\numberwithin{equation}{section}
\theoremstyle{plain}
\newtheorem{thm}{\protect\theoremname}[section]
\theoremstyle{remark}
\newtheorem{rem}[thm]{\protect\remarkname}
\theoremstyle{plain}
\newtheorem{prop}[thm]{\protect\propositionname}
\theoremstyle{plain}
\newtheorem{question}[thm]{\protect\questionname}
\theoremstyle{plain}
\newtheorem{lem}[thm]{\protect\lemmaname}
\theoremstyle{plain}
\newtheorem{cor}[thm]{\protect\corollaryname}
\providecommand{\corollaryname}{Corollary}
\providecommand{\lemmaname}{Lemma}
\providecommand{\propositionname}{Proposition}
\providecommand{\questionname}{Open Question}
\providecommand{\remarkname}{Remark}
\providecommand{\theoremname}{Theorem}
\begin{document}
\global\long\def\bbC{\mathbb{C}}%
\global\long\def\bbN{\mathbb{N}}%
\global\long\def\bbQ{\mathbb{Q}}%
\global\long\def\bbR{\mathbb{R}}%
\global\long\def\bbZ{\mathbb{Z}}%
\global\long\def\bbS{\mathbb{S}}%

\global\long\def\bfD{{\bf D}}%

\global\long\def\bfv{{\bf v}}%

\global\long\def\calA{\mathcal{A}}%
\global\long\def\calB{\mathcal{B}}%
\global\long\def\calC{\mathcal{C}}%
\global\long\def\calD{\mathcal{D}}%
\global\long\def\calE{\mathcal{E}}%
\global\long\def\calF{\mathcal{F}}%
\global\long\def\calg{\mathcal{G}}%
\global\long\def\calH{\mathcal{H}}%
\global\long\def\calI{\mathcal{I}}%
\global\long\def\calJ{\mathcal{J}}%
\global\long\def\calK{\mathcal{K}}%
\global\long\def\calL{\mathcal{L}}%
\global\long\def\calM{\mathcal{M}}%
\global\long\def\calN{\mathcal{N}}%
\global\long\def\calO{\mathcal{O}}%
\global\long\def\calP{\mathcal{P}}%
\global\long\def\calQ{\mathcal{Q}}%
\global\long\def\calR{\mathcal{R}}%
\global\long\def\calS{\mathcal{S}}%
\global\long\def\calT{\mathcal{T}}%
\global\long\def\calU{\mathcal{U}}%
\global\long\def\calV{\mathcal{V}}%
\global\long\def\calW{\mathcal{W}}%
\global\long\def\calX{\mathcal{X}}%
\global\long\def\calY{\mathcal{Y}}%
\global\long\def\calZ{\mathcal{Z}}%

\global\long\def\frkc{\mathfrak{c}}%
\global\long\def\frkd{\mathfrak{d}}%
\global\long\def\frkp{\mathfrak{p}}%
\global\long\def\frkr{\mathfrak{r}}%

\global\long\def\alp{\alpha}%
\global\long\def\bt{\beta}%
\global\long\def\dlt{\delta}%
\global\long\def\Dlt{\Delta}%
\global\long\def\eps{\epsilon}%
\global\long\def\gmm{\gamma}%
\global\long\def\Gmm{\Gamma}%
\global\long\def\kpp{\kappa}%
\global\long\def\tht{\theta}%
\global\long\def\lmb{\lambda}%
\global\long\def\Lmb{\Lambda}%
\global\long\def\vphi{\varphi}%
\global\long\def\omg{\omega}%

\global\long\def\rd{\partial}%
\global\long\def\aleq{\lesssim}%
\global\long\def\ageq{\gtrsim}%
\global\long\def\aeq{\simeq}%

\global\long\def\peq{\mathrel{\phantom{=}}}%
\global\long\def\To{\longrightarrow}%
\global\long\def\weakto{\rightharpoonup}%
\global\long\def\embed{\hookrightarrow}%
\global\long\def\Re{\mathrm{Re}}%
\global\long\def\Im{\mathrm{Im}}%
\global\long\def\chf{\mathbf{1}}%
\global\long\def\td#1{\widetilde{#1}}%
\global\long\def\br#1{\overline{#1}}%
\global\long\def\ul#1{\underline{#1}}%
\global\long\def\wh#1{\widehat{#1}}%
\global\long\def\rng#1{\mathring{#1}}%
\global\long\def\tint#1#2{{\textstyle \int_{#1}^{#2}}}%
\global\long\def\tsum#1#2{{\textstyle \sum_{#1}^{#2}}}%

\global\long\def\lan{\langle}%
\global\long\def\ran{\rangle}%
\global\long\def\blan{\big\langle}%
\global\long\def\bran{\big\rangle}%
\global\long\def\Blan{\Big\langle}%
\global\long\def\Bran{\Big\rangle}%

\global\long\def\Mod{\mathbf{Mod}}%
\global\long\def\out{\phantom{}^{\mathrm{(out)}}}%
\global\long\def\inn{\phantom{}^{\mathrm{(in)}}}%
\global\long\def\loc{\mathrm{loc}}%
\global\long\def\sg{\mathrm{sg}}%
\global\long\def\NL{\mathrm{NL}}%
\global\long\def\rad{\mathrm{rad}}%
\global\long\def\dec{\mathrm{d}}%
\global\long\def\coer{\mathrm{c}}%

\title[Classification for HMHF and NLH]{On classification of global dynamics for energy-critical equivariant
harmonic map heat flows and radial nonlinear heat equation}
\author{Kihyun Kim}
\email{kihyun.kim@snu.ac.kr (current); khyun@ihes.fr (previous)}
\address{Department of Mathematical Sciences and Research Institute of Mathematics,
Seoul National University, 1 Gwanak-ro, Gwanak-gu, Seoul 08826,  Korea
(current); IHES, 35 route de Chartres, 91440 Bures-sur-Yvette, France
(previous)}
\author{Frank Merle}
\email{merle@ihes.fr}
\address{IHES, 35 route de Chartres, 91440 Bures-sur-Yvette, France and AGM,
CY Cergy Paris Université, 2 av.\ Adolphe Chauvin, 95302 Cergy-Pontoise
Cedex, France}
\begin{abstract}
We consider the global dynamics of finite energy solutions to energy-critical
equivariant harmonic map heat flow (HMHF) and radial nonlinear heat
equation (NLH). It is known that any finite energy equivariant solutions
to (HMHF) decompose into finitely many harmonic maps (bubbles) separated
by scales and a body map, as approaching to the maximal time of existence.
Our main result for (HMHF) gives a complete classification of their
dynamics for equivariance indices $D\geq3$; (i) they exist globally
in time, (ii) the number of bubbles and signs are determined by the
energy class of the initial data, and (iii) the scales of bubbles
are asymptotically given by a universal sequence of rates up to scaling
symmetry. In parallel, we also obtain a complete classification of
$\dot{H}^{1}$-bounded radial solutions to (NLH) in dimensions $N\geq7$,
building upon soliton resolution for such solutions. To our knowledge,
this provides the first rigorous classification of bubble tree dynamics
within symmetry. We introduce a new approach based on the energy method
that does not rely on maximum principle. The key ingredient of the
proof is a monotonicity estimate near any bubble tree configurations,
which in turn requires a delicate construction of modified multi-bubble
profiles also.
\end{abstract}

\maketitle

\tableofcontents{}

\section{Introduction}

We consider the energy-critical harmonic map heat flow under equivariant
symmetry 
\begin{equation}
\left\{ \begin{aligned}\rd_{t}v & =\rd_{rr}v+\frac{1}{r}\rd_{r}v-\frac{D^{2}\sin(2v)}{2r^{2}}\\
v(0,r) & =v_{0}(r)\in\bbR,
\end{aligned}
\quad(t,r)\in[0,T)\times(0,\infty)\right.\tag{HMHF}\label{eq:HMHF-equiv}
\end{equation}
with equivariance indices $D\in\bbZ_{\geq1}$ and the energy-critical
heat equation under radial symmetry
\begin{equation}
\left\{ \begin{aligned}\rd_{t}u & =\rd_{rr}u+\frac{N-1}{r}\rd_{r}u+|u|^{\frac{4}{N-2}}u,\\
u(0,r) & =u_{0}(r)\in\bbR,
\end{aligned}
\quad(t,r)\in[0,T)\times(0,\infty)\right.\tag{NLH}\label{eq:NLH-rad}
\end{equation}
in dimensions $N\geq3$, for finite energy initial data.

Our main results (Theorem~\ref{thm:main-HMHF} and Theorem~\ref{thm:main-NLH})
give a complete classification of the global dynamics of finite energy
solutions to \eqref{eq:HMHF-equiv} when $D\geq3$ and $\dot{H}^{1}$-bounded
solutions to \eqref{eq:NLH-rad} when $N\geq7$.

\subsection{Energy-critical equivariant harmonic map heat flow}

The \emph{harmonic map heat flow} on $\bbR^{2}$ with the $\bbS^{2}$
target is the gradient flow associated to the Dirichlet energy 
\[
E[\phi]=\frac{1}{2}\int_{\bbR^{2}}|\nabla\phi|^{2}dx,
\]
where $\bbS^{2}\embed\bbR^{3}$ is the standard unit 2-sphere and
$\phi$ is a map from $\bbR^{2}\to\bbS^{2}$. The associated Cauchy
problem is then given by 
\begin{equation}
\left\{ \begin{aligned}\rd_{t}\phi & =\Dlt\phi+\phi|\nabla\phi|^{2},\\
\phi(0,x) & =\phi_{0}(x)\in\bbS^{2},
\end{aligned}
\quad(t,x)\in[0,T)\times\bbR^{2}.\right.\label{eq:HMHF}
\end{equation}
The RHS of \eqref{eq:HMHF} is called the \emph{tension field}. In
a more general context, the harmonic map heat flow for maps from a
Riemannian manifold $\calM$ to another $\calN$ was introduced by
Eells and Sampson \cite{EellsSampson1964AJM} to study the deformation
of a given mapping into \emph{harmonic maps}, i.e., the critical points
of the Dirichlet energy. The harmonic maps are the static solutions
to the harmonic map heat flow.

The two-dimensional case ($\dim\calM=2$), such as \eqref{eq:HMHF},
is distinguished as the energy is conformally invariant. In our setting
$\calM=\bbR^{2}$, we have scaling symmetry; for any $\lmb\in(0,\infty)$
and a solution $\phi(t,x)$ to \eqref{eq:HMHF}, $\phi(\frac{t}{\lmb^{2}},\frac{x}{\lmb})$
is also a solution to \eqref{eq:HMHF}. That the underlying dimension
is $2$ implies that the energy is invariant under the scaling transform.
Hence we call \eqref{eq:HMHF} \emph{energy-critical}. By the result
of Struwe \cite{Struwe1985} (originally proved for a closed surface
$\calM$), the Cauchy problem is well-posed in the \emph{energy class},
i.e., the set of maps with finite energy.

A particularly interesting phenomenon in this two-dimensional setting
is \emph{bubbling}, whose harmonic map counterpart goes back to \cite{SacksUhlenbeck1981}.
Struwe proved in \cite{Struwe1985} that any finite energy initial
data admits a global weak solution that is regular except finitely
many spacetime points. Moreover, these singular points are characterized
by the property that suitably rescaled flow converges locally in space
to a nontrivial harmonic map as approaching to the singular time.
The energy identity, written in our context as 
\[
E[\phi(t_{2})]-E[\phi(t_{1})]=\int_{t_{1}}^{t_{2}}\int_{\bbR^{2}}|\rd_{t}\phi|^{2}dxdt,
\]
and its localized version play a key role. A series of works such
as Qing \cite{Qing1995CommAnalGeom}, Ding--Tian \cite{DingTian1995CommAnalGeom},
Wang \cite{Wang1996HoustonJM}, Qing--Tian \cite{QingTian1997CPAM},
and Lin--Wang \cite{LinWangCVPDE} provide more refined understanding
on this energy identity (e.g., the lack of compactness for Palais--Smale
sequences). In consequence, for a \emph{well-chosen time sequence},
the solution looks like a superposition of bubbles (scaled harmonic
maps) and a body map. See also Topping \cite{Topping1997JDG,Topping2004AnnMath,Topping2004MathZ}.
We refer to the book \cite{LinWang2008book} by Lin and Wang for more
detailed discussions on bubbling and also for an introduction to harmonic
map heat flows.

Recently, within equivariant symmetry (which is the setting of this
paper), Jendrej and Lawrie \cite{JendrejLawrie2023CVPDE} obtained
the full \emph{continuous-in-time} bubble decomposition; see Proposition~\ref{prop:HMHF-bubble-dec}
below for the statement. Jendrej, Lawrie, and Schlag \cite{JendrejLawrieSchlag2023arXiv}
obtained a version of continuous-in-time decomposition without symmetry.
It is worth mentioning that the work \cite{JendrejLawrie2023CVPDE}
is inspired from the works on soliton resolution for critical wave
equations; see for example \cite{DuyckaertsKenigMerle2013CambJMath,Cote2015CPAM,JiaKenig2017AJM,DuyckaertsJiaKenigMerle2017GAFA,DuyckaertsKenigMerle2023Acta,JendrejLawrie2021arXiv,JendrejLawrie2023AnnPDE}.

In this paper, we consider harmonic map heat flows under equivariant
symmetry. For $D\in\bbZ_{\geq1}$, the $D$-equivariant symmetry ansatz
\begin{equation}
\phi(t,x)=\big(\sin v(t,r)\cos(D\tht),\sin v(t,r)\sin(D\tht),\cos v(t,r)\big),\label{eq:equiv-ansatz}
\end{equation}
where $(r,\tht)$ are polar coordinates on $\bbR^{2}$, is preserved
under the flow of \eqref{eq:HMHF}. The equation then reduces to \eqref{eq:HMHF-equiv}
given at the beginning of the introduction. The energy functional
reads 
\[
E[v(t)]=2\pi\int_{0}^{\infty}\frac{1}{2}\Big((\rd_{r}v(t,r))^{2}+\frac{D^{2}\sin^{2}v(t,r)}{r^{2}}\Big)rdr
\]
and the energy identity reads
\[
E[v(t_{2})]-E[v(t_{1})]=2\pi\int_{t_{1}}^{t_{2}}\int_{0}^{\infty}(\rd_{t}v(t,r))^{2}rdrdt.
\]

It is well-known that the energy class within $D$-equivariance is
the disjoint union of the connected components $\calE_{\ell,m}$ for
$\ell,m\in\bbZ$, where 
\begin{equation}
\calE_{\ell,m}\coloneqq\{v:(0,\infty)\to\bbR:E[v]<+\infty,\ \lim_{r\to0}v(r)=\ell\pi,\ \lim_{r\to+\infty}v(r)=m\pi\}.\label{eq:def-calE-ellm}
\end{equation}
The dynamics of \eqref{eq:HMHF-equiv} is then understood separately
on each component $\calE_{\ell,m}$. Each $\calE_{\ell,m}$ is an
affine space. When $\ell=m=0$, $\calE\coloneqq\calE_{0,0}$ becomes
a vector space and we equip this space with the norm 
\[
\|v\|_{\calE}^{2}\coloneqq\|\rd_{r}v\|_{L^{2}(rdr)}^{2}+\|r^{-1}v\|_{L^{2}(rdr)}^{2}.
\]
This norm naturally induces a metric on each $\calE_{\ell,m}$.

The equation \eqref{eq:HMHF-equiv} is invariant under the transforms
$v\mapsto v+\pi$ and $v\mapsto-v$. At the map level (i.e., $\phi=(\phi_{1},\phi_{2},\phi_{3})$),
these transforms correspond to isometries on $\bbS^{2}$ $\phi\mapsto(-\phi_{1},-\phi_{2},-\phi_{3})$
(orientation reversing) and $\phi\mapsto(-\phi_{1},-\phi_{2},\phi_{3})$
(orientation preserving), respectively. The latter transform simply
corresponds to rotation about the $z$-axis by $\pi$, so the maps
$\phi$ and $(-\phi_{1},-\phi_{2},\phi_{3})$ are homotopic outside
symmetry. The disconnectedness of $\calE_{0,1}$ and $\calE_{0,-1}$
in our setting is due to the symmetry ansatz \eqref{eq:equiv-ansatz}.

There is a nontrivial $D$-equivariant harmonic map
\[
Q(r)\coloneqq2\arctan(r^{D})
\]
and it is unique in the sense that 
\[
\iota Q_{\lmb}(r)\coloneqq\iota Q\Big(\frac{r}{\lmb}\Big)
\]
for any sign $\iota\in\{\pm1\}$, scale $\lmb\in(0,\infty)$, and
their modifications by an addition of $\ell\pi$, $\ell\in\bbZ$,
generate all nontrivial $D$-equivariant harmonic maps. Note that
$Q\in\calE_{0,1}$ and $E[Q]=4\pi D$.

Recently, Jendrej and Lawrie \cite{JendrejLawrie2023CVPDE} established
the continuous-in-time bubble decomposition for $D$-equivariant harmonic
map heat flows of finite energy. It roughly says that any such solutions
decompose into the sum of \emph{scale-decoupled bubbles }(or, simply
\emph{multi-bubbles})\emph{ }and a radiation: 
\[
v(t)\approx\sum_{j=1}^{J}\iota_{j}Q_{\lmb_{j}(t)}+v^{\ast}
\]
as $t\to T$ (the maximal time of existence) with $J$ the number
of bubbles, $\lmb_{J}(t)\ll\dots\ll\lmb_{1}(t)$, and a radiation
profile $v^{\ast}$ (which becomes trivial if $T=+\infty$). For the
precise statement, see Proposition~\ref{prop:HMHF-bubble-dec} in
Section~\ref{subsec:Statements-soliton-res}.

It is then natural to consider the \emph{classification problem};
which scenarios of the number of bubbles $J$, signs $\iota_{j}$,
and scales $\lmb_{j}(t)$ can actually arise? In \cite{GustafsonNakanishiTsai2010CMP},
Gustafson, Nakanishi, and Tsai classified the dynamics when $D\geq2$,
$J=1$ (one bubble), and the initial data is close to $Q$. In particular,
when $D\geq3$, one has asymptotic stability: $T=+\infty$ and $\lmb_{1}(t)\to L_{\infty}\in(0,\infty)$.
However, the classification including multi-bubbles has remained open.

We now state our main result for \eqref{eq:HMHF-equiv}. We give a
complete classification when $D\geq3$. Further discussions on the
result will follow after we state the analogous result (Theorem~\ref{thm:main-NLH}
below) for the critical heat equation.
\begin{thm}[Classification of $D$-equivariant solutions within high equivariance]
\label{thm:main-HMHF}Let $D\geq3$. Let $\ell,m\in\bbZ$ and $v_{0}\in\calE_{\ell,m}$
be an initial data. Then, the corresponding solution $v(t)$ to \eqref{eq:HMHF-equiv}
is global and admits the decomposition 
\begin{equation}
v(t)-\ell\pi-\iota\sum_{j=1}^{J}Q_{\lmb_{j,L_{\infty}}^{\mathrm{ex}}(t)}\to0\text{ in }\calE\text{ as }t\to+\infty\label{eq:main-thm-HMHF-dec}
\end{equation}
for some real $L_{\infty}\in(0,\infty)$ and 
\begin{equation}
J=|m-\ell|\quad\text{and}\quad\iota=\mathrm{sgn}(m-\ell).\label{eq:main-thm-J-iota}
\end{equation}
Here, the rates $\lmb_{j,L_{\infty}}^{\mathrm{ex}}(t)$ are given
by 
\[
\lmb_{j,L_{\infty}}^{\mathrm{ex}}(t)\coloneqq L_{\infty}^{1+2\alpha_{j}}\cdot\frac{\beta_{j}}{t^{\alpha_{j}}}\quad\text{with}\quad\alp_{j}\coloneqq\frac{1}{2}\Big(\frac{D}{D-2}\Big)^{j-1}-\frac{1}{2}
\]
and $\beta_{j}$s are the universal constants defined by \eqref{eq:def-beta_j}
and \eqref{eq:def-kappa-explicit-int}.
\end{thm}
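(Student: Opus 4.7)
The plan is to start from the continuous-in-time bubble decomposition of Jendrej--Lawrie (Proposition~\ref{prop:HMHF-bubble-dec}), which on the maximal interval $[0,T)$ already provides a number $J$ of bubbles, signs $\iota_j$, ordered scales $\lmb_J(t)\ll\cdots\ll\lmb_1(t)$, a body map $v^{\ast}$, and an $\calE$-small error. My task is to promote this qualitative decomposition to the quantitative statement \eqref{eq:main-thm-HMHF-dec}, i.e., to show (i) $T=+\infty$, (ii) $v^{\ast}\equiv \ell\pi$ together with the identifications \eqref{eq:main-thm-J-iota}, and (iii) the universal rates for $\lmb_j(t)$. Algebraically, since $Q$ carries the unit winding from $0$ to $\pi$ and the bubbles are scale-separated, matching $v(r)\to\ell\pi$ at $r=0$ and $v(r)\to m\pi$ at $r=+\infty$ already forces $\iota_1+\cdots+\iota_J = m-\ell$, and in particular $J\geq|m-\ell|$; the sharper identification together with $v^{\ast}\equiv 0$ will come a posteriori from the monotonicity estimate below, since opposite-sign adjacent bubbles would attract and trigger collapse while a nontrivial body map would conflict with the ODE asymptotics.

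The technical heart is a modulation analysis around a \emph{modified} multi-bubble profile $\td{P}(\lmb) = \sum_{j=1}^{J}\iota_j Q_{\lmb_j} + (\text{small corrections})$. Writing $v(t) = \ell\pi + \td{P}(\lmb(t)) + \eps(t)$ with $\lmb(t)$ chosen so that $\eps(t)$ is orthogonal to the scaling directions $\Lmb Q_{\lmb_j}$, differentiating the orthogonality conditions in time yields a modulation system of the schematic form
\[
\frac{\dot\lmb_j}{\lmb_j} = F_j(\lmb) + O(\|\eps\|_{\calE}^2),
\]
where the leading term $F_j$ encodes nearest-neighbor bubble interactions and, when equated to zero, produces exactly the predicted rates $\lmb_{j,L_{\infty}}^{\mathrm{ex}}(t)$. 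The corrections in $\td P$ serve two purposes: they reduce the formal residual to a size commensurate with $\eps^2$, and they make available a coercive quadratic form in $\eps$ after quotienting out the $J$ scaling directions.

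The crucial step is a \emph{monotonicity (Lyapunov) estimate} for a functional $\calJ[\eps;\lmb]$ that is equivalent, modulo the scaling modulations, to $\|\eps\|_{\calE}^2$ and satisfies $\tfrac{d}{dt}\calJ \leq (\text{time-integrable error})$ along the flow. Bootstrapping this estimate against the modulation ODEs simultaneously (a) shows that $\|v(t)\|_{\calE}$ stays bounded so $T=+\infty$, (b) forces $\eps(t)\to 0$ in $\calE$ and trivializes the body map, and (c) pins the scales to their leading-order rates. I expect this to be the main obstacle, because near an arbitrary $J$-bubble tree one must reconcile the $J$ quasi-null scaling directions of the linearization with bubble--bubble interactions that are only borderline integrable; this is where $D\geq 3$ enters essentially, ensuring that the interaction exponents are strong enough for both the $\eps$-driven errors in the ODEs and the defect in the monotonicity to be summable in time. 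The constructions of $\td P$ and $\calJ$ must be carried out in tandem so that the orthogonality conditions, weighted coercivity, and formal-error cancellations all match up.

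Once $\eps(t)\to 0$ in $\calE$, the reduced system $\dot\lmb_j/\lmb_j = F_j(\lmb)$ can be integrated in closed form. The outermost equation gives $\dot\lmb_1/\lmb_1\to 0$, hence $\lmb_1(t)\to L_{\infty}$ for some $L_{\infty}\in(0,\infty)$. The inner scales are then slaved to the preceding ones in a triangular cascade whose iteration reproduces the exponents $\alp_j = \frac{1}{2}((\tfrac{D}{D-2})^{j-1}-1)$ and the universal constants $\bt_j$ stated in the theorem; the one-parameter freedom $L_{\infty}$ corresponds to the residual scaling symmetry of the flow at $t=+\infty$.
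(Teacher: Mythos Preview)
Your overall architecture matches the paper's: start from Jendrej--Lawrie, introduce a modified multi-bubble profile, run a modulation analysis, prove a monotonicity estimate, and integrate the resulting ODE system. However, your description of the central monotonicity step misidentifies both its mechanism and its output, and this is precisely where the proof lives or dies.

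You propose a Lyapunov functional $\calJ[\eps;\lmb]\sim\|\eps\|_{\calE}^2$ satisfying $\tfrac{d}{dt}\calJ\leq(\text{time-integrable error})$. That inequality only bounds $\calJ$ from above, which Jendrej--Lawrie already supplies in the global case; it does \emph{not} yield time-integrability of the $O(\|\eps\|_{\calE}^2)$ terms you place in the modulation ODE, so you cannot integrate the system (e.g.\ $\|\eps\|_{\calE}\to0$ as slowly as $1/\log t$ would already break the argument for $\lmb_1$). What the paper actually uses is the energy dissipation $\int_{t_0}^{T}\|\partial_t u\|_{L^2}^2\,dt<\infty$ together with a \emph{coercivity lower bound} $\|\partial_t u\|_{L^2}^2\gtrsim \calD+\|g\|_{\dot H^2}^2$, where $\calD=\sum_j \mu_j^{2D}/\lmb_j^2$; see \eqref{eq:strat-3}--\eqref{eq:strat-4} and Proposition~\ref{prop:spacetime-est}. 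This produces the spacetime estimate $\int(\calD+\|g\|_{\dot H^2}^2)\,dt<\infty$, and the modulation errors in \eqref{eq:Ref-Mod} are accordingly controlled in $\dot H^2$, not in $\calE$. Relatedly, the role of the profile modification is \emph{not} to shrink the inhomogeneous residual to size $O(\eps^2)$ --- it keeps the definite size $\sqrt{\calD}$ of \eqref{eq:prof-eq-main-size} --- but to make it nearly orthogonal to $H_{\vec{\lmb}}g$, so that $\|\partial_t u\|_{L^2}^2$ decouples as in \eqref{eq:strat-3}. A naive linear ansatz for the profile would destroy this orthogonality for large $D$ (Remark~\ref{rem:7D-simpler-profile}), which is why the paper constructs $U$ by a nonlinear recursion on the number of bubbles. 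Finally, $T=+\infty$ is not obtained from an $\calE$-bound on $v$ (energy is monotone regardless, and finite-time bubbling still occurs when $D=1$); it follows only after the ODE integration shows $\lmb_1(t)\to L_\infty\in(0,\infty)$.
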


\subsection{Energy-critical radial nonlinear heat equation}

We also consider the \emph{energy-critical heat equation} 
\begin{equation}
\left\{ \begin{aligned}\rd_{t}u & =\Dlt u+|u|^{\frac{4}{N-2}}u,\\
u(0,x) & =u_{0}\in\bbR,
\end{aligned}
\quad(t,x)\in[0,T)\times\bbR^{N}\right.\label{eq:NLH}
\end{equation}
in dimensions $N\geq3$. \eqref{eq:NLH} is a gradient flow associated
to the energy 
\[
E[u(t)]=\int_{\bbR^{N}}\Big(\frac{1}{2}|\nabla u(t,x)|^{2}-\frac{N-2}{2N}|u(t,x)|^{\frac{2N}{N-2}}\Big)dx.
\]
The energy space is $\dot{H}^{1}=\dot{H}^{1}(\bbR^{N})$, \eqref{eq:NLH}
is well-posed in $\dot{H}^{1}$ (see \cite{Weissler1980,BrezisCazenave1996}
and also \cite{CollotMerleRaphael2017CMP}), and it has scaling symmetry:
for any $\lmb\in(0,\infty)$ and a solution $u(t,x)$ to \eqref{eq:NLH},
$\frac{1}{\lmb^{(N-2)/2}}u(\frac{t}{\lmb^{2}},\frac{x}{\lmb})$ is
also a solution to \eqref{eq:NLH}. The energy is invariant under
this scaling transform. Imposing the radial symmetry $u(t,x)=u(t,r)$
with $r=|x|$ (abuse of notation), we arrive at the equation \eqref{eq:NLH-rad}.

The $D$-equivariant harmonic map heat flow is closely related to
the radial critical heat equation in dimension $N=2D+2$. This connection
can roughly be seen from the identity 
\begin{equation}
\Big(\rd_{rr}+\frac{1}{r}\rd_{r}-\frac{D^{2}}{r^{2}}\Big)(r^{D}u)=r^{D}\Big(\rd_{rr}+\frac{N-1}{r}\rd_{r}\Big)u\label{eq:Laplacian-connection}
\end{equation}
which transforms the Laplacian for $D$-equivariant functions on $\bbR^{2}$
into that for radial functions on $\bbR^{N}$, and that $u\mapsto r^{D}u$
is an isometry from $L^{2}(r^{N-1}dr)$ to $L^{2}(rdr)$.

\eqref{eq:NLH-rad} admits a nontrivial radial static solution
\[
W(r)=\frac{1}{(1+\frac{r^{2}}{N(N-2)})^{\frac{N-2}{2}}}
\]
and it is unique in the sense that 
\[
\iota W_{\lmb}(r)\coloneqq\frac{\iota}{\lmb^{\frac{N-2}{2}}}W\Big(\frac{r}{\lmb}\Big)
\]
for any sign $\iota\in\{\pm1\}$ and scale $\lmb\in(0,\infty)$ generate
all nonzero radial static solutions in $\dot{H}^{1}$.

Soliton resolution, the analogue of bubble decomposition for \eqref{eq:HMHF-equiv},
for $\dot{H}^{1}$-bounded radial solutions to \eqref{eq:NLH-rad}
was announced by Matano and Merle in the mid 10's and the proof was
presented at a conference at IHES. As the result did not appear in
a paper form, we will state this result as an open question (see Open
Question~\ref{conj:radial-soliton-resolution}) in Section~\ref{subsec:Statements-soliton-res}.
Note that $\dot{H}^{1}$-boundedness is necessary due to the existence
of type-I blow-up.

As for \eqref{eq:HMHF-equiv}, we also consider the classification
problem for \eqref{eq:NLH-rad}. When $J=1$ and $N\geq7$, a result
in Collot--Merle--Raphaël \cite{CollotMerleRaphael2017CMP} yields
the (conditional) asymptotic stability $\lmb_{1}(t)\to L_{\infty}\in(0,\infty)$.
Our main result for \eqref{eq:NLH-rad} gives the full multi-bubble
generalization under radial symmetry, leading to a complete classification
of $\dot{H}^{1}$-bounded radial solutions to critical heat equations
when $N\geq7$.
\begin{thm}[Classification of $\dot{H}^{1}$-bounded radial solutions in large
dimensions]
\label{thm:main-NLH}Let $N\geq7$. Assume the statement in Open
Question~\ref{conj:radial-soliton-resolution} holds. Let $u(t)$
be a $\dot{H}^{1}$-bounded radial solution to \eqref{eq:NLH-rad}.
Then, $u(t)$ is global and admits a decomposition 
\begin{equation}
u(t)-\iota\sum_{j=1}^{J}(-1)^{j}W_{\lmb_{j,L_{\infty}}^{\mathrm{ex}}(t)}\to0\text{ in }\dot{H}^{1}\text{ as }t\to+\infty\label{eq:main-thm-NLH-dec}
\end{equation}
for some integer $J\ge0$, sign $\iota\in\{\pm1\}$, and real $L_{\infty}\in(0,\infty)$.
Here, the rates $\lmb_{j,L_{\infty}}^{\mathrm{ex}}(t)$ are given
by 
\[
\lmb_{j,L_{\infty}}^{\mathrm{ex}}(t)\coloneqq L_{\infty}^{1+2\alpha_{j}}\cdot\frac{\beta_{j}}{t^{\alpha_{j}}}\quad\text{with}\quad\alp_{j}\coloneqq\frac{1}{2}\Big(\frac{N-2}{N-6}\Big)^{j-1}-\frac{1}{2}
\]
and $\beta_{j}$s are the universal constants defined by \eqref{eq:def-beta_j}
and \eqref{eq:def-kappa-explicit-int}.
\end{thm}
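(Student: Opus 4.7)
The proof of Theorem~\ref{thm:main-NLH} will closely parallel that of Theorem~\ref{thm:main-HMHF}, reflecting the connection between $D$-equivariant \eqref{eq:HMHF-equiv} and radial \eqref{eq:NLH-rad} in dimension $N=2D+2$ via \eqref{eq:Laplacian-connection}. The plan is first to invoke the soliton resolution postulated in Open Question~\ref{conj:radial-soliton-resolution} to write any $\dot{H}^{1}$-bounded radial solution as $u(t)=\sum_{j=1}^{J}\iota_{j}W_{\lmb_{j}(t)}+u^{\ast}(t)$, a scale-decoupled superposition of bubbles plus a body map $u^{\ast}$; then to derive modulation equations for $\lmb_{j}(t)$ from the underlying gradient flow structure; and finally to combine these with a monotonicity estimate around the bubble-tree manifold to force the explicit asymptotics and pin down $J$, the signs $\iota_{j}$, and the scales $\lmb_{j}(t)$. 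The dimensional threshold $N\geq 7$ plays the same role as $D\geq 3$ in the HMHF case: it ensures that leading bubble--bubble interactions dominate over bubble--radiation interactions, so that the scale dynamics are governed by an autonomous hierarchical ODE.

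The decisive technical step is a monotonicity estimate in a small neighborhood of every bubble-tree configuration, which in turn rests on the construction of a modified multi-bubble profile $\calW_{J}(\vec{\lmb})$ refining the naive sum $\sum_{j}\iota_{j}W_{\lmb_{j}}$ by correction terms absorbing the leading quadratic interaction errors. Following the energy-method philosophy imported from the HMHF analysis (and in contrast to a maximum-principle-based approach), the profile must be designed so that the linearization of \eqref{eq:NLH-rad} around it admits a coercive estimate under suitable orthogonality conditions adapted to the modulation directions. I would then introduce a weighted functional measuring the deviation of $u(t)$ from $\calW_{J}(\vec{\lmb}(t))$ and derive a differential inequality for it that forces the scale ratios $\lmb_{j+1}(t)/\lmb_{j}(t)$ to obey the predicted hierarchy; combined with the energy identity, this rules out continued growth of $\lmb_{1}(t)$, yielding global existence and convergence of the body map to a fixed limit.

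With monotonicity in hand, the signs $\iota_{j}=\iota(-1)^{j}$ are forced by the sign of the leading quadratic interaction between adjacent bubbles: equal signs yield an attractive interaction incompatible with the monotonic hierarchy, whereas alternating signs give the stable repulsive pattern consistent with \cite{CollotMerleRaphael2017CMP} in the single-bubble case. Solving the resulting hierarchical ODE for $(\lmb_{j})_{j}$ asymptotically then delivers the rates $\lmb_{j,L_{\infty}}^{\mathrm{ex}}(t)$ with exponent $\alp_{j}=\frac{1}{2}((N-2)/(N-6))^{j-1}-\frac{1}{2}$, the base $(N-2)/(N-6)$ arising from the decay exponents at spatial infinity of $W(r)$ and of its scaling generator in dimension $N$. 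The main obstacle will be the simultaneous construction of $\calW_{J}$ and the verification of coercivity for its linearized operator: in the absence of a maximum principle, one must track cross-terms involving three or more consecutive bubbles and organize the corrections so that their time derivatives along the flow cancel the leading interaction errors to the precision required, all the while remaining compatible with the hierarchical modulation dynamics.
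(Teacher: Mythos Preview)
Your outline matches the paper's approach: invoke soliton resolution, build a modified multi-bubble profile $U$ so that the residual $\Delta U+f(U)$ lies approximately along the kernel elements $\Lambda W_{;j}$ (hence nearly orthogonal to $H_{\vec{\lambda}}g$), use the energy identity to extract the spacetime bound $\int_{t_0}^T(\mathcal{D}+\|g\|_{\dot{H}^2}^2)\,dt<\infty$, and then derive and integrate the modulation system $\lambda_{k,t}/\lambda_k\approx\bar{\iota}_k\kappa\mu_k^D/\lambda_k^2$ to determine signs (alternating, since $\kappa>0$ for \eqref{eq:NLH-rad}) and the rates $\lambda_{j,L_\infty}^{\mathrm{ex}}$. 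Your ``weighted functional with a differential inequality'' is realized in the paper directly as this $L_t^2$ spacetime estimate coming from the dissipation term of the energy identity, rather than through a separate Lyapunov construction.
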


\begin{rem}
\label{rem:Open-Question-resolved}Open Question~\ref{conj:radial-soliton-resolution}
\emph{was resolved affirmatively} in the paper \cite{Aryan2024arXiv}
one month later we posted our article on arXiv.
\end{rem}

\subsection{\label{subsec:Discussion}Discussions on the results}

We comment on Theorem~\ref{thm:main-HMHF} and Theorem~\ref{thm:main-NLH}.\smallskip{}

\emph{1. Method and novelties.} We introduce a new approach for the
classification of multi-bubble dynamics. Our proof is based solely
on the energy method and is not reliant on maximum principle. We also
deal with sign changing solutions. The crucial ingredient of the proof
is a \emph{monotonicity estimate near any multi-bubble configurations}.
This is not difficult in the one-bubble case, but our multi-bubble
analysis requires a refined construction of \emph{modified multi-bubble
profiles}. See Section~\ref{subsec:Strategy} for more details on
the proof. We believe that the approach introduced in this paper is
robust and can be applied in more general settings such as wave equations
or geometric evolution equations. \smallskip{}

\emph{2. Existence of scenarios in classification.} All the scenarios
in our main theorems exist. Thus the theorems give a \emph{complete}
classification. For \eqref{eq:HMHF-equiv}, the existence is immediate;
for any $\ell\in\bbZ$, $J\in\bbN_{\geq0}$, and $\iota\in\{\pm1\}$,
\emph{every solution} in the class $\calE_{\ell,\ell+\iota J}$ admits
the decomposition \eqref{eq:main-thm-HMHF-dec} for some $L_{\infty}\in(0,\infty)$.
The value of $L_{\infty}$ can be adjusted arbitrarily by scaling.
For \eqref{eq:NLH-rad}, del Pino, Musso, and Wei constructed each
scenario \cite{delPinoMussoWei2021AnalPDE}. We remark that our method
combined with a shooting argument will provide an alternative construction
in $\dot{H}^{1}$, but we will not address it in this paper.\smallskip{}

\emph{3. Lower equivariance.} When $D\leq2$, the uniqueness result
(or \emph{the universality of blow-up speeds}) for the scales as in
our main theorems no longer holds. Our theorems are optimal also in
this sense.

When $D=2$, Gustafson, Nakanishi, and Tsai \cite{GustafsonNakanishiTsai2010CMP}
considered the one-bubble scenario $J=1$ and proved that the asymptotic
behavior of the scale $\lmb_{1}(t)$ depends on the spatial decay
of initial data. It turns out that the scenarios such as $\lmb_{1}(t)\to0$,
$\lmb_{1}(t)\to\infty$, $\lmb_{1}(t)$ eternally oscillating, are
all possible. (Note that this asymptotic behavior is different from
the formal asymptotics in \cite{vandenBergHulshofKing2003}.) Nevertheless,
a generalization of the classification result near one bubble to the
multi-bubble setting seems possible; this will be reported in the
future.

The $D<2$ case is more dramatic. First of all, there is a finite-time
blow-up of bubbling. Chang, Ding, and Ye \cite{ChangDingYe1992} proved
the existence of finite-time blowing up $1$-equivariant HMHFs when
the domain is a ball. More direct constructions with sharp descriptions
of the blow-up dynamics were later obtained by Raphaël and Schweyer
\cite{RaphaelSchweyer2013CPAM,RaphaelSchweyer2014AnalPDE} using the
method developed in \cite{RaphaelRodnianski2012Publ.Math.,MerleRaphaelRodnianski2013InventMath}.
A non-radial construction of bubbles simultaneously blowing up at
$k\geq1$ different points was obtained in \cite{DaviladelPinoWei2020Invent}.
See also finite-time (type-II) blow-up constructions \cite{Schweyer2012JFA,delPinoMussoWei2019ActaSinica,delPinoMussoWeiZhou2020DCDS,delPinoMussoWeiZhangZhou2020arXiv,Harada2020AIHP,Harada2020AnnPDE}
for \eqref{eq:NLH} with earlier formal constructions \cite{HerreroVelazquez-preprint,HerreroVelazquezCRASP1994,FilippasHerreroVelazquez2000}.
On the other hand, van der Hout proved in \cite{vanderHout2003JDE}
that there is no finite-time blowing up bubble tree ($J\geq2$ and
$T<+\infty$) for \eqref{eq:HMHF-equiv} when $D=1$. Finally, there
is no rigidity for the asymptotic behavior of scales for global solutions
as in the $D=2$ case. The spatial decay of initial data plays a key
role; see for example \cite{FilaKing2012,delPinoMussoWei2020AnalPDE,WeiZhangZhou2023arXiv}.\smallskip{}

\emph{4. Classification problem.} We first discuss classification
problems for nonlinear heat equations on $\bbR^{N}$:
\[
\rd_{t}u=\Dlt u+|u|^{p-1}u,\qquad p>1.
\]
In the subcritical case $p<p_{s}=\frac{N+2}{N-2}$, finite-time blow-up
dynamics is classified; Giga and Kohn \cite{GigaKohn1985CPAM,GigaKohn1987Indiana}
and Giga, Matsui, and Sasayama \cite{GigaMatsuiSasayama2004Indiana}
proved that all such solutions are known to be of \emph{type-I}, i.e.,
$\|u(t)\|_{L^{\infty}}\aeq(T-t)^{-\frac{1}{p-1}}$ (the rate given
by the ODE solution). A similar result is also available for sub-conformal
nonlinear wave equations; see Merle--Zaag \cite{MerleZaag2003AJM,MerleZaag2005MathAnn}.

On the other hand, when $p$ is sufficiently large ($p$ larger than
so-called Joseph--Lundgren exponent $p_{JL}$), a finite-time blow-up
different from type-I (we call \emph{type-II} and $\|u(t)\|_{L^{\infty}}\gg(T-t)^{-\frac{1}{p-1}}$)
was found by Herrero and Velázquez \cite{HerreroVelazquez-preprint,HerreroVelazquezCRASP1994}
together with some polynomial rates of $\|u(t)\|_{L^{\infty}}$. Later,
Mizoguchi \cite{Mizoguchi2007MathAnn,Mizoguchi2011TAMS} proved that
\emph{radial} and \emph{nonnegative} (with more assumptions on $u_{0}$)
type-II blow-up solutions indeed have the rates found in \cite{HerreroVelazquez-preprint,HerreroVelazquezCRASP1994}.
See also \cite{Mizoguchi2020CPAM}.

As discussed in the previous item, (sign changing) type-II blow-up
exists in the critical case $p=p_{s}$ in lower dimensions $N\leq6$.
Restricted to radial solutions, this gap in the range of $p$ indeed
exists; Matano and Merle \cite{MatanoMerle2004CPAM} proved the non-existence
of radial type-II blow-up solutions in the range $p_{s}<p<p_{JL}$.
Moreover, in the critical case $p=p_{s}$, any radial type-II finite-time
blow-up solutions must be sign changing. Recently, Wang and Wei \cite{WangWei2021arXiv}
proved that any finite-time blow-up with non-radial $u_{0}\geq0$
when $p=p_{s}$, $N\geq7$, is type-I.

Our main results are closely related to the analysis of type-II blow-up
dynamics in the critical case $p=p_{s}$. Type-II blow-up solutions
in this case are typically constructed via concentration of solitons
(or bubbling) and our main theorems in particular classify the asymptotic
dynamics of multi-solitons. The classification in this sense is not
limited to the equations considered in this paper, but rather appears
universally. See for example the works in the one-bubble case: the
series of works \cite{MerleRaphael2005AnnMath,MerleRaphael2003GAFA,Raphael2005MathAnn,MerleRaphael2004InventMath,MerleRaphael2006JAMS,MerleRaphael2005CMP}
for $L^{2}$-critical NLS, \cite{MartelMerleRaphael2014Acta} for
$L^{2}$-critical gKdV, \cite{GustafsonNakanishiTsai2010CMP} for
\eqref{eq:HMHF-equiv}, \cite{CollotMerleRaphael2017CMP} for \eqref{eq:NLH},
and also \cite{Kim2022arXiv2}.

In a different context, Merle and Zaag \cite{MerleZaag2012AJM} classified
the behavior of solutions to 1D semi-linear wave equations near the
blow-up curves including characteristic blow-up points, which might
be seen as a classification result involving multi-solitons. Note
that Côte and Zaag \cite{CoteZaag2013CPAM} constructed all the scenarios
in the classification.

\smallskip{}

\emph{5. Some other results.} The non-radial case is more interesting
and has richer dynamics. A new scenario of global solutions with solitons
concentrating at several different points exists \cite{CortazarDelPinoMusso2020JEMS}.
See also an earlier construction \cite{MartelRaphael2018AnnSci} by
Martel and Raphaël for the construction of strongly interacting solitons
for $L^{2}$-critical NLS. For weakly interacting solitons (or weakly
interacting with the radiation), see \cite{Merle1990CMP,BourgainWang1997,Martel2005AJM}.
Finally, we refer to \cite{Jendrej2017AnalPDE,Jendrej2019AJM} for
constructions of radially symmetric two-bubbles for energy-critical
wave and Schrödinger equations.

\subsection{\label{subsec:Notation}Notation}

We introduce a few sets of notation. Apart from the basic one, we
need additional notation to deal with multi-bubble solutions and treat
both \eqref{eq:HMHF-equiv} and \eqref{eq:NLH-rad} at the same time.

\subsubsection{Basic notation}
\begin{itemize}
\item For $A\in\bbR$ and $B\geq0$, we denote $A\aleq B$ (or, $A=O(B)$)
if $|A|\leq CB$ for some universal constant $C>0$. If $A,B\geq0$,
we denote $A\aeq B$ if $A\aleq B$ and $A\ageq B$. The dependence
on parameters is written in subscripts. Any dependence on $N$ (or
$D=\frac{N-2}{2}$) will be ignored. The dependence on $J$ (the number
of bubbles) will be tracked only in Section~\ref{subsec:Construction-of-vphi}
and be ignored elsewhere.
\item We use the notation $\chf_{A}$. If $A$ is a statement, it means
that it is $1$ if $A$ is true and $0$ otherwise. If $A$ is a set,
then it denotes the indicator function on $A$; $\chf_{A}(x)=1$ if
$x\in A$ and $0$ otherwise.
\item $\chi=\chi(r)$ denotes a smooth radial cutoff function such that
$\chi(r)=1$ if $r\leq1$ and $\chi(r)=0$ if $r\geq2-\frac{1}{10}$.
We denote $\chi_{R}\coloneqq\chi(\cdot/R)$.
\item $\lan x\ran\coloneqq(1+|x|^{2})^{1/2}$.
\item We use small $o$-notation such as $o_{\alp^{\ast}\to0}(1)$ and $o_{t_{0}\to T}(1)$,
which denote some quantities going to zero as $\alp^{\ast}\to0$ and
$t_{0}\to T$, respectively. We also denote $\dlt(\alp^{\ast})=o_{\alp^{\ast}\to0}(1)$.
In Sections~\ref{sec:Spacetime-estimate}--\ref{sec:Proof-of-main-thm},
$o(1)$ denotes $o_{t\to T}(1)$.
\item For functions $g$ and $h$ identified as radial parts of functions
on $\bbR^{N}$, we denote the integral $\int g=c_{N}\int_{0}^{\infty}g(r)r^{N-1}dr$
and the inner product $\lan g,h\ran=\int gh$, where $c_{N}$ is the
volume of the unit $(N-1)$-sphere.
\item For $k\in\bbN$, denote $|f|_{k}\coloneqq\max\{|f|,|r\rd_{r}f|,\dots,|r^{k}\rd_{r}^{k}f|\}$
and $|f|_{-k}\coloneqq r^{-k}|f|_{k}$.
\end{itemize}

\subsubsection{\label{subsec:Unified-notation}Unified notation for \eqref{eq:NLH-rad}
and \eqref{eq:HMHF-equiv}.}

We will consider \eqref{eq:HMHF-equiv} and \eqref{eq:NLH-rad} simultaneously
by introducing a unified notation. These two equations are written
in a unified form 
\[
\rd_{t}u=\rd_{rr}u+\frac{N-1}{r}\rd_{r}u+r^{-(D+2)}f(v).
\]
The definitions of various objects depend on the equations. Unless
stated otherwise, we assume throughout the paper
\[
D>2.
\]

\begin{itemize}
\item For \eqref{eq:HMHF-equiv}, $v=v(t,r)$, $D\in\bbZ_{\geq1}$, and
$Q(r)=2\arctan(r^{D})$ are given. We define $u=r^{-D}v$, $N=2D+2$,
$p=\frac{N+2}{N-2}$, $f(v)=\frac{D^{2}}{2}\{2v-\sin(2v)\}$, and
$W(r)=r^{-D}Q(r)$.
\item For \eqref{eq:NLH-rad}, $u=u(t,r)$, $N\in\bbZ_{\geq3}$, $p=\frac{N+2}{N-2}$,
and $W(r)=(1+\frac{r^{2}}{N(N-2)})^{-\frac{N-2}{2}}$ are given. We
define $D=\frac{N-2}{2}$, $v=r^{D}u$, $f(v)=|v|^{\frac{4}{N-2}}v$,
and $Q(r)=r^{D}W(r)$. 
\item The functions $u,W,U$ are related to the functions $v,Q,P$ via 
\[
u=r^{-D}v,\qquad W=r^{-D}Q,\qquad U=r^{-D}P.
\]
For a radial function $\vphi(r)$, we define $\rng{\vphi}(r)$ by
the relation 
\[
\vphi(r)=r^{-D}\rng{\vphi}(r).
\]
We view $u,W,U$ and $g$ as radial parts of functions on $\bbR^{N}$
and $v,Q,P$ as radial parts of functions on $\bbR^{2}$.
\item Let $\lmb\in(0,\infty)$. For a function $\vphi$ identified as the
radial part of a function on $\bbR^{N}$ (such as $u,W,U,g$), we
define 
\[
\vphi_{\lmb}(r)\coloneqq\frac{1}{\lmb^{D}}\vphi\Big(\frac{r}{\lmb}\Big),\qquad\Lmb\vphi\coloneqq(r\rd_{r}+D)\vphi,
\]
whereas for a function $\rng{\vphi}$ identified as the radial part
of a function on $\bbR^{2}$ (such as $v,Q,P,h$), we define 
\[
\rng{\vphi}_{\lmb}(r)\coloneqq\rng{\vphi}\Big(\frac{r}{\lmb}\Big),\qquad\Lmb\rng{\vphi}\coloneqq r\rd_{r}\rng{\vphi}.
\]
\item Once $\dot{H}^{1}$-scalings and the generator $\Lmb$ are defined
by either of the two ways above, we set 
\[
\phi_{\ul{\lmb}}\coloneqq\frac{1}{\lmb^{2}}\phi_{\lmb}\quad\text{and}\quad\Lmb_{-1}\phi\coloneqq\Lmb\phi+2\phi.
\]
\item For $\lmb\in(0,\infty)$, we usually denote $y=\frac{r}{\lmb}$ as
a renormalized space variable.
\end{itemize}

\subsubsection{Notation for multi-bubbles}

Let $J\in\{1,2,\dots\}$, $\vec{\iota}=(\iota_{1},\dots,\iota_{J})\in\{\pm1\}^{J}$,
and $\vec{\lmb}=(\lmb_{1},\dots,\lmb_{J})\in(0,\infty)^{J}$ be given.
\begin{itemize}
\item We define 
\[
\br{\lmb}_{j}\coloneqq\sqrt{\lmb_{j}\lmb_{j-1}},\quad\mu_{j}\coloneqq\lmb_{j}/\lmb_{j-1},\quad\br{\iota}_{j}\coloneqq\iota_{j}\iota_{j-1}\quad\text{ for }j\in\{2,\dots,J\}.
\]
and set 
\[
\br{\lmb}_{1}\coloneqq0,\quad\br{\lmb}_{J+1}\coloneqq\infty,\quad\mu_{1}\coloneqq0,\quad\mu_{J+1}\coloneqq0.
\]
\item For a function $\phi$, we denote 
\[
\phi_{;j}\coloneqq\iota_{j}\phi_{\lmb_{j}}\qquad\text{and}\qquad\phi_{\ul{;j}}\coloneqq\frac{1}{\lmb_{j}^{2}}\phi_{;j}.
\]
\item The crucial scalar quantity in our paper is 
\[
\calD\coloneqq\sum_{j=2}^{J}\frac{\mu_{j}^{2D}}{\lmb_{j}^{2}}.
\]
\item For $\alp\in(0,\infty)$, the space of decoupled scales (by $\alp$)
is denoted by 
\[
\calP_{J}(\alp)\coloneqq\{\vec{\lmb}\in(0,\infty)^{J}:\max_{2\leq\ell\leq J}\mu_{\ell}<\alp\}.
\]
\end{itemize}

\subsubsection{\label{subsec:Universal-sequence}Sequences of universal constants
in main theorems}

The sequences $(\lmb_{j,L}^{\mathrm{ex}}(t))_{j\geq1}$, $(\alp_{j})_{j\geq1}$,
and $(\beta_{j})_{j\geq1}$ that appeared in Theorems~\ref{thm:main-HMHF}
and \ref{thm:main-NLH} are defined as follows. For $L\in(0,\infty)$
and an integer $j\geq1$, we have 
\begin{equation}
\lmb_{j,L}^{\mathrm{ex}}(t)\coloneqq L^{1+2\alpha_{j}}\cdot\frac{\beta_{j}}{t^{\alpha_{j}}},\qquad\alpha_{j}\coloneqq\frac{1}{2}\Big(\frac{D}{D-2}\Big)^{j-1}-\frac{1}{2},\label{eq:def-lmb-ex-ell}
\end{equation}
and the universal constants $\beta_{j}$ are defined by 
\begin{equation}
\beta_{1}\coloneqq1,\qquad\beta_{j}=\Big(\frac{\alpha_{j}}{|\kappa|}\Big)^{\frac{1}{D-2}}\beta_{j-1}^{\frac{D}{D-2}}\qquad\forall j\geq2\label{eq:def-beta_j}
\end{equation}
with 
\begin{equation}
\kappa\coloneqq\begin{cases}
{\displaystyle \frac{N-2}{2}\frac{W(0)\int_{0}^{\infty}W^{p}y^{N-1}dy}{\int_{0}^{\infty}(\Lmb W)^{2}y^{N-1}dy}>0} & \text{for NLH},\\
{\displaystyle -\frac{4\int_{0}^{\infty}(\Lmb Q)^{3}y^{D-1}dy}{\int_{0}^{\infty}(\Lmb Q)^{2}ydy}<0} & \text{for HMHF}.
\end{cases}\label{eq:def-kappa-explicit-int}
\end{equation}
Recall that $W(y)=(1+\frac{y^{2}}{N(N-2)})^{-\frac{N-2}{2}}$ and
$\Lmb W(y)=(\frac{N-2}{2}-\frac{y^{2}}{2N})(1+\frac{y^{2}}{N(N-2)})^{-\frac{N}{2}}$
for NLH, whereas $\Lmb Q(y)=\frac{2Dy^{D}}{1+y^{2D}}$ for HMHF. Using
our unified notation, these values are in fact equal to 
\begin{equation}
\kappa=-\frac{\lan y^{-2}f'(Q)W(0),\Lmb W\ran}{\|\Lmb W\|_{L^{2}}^{2}}.\label{eq:def-kappa-unified}
\end{equation}
For NLH, this follows from integrating by parts. For HMHF, this follows
from $\Lmb Q=D\sin(Q)$ and $W(0)=2$.

\subsection{\label{subsec:Strategy}Strategy of the proof}

We perform the modulation analysis of multi-bubble dynamics, starting
from the soliton resolution result (whose precise formulation is given
in Section~\ref{subsec:Statements-soliton-res}). As mentioned earlier,
the key ingredient of the proof, which allows us to improve soliton
resolution to the classification, is a \emph{monotonicity estimate}
near multi-bubbles\emph{. }More precisely, we will prove a coercivity
estimate for the dissipation term in the energy identity, in a vicinity
of multi-bubbles. This is not as obvious as in the one-bubble case;
the coercivity estimate will be restored after introducing \emph{modified
multi-bubble profiles}.\smallskip{}

\emph{1. Reduction to bubble tree classification.} With the unified
notation in Section~\ref{subsec:Unified-notation}, the equations
\eqref{eq:HMHF-equiv} and \eqref{eq:NLH-rad} are written as evolution
equations for \emph{radially symmetric }solutions in $\bbR^{N}$ 
\begin{equation}
\left\{ \begin{aligned}\rd_{t}u & =\Dlt u+r^{-(D+2)}f(v),\\
u(0,r) & =u_{0}(r)\in\bbR,
\end{aligned}
\quad(t,r)\in[0,T)\times(0,\infty)\right.\label{eq:main-evol-eq}
\end{equation}
where $\Dlt=\rd_{rr}+\frac{N-1}{r}\rd_{r}$ is the radial Laplacian
on $\bbR^{N}$,
\[
v=r^{D}u\qquad\text{and}\qquad f(v)=\begin{cases}
|v|^{p-1}v & \text{for NLH},\\
\frac{D^{2}}{2}(2v-\sin(2v)) & \text{for HMHF.}
\end{cases}
\]
Note that the constants $N$, $D$, and $p$ are always related by
\[
D=\frac{N-2}{2}\qquad\text{and}\qquad p=\frac{N+2}{N-2}.
\]

By soliton resolution (see Section~\ref{subsec:Statements-soliton-res})
and the invariance of $v\mapsto v\pm\pi$ for \eqref{eq:HMHF-equiv},
the proof of Theorems~\ref{thm:main-HMHF} and \ref{thm:main-NLH}
reduces to the following classifcation theorem of radial bubble trees.
Here, we denote 
\begin{equation}
\calH_{\ell,m}\coloneqq\{u=r^{-D}v:v\in\calE_{\ell,m}\},\label{eq:def-calH-ell-m}
\end{equation}
where $\ell,m\in\bbZ$ and $\calE_{\ell,m}$ was defined in \eqref{eq:def-calE-ellm}.
Note that $\calH_{0,0}$ is naturally identified with $\dot{H}_{\rad}^{1}(\bbR^{N})$.
\begin{thm}[Classification of radial bubble trees for $D>2$]
\label{thm:main}Let $D>2$ be an integer or a half-integer and $m=0$
for \eqref{eq:NLH-rad}; let $D>2$ be an integer and $m\in\bbZ$
for \eqref{eq:HMHF-equiv}. Let $u(t)$ be a radial $\calH_{0,m}$-solution
to \eqref{eq:main-evol-eq} with $T\in(0,+\infty]$ its maximal time
of existence.

Assume there exist an integer $J\geq1$, signs $\iota_{1},\dots,\iota_{J}\in\{\pm1\}$,
continuous functions $\lmb_{1}(t),\dots,\lmb_{J}(t)\in(0,\infty)$
with $\lim_{t\to T}\lmb_{1}(t)=0$ if $T<+\infty$, and a function
$z^{\ast}\in\calH_{0,m^{\ast}}$ with $m^{\ast}=0$ for \eqref{eq:NLH-rad}
and $m^{\ast}=m-\sum_{j=1}^{J}\iota_{j}$ for \eqref{eq:HMHF-equiv}
if $T<+\infty$, and $z^{\ast}\equiv0$ if $T=+\infty$ such that
\begin{equation}
\Big\| u(t)-\sum_{j=1}^{J}\iota_{j}W_{\lmb_{j}(t)}-z^{\ast}\Big\|_{\dot{H}^{1}}+\sum_{j=2}^{J}\frac{\lmb_{j}(t)}{\lmb_{j-1}(t)}\to0\qquad\text{as }t\to T.\label{eq:main-thm-decom}
\end{equation}

Then, we have the following. (i) $T=+\infty$, (ii) $\iota_{j}=(-1)^{j}\iota$
for some $\iota\in\{\pm1\}$ for NLH, and $\iota_{j}=\mathrm{sgn}(m)$
and $J=|m|$ for \eqref{eq:HMHF-equiv}, and (iii) there exists $L_{\infty}\in(0,\infty)$
such that
\begin{equation}
\lmb_{j}(t)=\lmb_{j,L_{\infty}}^{\mathrm{ex}}(t)\cdot(1+o(1)),\qquad\forall j=1,\dots,J.\label{eq:main-thm-scale}
\end{equation}
\end{thm}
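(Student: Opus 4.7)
My plan is to start from the bubble-tree decomposition \eqref{eq:main-thm-decom} supplied by soliton resolution and upgrade it to the full classification via a modulation analysis coupled with a Lyapunov-type monotonicity argument at the level of the parabolic equation \eqref{eq:main-evol-eq}. As a first step I would refine the scales $\vec\lmb(t)$ by imposing $J$ orthogonality conditions $\lan g(t),\calZ_{;j}\ran=0$, where $g(t):=u(t)-\sum_k\iota_k W_{\lmb_k(t)}-z^\ast$ and $\calZ_{;j}$ is a suitably rescaled dual of the scaling direction at the $j$-th bubble. Because $\vec\lmb\in\calP_J(\alp^\ast)$ forces the relevant Gram matrix to be almost block-diagonal, the implicit function theorem produces continuous (and $C^1$) modulation parameters close to the original ones. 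Differentiating the orthogonality constraints, plugging in \eqref{eq:main-evol-eq}, and using the static identity for each single bubble then yield modulation equations of the schematic form
\[
\lmb_j\dot\lmb_j=\kappa\bar\iota_j\mu_j^D(1+o(1))+O\big(\|g\|_{\dot H^1}w_j(\vec\lmb)\big),\qquad j=1,\dots,J,
\]
where $\mu_1=0$ by convention and $\kappa$ is exactly the constant in \eqref{eq:def-kappa-unified}; the leading term is produced by the constant background $\bar\iota_j\mu_j^D W(0)$ seen at the $j$-th scale from the $(j-1)$-th outer bubble.

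The technical heart of the proof is the monotonicity estimate announced in Section~\ref{subsec:Strategy}. The residual produced by the naive $J$-bubble profile $\sum_k\iota_k W_{\lmb_k}$ has a nonzero projection onto each of the $J$ scaling modes, which destroys any coercive control. To fix this I would construct a \emph{modified multi-bubble profile}
\[
P=\sum_{k=1}^J\iota_k W_{\lmb_k}+\sum_{k=2}^J\iota_{k-1}\Phi^{(k)}_{\lmb_{k-1}},
\]
where each $\Phi^{(k)}$ solves a linearized inhomogeneous elliptic problem at the $(k-1)$-th scale whose source is the constant-background forcing generated by the $k$-th inner bubble, and is chosen orthogonal to $\Lmb W$. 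This algebraic choice cancels the tangent component of the forcing and reduces what remains to a remainder quadratic in $\calD=\sum_j\mu_j^{2D}/\lmb_j^2$. One then has to establish the uniform coercivity
\[
\lan(-\Dlt-V_P)g,g\ran\geq c_0\|g\|_{\dot H^1}^2\quad\text{whenever }g\perp\{\calZ_{;j}\}_{j=1}^J,\ \vec\lmb\in\calP_J(\alp^\ast),
\]
with $V_P=f'(r^D P)/r^2$, by partitioning $g$ across the geometric-mean scales $\br\lmb_2<\cdots<\br\lmb_J$, applying the sharp single-bubble coercivity on each piece (available because $D\geq 3$ precludes a growing resonance at infinity), and absorbing the cross-interactions via the $\Phi^{(k)}$ chosen above.

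Inserting this coercivity into the energy identity and combining with the modulation equations produces a differential inequality
\[
\tfrac{d}{dt}\calL(\vec\lmb,g)+c_0\|g(t)\|_{\dot H^1}^2w(\vec\lmb)\leq C\calD^{1+\eta}
\]
for a Lyapunov functional $\calL$ depending on $\vec\lmb$ and $g$ and some $\eta>0$. Integrating from $t$ to $T$ and using that $\|g\|_{\dot H^1}$ vanishes at the endpoint yields the bootstrap bound $\|g(t)\|_{\dot H^1}^2=o(\calD(t))$, which feeds back into the modulation equations to make the system autonomous at main order:
\[
\tfrac{d}{dt}\big(\tfrac12\lmb_j^2\big)=\kappa\bar\iota_j\mu_j^D(1+o(1)),\qquad j=1,\dots,J,\ \ \mu_1=0.
\]
The constraint $\mu_j(t)\to 0$ required for the decomposition to persist forces $\kappa\bar\iota_j<0$ for every $j\geq 2$, giving \emph{alternating} signs for NLH (where $\kappa>0$) and \emph{equal} signs for HMHF (where $\kappa<0$).

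Finally I would integrate the ODE system inductively. The $j=1$ equation gives $\dot\lmb_1\to 0$, hence $\lmb_1(t)\to L_\infty\in(0,\infty)$, matching $\alp_1=0$. Assuming by induction that $\lmb_{j-1}(t)=L_\infty^{1+2\alp_{j-1}}\beta_{j-1}t^{-\alp_{j-1}}(1+o(1))$, the $j$-th equation $\lmb_j\dot\lmb_j=\kappa\bar\iota_j(\lmb_j/\lmb_{j-1})^D(1+o(1))$ integrates exactly to the universal rate \eqref{eq:def-lmb-ex-ell}: the exponent relation $(D-2)\alp_j=D\alp_{j-1}+1$ comes from matching powers of $t$, and the recursion \eqref{eq:def-beta_j} for $\beta_j$ is fixed by the constants appearing in the self-similar balance. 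Since these rates give $\lmb_1(t)\to L_\infty>0$, a finite-time blow-up ($\lmb_1(T)=0$) is impossible; hence $T=+\infty$ and consequently $z^\ast\equiv 0$. For HMHF the boundary values $v(t,0)=0$, $v(t,\infty)=m\pi$, together with $\iota_j Q_{\lmb_j}(r)\to\iota_j\pi$ for each fixed $r>0$ as $\lmb_j\to 0$, then telescope to $\sum_j\iota_j=m$, giving $J=|m|$ and $\iota_j=\mathrm{sgn}(m)$. The main obstacle in this plan is the joint step of constructing the modified profile and proving uniform coercivity across all $\vec\lmb\in\calP_J(\alp^\ast)$: the $J$ near-kernel directions produced by the bubbles interact through the slowly decaying tails of $\Lmb W$ and of the $\Phi^{(k)}$, so a genuinely multi-bubble algebraic choice is needed to simultaneously cancel the tangent forcing and preserve the quadratic form.
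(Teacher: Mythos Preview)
Your outline has the right architecture (modified profile, coercivity, modulation ODE, inductive integration), but two of the key steps are set up at the wrong level and would not close as written.

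\textbf{Coercivity and the spacetime control of $g$.} You propose the quadratic-form bound $\lan(-\Dlt-V_P)g,g\ran\geq c_0\|g\|_{\dot H^1}^2$ and then a Lyapunov inequality producing $\|g(t)\|_{\dot H^1}^2=o(\calD(t))$ pointwise. This is not how the control on $g$ is obtained, and there is no obvious way to make it work: the only a priori monotone quantity is the energy, whose dissipation is $\int\|\rd_t u\|_{L^2}^2\,dt<\infty$, an $L^2$-in-space object. What one actually needs is the $L^2$-operator coercivity $\|H_{\vec\lmb}g\|_{L^2}\aeq\|g\|_{\dot H^2}$ under the orthogonality \eqref{eq:orthog}, so that writing $\rd_t u=-\sum_j\br\iota_j\kappa\mu_j^D\lmb_j^{-2}\Lmb W_{;j}-H_U g+(\text{small})$ and using the near-orthogonality of $\Lmb W_{;j}$ to the range of $H_U$ gives
\[
\|\rd_t u\|_{L^2}^2\ \gtrsim\ \calD+\|g\|_{\dot H^2}^2,
\]
hence the \emph{spacetime} estimate $\int_{t_0}^T\{\calD+\|g\|_{\dot H^2}^2\}\,dt<\infty$. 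No pointwise bound on $g$ is claimed or needed; what enters the ODE integration are error terms $O(\calD+\|g\|_{\dot H^2}^2)$, which are absorbed because they are $L^1_t$. Your $\dot H^1$-level coercivity controls the static energy gap, not the dissipation, and your inequality $\frac{d}{dt}\calL+c_0\|g\|_{\dot H^1}^2w\leq C\calD^{1+\eta}$ does not yield $\|g\|_{\dot H^1}^2=o(\calD)$ pointwise even if it held.

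\textbf{The modified profile.} The single linear corrector $\Phi^{(k)}$ you propose (essentially $\out H^{-1}$ applied once to the constant-background forcing) is exactly the ansatz that \emph{fails} for $N\geq 8$: the residual $\Psi$ it leaves at the intermediate scale $r\aeq\br\lmb_j$ is not $o(\sqrt{\calD})$ in $L^2$, which destroys the key inequality above. The paper's profile is built by a \emph{nonlinear} fixed-point at each inner scale $\lmb_\ell$ (not the outer $\lmb_{\ell-1}$), recursively in $\ell$, precisely to push $\|\Psi\|_{L^2}$ below $\dlt(\alp^\ast)\sqrt\calD$; see Proposition~\ref{prop:Modified-Profile} and Remark~\ref{rem:7D-simpler-profile}. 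Also, in the ODE step, the cross-terms between different $\mu_j$'s are not negligible by a naive $o(1)$; one introduces modified parameters $\lmb_k^{\pm}=e^{o(1)}\lmb_k$ satisfying one-sided differential inequalities (Section~\ref{subsec:Integration}) to integrate cleanly. The rest of your sketch (sign determination via $\kappa\br\iota_j<0$, the exponent recursion $(D-2)\alp_j=D\alp_{j-1}+1$, and the topological count for HMHF) matches the paper.
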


In what follows, we explain the strategy of the proof of Theorem~\ref{thm:main}.
We will focus on the \eqref{eq:NLH-rad} case.\smallskip{}

\emph{2. Recap of the proof in the one-bubble case.} We first recall
the proof in the simplest case $J=1$ for \eqref{eq:NLH-rad} (for
$N\geq7$) with $\|u(t)-W_{\lmb(t)}\|_{\dot{H}^{1}}<\dlt\ll1$, contained
in \cite{CollotMerleRaphael2017CMP}. From the energy identity and
$\dot{H}^{1}$-boundedness, we have an $L^{2}$-in-time control
\[
\int_{0}^{T}\|\rd_{t}u(t)\|_{L^{2}}^{2}dt=E[u(0)]-\lim_{t\to T}E[u(t)]<+\infty.
\]
Substituting the equation \eqref{eq:NLH-rad}, writing $u(t)=W_{\lmb(t)}+g(t)$,
and using the soliton equation $\Dlt W_{\lmb}+f(W_{\lmb})=0$, the
$L_{t}^{2}$-control reads 
\[
\int_{0}^{T}\big\|-H_{\lmb(t)}g(t)+\NL_{W_{\lmb(t)}}(g(t))\big\|_{L^{2}}^{2}dt<+\infty,
\]
where $H_{\lmb}=-\Dlt-f'(W_{\lmb})$ is the linearized operator around
$W_{\lmb}$ and $\NL_{W_{\lmb}}(g)=f(W_{\lmb}+g)-f(W_{\lmb})-f'(W_{\lmb})g$
is the nonlinear term. By standard modulation theory, we may assume
that $g(t)$ satisfies some orthogonality condition which guarantees
the coercivity $\|H_{\lmb}g\|_{L^{2}}\ageq\|g\|_{\dot{H}^{2}}$. The
nonlinear term can be ignored since $\|g(t)\|_{\dot{H}^{1}}$ is small.
As a result, we have an $L_{t}^{2}$-\emph{spacetime estimate} (or,
a monotonicity estimate) for $g(t)$:
\[
\int_{0}^{T}\|g(t)\|_{\dot{H}^{2}}^{2}dt<+\infty.
\]
On the other hand, a standard modulation estimate using $r\Lmb W\in L^{2}$
(if and only if $N>6$) gives 
\[
\Big|\frac{\lmb_{t}}{\lmb}\Big|\aleq\|g(t)\|_{\dot{H}^{2}}^{2}
\]
after possibly modifying $\lmb(t)$. The previous two displays then
imply $\lmb(t)\to L_{\infty}\in(0,\infty)$ as $t\to T$. We also
have $T=+\infty$ because $\lmb(t)\not\to0$. Here, the spacetime
estimate is indispensable as it guarantees that $g(t)$ does not contribute
to the modulation dynamics. See also \cite{GustafsonNakanishiTsai2010CMP}.

\smallskip{}

\emph{3. Modified multi-bubble profiles and spacetime estimate.} To
prove Theorem~\ref{thm:main}, we will generalize the previous proof
to the multi-bubble setting. The key observation will be that after
\emph{modifying} pure multi-bubble profiles, say from $\sum_{j=1}^{J}W_{;j}$
to $U(\vec{\iota},\vec{\lmb};\cdot)$, one can restore the \emph{$L_{t}^{2}$-spacetime
estimate} for the remainder $g(t)$ of the decomposition 
\[
u(t)=U(\vec{\iota},\vec{\lmb}(t);\cdot)+g(t).
\]

A naive use of the decomposition $u(t)=\sum_{j=1}^{J}W_{;j}(t)+g(t)$
encounters a difficulty because any pure multi-bubble with $J\geq2$
cannot be a static solution. There is always a nontrivial inhomogeneous
error term in $\rd_{t}u$, say $\rd_{t}u=\Psi_{0}-H_{\vec{\lmb}}g+N(g)$
with $\Psi_{0}=\Dlt(\sum_{j=1}^{J}W_{;j})+f(\sum_{j=1}^{J}W_{;j})$
and $H_{\vec{\lmb}}g=-\Dlt g-\sum_{j=1}^{J}f'(W_{;j})g$, preventing
us to say that the linear term $H_{\vec{\lmb}}g$ is dominant in the
right hand side. A simple but crucial observation to get around this
difficulty is the following; by modifying the pure multi-bubble profiles
(from $\sum_{j=1}^{J}W_{;j}$ to $U$), one can transform $\Psi_{0}$
into $\Psi_{1}=\Dlt U+f(U)$ that is \emph{almost orthogonal} to $H_{\vec{\lmb}}g$.
Once we have $\Psi_{1}$ orthogonal to $H_{\vec{\lmb}}g$, we have
\begin{equation}
\|\rd_{t}u\|_{L^{2}}^{2}\approx\|\Psi_{1}-H_{\vec{\lmb}}g\|_{L^{2}}^{2}\ageq\|\Psi_{1}\|_{L^{2}}^{2}+\|H_{\vec{\lmb}}g\|_{L^{2}}^{2}.\label{eq:strat-3}
\end{equation}
One has $\|H_{\vec{\lmb}}g\|_{L^{2}}\aeq\|g\|_{\dot{H}^{2}}$ by imposing
suitable orthogonality conditions on $g(t)$ on each scale $\lmb_{j}(t)$.
Moreover, it turns out that (see \eqref{eq:prof-eq-main-size})
\begin{equation}
\|\Psi_{1}\|_{L^{2}}^{2}\aeq\sum_{j=2}^{J}\frac{\mu_{j}^{2D}}{\lmb_{j}^{2}}\eqqcolon\calD.\label{eq:strat-8}
\end{equation}
Therefore, in view of the energy identity, \emph{\eqref{eq:strat-3}
not only recovers the spacetime estimate for $g(t)$, but also gives
an additional control from the size of $\Psi_{1}$: }(see \eqref{eq:sptime-est})\emph{
\begin{equation}
\int_{0}^{T}\Big\{\calD+\|g(t)\|_{\dot{H}^{2}}^{2}\Big\} dt<+\infty.\label{eq:strat-4}
\end{equation}
}Note that, in the finite-time blow-up case where $g$ is not assumed
to be $\dot{H}^{1}$-small, we need an additional localization argument.

Now, the proof of the spacetime estimate reduces to the construction
of modified multi-bubble profiles $U$ such that $\Psi_{1}$ is almost
orthogonal to $H_{\vec{\lmb}}g$. In the one-bubble case, some earlier
works such as \cite{MerleRaphael2003GAFA,RaphaelSzeftel2011JAMS}
already suggest how one can modify the profile; in principle, one
can transform $\Psi_{0}$ into $\Psi_{1}$ so that $\Psi_{1}$ belongs
to an approximate kernel of the linearized operator. In our case,
carefully performing the one-bubble analysis on each bubble combined
with induction, we obtain 
\begin{equation}
\Dlt U+f(U)=-\sum_{j=2}^{J}\frac{c_{j}}{\lmb_{j}^{2}}\Lmb W_{;j}+\text{(small error)}\label{eq:strat-5}
\end{equation}
for some reals $c_{j}$ in terms of scales. This will be sufficient
because $\Lmb W_{;j}$ is an approximate kernel, and hence it is almost
orthogonal to $H_{\vec{\lmb}}g$.

In our construction of $U$, a simple linear expansion $U=\sum_{j=1}^{J}[W+\chi_{j}c_{j}S]_{;j}$
for some fixed profile $S$ and cutoffs $\chi_{j}$, which was sufficient
for constructing bubble trees in \cite{delPinoMussoWei2021AnalPDE},
\emph{does not work for classification} in large dimensions; see Remark~\ref{rem:7D-simpler-profile}.
We perform a more refined construction using recursion on $J$, in
a similar spirit of elliptic bubble tree constructions such as in
\cite{IacopettiVaira2016} for the Brezis--Nirenberg problem. See
also \cite{DengSunWei2025Duke}.\smallskip{}

\emph{4. Modulation dynamics.} Once the construction of $U$ and the
$L_{t}^{2}$-spacetime estimate of $g(t)$ are established, it remains
to study the dynamics of the scales $\lmb_{1},\dots,\lmb_{J}$ and
determine the signs $\iota_{1},\dots,\iota_{J}$. At the formal level,
$c_{j}/\lmb_{j}^{2}$ in \eqref{eq:strat-5} corresponds to the variation
$(\lmb_{j})_{t}/\lmb_{j}$ and the leading order term of $c_{j}$
can be computed by $c_{j}\approx-\lan\Psi_{0},\Lmb W_{;j}\ran/\|\Lmb W\|_{L^{2}}^{2}$.
One arrives at the formal ODE system
\begin{equation}
\frac{\lmb_{j,t}}{\lmb_{j}}\approx\kappa\frac{\br{\iota}_{j}\mu_{j}^{D}}{\lmb_{j}^{2}}.\label{eq:strat-6}
\end{equation}
Next, we recall that the scales $\lmb_{j}(t)$ satisfy the assumptions
in Theorem~\ref{thm:main}, i.e., 
\begin{equation}
\sum_{j=2}^{J}\mu_{j}(t)+\chf_{T<+\infty}\lmb_{1}(t)\to0.\label{eq:strat-7}
\end{equation}
In the formal ODE system \eqref{eq:strat-6}, we see that $\lmb_{1}$
is constant, so $\lmb_{1}\equiv L_{\infty}\in(0,\infty)$. This excludes
finite-time blow-up by \eqref{eq:strat-7}. Next, as $\kappa>0$ for
\eqref{eq:NLH-rad}, we obtain $\br{\iota}_{2}=\iota_{1}\iota_{2}=-1$
in order to have $\mu_{2}=\lmb_{2}/\lmb_{1}\to0$. With $\br{\iota}_{2}=-1$
and $\lmb_{1}\equiv L_{\infty}$, one can integrate \eqref{eq:strat-6}
for $j=2$ to obtain the asymptotics of $\lmb_{2}(t)$. This argument
can be done for all $j$ by induction.

The actual ODE system (see e.g., Proposition~\ref{prop:Modulation-Est})
has error terms. Now, the spacetime estimate \eqref{eq:strat-4} is
crucial, as it guarantees that the error terms from $g(t)$ are negligible.
Dealing with some other error terms involving different $\mu_{k}$s
requires more clever tricks. We first observe that the errors involving
high powers of $\mu_{k}$s are negligible, thanks to the additional
control in \eqref{eq:strat-4}. Having simplified the error terms,
we further introduce modified scaling parameters (which might be viewed
as sub- and supersolutions to ODE) that enjoy certain monotonic properties,
in order to integrate the ODE system; see Step 1 in Section~\ref{subsec:Integration}.

Finally, we remark that the harmonic map heat flow case is similar.
The only difference in the ODE integration is that now we have $\kappa<0$.
This gives $\br{\iota}_{j}=1$ saying that all $\iota_{j}$s are the
same. The topological restriction $u(t)\in\calE_{0,m}$ then determines
the sign and the total number of bubbles.\medskip{}

\noindent \mbox{\textbf{Organization of the paper.} }We gather some
preliminaries in Section~\ref{sec:Preliminaries}. We construct the
modified multi-bubble profiles $U$ in Section~\ref{sec:Modified-multi-bubble-profiles}
and use them to prove the key spacetime estimate in Section~\ref{sec:Spacetime-estimate}.
Finally in Section~\ref{sec:Proof-of-main-thm}, we prove modulation
estimates and integrate them to complete the proof of the main theorems.\medskip{}

\noindent \mbox{\textbf{Acknowledgements.} }K. Kim was supported
by Huawei Young Talents Programme at IHES. K. Kim was supported by
the New Faculty Startup Fund from Seoul National University, the POSCO
Science Fellowship of POSCO TJ Park Foundation, and the National Research
Foundation of Korea (NRF) grant funded by the Korea government (MSIT)
RS-2025-00523523. The authors thank anonymous referees for their careful
reading and valuable comments that improve the presentation of this
manuscript.

\section{\label{sec:Preliminaries}Preliminaries}

\subsection{\label{subsec:Statements-soliton-res}Statements of bubble decomposition
and soliton resolution}

Here, we gather the precise statements of bubble decomposition for
equivariant solutions to \eqref{eq:HMHF-equiv} and soliton resolution
for $\dot{H}^{1}$-bounded radial solutions to \eqref{eq:NLH-rad},
which will be the starting point of our analysis.
\begin{prop}[Bubble decomposition for equivariant solutions; see \cite{JendrejLawrie2023CVPDE}]
\label{prop:HMHF-bubble-dec}Let $D\in\bbZ_{\geq1}$. Let $\ell,m\in\bbZ$
and $v_{0}\in\calE_{\ell,m}$ be an initial data; let $u(t)$ be the
corresponding solution to \eqref{eq:HMHF-equiv} with $T\in(0,+\infty]$
its maximal time of existence.

\emph{(Finite-time blow-up case)} If $T<+\infty$, then there exist
an integer $J\geq1$, signs $\iota_{1},\dots,\iota_{J}\in\{\pm1\}$,
continuous functions $\lmb_{1}(t),\dots,\lmb_{J}(t)\in(0,\infty)$,
and a function $z^{\ast}\in\calE_{0,m^{\ast}}$ with $m^{\ast}=m-\ell-\sum_{j=1}^{J}\iota_{j}$
such that 
\begin{equation}
\Big\| v(t)-\ell\pi-\sum_{j=1}^{J}\iota_{j}Q_{\lmb_{j}(t)}-z^{\ast}\Big\|_{\calE}+\sum_{j=2}^{J}\frac{\lmb_{j}(t)}{\lmb_{j-1}(t)}+\frac{\lmb_{1}(t)}{\sqrt{T-t}}\to0\qquad\text{as }t\to T.\label{eq:bubble-dec-HMHF-finite}
\end{equation}

\emph{(Global case)} If $T=+\infty$, then there exist an integer
$J\geq0$, signs $\iota_{1},\dots,\iota_{J}\in\{\pm1\}$, and continuous
functions $\lmb_{1}(t),\dots,\lmb_{J}(t)\in(0,\infty)$ such that
\begin{equation}
\Big\| v(t)-\ell\pi-\sum_{j=1}^{J}\iota_{j}Q_{\lmb_{j}(t)}\Big\|_{\calE}+\sum_{j=2}^{J}\frac{\lmb_{j}(t)}{\lmb_{j-1}(t)}+\frac{\lmb_{1}(t)}{\sqrt{t}}\to0\qquad\text{as }t\to T.\label{eq:bubble-dec-HMHF-infinite}
\end{equation}
\end{prop}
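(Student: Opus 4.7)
The plan is to combine Struwe's classical sequential bubbling with a modulation-theoretic argument that promotes it to a continuous-in-time statement. The crucial input from the gradient flow structure is the dissipation bound
\[
\int_{0}^{T} \|\rd_{t} v(t)\|_{L^{2}(rdr)}^{2} \, dt = \frac{1}{2\pi}\bigl(E[v(0)] - \lim_{t \to T} E[v(t)]\bigr) < +\infty,
\]
which follows from the energy identity and the non-negativity of $E$. In particular there is a sequence $t_{n} \to T$ along which $\|\rd_{t} v(t_{n})\|_{L^{2}(rdr)} \to 0$, so that $v(t_{n})$ is asymptotically a Palais--Smale sequence for the Dirichlet energy in the $D$-equivariant class.

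First I would apply a profile decomposition adapted to $D$-equivariant maps (in the spirit of Qing--Tian and Lin--Wang) to this sequence. Using the classification of $D$-equivariant finite-energy harmonic maps as $\iota Q_{\lmb} + \ell\pi$, one obtains along the subsequence
\[
v(t_{n}) - \ell\pi - \sum_{j=1}^{J} \iota_{j} Q_{\lmb_{j}(t_{n})} - z^{\ast}_{n} \to 0 \quad \text{in } \calE,
\]
with uniformly decoupled scales. The energy quantization $E[Q] = 4\pi D$ bounds $J$, and continuity of the boundary values at $r = 0, \infty$ fixes the topological class $\calE_{0, m^{\ast}}$ of the radiation (or shows $m^{\ast}=0$ and $z^{\ast}\equiv 0$ in the global case, using that any genuine radiation must dissipate completely as $t \to +\infty$).

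Second, I would promote this sequential decomposition to a continuous-in-time one via modulation theory. Define $\lmb_{j}(t)$ as continuous parameters by imposing orthogonality of the residual $g(t) = v(t) - \ell\pi - \sum_{j} \iota_{j} Q_{\lmb_{j}(t)} - z^{\ast}$ against the scaling directions $\Lmb Q_{\lmb_{j}(t)}$, localized on the $j$th scale by suitable cutoffs. A standard modulation estimate yields $|(\log \lmb_{j})'(t)| \aleq \|\rd_{t} v(t)\|_{L^{2}} + (\text{scale-interaction errors})$. Combined with $\rd_{t} v \in L^{2}_{t}L^{2}_{x}$ and Cauchy--Schwarz over subintervals approaching $T$, this controls the total variation of $\log \lmb_{j}(t)$, excludes spontaneous creation or annihilation of bubbles between the $t_{n}$'s, and yields \eqref{eq:bubble-dec-HMHF-finite}--\eqref{eq:bubble-dec-HMHF-infinite}. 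The scale-separation $\lmb_{j+1}/\lmb_{j} \to 0$ comes from the same modulation control, while $\lmb_{1}/\sqrt{T-t} \to 0$ (resp.\ $\lmb_{1}/\sqrt{t} \to 0$ in the global case) is obtained in parabolic self-similar variables $y = r/\sqrt{T-t}$, where the rescaled solution is asymptotically self-similar and the absence of nontrivial finite-energy self-similar $D$-equivariant profiles forces $\lmb_{1}$ to be small compared to $\sqrt{T-t}$.

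The hard part is the continuous-in-time step, specifically ruling out \emph{bubble-tree reshuffling} between the good times $t_{n}$. The modulation estimate gives only borderline information through Cauchy--Schwarz against $\rd_{t} v$, so one must bootstrap: on the intervals where $\|\rd_{t} v\|_{L^{2}}$ is not small (collectively of small measure, by $L^{2}_{t}$ integrability), one shows the bubble configuration cannot abruptly change by a quantitative concentration-compactness argument — asserting that Palais--Smale-like sequences close to a fixed $J$-bubble configuration remain close to a $J$-bubble configuration of nearby shape. Such a ``no-bubbling-off'' property in the equivariant setting, combined with the energy quantization $4\pi D$ per bubble, is what closes the proof and guarantees that a single $(J, \vec\iota, \vec\lmb(t), z^{\ast})$ describes the solution uniformly as $t \to T$.
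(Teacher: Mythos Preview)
This proposition is not proved in the paper; it is quoted from Jendrej--Lawrie \cite{JendrejLawrie2023CVPDE} as a known input (see the heading ``see \cite{JendrejLawrie2023CVPDE}'' and the discussion in the introduction). So there is no proof in the paper to compare your proposal against.

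That said, your sketch follows the correct architecture of the Jendrej--Lawrie argument: sequential bubbling from a Palais--Smale time sequence, then an upgrade to continuous time via modulation, with the crux being the exclusion of bubble reshuffling between good times. However, the mechanism you invoke for that last step is not quite right. The estimate $|(\log\lmb_{j})'(t)|\aleq\|\rd_{t}v(t)\|_{L^{2}}+\cdots$ together with $\rd_{t}v\in L^{2}_{t}L^{2}_{x}$ and Cauchy--Schwarz only yields $\int_{I}|(\log\lmb_{j})'|\,dt\aleq |I|^{1/2}$, which does not prevent the scales from drifting by $O(1)$ on intervals of unit length, nor does it rule out the number of bubbles changing. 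The actual argument in \cite{JendrejLawrie2023CVPDE} is a \emph{collision-intervals} (or ``no-return'') analysis: one shows that whenever the solution exits a $\dlt$-neighborhood of a $J$-bubble configuration, a definite positive amount of energy (quantified in terms of $\dlt$ and the energy gap $4\pi D$) must be dissipated through $\int\|\rd_{t}v\|_{L^{2}}^{2}\,dt$. Since the total dissipation is finite, such exits can occur only finitely often, which forces the decomposition to hold for all $t$ close to $T$ with a single $(J,\vec\iota)$. Your final paragraph gestures at this but conflates it with a ``quantitative concentration-compactness'' statement; the key quantitative input is the minimal energy dissipation on collision intervals, not merely stability of nearby configurations.
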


As mentioned in the introduction, the following statement of soliton
resolution for $\dot{H}^{1}$-bounded solutions to \eqref{eq:NLH-rad}
was announced by Matano and Merle and the proof was presented at a
conference at IHES. However, the result did not appear in a paper
form. We state the precise version we use in this paper.
\begin{question}[Soliton resolution for $\dot{H}^{1}$-bounded radial solutions]
\label{conj:radial-soliton-resolution}Let $N\geq3$. Let $u(t)$
be a radially symmetric $\dot{H}^{1}$-solution to \eqref{eq:NLH-rad}
with $T\in(0,+\infty]$ its maximal time of existence. Assume $\sup_{t\in[0,T)}\|u(t)\|_{\dot{H}^{1}}<+\infty$.

\emph{(Finite-time blow-up case)} If $T<+\infty$, then there exist
an integer $J\geq1$, signs $\iota_{1},\dots,\iota_{J}\in\{\pm1\}$,
continuous functions $\lmb_{1}(t),\dots,\lmb_{J}(t)\in(0,\infty)$,
and a function $z^{\ast}\in\dot{H}_{\rad}^{1}$ such that 
\begin{equation}
\Big\| u(t)-\sum_{j=1}^{J}\iota_{j}W_{\lmb_{j}(t)}-z^{\ast}\Big\|_{\dot{H}^{1}}+\sum_{j=2}^{J}\frac{\lmb_{j}(t)}{\lmb_{j-1}(t)}+\frac{\lmb_{1}(t)}{\sqrt{T-t}}\to0\qquad\text{as }t\to T.\label{eq:sol-res-NLH-finite}
\end{equation}

\emph{(Global case)} If $T=+\infty$, then there exist an integer
$J\geq0$, signs $\iota_{1},\dots,\iota_{J}\in\{\pm1\}$, and continuous
functions $\lmb_{1}(t),\dots,\lmb_{J}(t)\in(0,\infty)$ such that
\begin{equation}
\Big\| u(t)-\sum_{j=1}^{J}\iota_{j}W_{\lmb_{j}(t)}\Big\|_{\dot{H}^{1}}+\sum_{j=2}^{J}\frac{\lmb_{j}(t)}{\lmb_{j-1}(t)}+\frac{\lmb_{1}(t)}{\sqrt{t}}\to0\qquad\text{as }t\to T.\label{eq:sol-res-NLH-infinite}
\end{equation}
\end{question}

As mentioned in Remark~\ref{rem:Open-Question-resolved}, Open Question~\ref{conj:radial-soliton-resolution}
\emph{was resolved affirmatively} in the paper \cite{Aryan2024arXiv}
one month later we posted our article on arXiv.

\subsection{Basic pointwise estimates}

We begin with some pointwise estimates for the nonlinearity. Recall
that $f(v)=|v|^{p-1}v$ for \eqref{eq:NLH-rad} and $f(v)=\frac{D^{2}}{2}(2v-\sin(2v))$
for \eqref{eq:HMHF-equiv}. We record the following pointwise estimates
regarding $f(v)$.
\begin{lem}
For $f(v)=|v|^{p-1}v$ with $1<p\leq2$, we have 
\begin{align}
|f(a+b)-f(a)-f'(a)b| & \aleq|a|^{p-2}|b|^{2},\qquad a\neq0,\label{eq:f-06}\\
|f'(a+b)-f'(a)| & \aleq|a|^{p-2}|b|,\qquad a\neq0,\label{eq:f-02}\\
|f'(a+b)-f'(a)| & \aleq|b|^{p-1},\label{eq:f-03}\\
|f(a+b)-f(a)-f'(a)b| & \aleq|b|^{p},\label{eq:f-04}\\
|f(a+b)-f(a)-f(b)| & \aleq|b|^{p-1}|a|,\label{eq:f-05}\\
|f'(\tsum{j=1}Ja_{j})-\tsum{j=1}Jf'(a_{j})| & \aleq_{J}\tsum{j,\ell=1}J\chf_{j\neq\ell}|a_{j}a_{\ell}|^{\frac{p-1}{2}}.\label{eq:f-07}
\end{align}
For $f(v)=\frac{D^{2}}{2}(2v-\sin(2v))$, we have 
\begin{align}
|f(a+b)-f(a)-f'(a)b| & \aleq|b|^{2},\qquad|b|\aleq1,\label{eq:f-06-HM}\\
|f'(a+b)-f'(a)| & \aleq|\sin b|,\label{eq:f-03-HM}\\
|f(a+b)-f(a)-f(b)| & \aleq|\sin a||\sin b|,\label{eq:f-05-HM}\\
|f'(\tsum{j=1}Ja_{j})-\tsum{j=1}Jf'(a_{j})| & \aleq_{J}\tsum{j,\ell=1}J|\sin a_{j}||\sin a_{\ell}|.\label{eq:f-07-HM}
\end{align}
\end{lem}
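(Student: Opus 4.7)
The estimates split into two groups: the NLH case with $f(v) = |v|^{p-1}v$, $1 < p \leq 2$, and the HMHF case with $f(v) = \tfrac{D^2}{2}(2v - \sin 2v)$. For the NLH pointwise bounds, the plan is to perform a case analysis based on the relative sizes of $|a|$ and $|b|$, combined with the $(p-1)$-Hölder continuity of $|\cdot|^{p-1}$. In the inner regime $|b| \leq |a|/2$, the segment $[a, a+b]$ stays away from the origin, so $|a + sb| \aeq |a|$ uniformly in $s \in [0, 1]$ and $|f''(a + sb)| \aleq |a|^{p-2}$. A Taylor remainder $f(a+b) - f(a) - f'(a)b = \int_0^1 (1-s) f''(a + sb) b^2\, ds$ and the analogous mean value expression for $f'$ then yield \eqref{eq:f-06} and \eqref{eq:f-02}. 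In the complementary regime $|b| > |a|/2$ I would use the brute bounds $|f(v)| \aleq |v|^p$ and $|f'(v)| \aleq |v|^{p-1}$ and observe that, since $p - 2 \leq 0$, the quantities $|b|^p, |a|^p, |a|^{p-1}|b|$ are all $\aleq |a|^{p-2}|b|^2$, and similarly $|b|^{p-1}, |a|^{p-1} \aleq |a|^{p-2}|b|$, as soon as $|b| \ageq |a|/2$.

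For the $|b|^{p-1}$-type NLH estimates \eqref{eq:f-03}--\eqref{eq:f-05}, the starting point is the textbook inequality $\big| |x|^\alpha - |y|^\alpha \big| \leq |x - y|^\alpha$ for $\alpha \in (0, 1]$ applied to $\alpha = p - 1$, which is exactly \eqref{eq:f-03}. Integrating it along the segment yields $f(a+b) - f(a) - f'(a)b = \int_0^1 [f'(a+sb) - f'(a)]b\, ds$, hence \eqref{eq:f-04}. For \eqref{eq:f-05} I would rewrite $f(a+b) - f(a) - f(b) = \int_0^1 [f'(a+sb) - f'(sb)] b\, ds$ and apply \eqref{eq:f-03} with difference $a$, getting $\aleq |a|^{p-1}|b|$; by symmetry in $(a,b)$ of the left-hand side this is equivalent to the stated $|b|^{p-1}|a|$. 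The multi-argument estimate \eqref{eq:f-07} reduces by induction on $J$ to the two-argument statement $|f'(a+b) - f'(a) - f'(b)| \aleq |ab|^{(p-1)/2}$; applying \eqref{eq:f-03} once with $(a,b)$ and once with $(b,a)$ gives both $\aleq |a|^{p-1}$ and $\aleq |b|^{p-1}$, and hence $\aleq \min(|a|^{p-1}, |b|^{p-1}) \leq (|a| |b|)^{(p-1)/2}$.

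The HMHF case is simpler because all derivatives of $f$ are bounded: $f'(v) = 2D^2 \sin^2 v$ and $f''(v) = 2D^2 \sin 2v$. Thus \eqref{eq:f-06-HM} is just Taylor's theorem with the remainder controlled by $\sup|f''|$, restricted to $|b| \aleq 1$ so that the bound is $|b|^2$. The remaining estimates are identities:
\[
f'(a+b) - f'(a) = D^2[\cos 2a - \cos(2a+2b)] = 2D^2 \sin(2a + b)\sin b
\]
proves \eqref{eq:f-03-HM}; the identity $\sin 2a + \sin 2b - \sin(2a+2b) = 4 \sin(a+b)\sin a \sin b$ gives $f(a+b) - f(a) - f(b) = 2D^2 \sin(a+b)\sin a \sin b$, which is \eqref{eq:f-05-HM}; and $\sin^2(a+b) - \sin^2 a - \sin^2 b = 2\cos(a+b) \sin a \sin b$ handles the $J = 2$ case of \eqref{eq:f-07-HM}, with induction on $J$ doing the rest.

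The only mild obstacle is bookkeeping in the NLH regime $|b| \aeq |a|$, where the two branches of the dichotomy overlap: one must check that the constants stay uniform across the threshold and that the factor $|a|^{p-2}$, which is only finite for $a \neq 0$, combines correctly with $|b|^2$ or $|b|$ to recover the stated bounds. Everything else reduces to one-line calculus or trigonometry.
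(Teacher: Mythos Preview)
Your proposal is correct and, for the HMHF case, essentially identical to the paper's proof: the same product-to-sum trigonometric identities are used, and \eqref{eq:f-07-HM} is handled by the $J=2$ identity plus induction. For the NLH case the paper does not actually give a proof but only sketches ``assume $a=1$ and Taylor expand $x\mapsto f(1+x)$'' (exploiting the homogeneity $f(\lambda v)=|\lambda|^{p}f(v)$), whereas you carry out an explicit case split $|b|\le|a|/2$ versus $|b|>|a|/2$; the two are equivalent, since normalizing $a=1$ amounts to the same dichotomy on $|b/a|$. Your argument for \eqref{eq:f-07} via $\min(|a|^{p-1},|b|^{p-1})\le|ab|^{(p-1)/2}$ and induction is also correct and matches the paper's indication. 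One minor remark: in \eqref{eq:f-06-HM} your Taylor-with-bounded-$f''$ argument in fact gives the bound for all $b$, not only $|b|\aleq1$; the paper's version imposes $|b|\aleq1$ only because that is all that is needed downstream.
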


\begin{rem}
Sharper estimates are possible for \eqref{eq:HMHF-equiv}. However,
such improvements are not necessary because the estimates for \eqref{eq:HMHF-equiv}
are already better than those for \eqref{eq:NLH-rad} under $|a|,|b|\aleq1$.
\end{rem}

\begin{proof}
We only show the estimates for \eqref{eq:HMHF-equiv}. For the proof
of the estimates for \eqref{eq:NLH-rad}, one may assume $a=1$ and
Taylor expand the function $x\mapsto f(1+x)$ for \eqref{eq:f-06}--\eqref{eq:f-05},
and one may use induction to prove \eqref{eq:f-07}. See also \cite[Appendix D]{CollotMerleRaphael2017CMP}.

We turn to the proof of \eqref{eq:f-06-HM}--\eqref{eq:f-07-HM}.
\eqref{eq:f-06-HM} follows from 
\begin{align*}
 & \sin(2a+2b)-\sin(2a)-\cos(2a)2b\\
 & \quad=\sin(2a)(\cos(2b)-1)+\cos(2a)(\sin(2b)-2b)\aleq|b|^{2},\qquad|b|\aleq1.
\end{align*}
\eqref{eq:f-03-HM} follows from 
\[
\cos(2a+2b)-\cos(2a)=-2\sin(2a+b)\sin b.
\]
\eqref{eq:f-05-HM} follows from 
\begin{align*}
 & \sin(2a+2b)-\sin(2a)-\sin(2b)\\
 & \quad=2\sin b\cos(2a+b)-2\sin b\cos b=-4\sin(a+b)\sin a\sin b.
\end{align*}
\eqref{eq:f-07-HM} follows from 
\begin{align*}
 & \{\cos(2a+2b)-1\}-\{\cos(2a)-1\}-\{\cos(2b)-1\}\\
 & \quad=(1-\cos(2a))(1-\cos(2b))-\sin(2a)\sin(2b)=-4\cos(a+b)\sin a\sin b
\end{align*}
and induction. This completes the proof.
\end{proof}
\begin{lem}
For any $\lmb\in(0,\infty)$, we have 
\begin{align*}
Q_{\lmb} & \aleq1,\\
r^{-2}f'(Q_{\lmb}) & \aleq[\lan y\ran^{-2D}]_{\lmb}^{p-1}=\lmb^{-2}\lan r/\lmb\ran^{-4},\\
W_{\lmb},\Lmb W_{\lmb} & \aleq[\lan y\ran^{-2D}]_{\lmb}=\lmb^{-D}\lan r/\lmb\ran^{-2D}\qquad\text{ for NLH},\\
r^{-D}\sin Q_{\lmb}\aeq\Lmb W_{\lmb} & \aeq[\lan y\ran^{-2D}]_{\lmb}=\lmb^{-D}\lan r/\lmb\ran^{-2D}\qquad\text{ for HMHF.}
\end{align*}
\end{lem}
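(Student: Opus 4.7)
The plan is, by scaling invariance, to reduce every estimate to the case $\lambda = 1$. All of $Q_{\lambda}$, $\sin Q_{\lambda}$, $W_{\lambda}$, $\Lambda W_{\lambda}$, $r^{-2}f'(Q_{\lambda})$ and the right-hand sides $[\langle y\rangle^{-2D}]_{\lambda}^{k}$ rescale consistently under the conventions of Section~\ref{subsec:Unified-notation}, so it suffices to prove each bound at $\lambda = 1$. At that point I would substitute the explicit formulas $Q(y) = 2\arctan(y^{D})$ for HMHF and $W(y) = (1 + y^{2}/(N(N-2)))^{-(N-2)/2}$, $Q(y) = y^{D}W(y)$ for NLH, and verify each inequality by inspecting the behavior near $y=0$ and $y=\infty$.

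For $Q \lesssim 1$: in HMHF we have $0 \le Q(y) < \pi$ trivially; in NLH, $Q(y) = y^{D}(1+y^{2}/(N(N-2)))^{-D}$ is a bounded continuous function on $[0,\infty)$ vanishing at both endpoints. For $r^{-2}f'(Q)$ in NLH, using $f'(v) = p|v|^{p-1}$ together with the critical-exponent relation $D(p-1) = 2$ gives $r^{-2}f'(Q) = p\, r^{D(p-1)-2}W^{p-1} = pW^{p-1} \simeq \langle y\rangle^{-2D(p-1)} = \langle y\rangle^{-4}$. For $r^{-2}f'(Q)$ in HMHF, $f'(v) = 2D^{2}\sin^{2}v$ combined with $\sin Q(y) = 2y^{D}/(1+y^{2D})$ yields $r^{-2}f'(Q) = 8D^{2} y^{2D-2}/(1+y^{2D})^{2}$, which is $\lesssim y^{2D-2} \lesssim 1$ for $y \lesssim 1$ and $\lesssim y^{-2D-2} \lesssim y^{-4}$ for $y \gtrsim 1$ (even $D \geq 1$ would suffice). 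Both regimes give the claimed $\lesssim \langle y\rangle^{-4}$.

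For the NLH bounds $W, \Lambda W \lesssim \langle y\rangle^{-2D}$: the estimate on $W$ is immediate from its explicit form. For $\Lambda W$, I would use the identity $\Lambda W(y) = (\tfrac{N-2}{2} - \tfrac{y^{2}}{2N})(1+y^{2}/(N(N-2)))^{-N/2}$ recalled in Section~\ref{subsec:Universal-sequence}; the polynomial prefactor contributes $\lesssim \langle y\rangle^{2}$ and the denominator contributes $\lesssim \langle y\rangle^{-N}$, so $|\Lambda W| \lesssim \langle y\rangle^{2-N} = \langle y\rangle^{-2D}$. For the HMHF equivalences, from $\sin Q(y) = 2y^{D}/(1+y^{2D})$ we obtain $y^{-D}\sin Q = 2/(1+y^{2D}) \simeq \langle y\rangle^{-2D}$. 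To compare with $\Lambda W$, I would use the chain-rule identity $\Lambda Q = r\partial_{r}Q = r^{D}\Lambda W$ relating the two generators through $Q = r^{D}W$, together with $\Lambda Q = D\sin Q$ (which follows from $\partial_{y}\arctan(y^{D}) = Dy^{D-1}/(1+y^{2D})$ and $\sin(2\arctan t) = 2t/(1+t^{2})$). Dividing gives $\Lambda W = D\,y^{-D}\sin Q$, so the two are equivalent and both $\simeq \langle y\rangle^{-2D}$.

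There is no substantive obstacle in this lemma: it reduces to elementary checks on explicit functions. The only minor points of care are the conformal identity $D(p-1) = 2$ used to eliminate the power of $r$ in $r^{-2}f'(Q)$ for NLH, and the bookkeeping between the two scaling conventions — the $\bbR^{2}$-scaling on $Q$ and $\sin Q$ versus the $\bbR^{N}$-scaling on $W$ and $\Lambda W$ — set up in Section~\ref{subsec:Unified-notation}.
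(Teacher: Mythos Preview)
Your proposal is correct and follows exactly the paper's approach: reduce to $\lambda=1$ by scaling and then read off each estimate from the explicit formulas of $W$, $Q$, and the relation $D(p-1)=2$. You have simply supplied the detailed verifications that the paper omits; there is nothing to add.
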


\begin{proof}
By scaling, we may assume $\lmb=1$. All the estimates then follow
from explicit formulas of $W$, $Q=y^{-D}W$, and $D(p-1)=2$.
\end{proof}

\subsection{\label{subsec:Formal-inversion}Formal inversion of $H$}

When we construct modified multi-bubble profiles in Section~\ref{sec:Modified-multi-bubble-profiles},
we need to invert the linearized operator 
\[
H\coloneqq-\Dlt-y^{-2}f'(Q).
\]
Under radial symmetry, the $\dot{H}^{1}$-kernel of $H$ is spanned
by $\Gmm_{1}=\Lmb W$ and a singular kernel element $\Gmm_{2}$ can
be found using the Wronskian formula: 
\begin{equation}
\Gamma_{2}(y)=\Lmb W\int_{1}^{y}\frac{dy'}{(y')^{N-1}(\Lmb W)^{2}}.\label{eq:Gmm2-def}
\end{equation}
For \eqref{eq:HMHF-equiv}, this formula defines a smooth global solution
$\Gmm_{2}$ on $(0,\infty)$. For \eqref{eq:NLH-rad}, since $\Lmb W$
vanishes at $y=y_{\ast}\coloneqq\sqrt{N(N-2)}$, we first define $\Gmm_{2}(y)$
using \eqref{eq:Gmm2-def} only for small $y$, say $y\in(0,y_{\ast})$.
Then, by the standard ODE theory, $\Gmm_{2}$ extends to a smooth
global solution on $(0,\infty)$. In view of Wronskian formula, we
have 
\[
\Gmm_{1}(\rd_{y}\Gmm_{2})-\Gmm_{2}(\rd_{y}\Gmm_{1})=\frac{1}{y^{N-1}}.
\]

Next, we claim the following bounds for any $k\in\bbN$:
\begin{equation}
\begin{aligned}|\Gmm_{1}|_{k} & \aleq_{k}\chf_{(0,1]}+\chf_{[1,\infty)}y^{-(N-2)},\\
|\Gmm_{2}|_{k} & \aleq_{k}\chf_{(0,1]}y^{-(N-2)}+\chf_{[1,\infty)}.
\end{aligned}
\label{eq:Gmm-ptwise}
\end{equation}
The bound for $\Gmm_{1}$ easily follows from the explicit formula
$\Gmm_{1}=\Lmb W$. For $\Gmm_{2}$, the estimate near the origin
(i.e., $y\ll1$) follows from the integral formula. To show the estimate
for large $y$, one notices that a variant of \eqref{eq:Gmm2-def},
after replacing $\int_{1}^{y}$ to $\int_{2y_{\ast}}^{y}$, defines
another smooth solution $\Gmm_{3}$ in the region $(y_{\ast},\infty)$.
Note that $|\Gmm_{3}|_{k}\aeq_{k}1$ for large $y$. In particular,
$\Gmm_{1}$ and $\Gmm_{3}$ are linearly independent, so $\Gmm_{2}$
is a linear combination of $\Gmm_{1}$ and $\Gmm_{3}$. This gives
the desired bound of $\Gmm_{2}$ for large $y$.

Finally, we define a right inverse of $H$ by 
\begin{equation}
[\out H^{-1}F](y)\coloneqq\Gmm_{1}\int_{0}^{y}\Gmm_{2}F\,(y')^{N-1}dy'-\Gmm_{2}\int_{0}^{y}\Gmm_{1}F\,(y')^{N-1}dy'.\label{eq:H-inv-def}
\end{equation}
When $\lan F,\Lmb W\ran=0$, $-\int_{0}^{y}$ in the second integral
can be replaced by $\int_{y}^{\infty}$: 
\begin{equation}
\out H^{-1}F=\Gmm_{1}\int_{0}^{y}\Gmm_{2}F\,(y')^{N-1}dy'+\Gmm_{2}\int_{y}^{\infty}\Gmm_{1}F\,(y')^{N-1}dy',\quad\lan F,\Lmb W\ran=0.\label{eq:H-inv-2}
\end{equation}
For any $\lmb\in(0,\infty)$, the formal right inverse of $H_{\lmb}$
is defined by the scaling relation 
\begin{equation}
\out H_{\lmb}^{-1}F_{\ul{\lmb}}=[\out H^{-1}F]_{\lmb}.\label{eq:H-lmb-inv-rel}
\end{equation}

\subsection{Coercivity estimate for $H_{\vec{\protect\lmb}}$}

We work with function spaces $\dot{H}_{\rad}^{1}$ and $\dot{H}_{\rad}^{2}$.
The initial data belong to $\dot{H}_{\rad}^{1}$ for \eqref{eq:NLH-rad},
while for \eqref{eq:HMHF-equiv} the data after subtracting the bubbles
and the asymptotic profile belong to $\dot{H}_{\rad}^{1}$. The latter
space $\dot{H}_{\rad}^{2}$ is naturally associated to the dissipation
term of the energy identity. As $N>4$, we have the inequalities of
Hardy and Rellich:
\begin{align}
\|u\|_{\dot{H}^{1}} & \ageq\|r^{-1}u\|_{L^{2}},\label{eq:Hardy}\\
\|u\|_{\dot{H}^{2}} & \ageq\|r^{-2}u\|_{L^{2}}.\label{eq:Rellich}
\end{align}

The coercivity estimate for the dissipation term will be crucial in
our analysis. First, we recall the well-known one-bubble case; for
the proof, see \cite[Appendix B]{CollotMerleRaphael2017CMP} and \cite[Appendix B]{RaphaelRodnianski2012Publ.Math.}
for \eqref{eq:NLH-rad} and \eqref{eq:HMHF-equiv}, respectively.
\begin{lem}[Coercivity estimate for $H$]
Let $\calZ\in C_{c}^{\infty}(\bbR^{N})$ be a fixed radial profile
with $\lan\calZ,\Lmb W\ran\neq0$. Then, we have 
\begin{equation}
\|Hg\|_{L^{2}}\aeq\|g\|_{\dot{H}^{2}},\qquad\forall g\in\dot{H}_{\rad}^{2}\cap\{\calZ\}^{\perp},\label{eq:H-lin-coer}
\end{equation}
where $H=-\Dlt-y^{-2}f'(Q)$ and $\perp$ is defined with respect
to the inner product $\lan\cdot,\cdot\ran$.
\end{lem}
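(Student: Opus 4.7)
The plan is to prove the two-sided bound by showing the upper bound directly and the lower bound by a contradiction/compactness argument exploiting the explicit one-dimensional $\dot{H}^2$-kernel of $H$ under radial symmetry. Write $V(y) \coloneqq y^{-2} f'(Q)$, so that $Hg = -\Delta g - Vg$; by the pointwise bounds already recorded, $V(y) \lesssim \langle y\rangle^{-4}$. Combined with Rellich's inequality \eqref{eq:Rellich}, this gives
\[
\|V g\|_{L^2} \lesssim \|\langle y\rangle^{-4} g\|_{L^2} \lesssim \|y^{-2} g\|_{L^2} \lesssim \|g\|_{\dot{H}^2},
\]
which immediately yields the upper bound $\|Hg\|_{L^2} \lesssim \|g\|_{\dot{H}^2}$ and also the reverse-type inequality $\|g\|_{\dot{H}^2} \lesssim \|Hg\|_{L^2} + \|Vg\|_{L^2}$. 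Thus the task reduces to proving a \emph{qualitative} lower bound: there is no sequence $g_n \in \dot{H}^2_{\rad} \cap \{\calZ\}^{\perp}$ with $\|g_n\|_{\dot{H}^2} = 1$ and $\|Hg_n\|_{L^2} \to 0$.

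Suppose such a sequence existed. Pass to a subsequence with $g_n \rightharpoonup g_\infty$ weakly in $\dot{H}^2_{\rad}$. The key analytic input is that the multiplication operator $g \mapsto Vg$ is \emph{compact} from $\dot{H}^2_{\rad}$ to $L^2$: for any $R \geq 1$, split $Vg_n = V\chi_R g_n + V(1-\chi_R)g_n$; the inner piece converges strongly in $L^2$ by Rellich--Kondrachov applied on $\{|y|\leq 2R\}$, while the outer piece is controlled by
\[
\|V(1-\chi_R)g_n\|_{L^2} \lesssim \sup_{y \geq R}\langle y\rangle^{-2} \cdot \|y^{-2} g_n\|_{L^2} \lesssim R^{-2},
\]
uniformly in $n$. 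Hence $Vg_n \to Vg_\infty$ in $L^2$, and then $\Delta g_n = -Hg_n - Vg_n \to -Vg_\infty$ in $L^2$, so $g_\infty \in \dot{H}^2_{\rad}$ and $Hg_\infty = 0$.

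Next, identify the $\dot{H}^2_{\rad}$-kernel of $H$. Under radial symmetry $H$ is an ordinary differential operator of order two, with a two-dimensional solution space spanned by $\Gamma_1 = \Lambda W$ and the singular element $\Gamma_2$ defined in \eqref{eq:Gmm2-def}. The pointwise bound \eqref{eq:Gmm-ptwise} shows $|\Gamma_2(y)| \lesssim y^{-(N-2)}$ near the origin; from $\Delta \Gamma_2 = -V\Gamma_2$ and the same bound on $V$, this gives $\Delta \Gamma_2 \notin L^2_{\loc}$, so $\Gamma_2 \notin \dot{H}^2_{\rad}$. Therefore $g_\infty = c\,\Lambda W$ for some $c \in \mathbb{R}$. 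Because $\calZ \in C_c^\infty$, the orthogonality $\langle g_n,\calZ\rangle = 0$ passes to the limit (strong $L^2_{\loc}$ convergence suffices), yielding $c\,\langle \Lambda W,\calZ\rangle = 0$, and since $\langle \Lambda W, \calZ\rangle \neq 0$ by assumption, $c=0$, i.e.\ $g_\infty = 0$. But then the compactness of $V$ gives $\|Vg_n\|_{L^2} \to 0$, and combined with $\|Hg_n\|_{L^2}\to 0$ we obtain $\|g_n\|_{\dot{H}^2} \to 0$, contradicting $\|g_n\|_{\dot{H}^2}=1$.

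The main obstacle I anticipate is the compactness step $Vg_n \to Vg_\infty$ strongly in $L^2$; it is what converts a weak limit of the PDE into a strong one and it is what makes the decay $V \lesssim \langle y\rangle^{-4}$ (hence the condition $N>4$) essential. The kernel identification is routine given the already established ODE bounds for $\Gamma_1$ and $\Gamma_2$, and the final use of $\langle \calZ, \Lambda W\rangle \neq 0$ is exactly the non-degeneracy condition that rules out the unique obstruction to coercivity. Everything else is a straightforward application of Hardy/Rellich in the regime $N \geq 7$ (NLH) or $D\geq 3$, i.e.\ $N\geq 8$ (HMHF).
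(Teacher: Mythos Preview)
The paper does not give its own proof of this lemma; it simply records it as well-known and refers to \cite[Appendix B]{CollotMerleRaphael2017CMP} (for \eqref{eq:NLH-rad}) and \cite[Appendix B]{RaphaelRodnianski2012Publ.Math.} (for \eqref{eq:HMHF-equiv}). Your compactness/contradiction argument is exactly the standard one found in those references and is correct in outline.

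There is one minor imprecision in the kernel-identification step. You exclude $\Gamma_2$ from $\dot H^2_{\rad}$ by claiming that the pointwise identity $\Delta\Gamma_2=-V\Gamma_2$ together with $|\Gamma_2|\aleq y^{-(N-2)}$ gives $\Delta\Gamma_2\notin L^2_{\loc}$. For \eqref{eq:NLH-rad} this works because $V(0)=p|W(0)|^{p-1}\neq 0$ and one has the matching lower bound $|\Gamma_2|\aeq y^{-(N-2)}$ near $0$ from the Wronskian formula. For \eqref{eq:HMHF-equiv}, however, $V(y)=y^{-2}f'(Q)\aeq y^{2D-2}$ near the origin, so the \emph{classical} $\Delta\Gamma_2=-V\Gamma_2\aeq y^{-2}$ is in $L^2_{\loc}$ when $N>4$; the genuine obstruction is the Dirac mass at $0$ in the \emph{distributional} Laplacian of the fundamental-solution-like singularity $y^{-(N-2)}$. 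A cleaner and uniform way to rule out $\Gamma_2$ is to invoke Rellich \eqref{eq:Rellich}: any $g_\infty\in\dot H^2_{\rad}$ satisfies $y^{-2}g_\infty\in L^2$, whereas a nonzero $\Gamma_2$-component would give $y^{-2}g_\infty\aeq y^{-N}$ near $0$, which is not locally square integrable. With this fix your argument goes through verbatim.
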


Building upon this one-bubble case coercivity estimate, we obtain
a multi-bubble generalization of the coercivity estimate by a soft
argument.
\begin{lem}[Coercivity estimate for $H_{\vec{\lmb}}$]
Let $\calZ\in C_{c}^{\infty}(\bbR^{N})$ be a fixed radial profile
with $\lan\calZ,\Lmb W\ran\neq0$. For any $J\geq1$, there exists
$\alp_{\coer}>0$ such that the following estimate holds whenever
$\vec{\lmb}\in\calP_{J}(\alp_{\coer})$: 
\begin{equation}
\|H_{\vec{\lmb}}g\|_{L^{2}}\aeq\|g\|_{\dot{H}^{2}},\qquad\forall g\in\dot{H}_{\rad}^{2}\cap\{\calZ_{\lmb_{1}},\dots,\calZ_{\lmb_{J}}\}^{\perp},\label{eq:lin-coer}
\end{equation}
where we denoted $H_{\vec{\lmb}}=-\Dlt-r^{-2}\sum_{j=1}^{J}f'(Q_{;j})$
and $\perp$ is defined with respect to the inner product $\lan\cdot,\cdot\ran$.
\end{lem}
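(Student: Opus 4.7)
The plan is a contradiction-and-compactness argument that extracts weak limits at each of the $J$ concentration scales and invokes the one-bubble kernel structure. Suppose the conclusion fails; then there exist sequences $\vec{\lmb}^{(n)}\in\calP_J(1/n)$ and $g_n\in\dot{H}_{\rad}^2\cap\{\calZ_{\lmb_1^{(n)}},\dots,\calZ_{\lmb_J^{(n)}}\}^{\perp}$ with $\|g_n\|_{\dot{H}^2}=1$ yet $\|H_{\vec{\lmb}^{(n)}}g_n\|_{L^2}\to 0$. I will drive this to the contradiction $\|g_n\|_{\dot{H}^2}\to 0$.

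For each $j$, zoom into scale $\lmb_j^{(n)}$ via the $\dot{H}^2$-invariant rescaling $\tilde{g}_n^{(j)}(y)\coloneqq(\lmb_j^{(n)})^{(N-4)/2}g_n(\lmb_j^{(n)}y)$, so $\|\tilde{g}_n^{(j)}\|_{\dot{H}^2}=1$. A direct change of variables gives $\|H_{\vec{\lmb}^{(n)}}g_n\|_{L^2}=\|\tilde{H}^{(n,j)}\tilde{g}_n^{(j)}\|_{L^2}$, where $\tilde{H}^{(n,j)}=-\Dlt-y^{-2}\sum_k f'(Q_{\lmb_k^{(n)}/\lmb_j^{(n)}})$. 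Passing to a subsequence, $\tilde{g}_n^{(j)}\weakto h_j$ in $\dot{H}_{\rad}^2$. The decoupling $\max_\ell\mu_\ell^{(n)}\to 0$ forces $\lmb_k^{(n)}/\lmb_j^{(n)}\to 0$ or $\infty$ for every $k\neq j$, and since both $Q(0)$ and $Q(\infty)$ are zeros of $f'$, only the $k=j$ bump survives on any compact $y$-set; hence $\tilde{H}^{(n,j)}\to H$ on compacta, and the distributional limit of $\tilde{H}^{(n,j)}\tilde{g}_n^{(j)}\to 0$ yields $Hh_j=0$.

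The radial $\dot{H}^2$-kernel of $H$ is one-dimensional: by \eqref{eq:Gmm-ptwise} the second formal kernel element $\Gmm_2$ has a $y^{-(N-2)}$ singularity at the origin, producing $\Dlt\Gmm_2\sim y^{-N}$ there, so $\Gmm_2\notin\dot{H}^2$. Therefore $h_j=c_j\Lmb W$ for some $c_j\in\bbR$. The orthogonality $\lan g_n,\calZ_{\lmb_j^{(n)}}\ran=0$ rescales to $\lan\tilde{g}_n^{(j)},\calZ\ran=0$ (up to a nonzero power of $\lmb_j^{(n)}$), and Rellich compactness $\dot{H}^2\embed L^2_{\loc}$ applied against the compactly supported test function $\calZ$ passes this to $\lan h_j,\calZ\ran=0$. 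Since $\lan\calZ,\Lmb W\ran\neq 0$ by hypothesis, $c_j=0$, i.e., $\tilde{g}_n^{(j)}\weakto 0$ in $\dot{H}^2$ for every $j$.

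To close the contradiction, I would start from $\Dlt g_n=-H_{\vec{\lmb}^{(n)}}g_n-\sum_{j=1}^J r^{-2}f'(Q_{\lmb_j^{(n)}})g_n$ and take $L^2$-norms:
\[
1=\|g_n\|_{\dot{H}^2}\le\|H_{\vec{\lmb}^{(n)}}g_n\|_{L^2}+\sum_{j=1}^J\|r^{-2}f'(Q_{\lmb_j^{(n)}})g_n\|_{L^2}.
\]
The first term is $o(1)$ by assumption. For each $j$, rescaling converts the $j$-th summand into $\|y^{-2}f'(Q)\tilde{g}_n^{(j)}\|_{L^2}$. Using $y^{-2}f'(Q)\aleq\lan y\ran^{-4}$, splitting the integral at $|y|=R$, applying Rellich compactness on the inner region (where $\tilde{g}_n^{(j)}\to 0$ strongly in $L^2_{\loc}$) and the Rellich inequality $\|y^{-2}h\|_{L^2}\aleq\|h\|_{\dot{H}^2}$ on the outer region (valid since $N>4$), each summand is $o(1)$, with the outer contribution made small uniformly in $n$ by taking $R$ large. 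The main obstacle I anticipate is this last step: turning weak $\dot{H}^2$-convergence of each rescaled piece into $L^2$-smallness of the potentials $r^{-2}f'(Q_{\lmb_j^{(n)}})g_n$. No scale-cutoff partition of unity is needed because the potentials are themselves localized at the $\lmb_j$ with sufficient decay; the hypothesis $D>2$ enters precisely to guarantee both $\Gmm_2\notin\dot{H}^2$ and the Rellich inequality in the correct regime.
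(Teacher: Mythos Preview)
Your argument is correct and shares the same skeleton as the paper's proof (contradiction, rescaling, weak compactness, and the one-bubble kernel structure), but the two are organized in opposite directions. The paper first proves a concentration claim: from the same triangle inequality you write, namely $1\le\|H_{\vec\lmb^{(n)}}g_n\|_{L^2}+\sum_j\|r^{-2}f'(Q_{;j})g_n\|_{L^2}$, together with the decay $f'(Q)\aleq\min\{y^2,y^{-2}\}$, it deduces that there is a single scale $\lmb_{k_n}^{(n)}$ near which $\|\chf_{r\aeq\lmb_{k_n}^{(n)}}r^{-2}g_n\|_{L^2}\aeq 1$. It then rescales only at that one scale, so the extracted weak limit $h$ is automatically \emph{nonzero}, and $Hh=0$ with $\lan h,\calZ\ran=0$ contradicts the one-bubble coercivity \eqref{eq:H-lin-coer} directly. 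You instead rescale at all $J$ scales, show every weak limit $h_j$ is \emph{zero} (same kernel argument), and then feed this back into the potential terms to force $1\le o(1)$. Your route is slightly longer (one diagonal extraction and $J$ applications of Rellich--Kondrachov rather than one), but it trades the preliminary concentration claim for a more symmetric treatment of the scales; both arrive at the same contradiction and both rely on exactly the same two ingredients, $N>4$ for the Rellich inequality and $\Gmm_2\notin\dot H^2$.
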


\begin{proof}
Suppose not. Then, there exist $J\geq1$, sequences $\alp_{n}\to0$,
$\vec{\lmb}^{(n)}\in\calP_{J}(\alp_{n})$, and $g_{n}\in\dot{H}_{\rad}^{2}\cap\{\calZ_{\lmb_{1}^{(n)}},\dots,\calZ_{\lmb_{J}^{(n)}}\}^{\perp}$
such that $\|g_{n}\|_{\dot{H}^{2}}=1$ and $\|H_{\vec{\lmb}^{(n)}}g_{n}\|_{L^{2}}\to0$.

We claim that there exists $A>1$ such that for all large $n$ 
\begin{equation}
\max_{1\leq k\leq J}\|\chf_{[A^{-1}\lmb_{k}^{(n)},A\lmb_{k}^{(n)}]}r^{-2}g_{n}\|_{L^{2}}\aeq1.\label{eq:subcoer}
\end{equation}
Indeed, as $f'(Q)\aeq\chf_{(0,1]}y^{2}+\chf_{[1,+\infty)}y^{-2}$,
we have 
\[
\chf_{(0,A^{-1}\lmb_{k}^{(n)}]}f'(Q_{;k})+\chf_{[A\lmb_{k}^{(n)},+\infty)}f'(Q_{;k})\aleq A^{-2}.
\]
Therefore, using $\|g_{n}\|_{\dot{H}^{2}}=1$, $\|H_{\vec{\lmb}^{(n)}}g_{n}\|_{L^{2}}\to0$,
and Rellich's inequality \eqref{eq:Rellich}, we have 
\begin{align*}
1=\|g_{n}\|_{\dot{H}^{2}} & \leq\|H_{\vec{\lmb}^{(n)}}g_{n}\|_{L^{2}}+\tsum{k=1}J\|r^{-2}f'(Q_{;k})g_{n}\|_{L^{2}}\\
 & \aleq o_{n\to\infty}(1)+\max_{1\leq k\leq J}\|\chf_{[A^{-1}\lmb_{k}^{(n)},A\lmb_{k}^{(n)}]}r^{-2}g_{n}\|_{L^{2}}+A^{-2}\|r^{-2}g_{n}\|_{L^{2}}\\
 & \aleq o_{n\to\infty}(1)+o_{A\to\infty}(1)+\max_{1\leq k\leq J}\|\chf_{[A^{-1}\lmb_{k}^{(n)},A\lmb_{k}^{(n)}]}r^{-2}g_{n}\|_{L^{2}}.
\end{align*}
Taking $A>1$ large, \eqref{eq:subcoer} follows.

By \eqref{eq:subcoer}, there exists a sequence $k_{n}\in\{1,\dots,J\}$
such that $\|\chf_{r\aeq\lmb_{k_{n}}^{(n)}}r^{-2}g_{n}\|_{L^{2}}\aeq1$.
We renormalize to the scale $r\aeq\lmb_{k_{n}}^{(n)}$; define $h_{n}\in\dot{H}_{\rad}^{2}$
and $\vec{\nu}^{(n)}\in\calP_{J}(\alp_{n})$ such that $g_{n}=(\lmb_{k_{n}}^{(n)})^{-1}[h_{n}]_{\lmb_{k_{n}}^{(n)}}$
and $\vec{\lmb}^{(n)}=\lmb_{k_{n}}^{(n)}\vec{\nu}^{(n)}$. Note that
we have 
\[
\|h_{n}\|_{\dot{H}^{2}}=1,\quad\|H_{\vec{\nu}^{(n)}}h_{n}\|_{L^{2}}\to0,\quad\|\chf_{y\aeq1}h_{n}\|_{L^{2}}\aeq1,\quad\lan h_{n},\calZ\ran=0.
\]
Passing to a subsequence, let $h\in\dot{H}_{\rad}^{2}$ be a weak
$\dot{H}^{2}$-limit of $h_{n}$. By the Rellich--Kondrachov theorem,
we may assume $h_{n}\to h$ strongly in $L_{\loc}^{2}$. In particular,
we have $h\neq0$ and $\lan h,\calZ\ran=0$. We claim that 
\begin{equation}
Hh=0.\label{eq:Hh=00003D0}
\end{equation}
If this claim is true, we get a contradiction to \eqref{eq:H-lin-coer}.

It remains to show \eqref{eq:Hh=00003D0}. It suffices to show $\lan Hh,\vphi\ran=0$
for all $\vphi\in C_{c}^{\infty}(0,\infty)$. As $h_{n}\weakto h$
weakly in $\dot{H}^{2}$ and $\|H_{\vec{\nu}^{(n)}}h_{n}\|_{L^{2}}\to0$,
we have 
\begin{align*}
\lan Hh,\vphi\ran=\lim_{n\to\infty}\lan Hh_{n},\vphi\ran & =\lim_{n\to\infty}\big\{\lan H_{\vec{\nu}^{(n)}}h_{n},\vphi\ran+\tsum{j\neq k_{n}}{}\lan y^{-2}f'(Q_{\nu_{j}^{(n)}})h_{n},\vphi\ran\big\}\\
 & =\lim_{n\to\infty}\tsum{j\neq k_{n}}{}\lan y^{-2}f'(Q_{\nu_{j}^{(n)}})h_{n},\vphi\ran.
\end{align*}
To estimate the last term, fix $R>1$ such that $\mathrm{supp}(\vphi)\subseteq[R^{-1},R]$.
From the pointwise bound $f'(Q_{\nu})\aleq\min\{\nu^{-2}y^{2},\nu^{2}y^{-2}\}$
and $\alp_{n}\to0$ (the ratio between scales), we obtain $\|\chf_{[R^{-1},R]}f'(Q_{\nu_{j}^{(n)}})\|_{L^{\infty}}\aleq o_{n\to\infty}(1)$.
Thus 
\[
\lan y^{-2}f'(Q_{\nu_{j}^{(n)}})h_{n},\vphi\ran\aleq o_{n\to\infty}(1)\|y^{-2}h_{n}\|_{L^{2}}\|\vphi\|_{L^{2}}\aleq o_{n\to\infty}(1),
\]
which gives \eqref{eq:Hh=00003D0}. This completes the proof.
\end{proof}

\section{\label{sec:Modified-multi-bubble-profiles}Modified multi-bubble
profiles}

As explained in Section~\ref{subsec:Strategy}, we need to construct
the modified multi-bubble profiles $U$ to decompose a nonlinear solution
$u(t)$ (near multi-bubbles) into $u(t)=U(t)+g(t)$ so that $g(t)$
enjoys an $L_{t}^{2}$-spacetime estimate. The main goal of this section
is the construction of $U$.
\begin{prop}[Modified profile]
\label{prop:Modified-Profile}There exist $0<\alp_{0}<\frac{1}{10}$
and $A_{0}>10$ such that for any $J\in\bbN$, $\iota_{1},\dots,\iota_{J}\in\{\pm1\}$,
and $(\lmb_{1},\dots,\lmb_{J})\in\calP_{J}(\alp_{0})$, there exist
modified profiles $U=U(\iota_{1},\dots,\iota_{J},\lmb_{1},\dots,\lmb_{J};r)$
with the following properties. (By Section~\ref{subsec:Notation},
the implicit constants here depend only on $N$ and $J$.)
\begin{itemize}
\item (Pointwise estimates for $U$) The profile $U$ takes the form
\begin{equation}
U=\sum_{j=1}^{J}W_{;j}+\td U.\label{eq:U-dec}
\end{equation}
For \eqref{eq:NLH-rad}, we have (recall $\br{\lmb}_{j}=\sqrt{\lmb_{j}\lmb_{j-1}}$)
\begin{align}
|U| & \aeq|W_{;j}| &  & \text{if }r\in[A_{0}\br{\lmb}_{j+1},A_{0}^{-1}\br{\lmb}_{j}],\label{eq:U-bd-1}\\
|U| & \aleq|W_{;j}| &  & \text{if }r\in[A_{0}^{-1}\br{\lmb}_{j+1},A_{0}\br{\lmb}_{j}],\label{eq:U-bd-2}\\
|U-W_{;j}| & \aleq|W_{;j+1}|+|W_{;j-1}| &  & \text{if }r\in[A_{0}^{-1}\br{\lmb}_{j+1},A_{0}\br{\lmb}_{j}].\label{eq:U-bd-3}
\end{align}
For \eqref{eq:HMHF-equiv}, we have (recall $P=r^{D}U$)
\begin{align}
r^{-D}\sin(P) & \aleq|[\lan y\ran^{-2D}]_{;j}| &  & \text{if }r\in[A_{0}^{-1}\br{\lmb}_{j+1},A_{0}\br{\lmb}_{j}],\label{eq:U-bd-2-HM}\\
r^{-D}\sin(P-Q_{;j}) & \aleq|[\lan y\ran^{-2D}]_{;j-1}|+|[\lan y\ran^{-2D}]_{;j+1}| &  & \text{if }r\in[A_{0}^{-1}\br{\lmb}_{j+1},A_{0}\br{\lmb}_{j}].\label{eq:U-bd-3-HM}
\end{align}
\item (Profile equation) $U$ solves the equation 
\begin{equation}
\Dlt U+r^{-(D+2)}f(P)=-\sum_{j=2}^{J}\br{\iota}_{j}\frac{\kappa\mu_{j}^{D}}{\lmb_{j}^{2}}\Lmb W_{;j}+\Psi\label{eq:def-Psi}
\end{equation}
and the main term has the size (recall $\calD=\sum_{j=2}^{J}\frac{\mu_{j}^{2D}}{\lmb_{j}^{2}}$)
\begin{equation}
\Big\|\sum_{j=2}^{J}\br{\iota}_{j}\frac{\kappa\mu_{j}^{D}}{\lmb_{j}^{2}}\Lmb W_{;j}\Big\|_{L^{2}}\aeq\sqrt{\calD}.\label{eq:prof-eq-main-size}
\end{equation}
\item (Estimates for the remainder terms $\td U$ and $\Psi$) Assume $(\lmb_{1},\dots,\lmb_{J})\in\calP_{J}(\alp^{\ast})$
for some $\alp^{\ast}\in(0,\alp_{0}]$. Recall that $\dlt(\alp^{\ast})$
denotes $o_{\alp^{\ast}\to0}(1)$. Then, $\td U$ satisfies the estimates
\begin{align}
\|\td U\|_{\dot{H}^{1}} & \aleq\dlt(\alp^{\ast}),\label{eq:tdU-H1}\\
\|\lmb_{j}\rd_{\lmb_{j}}\td U\cdot[\lan y\ran^{-2D}]_{\ul{;k}}\|_{L^{1}} & \aleq\chf_{j=k}(\mu_{k+1}^{D}+\mu_{k}^{D}|\log\mu_{k}|)\label{eq:tdU-weight-L1}\\
 & \peq+\chf_{j>k}\Big(\frac{\lmb_{j}}{\lmb_{k}}\Big)^{D}+\chf_{j<k}\dlt(\alp^{\ast})\Big(\frac{\lmb_{k}}{\lmb_{j}}\Big)^{D-2}\nonumber 
\end{align}
and $\Psi$ satisfies the estimates 
\begin{align}
\|\Psi\|_{L^{2}} & \aleq\dlt(\alp^{\ast})\sqrt{\calD},\label{eq:Psi-L2}\\
\|\Psi\cdot[\lan y\ran^{-2D}]_{\ul{;k}}\|_{L^{1}} & \aleq\frac{1}{\lmb_{k}^{2}}(\mu_{k+1}^{D}+\dlt(\alp^{\ast})\cdot\mu_{k}^{D})+\calD.\label{eq:Psi-inn}
\end{align}
\end{itemize}
\end{prop}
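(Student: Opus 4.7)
The plan is to construct $U$ inductively on $J$, correcting the naive sum $\sum_{j=1}^J W_{;j}$ scale by scale. The base case $J=1$ is trivial since $U = W_{;1}$ is exact. For $J \geq 2$, I would write
\[
U = \sum_{j=1}^{J} W_{;j} + \sum_{j=2}^{J} T^{(j)},
\]
where each $T^{(j)}$ is a correction localized near scale $\lambda_j$. The design is dictated by the leading cross-scale error: substituting $\sum_k W_{;k}$ into the nonlinearity and Taylor-expanding $f(r^D \sum_k W_{;k})$ around $r^D W_{;j}$, one sees that in the region $r \sim \lambda_j$ the dominant discrepancy comes from the essentially constant value $\iota_{j-1} W(0)/\lambda_{j-1}^D$ of the wider bubble $W_{;j-1}$, producing, in the rescaled variable $y = r/\lambda_j$, an inhomogeneity of size $\bar\iota_j \mu_j^D \cdot W(0) y^{-2} f'(Q)/\lambda_j^{D+2}$. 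Using the formal outgoing right inverse from Section~\ref{subsec:Formal-inversion}, I would set
\[
T^{(j)}(r) = \bar\iota_j \mu_j^D \cdot \Big[{}^{\mathrm{out}}\!H^{-1}\bigl(W(0)\, y^{-2} f'(Q) - \kappa\, \Lambda W\bigr)\Big]_{;j},
\]
where $\kappa$ defined in \eqref{eq:def-kappa-unified} is precisely the projection coefficient that makes the bracket orthogonal to $\Lambda W$, so that \eqref{eq:H-inv-2} applies and yields a correction with controlled growth via \eqref{eq:Gmm-ptwise}. By construction, the leftover at scale $\lambda_j$ is exactly $-\bar\iota_j (\kappa \mu_j^D/\lambda_j^2)\Lambda W_{;j}$ plus strictly smaller terms, which gives \eqref{eq:def-Psi}.

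Next, I would establish the pointwise estimates \eqref{eq:U-bd-1}--\eqref{eq:U-bd-3-HM}. From \eqref{eq:Gmm-ptwise} and the integral formula \eqref{eq:H-inv-def}, the explicit decay $|T^{(j)}| \lesssim \mu_j^D [\langle y\rangle^{-2D}]_{;j}$ holds; in particular $T^{(j)}$ is uniformly subordinate to $\mu_j^D W_{;j}$ and does not disturb the other bubbles in their home zones $[A_0^{-1}\bar\lambda_{k+1}, A_0 \bar\lambda_k]$ for $k \ne j$. Combined with the standard zone-wise comparison of the individual $W_{;k}$, this yields the three displayed pointwise bounds (and their \eqref{eq:HMHF-equiv} analogues, for which \eqref{eq:f-03-HM}--\eqref{eq:f-07-HM} control the sine factors). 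The size estimate \eqref{eq:prof-eq-main-size} follows from scale-separation: different $\Lambda W_{;j}$ are essentially $L^2$-orthogonal with $\|\Lambda W_{;j}\|_{L^2}^2 \asymp \lambda_j^2$, so the squared $L^2$-norm of the main term equals $\sum_{j=2}^J \mu_j^{2D}/\lambda_j^2 = \calD$ up to small off-diagonal contributions controlled by the gap $\alpha_0$.

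The core of the work is the remainder estimates \eqref{eq:tdU-H1}--\eqref{eq:Psi-inn}. The $\dot H^1$ bound \eqref{eq:tdU-H1} and the $L^2$ bound \eqref{eq:Psi-L2} follow from summing the pointwise bounds on $T^{(j)}$ and expressing $\Psi$ as the sum of (a) next-order quadratic-in-interaction terms coming from \eqref{eq:f-06}--\eqref{eq:f-07} applied with $a = W_{;j}$ and $b$ the sum of the other bubbles and corrections, (b) the nonlinearity generated by inserting $T^{(j)}$ into $f$, and (c) the three-bubble cascade terms involving non-adjacent pairs; each of these gains an additional factor of $\mu_k$ over the main term and integrates to $o_{\alpha^\ast \to 0}(\sqrt{\calD})$. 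The weighted $L^1$ bounds \eqref{eq:tdU-weight-L1} and \eqref{eq:Psi-inn} require sharp zone-by-zone pointwise control: the asymmetric decays $\Gamma_1 \sim (1, y^{-(N-2)})$ versus $\Gamma_2 \sim (y^{-(N-2)}, 1)$ in \eqref{eq:Gmm-ptwise}, together with the integration against the dual weight $[\langle y\rangle^{-2D}]_{\ul{;k}}$ in the range $r \aeq \lambda_k$, precisely produce the $(\lambda_j/\lambda_k)^D$ versus $(\lambda_k/\lambda_j)^{D-2}$ asymmetry in \eqref{eq:tdU-weight-L1}.

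The main obstacle will be obtaining the weighted $L^1$ bounds \eqref{eq:tdU-weight-L1} and \eqref{eq:Psi-inn} sharp enough: the gap ``$D$ versus $D-2$'' in the exponents is exactly what drives the formal ODE \eqref{eq:strat-6} and eventually the universal rates $\alp_j$ in Theorems~\ref{thm:main-HMHF} and~\ref{thm:main-NLH}, so any polynomial loss in these estimates would propagate and destroy the classification. A naive additive ansatz $\sum_j [W + \chi_j c_j S]_{;j}$ with a single fixed $S$, which suffices for existence constructions, does not produce an $L^2$ error of the required order $\delta(\alp^\ast)\sqrt{\calD}$ in large dimensions; this is the content of the remark to which the paper refers. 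The recursive construction above, in which the correction at $\lambda_j$ is tuned to the adjacent bubble through $\bar\iota_j$ and $\mu_j^D$, is what makes the error parallel to $\Lambda W_{;j}$ to the required order. A careful choice of cutoff radii placed at the geometric means $\bar\lambda_{j}$ will be needed so that the corrections at different scales decouple, and so that the boundary errors from the cutoffs are absorbed into the same weighted $L^1$ estimates.
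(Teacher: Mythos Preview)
Your proposal has a genuine gap: the ansatz $T^{(j)}=\br\iota_j\mu_j^D\bigl[\out H^{-1}(W(0)\,y^{-2}f'(Q)-\kappa\Lambda W)\bigr]_{;j}$ is precisely the ``linear approximation profile'' $\check U=\sum_j[W+\br\iota_j\mu_j^D\chi_j S]_{;j}$ with a single fixed $S$, and the paper's Remark~\ref{rem:7D-simpler-profile} states explicitly that this profile fails to yield \eqref{eq:Psi-L2} when $N\geq 8$. You acknowledge this remark in your last paragraph but then claim your ``recursive construction'' avoids the problem; it does not, because your $T^{(j)}$ depends on the other scales only through the scalar $\mu_j^D$, exactly as in the naive ansatz. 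Concretely, your claimed decay $|T^{(j)}|\aleq\mu_j^D[\lan y\ran^{-2D}]_{;j}$ is false: the source $W(0)y^{-2}f'(Q)$ has only $\lan y\ran^{-4}$ tail, so by \eqref{eq:H-inv-2} and \eqref{eq:Gmm-ptwise} one gets $S(y)\sim y^{-2}$ for large $y$, not $y^{-2D}$. Cutting this off at $r\sim\br\lmb_j$ produces a commutator error in $\Psi$ whose $L^2$ norm is $\sim\mu_j^{(D+3)/2}/\lmb_j$, which is \emph{larger} than $\mu_j^D/\lmb_j\sim\sqrt{\calD}$ once $D>3$; other cutoff radii fare no better.

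What the paper actually does (Lemmas~\ref{lem:contract-recur}--\ref{lem:hat-vphi-J+1-construction}) is genuinely recursive and nonlinear: having built $U_J$, one defines $\vphi_{J+1}$ as the fixed point of
\[
\vphi\ \mapsto\ \chf_{(0,2\dlt_0\lmb_J)}\,\out H_{\lmb_{J+1}}^{-1}\bigl\{\chi_{\dlt_0\lmb_J}\calR^{(0)}_{J+1}+\calR^{(1)}_{J+1}[\vphi]-\frkr_{J+1}[\vphi]\,\Lmb W_{\ul{;J+1}}\bigr\},
\]
where $\calR^{(0)}_{J+1}=r^{-(D+2)}\{f(Q_{;J+1}+P_J)-f(Q_{;J+1})-f(P_J)\}$ is the \emph{full} interaction, not its leading inner-region approximation $W_{;J}(0)\,r^{-2}f'(Q_{;J+1})$. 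The point is that on the outer annulus $[\br\lmb_{J+1},\dlt_0\lmb_J]$ the dominant bubble is $U_J$, so $\calR^{(0)}_{J+1}\sim|U_J|^{p-1}|W_{;J+1}|\sim\lmb_J^{-2}\lmb_{J+1}^Dr^{-2D}$ there, which decays much faster than the $r^{-4}$ tail your linearized source carries. This is what produces the paper's weight $\omg_{J+1}$ in \eqref{eq:def-omg-ell}, whose outer piece $\lmb_J^{D-2}\lmb_{J+1}^Dr^{-(2D-2)}$ (rather than $\lmb_{J+1}^2r^{-2}$) is exactly what makes both \eqref{eq:Psi-L2} and the sharp weighted bounds \eqref{eq:tdU-weight-L1}, \eqref{eq:Psi-inn} go through. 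The nonlinear feedback $\calR^{(1)}_{J+1}[\vphi]$ and the contraction in the $X$-norm built on $\omg_{J+1}$ are then needed to close the construction; a one-shot linear correction cannot achieve this.
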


\begin{rem}
The estimate \eqref{eq:Psi-L2}, saying that the inhomogeneous error
is smaller than the main term \eqref{eq:prof-eq-main-size} in the
$L^{2}$ norm, is crucial in the proof of spacetime estimate \eqref{eq:sptime-est}.
The extra weighted $L^{1}$-estimates \eqref{eq:tdU-weight-L1} and
\eqref{eq:Psi-inn} are not fundamental in the proof of spacetime
estimate, but they will be used in the proof of modulation estimates
(Proposition~\ref{prop:Modulation-Est}) when we estimate certain
inner products.
\end{rem}

\begin{rem}
\label{rem:7D-simpler-profile}For \eqref{eq:NLH-rad} with $N=7$,
a linear approximation profile 
\[
\check{U}=W_{;1}+\sum_{j=2}^{J}[W+\br{\iota}_{j}\mu_{j}^{D}\chi_{1/\sqrt{\mu_{j}}}S]_{;j},
\]
with the corrector profile $T$ defined by 
\[
S\coloneqq\out H^{-1}\{W(0)\cdot f'(W)+\kappa\Lmb W\},
\]
can be used. This profile was also used in the bubble tree construction
of \cite{delPinoMussoWei2021AnalPDE} for all large dimensions $N\geq7$.
However, this profile turns out to be not sufficient (at least following
the method in this paper) for the classification of bubble tree dynamics
when $N\geq8$. This is because the inhomogeneous error $\check{\Psi}$
generated by $\check{U}$ at the scale $r\aeq\br{\lmb}_{j}$ will
not satisfy \eqref{eq:Psi-L2}, which is necessary in the proof of
the spacetime estimate \eqref{eq:sptime-est}. Not only one has to
change the cutoff radius to improve the profile error, but one also
needs to make more profile expansions taking into account the nonlinear
term, as the dimension gets larger.

See also \cite{DengSunWei2025Duke} for a different construction of
another modified profile in the non-radial case with non-negative
bubbles, and with different Lagrange multipliers in the profile equation.
Note that our Lagrange multiplier $\Lmb W$ is crucial to obtain the
spacetime estimate \eqref{eq:sptime-est} related to modulation theory
for the heat equation.
\end{rem}

\subsection{\label{subsec:Sketch-of-construction}Sketch of construction}

We will construct the profile $U$ by inducting on the number of bubbles,
i.e., on $J$. Our induction is bottom-up; we start from the lowest
bubble ($W_{;1}$) and add correction terms $\vphi_{2},\vphi_{3},\dots$
to the higher bubbles ($W_{;2},W_{;3},\dots$).

We sketch this inductive construction for \eqref{eq:NLH-rad}. \emph{The
discussion in this paragraph is formal}. When $J=1$, we set 
\[
U_{1}(\iota_{1},\lmb_{1};r)\coloneqq\iota_{1}W_{\lmb_{1}}=W_{;1}.
\]
Let $\alp_{0}\in(0,\frac{1}{10})$ be a small constant that will be
independent of $J$. For some $J\geq1$, suppose that for all $\ell=2,\dots,J$,
we have constructed 
\begin{align*}
\vphi_{\ell} & =\vphi_{\ell}(\iota_{1},\dots,\iota_{\ell},\lmb_{1},\dots,\lmb_{\ell};r),\\
U_{\ell} & =U_{\ell}(\iota_{1},\dots,\iota_{\ell},\lmb_{1},\dots,\lmb_{\ell};r)=W_{;1}+\tsum{j=2}{\ell}(W_{;j}+\vphi_{j})\approx\tsum{j=1}JW_{;j},
\end{align*}
where $\iota_{1},\dots,\iota_{\ell}\in\{\pm1\}$ and $(\lmb_{1},\dots\lmb_{\ell})\in\calP(\alp_{0})$,
such that 
\begin{equation}
\Dlt U_{J}+f(U_{J})\approx-\sum_{j=2}^{J}\frac{c_{j}}{\lmb_{j}^{2}}\Lmb W_{;j}\qquad\text{and}\qquad c_{j}\approx\br{\iota}_{j}\mu_{j}^{D}\kappa.\label{eq:U_J-eqn-formal}
\end{equation}
We want to construct $\vphi_{J+1}=\vphi_{J+1}(\iota_{1},\dots,\iota_{J+1},\lmb_{1},\dots,\lmb_{J+1};r)$
such that $U_{J+1}=W_{;J+1}+U_{J}+\vphi_{J+1}$ satisfies \eqref{eq:U_J-eqn-formal}
for $J+1$. The inductive hypothesis \eqref{eq:U_J-eqn-formal} and
$\Dlt W_{;J+1}+f(W_{;J+1})=0$ say that $\vphi_{J+1}$ needs to solve
\[
\Dlt\vphi_{J+1}+f(U_{J+1})-f(U_{J})-f(W_{;J+1})\approx-\frac{c_{J+1}}{\lmb_{J+1}^{2}}\Lmb W_{;J+1},
\]
where $c_{J+1}$ is a scalar to be chosen with $c_{J+1}\approx\br{\iota}_{J+1}\mu_{J+1}^{D}\kappa$.
Decomposing $f(U_{J+1})-f(U_{J})-f(W_{;J+1})$ into the zeroth order
(say $\mathrm{IH}$, standing for inhomogeneous error), linear, and
nonlinear terms (say $\NL$) in $\vphi_{J+1}$, we see that $\vphi_{J+1}$
needs to solve 
\begin{align*}
 & \Dlt\vphi_{J+1}+f'(W_{;J+1}+U_{J})\vphi_{J+1}\\
 & \quad\approx-\{f(W_{;J+1}+U_{J})-f(W_{;J+1})-f(U_{J})\}-\NL-\frac{c_{J+1}}{\lmb_{J+1}^{2}}\Lmb W_{;J+1}\\
 & \quad\approx-\mathrm{IH}-\NL-\frac{c_{J+1}}{\lmb_{J+1}^{2}}\Lmb W_{;J+1}.
\end{align*}
Next, we further approximate the linear operator in the LHS by $\Dlt+f'(W_{;J+1})=-H_{\lmb_{J+1}}$.
Then, we obtain the equation
\[
H_{\lmb_{J+1}}\vphi_{J+1}\approx\mathrm{IH}+\mathrm{SL}+\NL+\frac{c_{J+1}}{\lmb_{J+1}^{2}}\Lmb W_{;J+1},
\]
where $\mathrm{SL}=\{f'(W_{;J+1}+U_{J})-f'(W_{;J+1})\}\vphi_{J+1}$
is a small linear term. To solve this equation, we need to invert
the operator $H_{\lmb_{J+1}}$, which requires the RHS of the above
display to be orthogonal to the radial kernel element $\Lmb W_{\lmb_{J+1}}$.
This determines 
\[
c_{J+1}=-\lan\mathrm{IH}+\mathrm{SL}+\NL,\Lmb W_{;J+1}\ran/\|\Lmb W\|_{L^{2}}^{2}.
\]
It turns out that $c_{J+1}$ can be approximated by (using $U_{J}\approx W_{;J}(0)$
at the scale of $W_{;J+1}$) 
\[
c_{J+1}\approx-\frac{\lan\mathrm{IH},\Lmb W_{;J+1}\ran}{\|\Lmb W\|_{L^{2}}^{2}}\approx-\frac{\lan f'(W_{;J+1})W_{;J}(0),\Lmb W_{;J+1}\ran}{\|\Lmb W\|_{L^{2}}^{2}}=\br{\iota}_{J+1}\mu_{J+1}^{D}\kappa.
\]
With this $c_{J+1}$, we define $\vphi_{J+1}$ by solving the fixed
point problem
\begin{equation}
\vphi_{J+1}=H_{\lmb_{J+1}}^{-1}\{\mathrm{IH}+\mathrm{SL}+\NL+\frac{c_{J+1}}{\lmb_{J+1}^{2}}\Lmb W_{;J+1}\}.\label{eq:vphi_J+1-formal}
\end{equation}
In fact, we have $\vphi_{J+1}\approx\br{\iota}_{j}\mu_{j}^{D}S$ (introduced
in Remark~\ref{rem:7D-simpler-profile}) at the leading order, but
we will stick to the fixed point problem to generate higher order
expansions of the profile at once.

For the rigorous construction, we add a cutoff in the argument because
the approximation $\Dlt+f'(W_{;J+1}+U_{J})\approx\Dlt+f'(W_{;J+1})$
is valid only in the region where $f'(U_{J})\ll\frac{1}{r^{2}}$ (in
view of Hardy's inequality), i.e., $r\not\aeq\lmb_{j}$ for all $j=1,\dots,J$.
In particular, a cutoff in $r\leq R_{J+1}$ with $R_{J+1}\ll\lmb_{J}$
will be sufficient. We will choose $R_{J+1}=\dlt_{0}\lmb_{J}$ for
some small universal constant $0<\dlt_{0}\ll1$.

\subsection{\label{subsec:Notation-for-rigorous-construction}Notation for rigorous
construction}

Let $0<\dlt_{0},\alp_{0}<\frac{1}{10}$ and $\ell\geq2$. Define 
\[
\calQ_{\ell}(\alp_{0})\coloneqq\{(\iota_{1},\dots,\iota_{\ell},\lmb_{1},\dots,\lmb_{\ell};r)\in\{\pm1\}^{\ell}\times\calP_{\ell}(\alp_{0})\times(0,\infty):r<2\dlt_{0}\lmb_{\ell-1}\}.
\]

First, for a function $\vphi:\calQ_{\ell}(\alp_{0})\to\bbR$ and $k\in\bbN$,
we define 
\[
\|\vphi\|_{X^{k}}:\{\pm1\}^{\ell}\times\calP_{\ell}(\alp_{0})\to[0,\infty]
\]
by 
\[
\|\vphi\|_{X^{k}}(\iota_{1},\dots,\iota_{\ell},\lmb_{1},\dots,\lmb_{\ell})\coloneqq\inf\{K\geq0:|\vphi(\iota_{1},\dots,\iota_{\ell},\lmb_{1},\dots,\lmb_{\ell};\cdot)|_{k}\leq K\omg_{\ell}\},
\]
where (recall $\br{\lmb}_{\ell}=\sqrt{\lmb_{\ell}\lmb_{\ell-1}}$)
\begin{equation}
\omg_{\ell}(r)\coloneqq\omg(\lmb_{\ell-1},\lmb_{\ell};r)\coloneqq\chf_{(0,\lmb_{\ell}]}\frac{r^{2}}{\lmb_{\ell}^{2}}+\chf_{[\lmb_{\ell},\br{\lmb}_{\ell}]}\frac{\lmb_{\ell}^{2}}{r^{2}}+\chf_{[\br{\lmb}_{\ell},2\dlt_{0}\lmb_{\ell-1}]}\frac{\lmb_{\ell-1}^{D-2}\lmb_{\ell}^{D}}{r^{2D-2}}.\label{eq:def-omg-ell}
\end{equation}
We omit the superscript $k$ of $X^{k}$ if $k=0$. 

Next, for any $k'\in\{0,1\}$, define 
\[
\calX_{\ell}^{k'}\coloneqq\{\vphi:\calQ_{\ell}(\alp_{0})\to\bbR:\|\vphi\|_{\calX_{\ell}^{k'}}<+\infty\text{ and }\vphi\text{ is scaling invariant}\},
\]
where (we omit $k'$ of $\calX_{\ell}^{k'}$ if $k'=0$)
\begin{align*}
\|\vphi\|_{\calX_{\ell}} & \coloneqq\sup_{\{\pm1\}^{\ell}\times\calP_{\ell}(\alp_{0})}\lmb_{\ell-1}^{D}\|\vphi\|_{X^{2}},\\
\|\vphi\|_{\calX_{\ell}^{1}} & \coloneqq\max\Big\{\|\vphi\|_{\calX_{\ell}},\max_{1\leq j\leq\ell}\sup_{\{\pm1\}^{\ell}\times\calP_{\ell}(\alp_{0})}\Big(\max\{\lmb_{\ell-1}^{D},\lmb_{j}^{D}\}\|\lmb_{j}\rd_{\lmb_{j}}\vphi\|_{X^{1}}\Big)\Big\},
\end{align*}
and $\vphi$ being scaling invariant means 
\[
\vphi(\iota\iota_{1},\dots,\iota\iota_{\ell},\lmb\lmb_{1},\dots,\lmb\lmb_{\ell};\cdot)=\iota[\vphi(\iota_{1},\dots,\iota_{\ell},\lmb_{1},\dots,\lmb_{\ell};\cdot)]_{\lmb}.
\]

Finally, for any $j\geq1$, when the profiles $\vphi_{\ell}\in\calX_{\ell}$
are defined for all $\ell=2,\dots,j$, we denote 
\begin{equation}
\begin{aligned}U_{j}(\iota_{1},\dots,\iota_{j},\lmb_{1},\dots,\lmb_{j};r) & \coloneqq W_{;1}+\tsum{\ell=2}j(W_{;\ell}+\chi_{\dlt_{0}\lmb_{\ell-1}}\vphi_{\ell}),\\
P_{j}(\iota_{1},\dots,\iota_{j},\lmb_{1},\dots,\lmb_{j};r) & =Q_{;1}+\tsum{\ell=2}j(Q_{;\ell}+\chi_{\dlt_{0}\lmb_{\ell-1}}\rng{\vphi}_{\ell}),
\end{aligned}
\label{eq:def-Pj}
\end{equation}
where $W_{;\ell}=\iota_{\ell}W_{\lmb_{\ell}}(r)$ and $\vphi_{\ell}=\vphi_{\ell}(\iota_{1},\dots,\iota_{\ell},\lmb_{1},\dots,\lmb_{\ell};r)$.
We set $\vphi_{1}\equiv0$. Note that $U_{1}=W_{;1}$ and $P_{1}=Q_{;1}$.

\subsection{\label{subsec:Construction-of-vphi}Construction of $\protect\vphi_{j}$}

In this subsection, we construct the profiles $\vphi_{j}$ for all
$j\geq2$ (recall $\vphi_{1}\equiv0$). The main goal of this subsection
is Corollary~\ref{cor:Profiles-vphi}. \emph{All the implicit constants
in this subsection are independent of $J$ and depend only on $N$.}

As explained in Section~\ref{subsec:Sketch-of-construction}, we
will construct $\vphi_{J+1}$ inductively by a fixed point argument
(cf. \eqref{eq:vphi_J+1-formal}). The following lemma gives a preparation
for this.
\begin{lem}[Quantitative estimates for fixed point argument]
\label{lem:contract-recur}There exist constants $0<\alp_{0},\dlt_{0}<\frac{1}{10}$
and $K_{0}>10$ with the following properties. Let $J\geq1$ and suppose
that the profiles $\vphi_{\ell}\in\calX_{\ell}^{1}$ are constructed
for all $\ell=2,\dots,J$ with the uniform bounds 
\begin{align}
\|\vphi_{\ell}\|_{X^{1}} & \leq K_{0}\lmb_{\ell-1}^{-D},\label{eq:vphi_j-bd-1}\\
\|\lmb_{j}\rd_{\lmb_{j}}\vphi_{\ell}\|_{X} & \leq K_{0}\min\{\lmb_{j}^{-D},\lmb_{\ell-1}^{-D}\},\quad\forall j=1,\dots,\ell.\label{eq:vphi_j-bd-2}
\end{align}
Define $U_{J},P_{J}$ by \eqref{eq:def-Pj}. Then, the mapping $\Phi:\calX_{J+1}\to\calX_{J+1}$
defined by 
\[
\Phi:\vphi\mapsto\chf_{(0,2\dlt_{0}\lmb_{J})}\out H_{\lmb_{J+1}}^{-1}\{\chi_{\dlt_{0}\lmb_{J}}\calR_{J+1}^{(0)}+\calR_{J+1}^{(1)}[\vphi]-\frkr_{J+1}[\vphi]\Lmb W_{\ul{;J+1}}\},
\]
where the operator $\out H_{\lmb}^{-1}$ is defined in Section~\ref{subsec:Formal-inversion}
and 
\begin{align*}
\calR_{J+1}^{(0)} & \coloneqq r^{-(D+2)}\{f(Q_{;J+1}+P_{J})-f(Q_{;J+1})-f(P_{J})\},\\
\calR_{J+1}^{(1)}[\vphi] & \coloneqq r^{-(D+2)}\{f(Q_{;J+1}+P_{J}+\chi_{\dlt_{0}\lmb_{J}}\rng{\vphi})-f(Q_{;J+1}+P_{J})-f'(Q_{;J+1})\chi_{\dlt_{0}\lmb_{J}}\rng{\vphi}\},\\
\frkr_{J+1}[\vphi] & \coloneqq\lan\chi_{\dlt_{0}\lmb_{J}}\calR_{J+1}^{(0)}+\calR_{J+1}^{(1)}[\vphi],\Lmb W_{;J+1}\ran/\|\Lmb W\|_{L^{2}}^{2},
\end{align*}
satisfies the following quantitative estimates:
\begin{enumerate}
\item (Contraction mapping on $\calX_{J+1}$) For any $\vphi,\vphi'\in\calX_{J+1}$
with $\|\vphi\|_{\calX_{J+1}},\|\vphi'\|_{\calX_{J+1}}\leq K_{0}$,
we have 
\begin{align}
\|\Phi[\vphi]\|_{X^{2}} & \leq K_{0}\lmb_{J}^{-D},\label{eq:Phi-map-1}\\
\|\Phi[\vphi]-\Phi[\vphi']\|_{X^{2}} & \leq\frac{1}{2}\|\vphi-\vphi'\|_{X}.\label{eq:Phi-map-2}
\end{align}
\item (Uniform bound and difference estimate for $\lmb_{j}\rd_{\lmb_{j}}\Phi[\vphi]$)
For any $\vphi,\vphi'\in\calX_{J+1}^{1}$ with $\|\vphi\|_{\calX_{J+1}^{1}},\|\vphi'\|_{\calX_{J+1}^{1}}\leq K_{0}$,
we have for $j=1,\dots,J$, 
\begin{align}
\|\lmb_{j}\rd_{\lmb_{j}}\Phi[\vphi]\|_{X^{2}} & \leq K_{0}\lmb_{j}^{-D},\label{eq:Phi-lmb-deriv-1}\\
\|\lmb_{j}\rd_{\lmb_{j}}\{\Phi[\vphi]-\Phi[\vphi']\}\|_{X^{2}} & \leq\frac{1}{2}\|\lmb_{j}\rd_{\lmb_{j}}(\vphi-\vphi')\|_{X}+\frac{1}{\lmb_{J+1}^{D-2}}\|\vphi-\vphi'\|_{X}^{p-1},\label{eq:Phi-lmb-deriv-2}
\end{align}
and we have for $j=J+1$ 
\begin{equation}
\|\lmb_{J+1}\rd_{\lmb_{J+1}}\Phi[\vphi]\|_{X^{1}}\leq K_{0}\lmb_{J}^{-D}.\label{eq:Phi-lmb-deriv-3}
\end{equation}
\item (Estimates for $\frkr_{J+1}[\vphi]$) We have
\begin{align}
|\frkr_{J+1}[\vphi]| & \aleq\mu_{J+1}^{D},\label{eq:frkr-hat-1}\\
|\frkr_{J+1}[\vphi]+\kappa\br{\iota}_{J+1}\mu_{J+1}^{D}| & \aleq(\chf_{J\geq2}\mu_{J}^{D}+\mu_{J+1})\mu_{J+1}^{D}.\label{eq:frkr-hat-2}
\end{align}
\end{enumerate}
\end{lem}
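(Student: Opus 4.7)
The plan is to analyze $\Phi$ entirely at the pointwise level using the weight $\omg_{J+1}$ defined in \eqref{eq:def-omg-ell}; this weight is tailor-made so that, modulo the kernel element $\Lmb W_{\lmb_{J+1}}$, the operator $\out H_{\lmb_{J+1}}^{-1}$ maps a source of size $\lmb_{J+1}^{-2}\omg_{J+1}$ to an output of size $\omg_{J+1}$. By the scaling relation \eqref{eq:H-lmb-inv-rel} and the scaling invariance built into $\calX_{J+1}$, it suffices throughout to work at $\lmb_{J+1}=1$ in the renormalized variable $y=r/\lmb_{J+1}$. The three items of the lemma then reduce to four tasks: (a) pointwise bounds on $\chi_{\dlt_0\lmb_J}\calR_{J+1}^{(0)}$ and $\calR_{J+1}^{(1)}[\vphi]$; (b) inversion via the Wronskian formula \eqref{eq:H-inv-def} or its orthogonality-improved version \eqref{eq:H-inv-2}; (c) a sharp computation of $\frkr_{J+1}[\vphi]$; (d) $\lmb_j$-differentiation of all of the above.

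For (a)--(b), the inductive bounds \eqref{eq:vphi_j-bd-1}--\eqref{eq:vphi_j-bd-2} give pointwise control of $P_J-\tsum{\ell=1}JQ_{;\ell}$ by the weights $\omg_\ell$, so on the range $r\le2\dlt_0\lmb_J$ the profile $P_J$ is dominated by $Q_{;J}$, whose Taylor expansion at the origin yields $r^{-D}P_J=W_{;J}(0)+o(1)$ as $r/\lmb_J\to0$. Combining this with the pointwise size of $Q_{;J+1}$ across the three regions of $\omg_{J+1}$, the estimates \eqref{eq:f-05}/\eqref{eq:f-05-HM} yield
\begin{equation*}
|\chi_{\dlt_0\lmb_J}\calR_{J+1}^{(0)}(r)|\aleq\lmb_J^{-D}\lmb_{J+1}^{-2}\omg_{J+1}(r),
\end{equation*}
and \eqref{eq:f-06}/\eqref{eq:f-06-HM} paired with $\|\vphi\|_{\calX_{J+1}}\le K_0$ furnish the same bound for $\calR_{J+1}^{(1)}[\vphi]$, with an extra small prefactor (when $\alp_0$ and $1/K_0$ are small universal constants) that produces the contraction factor $\tfrac12$ in \eqref{eq:Phi-map-2}. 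Feeding these bounds into the Wronskian formula together with the $\Gmm_i$-decay estimates \eqref{eq:Gmm-ptwise} proves \eqref{eq:Phi-map-1}--\eqref{eq:Phi-map-2} in the two inner regions of $\omg_{J+1}$.

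For (c), the leading contribution to $\frkr_{J+1}[\vphi]$ comes from $\chi_{\dlt_0\lmb_J}\calR_{J+1}^{(0)}$ integrated against $\Lmb W_{;J+1}$ on the scale $r\aeq\lmb_{J+1}$. Replacing $r^{-D}P_J$ by $W_{;J}(0)$ on that scale, Taylor expanding $f$ about $Q_{;J+1}$, and rescaling $y=r/\lmb_{J+1}$, the unified identity \eqref{eq:def-kappa-unified} delivers
\begin{equation*}
\frkr_{J+1}[\vphi]\approx\frac{\lan y^{-2}f'(Q_{;J+1})W_{;J}(0),\Lmb W_{;J+1}\ran}{\|\Lmb W\|_{L^2}^2}=-\kpp\,\br{\iota}_{J+1}\mu_{J+1}^D.
\end{equation*}
The Taylor remainder in $P_J$ on $\mathrm{supp}(\Lmb W_{;J+1})$ yields the $\mu_{J+1}\cdot\mu_{J+1}^D$ correction in \eqref{eq:frkr-hat-2}, the one-level-down profile $\vphi_J$ generates the $\chf_{J\ge2}\mu_J^D\cdot\mu_{J+1}^D$ correction, and the $\vphi$-dependent contribution from $\calR_{J+1}^{(1)}[\vphi]$ is absorbed by the smallness of $\|\vphi\|_{\calX_{J+1}}$. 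The upper bound \eqref{eq:frkr-hat-1} follows from the same analysis without sign tracking. For (d), differentiating by $\lmb_j\rd_{\lmb_j}$ with $j\le J$ produces new sources whose pointwise bounds come from \eqref{eq:vphi_j-bd-2} together with $\lmb_j\rd_{\lmb_j}Q_{;j}=-\Lmb Q_{;j}$; propagating these through $\out H_{\lmb_{J+1}}^{-1}$ gives \eqref{eq:Phi-lmb-deriv-1}. For $j=J+1$ one derivative of regularity is lost when the $\lmb_{J+1}$-derivative hits the Green's kernel, which is why \eqref{eq:Phi-lmb-deriv-3} is stated in $X^1$ rather than $X^2$; the extra $\lmb_{J+1}^{-(D-2)}\|\vphi-\vphi'\|_X^{p-1}$ term in \eqref{eq:Phi-lmb-deriv-2} reflects the low H\"older regularity of $f''$ for \eqref{eq:NLH-rad} in large dimensions.

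The main obstacle is the outer region $r\in[\br{\lmb}_{J+1},2\dlt_0\lmb_J]$ of the bound \eqref{eq:Phi-map-1}. There, the naive formula \eqref{eq:H-inv-def} produces a $\Gmm_1$-tail of size $\aeq r^{-(N-2)}$, which is strictly larger than the $\lmb_J^{D-2}\lmb_{J+1}^D/r^{2D-2}$ tail encoded in the third region of $\omg_{J+1}$. The Lagrange multiplier $\frkr_{J+1}[\vphi]\Lmb W_{\ul{;J+1}}$ is inserted precisely so that this slowest-decaying component is cancelled; the resulting orthogonality to $\Lmb W_{\lmb_{J+1}}$ then licenses the improved formula \eqref{eq:H-inv-2}, which delivers the required faster decay. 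Verifying that this cancellation is quantitatively sharp and uniform in $J$---which is the reason $\omg_\ell$ is designed with its third region and why the sharp value of $\kpp$ in \eqref{eq:def-kappa-unified} matters already at this step---is the delicate pointwise matching at the core of the construction.
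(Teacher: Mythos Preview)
Your outline is correct and matches the paper's route: pointwise source bounds of size $\lmb_J^{-D}\cdot r^{-2}\omg_{J+1}$, then the inversion estimate (recorded in the paper as \eqref{eq:cont-01}) to pass to $\omg_{J+1}$. Two small corrections and one methodological difference are worth noting. First, the contraction smallness in \eqref{eq:Phi-map-2} comes from $\dlt_0^2$ (see \eqref{eq:R1-02}), not from $\alp_0$ or $K_0^{-1}$; the parameter hierarchy is $\alp_0\ll\dlt_0\ll K_0^{-1}\ll1$, so $K_0$ is fixed large first and $\dlt_0$ is chosen small afterwards. Second, the sharp value of $\kpp$ is \emph{not} needed for the outer-region tail: $\frkr_{J+1}[\vphi]$ is by definition the exact projection coefficient onto $\Lmb W_{;J+1}$, so the source is exactly orthogonal and \eqref{eq:H-inv-2} applies regardless of what that coefficient turns out to be; $\kpp$ only enters when you identify the leading term in \eqref{eq:frkr-hat-2}. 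Finally, for $j=J+1$ the paper does not differentiate the Green's kernel but instead invokes the scaling-invariance identity $\lmb_{J+1}\rd_{\lmb_{J+1}}\Phi[\vphi]=-\Lmb\Phi[\vphi]-\sum_{j=1}^{J}\lmb_j\rd_{\lmb_j}\Phi[\vphi]$; the $X^1$ (rather than $X^2$) bound in \eqref{eq:Phi-lmb-deriv-3} then comes simply from $\Lmb$ consuming one $r\rd_r$ of the $X^2$ control on $\Phi[\vphi]$. Your direct-differentiation explanation would also work but is more laborious.
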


\begin{rem}
The estimate \eqref{eq:frkr-hat-2} is not necessary in the construction
of the profiles $\vphi_{j}$ themselves, but it will be used in the
proof of Proposition~\ref{prop:Modified-Profile}.
\end{rem}

\begin{proof}
\textbf{Step 0.} Remarks on the proof.

The constants $\alp_{0},\dlt_{0},K_{0}$ will be chosen in the following
order: 
\begin{equation}
0<\alp_{0}\ll\dlt_{0}\ll K_{0}^{-1}\ll1.\label{eq:profile-param-dep}
\end{equation}
By the definition of $\calX_{\ell}^{k'}$, we assume for $\vphi$
and $\vphi'$ 
\begin{align*}
|\vphi|,|\vphi'| & \leq K_{0}\lmb_{J}^{-D}\omg_{J+1},\\
|\lmb_{j}\rd_{\lmb_{j}}\vphi|,|\lmb_{j}\rd_{\lmb_{j}}\vphi'| & \leq K_{0}\lmb_{j}^{-D}\omg_{J+1},\qquad\forall j=1,\dots,J.
\end{align*}
Finally, recall that $A\aleq B$ means $|A|\leq CB$.

\textbf{Step 1.} Preliminary pointwise bounds.

We claim the following pointwise bounds on $(0,2\dlt_{0}\lmb_{J}]$:
\begin{align}
r^{-2}\omg_{J+1} & \aeq\chf_{(0,\br{\lmb}_{J+1}]}[\lan y\ran^{-2D}]_{\lmb_{J+1}}^{p-1}+\chf_{[\br{\lmb}_{J+1},2\dlt_{0}\lmb_{J}]}[\lan y\ran^{-2D}]_{\lmb_{J}}^{p-2}[\lan y\ran^{-2D}]_{\lmb_{J+1}},\label{eq:1-1}\\
|U_{J}| & \aeq\lmb_{J}^{-D}\aeq[\lan y\ran^{-2D}]_{\lmb_{J}},\label{eq:1-2}\\
U_{J}-W_{;J} & \aleq\chf_{J\geq2}\lmb_{J-1}^{-D}\aeq\chf_{J\geq2}[\lan y\ran^{-2D}]_{\lmb_{J-1}},\label{eq:1-3}\\
\lmb_{j}\rd_{\lmb_{j}}U_{J} & \aleq\lmb_{j}^{-D},\qquad\forall j=1,\dots,J,\label{eq:1-4}\\
r^{2}\omg_{J+1}^{p-1} & \aleq\lmb_{J}^{2}\mu_{J+1}^{p},\label{eq:1-5}\\
r^{2}\omg_{J+1}^{p-2} & \aleq\dlt_{0}^{2}\lmb_{J+1}^{2}\mu_{J+1}^{-D}.\label{eq:1-6}
\end{align}
In this step, all the estimates will be restricted on $(0,2\dlt_{0}\lmb_{J}]$.
Now \eqref{eq:1-1} follows from the estimates $[\lan y\ran^{-2D}]_{\lmb_{J+1}}\aeq\chf_{(0,\lmb_{J+1}]}\lmb_{J+1}^{-D}+\chf_{[\lmb_{J+1},2\dlt_{0}\lmb_{J}]}\lmb_{J+1}^{D}r^{-2D}$,
$[\lan y\ran^{-2D}]_{\lmb_{J}}\aeq\lmb_{J}^{-D}$, and the relation
$(p-1)D=2$. To show \eqref{eq:1-2}, \eqref{eq:1-3}, and \eqref{eq:1-4},
we note the identities 
\[
U_{J}-W_{;J}=\sum_{\ell=1}^{J-1}W_{;\ell}+\sum_{\ell=2}^{J}\vphi_{\ell}\quad\text{and}\quad\lmb_{j}\rd_{\lmb_{j}}U_{J}=-\Lmb W_{;j}+\sum_{\ell=j}^{J}\lmb_{j}\rd_{\lmb_{j}}\vphi_{\ell}.
\]
Using $|W_{;\ell}|\aeq\lmb_{\ell}^{-D}$, $|\vphi_{\ell}|\leq K_{0}\lmb_{\ell-1}^{-D}(2\dlt_{0}\lmb_{J}/\lmb_{\ell})^{2}$
from \eqref{eq:vphi_j-bd-1}, and $K_{0}\dlt_{0}^{2}\ll1$ from parameter
dependence \eqref{eq:profile-param-dep}, we have \eqref{eq:1-2}
and \eqref{eq:1-3}. Using $\Lmb W_{;j}\aleq\lmb_{j}^{-D}$, $|\lmb_{j}\rd_{\lmb_{j}}\vphi_{\ell}|\leq K_{0}\lmb_{j}^{-D}(2\dlt_{0}\lmb_{J}/\lmb_{\ell})^{2}$
from \eqref{eq:vphi_j-bd-2}, and $K_{0}\dlt_{0}^{2}\ll1$ from parameter
dependence \eqref{eq:profile-param-dep}, we obtain \eqref{eq:1-4}.
To show \eqref{eq:1-5} and \eqref{eq:1-6}, observe that $r^{2}\omg_{J+1}^{p-1}$
and $r^{2}\omg_{J+1}^{p-2}$ achieve their maximum at $r=\br{\lmb}_{J+1}$
and $r=2\dlt_{0}\lmb_{J}$, respectively. Evaluating at these points
give \eqref{eq:1-5} and \eqref{eq:1-6}.

\textbf{Step 2.} Estimates for $\calR_{J+1}^{(0)}$.

In this step, we estimate the inhomogeneous error term $\calR_{J+1}^{(0)}$
(cf. $\mathrm{IH}$ in Section~\ref{subsec:Sketch-of-construction}):
\[
\calR_{J+1}^{(0)}=r^{-(D+2)}\{f(Q_{;J+1}+P_{J})-f(Q_{;J+1})-f(P_{J})\}.
\]
We claim 
\begin{align}
\chf_{(0,2\dlt_{0}\lmb_{J}]}\calR_{J+1}^{(0)} & \aleq\lmb_{J}^{-D}\cdot r^{-2}\omg_{J+1},\label{eq:R0-01}\\
\chf_{(0,2\dlt_{0}\lmb_{J}]}\lmb_{j}\rd_{\lmb_{j}}\calR_{J+1}^{(0)} & \aleq\lmb_{j}^{-D}\cdot r^{-2}\omg_{J+1},\qquad\forall j=1,\dots,J.\label{eq:R0-02}
\end{align}
First, \eqref{eq:R0-01} follows from applying \eqref{eq:f-05}, \eqref{eq:f-05-HM},
\eqref{eq:1-2}, and \eqref{eq:1-1}:
\begin{align*}
 & \chf_{(0,2\dlt_{0}\lmb_{J}]}\calR_{J+1}^{(0)}\\
 & \aleq r^{-(D+2)}\{\chf_{(0,\br{\lmb}_{J+1}]}|Q_{;J+1}|^{p-1}|P_{J}|+\chf_{[\br{\lmb}_{J+1},2\dlt_{0}\lmb_{J}]}|P_{J}|^{p-1}|Q_{;J+1}|\}\\
 & \aleq\chf_{(0,\br{\lmb}_{J+1}]}[\lan y\ran^{-2D}]_{\lmb_{J+1}}^{p-1}[\lan y\ran^{-2D}]_{\lmb_{J}}+\chf_{[\br{\lmb}_{J+1},2\dlt_{0}\lmb_{J}]}[\lan y\ran^{-2D}]_{\lmb_{J}}^{p-1}[\lan y\ran^{-2D}]_{\lmb_{J+1}}\\
 & \aeq\lmb_{J}^{-D}\cdot r^{-2}\omg_{J+1}.
\end{align*}
To show \eqref{eq:R0-02}, we note the identity 
\begin{align*}
 & \lmb_{j}\rd_{\lmb_{j}}(\chi_{\dlt_{0}\lmb_{J}}\calR_{J+1}^{(0)})\\
 & =\chi_{\dlt_{0}\lmb_{J}}r^{-2}\{f'(Q_{;J+1}+P_{J})-f'(P_{J})\}(\lmb_{j}\rd_{\lmb_{j}}U_{J})-\chf_{j=J}(r\rd_{r}\chi_{\dlt_{0}\lmb_{J}})\calR_{J+1}^{(0)}.
\end{align*}
For the first term, we use \eqref{eq:f-02}, \eqref{eq:f-03-HM},
\eqref{eq:1-2}, \eqref{eq:1-4}, and \eqref{eq:1-1} to obtain 
\begin{align*}
 & \chi_{\dlt_{0}\lmb_{J}}r^{-2}\{f'(Q_{;J+1}+P_{J})-f'(P_{J})\}(\lmb_{j}\rd_{\lmb_{j}}U_{J})\\
 & \aleq r^{-2}\{\chf_{(0,\br{\lmb}_{J+1}]}|Q_{;J+1}|^{p-1}+\chf_{[\br{\lmb}_{J+1},2\dlt_{0}\lmb_{J}]}|P_{J}|^{p-2}|Q_{;J+1}|\}\lmb_{j}^{-D}\\
 & \aleq\{\chf_{(0,\br{\lmb}_{J+1}]}[\lan y\ran^{-2D}]_{\lmb_{J+1}}^{p-1}+\chf_{[\br{\lmb}_{J+1},2\dlt_{0}\lmb_{J}]}[\lan y\ran^{-2D}]_{\lmb_{J}}^{p-2}[\lan y\ran^{-2D}]_{\lmb_{J+1}}\}\lmb_{j}^{-D}\\
 & \aleq\lmb_{j}^{-D}\cdot r^{-2}\omg_{J+1}.
\end{align*}
For the second term, \eqref{eq:R0-01} implies 
\[
\chf_{j=J}(r\rd_{r}\chi_{\dlt_{0}\lmb_{J}})\calR_{J+1}^{(0)}\aleq\chf_{j=J}\lmb_{J}^{-D}\cdot r^{-2}\omg_{J+1}\aleq\lmb_{j}^{-D}\cdot r^{-2}\omg_{J+1}.
\]
The previous two displays give \eqref{eq:R0-02}.

\textbf{Step 3.} Estimates for $\calR_{J+1}^{(1)}$.

The term $\calR_{J+1}^{(1)}$ corresponds to the sum of small linear
terms and nonlinear terms (cf. $\mathrm{SL}+\mathrm{NL}$ in Section~\ref{subsec:Sketch-of-construction}).
Recall 
\[
\calR_{J+1}^{(1)}[\vphi]=r^{-(D+2)}\{f(Q_{;J+1}+P_{J}+\chi_{\dlt_{0}\lmb_{J}}\rng{\vphi})-f(Q_{;J+1}+P_{J})-f'(Q_{;J+1})\chi_{\dlt_{0}\lmb_{J}}\rng{\vphi}\}.
\]
We claim 
\begin{align}
\calR_{J+1}^{(1)}[\vphi] & \aleq\{K_{0}\lmb_{J}^{-(D+2)}+K_{0}^{p}\mu_{J+1}^{p}\lmb_{J}^{-D}r^{-2}\}\omg_{J+1},\label{eq:R1-01}\\
\calR_{J+1}^{(1)}[\vphi]-\calR_{J+1}^{(1)}[\vphi'] & \aleq\dlt_{0}^{2}\|\vphi-\vphi'\|_{X}\cdot r^{-2}\omg_{J+1},\label{eq:R1-02}
\end{align}
and for $j=1,\dots J$ 
\begin{align}
\lmb_{j}\rd_{\lmb_{j}}(\calR_{J+1}^{(1)}[\vphi]) & \aleq K_{0}\dlt_{0}^{2}\lmb_{j}^{-D}\cdot r^{-2}\omg_{J+1},\label{eq:R1-03}\\
\lmb_{j}\rd_{\lmb_{j}}(\calR_{J+1}^{(1)}[\vphi]-\calR_{J+1}^{(1)}[\vphi']) & \aleq\dlt_{0}^{2}\{\|\lmb_{j}\rd_{\lmb_{j}}(\vphi-\vphi')\|_{X}\label{eq:R1-04}\\
 & \qquad+K_{0}\lmb_{J+1}^{-(D-2)}\|\vphi-\vphi'\|_{X}^{p-1}\}\cdot r^{-2}\omg_{J+1}.\nonumber 
\end{align}
The proof is relegated to Appendix~\ref{sec:Proof-of-R1}.

\textbf{Step 4.} Estimates for $\frkr_{J+1}[\vphi]$; proof of \eqref{eq:frkr-hat-1}
and \eqref{eq:frkr-hat-2}.

We first show \eqref{eq:frkr-hat-1}. Note that 
\[
\lan r^{-2}\omg_{J+1},\Lmb W_{;J+1}\ran\aleq\lmb_{J+1}^{D}\quad\text{and}\quad\lan\omg_{J+1},\Lmb W_{;J+1}\ran\aleq\lmb_{J+1}^{D+2}|\log\mu_{J+1}|.
\]
Combining these estimates with \eqref{eq:R0-01} and \eqref{eq:R1-01},
we have 
\begin{align*}
\lan\chi_{\dlt_{0}\lmb_{J}}\calR_{J+1}^{(0)},\Lmb W_{;J+1}\ran & \aleq\mu_{J+1}^{D},\\
\lan\calR_{J+1}^{(1)}[\vphi],\Lmb W_{;J+1}\ran & \aleq K_{0}\mu_{J+1}^{D+2}|\log\mu_{J+1}|+K_{0}^{p}\mu_{J+1}^{D+p}\aleq K_{0}^{p}\mu_{J+1}^{D+p}.
\end{align*}
This completes the proof of \eqref{eq:frkr-hat-1}.

We turn to show \eqref{eq:frkr-hat-2}. By the previous display, the
contribution of $\calR_{J+1}^{(1)}[\vphi]$ is already safe; it suffices
to consider $\calR_{J+1}^{(0)}$. Note the identity 
\begin{align*}
\chi_{\dlt_{0}\lmb_{J}}\calR_{J+1}^{(0)} & =\chi_{\dlt_{0}\lmb_{J}}r^{-2}f'(Q_{;J+1})\cdot W_{;J}(0)+\chi_{\dlt_{0}\lmb_{J}}r^{-2}f'(Q_{;J+1})\{U_{J}-W_{;J}(0)\}\\
 & \peq+\chi_{\dlt_{0}\lmb_{J}}r^{-(D+2)}\{f(Q_{;J+1}+P_{J})-f(Q_{;J+1})-f'(Q_{;J+1})P_{J}-f(P_{J})\}.
\end{align*}
The first term produces the leading term; we have (recall \eqref{eq:def-kappa-unified})
\begin{align*}
\lan r^{-2}f'(Q_{;J+1})\cdot W_{;J}(0),\Lmb W_{;J+1}\ran & =\br{\iota}_{J+1}\mu_{J+1}^{D}\lan r^{-2}f'(Q)\cdot W(0),\Lmb W\ran\\
 & =-\kappa\br{\iota}_{J+1}\mu_{J+1}^{D}\|\Lmb W\|_{L^{2}}^{2}
\end{align*}
and 
\begin{align*}
 & \lan r^{-2}f'(Q_{;J+1})\cdot(1-\chi_{\dlt_{0}\lmb_{J}})W_{;J}(0),\Lmb W_{;J+1}\ran\\
 & \aleq\lmb_{J}^{-D}\|\chf_{[\dlt_{0}\lmb_{J},+\infty)}[\lan y\ran^{-2D}]_{\lmb_{J+1}}^{p}\|_{L^{1}}\aleq\lmb_{J}^{-D}\lmb_{J+1}^{D+2}\|\chf_{[\dlt_{0}\lmb_{J},+\infty)}r^{-(N+2)}\|_{L^{1}}\aleq\dlt_{0}^{-2}\mu_{J+1}^{D+2}.
\end{align*}
The remaining terms are considered as errors. First, we use \eqref{eq:1-3}
to have 
\begin{align*}
 & \chi_{\dlt_{0}\lmb_{J}}r^{-2}f'(Q_{;J+1})\{U_{J}-W_{;J}(0)\}\\
 & =\chi_{\dlt_{0}\lmb_{J}}r^{-2}f'(Q_{;J+1})\{(U_{J}-W_{;J})+(W_{;J}-W_{;J}(0))\}\\
 & \aleq\chf_{(0,2\dlt_{0}\lmb_{J}]}[\lan y\ran^{-2D}]_{\lmb_{J+1}}^{p-1}\{\chf_{J\geq2}\lmb_{J-1}^{-D}+\lmb_{J}^{-(D+2)}r^{2}\}
\end{align*}
and hence 
\[
\lan\chi_{\dlt_{0}\lmb_{J}}f'(Q_{;J+1})\{U_{J}-W_{;J}(0)\},\Lmb W_{;J+1}\ran\aleq\chf_{J\geq2}\mu_{J}^{D}\mu_{J+1}^{D}+\mu_{J+1}^{D+2}|\log\mu_{J+1}|.
\]
Next, we recall \eqref{eq:1-2}, apply \eqref{eq:f-04} and \eqref{eq:f-06-HM}
on $(0,\br{\lmb}_{J+1}]$, and apply \eqref{eq:f-05} and \eqref{eq:f-05-HM}
on $[\br{\lmb}_{J+1},2\dlt_{0}\lmb_{J}]$ to obtain 
\begin{align*}
 & \chi_{\dlt_{0}\lmb_{J}}r^{-(D+2)}\{f(Q_{;J+1}+P_{J})-f(Q_{;J+1})-f'(Q_{;J+1})P_{J}-f(P_{J})\}\\
 & \aleq\chf_{(0,\br{\lmb}_{J+1}]}[\lan y\ran^{-2D}]_{\lmb_{J}}^{p}+\chf_{[\br{\lmb}_{J+1},2\dlt_{0}\lmb_{J}]}[\lan y\ran^{-2D}]_{\lmb_{J}}^{p-1}[\lan y\ran^{-2D}]_{\lmb_{J+1}}\\
 & \aleq\chf_{(0,\br{\lmb}_{J+1}]}\lmb_{J}^{-(D+2)}+\chf_{[\br{\lmb}_{J+1},2\dlt_{0}\lmb_{J}]}\lmb_{J}^{-2}\lmb_{J+1}^{D}r^{-2D}
\end{align*}
and hence 
\[
\lan\chi_{\dlt_{0}\lmb_{J}}r^{-D-2}\{f(Q_{;J+1}+P_{J})-f(Q_{;J+1})-f'(Q_{;J+1})P_{J}-f(P_{J})\},\Lmb W_{;J+1}\ran\aleq\mu_{J+1}^{D+1}.
\]
This completes the proof of \eqref{eq:frkr-hat-2}.

\textbf{Step 5.} Completion of the proof.

In this step, we prove \eqref{eq:Phi-map-1}--\eqref{eq:Phi-lmb-deriv-3}.
Note that we have a pointwise estimate 
\[
r^{-2}\omg_{J+1}+|\lan r^{-2}\omg_{J+1},\Lmb W_{;J+1}\ran\Lmb W_{\ul{;J+1}}|\aleq r^{-2}\omg_{J+1}+\chf_{[2\dlt_{0}\lmb_{J},+\infty)}\lmb_{J+1}^{2D-2}r^{-2D}.
\]
Taking $\out H_{\lmb_{J+1}}^{-1}$ using \eqref{eq:H-inv-2} and \eqref{eq:H-lmb-inv-rel},
and applying the pointwise bounds \eqref{eq:Gmm-ptwise}, we have
for any function $F$ with $F\aleq r^{-2}\omg_{J+1}$ 
\begin{equation}
\chf_{(0,2\dlt_{0}\lmb_{J}]}\Big|\out H_{\lmb_{J+1}}^{-1}\Big\{ F-\frac{\lan F,\Lmb W_{;J+1}\ran\Lmb W_{\ul{;J+1}}}{\|\Lmb W\|_{L^{2}}^{2}}\Big\}\Big|_{2}\aleq\omg_{J+1}.\label{eq:cont-01}
\end{equation}

Now the estimates \eqref{eq:R0-01}, \eqref{eq:R1-01} after replacing
$\lmb_{J}^{-2}\leq\dlt_{0}^{2}r^{-2}$, and \eqref{eq:R1-02} imply
\begin{align*}
|\Phi[\vphi]|_{2} & \aleq(1+K_{0}\dlt_{0}^{2})\lmb_{J}^{-D}\omg_{J+1},\\
|\Phi[\vphi]-\Phi[\vphi']|_{2} & \aleq\dlt_{0}^{2}\|\vphi-\vphi'\|_{X}\omg_{J+1}.
\end{align*}
For $j\in\{1,\dots,J\}$, the estimates \eqref{eq:R0-02}, \eqref{eq:R1-03},
and \eqref{eq:R1-04} imply 
\begin{align*}
|\lmb_{j}\rd_{\lmb_{j}}\Phi[\vphi]|_{2} & \aleq(1+K_{0}\dlt_{0}^{2})\lmb_{j}^{-D}\omg_{J+1},\\
|\lmb_{j}\rd_{\lmb_{j}}\{\Phi[\vphi]-\Phi[\vphi']\}|_{2} & \aleq\dlt_{0}^{2}\{\|\lmb_{j}\rd_{\lmb_{j}}(\vphi-\vphi')\|_{X}+K_{0}\lmb_{J+1}^{-(D-2)}\|\vphi-\vphi'\|_{X}^{p-1}\}\omg_{J+1}.
\end{align*}
Finally when $j=J+1$, the scaling invariance of $\Phi[\vphi]$ implies
\begin{equation}
\lmb_{J+1}\rd_{\lmb_{J+1}}\Phi[\vphi]=-\Lmb\Phi[\vphi]-\sum_{j=1}^{J}\lmb_{j}\rd_{\lmb_{j}}\Phi[\vphi],\label{eq:scale-01}
\end{equation}
so using $\sum_{j=1}^{J}\lmb_{j}^{-D}\aleq\lmb_{J}^{-D}$ we have
\[
|\lmb_{J+1}\rd_{\lmb_{J+1}}\Phi[\vphi]|_{1}\aleq(1+K_{0}\dlt_{0}^{2})\lmb_{J}^{-D}\omg_{J+1}.
\]
Applying the parameter dependence \eqref{eq:profile-param-dep} completes
the proof of \eqref{eq:Phi-map-1}--\eqref{eq:Phi-lmb-deriv-3}.
\end{proof}
\begin{lem}[Recursive construction of $\vphi_{J+1}$]
\label{lem:hat-vphi-J+1-construction}Let the constants $\alp_{0},\dlt_{0},K_{0}$
be fixed by Lemma~\ref{lem:contract-recur} and assume the hypothesis
as in Lemma~\ref{lem:contract-recur} for some $J\geq1$. Then, $\Phi$
has a unique fixed point $\vphi_{J+1}\in\calX_{J+1}^{1}$ such that
\begin{align}
\|\vphi_{J+1}\|_{X^{2}} & \leq K_{0}\lmb_{J}^{-D},\label{eq:hat-vphi-J+1-bd}\\
\|\lmb_{j}\rd_{\lmb_{j}}\vphi_{J+1}\|_{X^{1}} & \leq K_{0}\min\{\lmb_{j}^{-D},\lmb_{J}^{-D}\},\qquad\forall j=1,\dots,J+1.\label{eq:hat-vphi-J+1-deriv}
\end{align}
\end{lem}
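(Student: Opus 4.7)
The plan is to construct $\vphi_{J+1}$ by Picard iteration starting from the zero function, using the quantitative estimates packaged in Lemma~\ref{lem:contract-recur}, and then upgrade the regularity from $\calX_{J+1}$ to $\calX_{J+1}^{1}$ by passing to the limit in the differentiated iteration.

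First, set $\vphi^{(0)} \equiv 0$ and $\vphi^{(n+1)} \coloneqq \Phi[\vphi^{(n)}]$. By induction on $n$ using \eqref{eq:Phi-map-1}, each $\vphi^{(n)}$ satisfies $\|\vphi^{(n)}\|_{X^{2}} \leq K_{0}\lmb_{J}^{-D}$. The contraction estimate \eqref{eq:Phi-map-2}, combined with the trivial inequality $\|\cdot\|_{X} \leq \|\cdot\|_{X^{2}}$, then gives $\|\vphi^{(n+1)} - \vphi^{(n)}\|_{X^{2}} \leq 2^{-n}\|\vphi^{(1)}\|_{X}$, so $(\vphi^{(n)})$ is Cauchy in $X^{2}$ and its limit $\vphi_{J+1}$ inherits the bound \eqref{eq:hat-vphi-J+1-bd}. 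Continuity of $\Phi$ in $X$ (a direct consequence of \eqref{eq:Phi-map-2}) shows that $\vphi_{J+1}$ is a fixed point of $\Phi$, and uniqueness within the ball $\{\|\vphi\|_{\calX_{J+1}} \leq K_{0}\}$ is immediate from the $\tfrac{1}{2}$-contraction.

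Next, I establish the derivative bound \eqref{eq:hat-vphi-J+1-deriv} along the iterates. By induction, using \eqref{eq:Phi-lmb-deriv-1} for $j \leq J$ and \eqref{eq:Phi-lmb-deriv-3} for $j = J+1$ (together with the observation that $\lmb_{J+1} < \lmb_{J}$ gives $\lmb_{J}^{-D} = \min\{\lmb_{J+1}^{-D},\lmb_{J}^{-D}\}$), each $\vphi^{(n)}$ lies in $\calX_{J+1}^{1}$ with the stated bound. To pass to the limit, I show that $(\lmb_{j}\rd_{\lmb_{j}}\vphi^{(n)})_{n}$ is Cauchy in $X$. For $j \leq J$, \eqref{eq:Phi-lmb-deriv-2} yields, with $a_{n} \coloneqq \|\lmb_{j}\rd_{\lmb_{j}}(\vphi^{(n+1)} - \vphi^{(n)})\|_{X}$,
\[
a_{n} \leq \tfrac{1}{2}a_{n-1} + \lmb_{J+1}^{-(D-2)}\|\vphi^{(n)} - \vphi^{(n-1)}\|_{X}^{p-1},
\]
and the last term decays at the geometric rate $2^{-(p-1)n}$ thanks to Step~1 and $p-1 > 0$; hence $a_{n} \to 0$. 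For $j = J+1$, the scaling identity \eqref{eq:scale-01} expresses $\lmb_{J+1}\rd_{\lmb_{J+1}}\vphi^{(n)}$ in terms of $\Lmb\vphi^{(n)}$ and $\lmb_{j}\rd_{\lmb_{j}}\vphi^{(n)}$ for $j \leq J$, whose differences have already been controlled (the former via the $X^{2}$-Cauchy property from Step~1, using $|\Lmb\vphi|_{1} \aleq |\vphi|_{2}$). The $X$-limits of these derivative sequences are identified with $\lmb_{j}\rd_{\lmb_{j}}\vphi_{J+1}$ distributionally, and the uniform $X^{1}$-bound is preserved in the pointwise a.e.\ limit, yielding \eqref{eq:hat-vphi-J+1-deriv}.

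The delicate point in this proof is the nonlinear term $\lmb_{J+1}^{-(D-2)}\|\vphi-\vphi'\|_{X}^{p-1}$ in the difference estimate \eqref{eq:Phi-lmb-deriv-2}, which obstructs any direct contraction argument at the $\calX_{J+1}^{1}$ level. The two-step strategy resolves this: Step~1 produces strong geometric convergence in the weaker $X$-norm, which in Step~2 enters the derivative estimate only as a summable perturbation because $p>1$. Apart from this one point, every other ingredient is a mechanical bookkeeping of the constants already prepared in Lemma~\ref{lem:contract-recur}.
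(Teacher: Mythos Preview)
Your argument is essentially the same as the paper's: Picard iteration from $0$, convergence in $X^{2}$ via \eqref{eq:Phi-map-1}--\eqref{eq:Phi-map-2}, uniform $X^{1}$ bounds on the derivatives via \eqref{eq:Phi-lmb-deriv-1} and \eqref{eq:Phi-lmb-deriv-3}, and then convergence of the derivative iterates for $j\leq J$ from the recursion \eqref{eq:Phi-lmb-deriv-2}, with the $j=J+1$ case handled by scaling invariance. One point needs tightening: from the recursion $a_{n}\leq\tfrac{1}{2}a_{n-1}+C\,2^{-(p-1)n}$ you conclude only ``$a_{n}\to 0$'', but since the $a_{n}$ are \emph{consecutive differences} you need $\sum_{n}a_{n}<\infty$ to get the Cauchy property you claimed; the recursion does give geometric decay of $a_{n}$ (the paper makes this explicit by showing $a_{n}+C_{0}K_{0}^{p-1}\lmb_{J+1}^{-D}2^{-n(p-1)}$ contracts by a fixed factor $a\in(2^{-(p-1)},1)$), so the fix is immediate once stated.
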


\begin{proof}
We proceed similarly as in the proof of the Banach fixed point theorem.
First, choose the initial state $\vphi^{(0)}\equiv0$ and define the
iterates $\vphi^{(n+1)}\coloneqq\Phi[\vphi^{(n)}]$ inductively. By
\eqref{eq:Phi-map-1} and \eqref{eq:Phi-map-2}, the sequence $\vphi^{(n+1)}$
converges to $\vphi_{J+1}\in\calX_{J+1}$, which is the unique fixed
point of $\Phi$ satisfying \eqref{eq:hat-vphi-J+1-bd}. Moreover,
we have a quantitative estimate $\|\vphi^{(n+1)}-\vphi^{(n)}\|_{\calX_{J+1}}\leq K_{0}2^{-n}$
for all $n\in\bbN$.

It remains to show that $\vphi_{J+1}$ is differentiable in each $\lmb_{j}$
with the uniform bounds \eqref{eq:hat-vphi-J+1-deriv}. By \eqref{eq:Phi-lmb-deriv-1}
and \eqref{eq:Phi-lmb-deriv-3}, the iterates $\vphi^{(n+1)}$ satisfy
the uniform bound $\|\lmb_{j}\rd_{\lmb_{j}}\vphi^{(n+1)}\|_{X^{1}}\leq K_{0}\min\{\lmb_{j}^{-D},\lmb_{J}^{-D}\}$
for $j=1,\dots,J+1$. By scaling invariance, it suffices to show that
$\lmb_{j}\rd_{\lmb_{j}}\vphi^{(n+1)}$ converges locally uniformly
in $\lmb_{1},\dots,\lmb_{J+1}$ for all $j=1,\dots,J$. We substitute
the estimate $\|\vphi^{(n+1)}-\vphi^{(n)}\|_{\calX_{J+1}}\leq K_{0}2^{-n}$
into \eqref{eq:Phi-lmb-deriv-2} to obtain 
\[
\|\lmb_{j}\rd_{\lmb_{j}}(\vphi^{(n+2)}-\vphi^{(n+1)})\|_{X^{1}}\leq\frac{1}{2}\|\lmb_{j}\rd_{\lmb_{j}}(\vphi^{(n+1)}-\vphi^{(n)})\|_{X^{1}}+K_{0}^{p-1}\lmb_{J+1}^{-D}2^{-n(p-1)}.
\]
Choosing the constants $2^{-(p-1)}<a<1$ and $C_{0}>0$ such that
$aC_{0}>2^{-(p-1)}C_{0}+1$, we have for any $n\in\bbN$ 
\begin{align*}
 & \|\lmb_{j}\rd_{\lmb_{j}}(\vphi^{(n+2)}-\vphi^{(n+1)})\|_{X^{1}}+C_{0}K_{0}^{p-1}\lmb_{J+1}^{-D}\cdot2^{-(n+1)(p-1)}\\
 & \leq\frac{1}{2}\|\lmb_{j}\rd_{\lmb_{j}}(\vphi^{(n+1)}-\vphi^{(n)})\|_{X^{1}}+(2^{-(p-1)}C_{0}+1)K_{0}^{p-1}\lmb_{J+1}^{-D}\cdot2^{-n(p-1)}\\
 & \leq a\cdot(\|\lmb_{j}\rd_{\lmb_{j}}(\vphi^{(n+1)}-\vphi^{(n)})\|_{X^{1}}+C_{0}K_{0}^{p-1}\lmb_{J+1}^{-D}\cdot2^{-n(p-1)}).
\end{align*}
This inequality shows that $\|\lmb_{j}\rd_{\lmb_{j}}(\vphi^{(n+1)}-\vphi^{(n)})\|_{X^{1}}$
is exponentially decreasing in $n$ and hence $\lmb_{j}\rd_{\lmb_{j}}\vphi^{(n+1)}$
converges locally uniformly in $\lmb_{1},\dots,\lmb_{J+1}$. This
completes the proof of \eqref{eq:hat-vphi-J+1-deriv}.
\end{proof}
By recursion, we have constructed the profiles $\vphi_{\ell}\in\calX_{\ell}^{1}$
for all $\ell\geq2$:
\begin{cor}[The profiles $\vphi_{\ell}$]
\label{cor:Profiles-vphi}There exist $0<\alp_{0},\dlt_{0}<\frac{1}{10}$
and profiles $\vphi_{\ell}\in\calX_{\ell}^{1}$ for all $\ell\geq2$
with the following properties.
\begin{enumerate}
\item (Estimates for $\vphi_{\ell}$) Recall the definition of $\omg_{\ell}$
from \eqref{eq:def-omg-ell}. We have 
\begin{align}
|\vphi_{\ell}|_{2} & \aleq\lmb_{\ell-1}^{-D}\omg_{\ell},\label{eq:vphi-bd}\\
|\lmb_{j}\rd_{\lmb_{j}}\vphi_{\ell}|_{1} & \aleq\min\{\lmb_{\ell-1}^{-D},\lmb_{j}^{-D}\}\omg_{\ell},\label{eq:vphi-deriv}
\end{align}
\item (Equation for $\vphi_{\ell}$) $\vphi_{\ell}$ solves on the region
$(0,2\dlt_{0}\lmb_{\ell-1})$
\begin{equation}
H_{\lmb_{\ell}}\vphi_{\ell}=-\frac{\frkr_{\ell}}{\lmb_{\ell}^{2}}\Lmb W_{;\ell}+\chi_{\dlt\lmb_{\ell-1}}\calR_{\ell}^{(0)}+\calR_{\ell}^{(1)},\label{eq:vphi-eqn}
\end{equation}
where 
\begin{align*}
\frkr_{\ell} & =\lan\chi_{\dlt\lmb_{\ell-1}}\calR_{\ell}^{(0)}+\calR_{\ell}^{(1)},\Lmb W_{;\ell}\ran/\|\Lmb W\|_{L^{2}}^{2},\\
\calR_{\ell}^{(0)} & =r^{-(D+2)}\{f(Q_{;\ell}+P_{\ell-1})-f(Q_{;\ell})-f(P_{\ell-1})\},\\
\calR_{\ell}^{(1)} & =r^{-(D+2)}\{f(Q_{;\ell}+P_{\ell-1}+\chi_{\dlt\lmb_{\ell-1}}\rng{\vphi}_{\ell})-f(Q_{;\ell}+P_{\ell-1})-f'(Q_{;\ell})\chi_{\dlt\lmb_{\ell-1}}\rng{\vphi}_{\ell}\}.
\end{align*}
\item (Estimates for $\frkr_{\ell}$) We have 
\begin{align}
|\frkr_{\ell}| & \aleq\mu_{\ell}^{D},\label{eq:frkr-1}\\
|\frkr_{\ell}+\kpp\br{\iota}_{\ell}\mu_{\ell}^{D}| & \aleq\mu_{\ell}^{D+1}+\chf_{\ell\geq3}\mu_{\ell}^{D}\mu_{\ell-1}^{D},\label{eq:frkr-2}
\end{align}
\end{enumerate}
\end{cor}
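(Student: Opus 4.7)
The plan is to prove the corollary by induction on $\ell$, with Lemma~\ref{lem:hat-vphi-J+1-construction} doing all of the heavy lifting at the inductive step. The base case is vacuous (we set $\vphi_{1}\equiv 0$). For the inductive step, assume that $\vphi_{2},\dots,\vphi_{J}$ have already been constructed in $\calX_{\ell}^{1}$ and satisfy \eqref{eq:vphi-bd}--\eqref{eq:vphi-deriv} for all $\ell\leq J$. Unwinding the definitions of the norms $\|\cdot\|_{X^{k}}$ and $\calX_{\ell}^{k'}$ in Section~\ref{subsec:Notation-for-rigorous-construction}, the inductive hypothesis is precisely the assumption \eqref{eq:vphi_j-bd-1}--\eqref{eq:vphi_j-bd-2} of Lemma~\ref{lem:contract-recur}. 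Applying Lemma~\ref{lem:hat-vphi-J+1-construction}, one obtains the fixed point $\vphi_{J+1}\in\calX_{J+1}^{1}$ of $\Phi$ satisfying the uniform bounds \eqref{eq:hat-vphi-J+1-bd}--\eqref{eq:hat-vphi-J+1-deriv}, which, after absorbing the weight $\omg_{J+1}$, reproduce \eqref{eq:vphi-bd}--\eqref{eq:vphi-deriv} at level $\ell=J+1$. Iterating the procedure for all $\ell\geq 2$ completes the construction.

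To derive the equation \eqref{eq:vphi-eqn}, I would simply use that $\vphi_{J+1}$ is the fixed point of $\Phi$, so
\[
\vphi_{J+1}=\chf_{(0,2\dlt_{0}\lmb_{J})}\out H_{\lmb_{J+1}}^{-1}\{\chi_{\dlt_{0}\lmb_{J}}\calR_{J+1}^{(0)}+\calR_{J+1}^{(1)}[\vphi_{J+1}]-\frkr_{J+1}[\vphi_{J+1}]\Lmb W_{\ul{;J+1}}\}
\]
on $(0,2\dlt_{0}\lmb_{J})$. Applying $H_{\lmb_{J+1}}$ and using $\out H_{\lmb_{J+1}}^{-1}$ being a right inverse (Section~\ref{subsec:Formal-inversion}) yields \eqref{eq:vphi-eqn} with $\frkr_{\ell}=\frkr_{\ell}[\vphi_{\ell}]$ and with $\calR_{\ell}^{(1)}=\calR_{\ell}^{(1)}[\vphi_{\ell}]$. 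The estimates \eqref{eq:frkr-1}--\eqref{eq:frkr-2} are then immediate from \eqref{eq:frkr-hat-1}--\eqref{eq:frkr-hat-2}, evaluated at the fixed point $\vphi_{\ell}$ whose $\calX_{\ell}^{1}$-norm is controlled by $K_{0}$.

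The only subtle point is book-keeping: one must check that when using Lemma~\ref{lem:contract-recur} at step $J+1$, the implicit constants from the previous steps do not drift with $J$. This is guaranteed by the choice \eqref{eq:profile-param-dep} of parameters $0<\alp_{0}\ll\dlt_{0}\ll K_{0}^{-1}\ll 1$ made once and for all in Lemma~\ref{lem:contract-recur}, together with the fact (stated at the start of Section~\ref{subsec:Construction-of-vphi}) that all implicit constants in the lemmas of this subsection are independent of $J$ and depend only on $N$. Consequently, the induction closes with the same $K_{0}$ at every step, so the family $(\vphi_{\ell})_{\ell\geq 2}$ is well-defined uniformly.

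I do not expect any genuine obstacle here: Lemma~\ref{lem:contract-recur} and Lemma~\ref{lem:hat-vphi-J+1-construction} already encode the contraction mapping argument and the difference estimates that control the scale derivatives. The corollary is, in essence, a clean restatement of what has already been proven, with one induction on $\ell$ to propagate the uniform bounds, and the derivation of \eqref{eq:vphi-eqn} from the fixed point equation by applying $H_{\lmb_{\ell}}$.
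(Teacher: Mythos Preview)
Your proposal is correct and follows essentially the same approach as the paper: fix the constants $\alp_{0},\dlt_{0},K_{0}$ from Lemma~\ref{lem:contract-recur} once and for all, then apply Lemma~\ref{lem:hat-vphi-J+1-construction} recursively so that at each step $J+1=\ell$ the hypotheses \eqref{eq:vphi_j-bd-1}--\eqref{eq:vphi_j-bd-2} are satisfied and the conclusions \eqref{eq:hat-vphi-J+1-bd}--\eqref{eq:hat-vphi-J+1-deriv} yield \eqref{eq:vphi-bd}--\eqref{eq:vphi-deriv}; the equation \eqref{eq:vphi-eqn} and the bounds \eqref{eq:frkr-1}--\eqref{eq:frkr-2} then follow directly from the fixed point identity and \eqref{eq:frkr-hat-1}--\eqref{eq:frkr-hat-2}. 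Your write-up is in fact more explicit than the paper's own proof, which simply says the construction is recursive and that $K_{0}$ can be absorbed into the implicit constants.
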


\begin{proof}
Fix $\alp_{0},\dlt_{0},K_{0}$ as in Lemma~\ref{lem:contract-recur}.
By Lemma~\ref{lem:hat-vphi-J+1-construction}, for all $\ell\geq2$,
we can recursively construct the profiles $\vphi_{\ell}\in\calX_{\ell}^{1}$
such that the statements with $J+1=\ell$ in Lemma~\ref{lem:contract-recur}
are true. As $K_{0}$ is fixed, the statement can be written without
keeping track of $K_{0}$ in the implicit constants.
\end{proof}

\subsection{Proof of Proposition~\ref{prop:Modified-Profile}}

In this subsection, we prove Proposition~\ref{prop:Modified-Profile}
using the corrector profiles $\vphi_{\ell}$ constructed in Corollary~\ref{cor:Profiles-vphi}.
Fix the constants $\alp_{0},\dlt_{0}>0$ as in Corollary~\ref{cor:Profiles-vphi}.
Note that (cf. \eqref{eq:def-Pj}) for each $j\geq1$ the modified
$j$ bubbles profile $U_{j}$ is defined by 
\begin{align*}
U_{j}=U_{j}(\iota_{1},\dots,\iota_{j},\lmb_{1},\dots,\lmb_{j};r) & =W_{;1}+\tsum{\ell=2}j(W_{;\ell}+\chi_{\dlt_{0}\lmb_{\ell-1}}\vphi_{\ell}).
\end{align*}
From now on, we fix $J\in\bbN$ and allow implicit constants to depend
also on $J$. 
\begin{lem}
\label{lem:A_0}There exists $A_{0}>10$ such that \eqref{eq:U-bd-1}--\eqref{eq:U-bd-3}
for \eqref{eq:NLH-rad} and \eqref{eq:U-bd-2-HM}--\eqref{eq:U-bd-3-HM}
for \eqref{eq:HMHF-equiv} hold.
\end{lem}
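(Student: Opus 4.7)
Fix $j\in\{1,\ldots,J\}$ and work on the small annulus $I_j\coloneqq[A_0\br{\lmb}_{j+1},A_0^{-1}\br{\lmb}_j]$ (for \eqref{eq:U-bd-1}), respectively its enlargement $\wh{I}_j\coloneqq[A_0^{-1}\br{\lmb}_{j+1},A_0\br{\lmb}_j]$ (for \eqref{eq:U-bd-2}--\eqref{eq:U-bd-3}), interpreting $\br{\lmb}_1$ and $\br{\lmb}_{J+1}$ so that the outermost and innermost annuli extend to $\infty$ and to $0$ respectively. The plan is: on $I_j$ show that the $j$-th bubble pointwise dominates all other terms in $U_J=W_{;j}+\sum_{k\ne j}W_{;k}+\sum_{\ell=2}^{J}\chi_{\dlt_0\lmb_{\ell-1}}\vphi_\ell$; on $\wh I_j$ show instead that the off-diagonal contributions are at worst comparable to the two neighbors $W_{;j\pm 1}$. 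The constant $A_0$ will be chosen as a large $(N,J)$-dependent number, taken \emph{after} $\alp_0,\dlt_0$ are fixed in Corollary~\ref{cor:Profiles-vphi}; combined with $\mu_\ell<\alp_0$, the smallness $A_0^{-1}\ll 1$ absorbs all universal constants.

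For the NLH part, use $|W_{;k}(r)|\aeq\lmb_k^{-D}\lan r/\lmb_k\ran^{-2D}$. On $\wh I_j$ one has $r\leq A_0\br{\lmb}_j\leq A_0\sqrt{\alp_0}\,\lmb_{j-1}\ll\lmb_k$ for every $k<j$, and $r\geq A_0^{-1}\br{\lmb}_{j+1}\geq A_0^{-1}\alp_0^{-1/2}\lmb_{j+1}\gg\lmb_k$ for every $k>j$, provided $A_0^2\alp_0\ll 1$; hence $|W_{;k}|\aeq\lmb_k^{-D}$ for $k<j$ and $|W_{;k}|\aeq\lmb_k^D r^{-2D}$ for $k>j$. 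Splitting $\wh I_j$ into $\{r\leq\lmb_j\}$ and $\{r\geq\lmb_j\}$ and taking the worst point inside each subregion, the ratio $|W_{;k}|/|W_{;j}|$ is bounded by $A_0^{-2D}\alp_0^{D(|k-j|-1)}$ on $I_j$, and on $\wh I_j$ by the same expression with $A_0^{-2D}$ replaced by an $O_J(1)$ constant precisely when $|k-j|=1$. Summing the geometric series in $|k-j|$ controls all off-scale bubbles. The corrector bounds $|\vphi_\ell|\aleq\lmb_{\ell-1}^{-D}\omg_\ell$ from \eqref{eq:vphi-bd}, combined with the three-piece shape of $\omg_\ell$ in \eqref{eq:def-omg-ell} and the cutoff support $r\leq 2\dlt_0\lmb_{\ell-1}$, are handled analogously: in each region $r\gtrless\lmb_\ell,\br{\lmb}_\ell$ the corrector picks up an extra factor of $\mu_\ell^D$, $\dlt_0^{2D-2}$, or $A_0^{-c}$ (some $c>0$) relative to $|W_{;j}|$. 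Putting everything together gives \eqref{eq:U-bd-1} and \eqref{eq:U-bd-2}, and retaining the $k=j\pm 1$ bubbles explicitly in the $\wh I_j$ bookkeeping gives \eqref{eq:U-bd-3}.

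For the HMHF part, use $r^{-D}\sin Q_{;k} = 2\iota_k\lmb_k^{-D}/(1+(r/\lmb_k)^{2D})$, which has the same dyadic shape as $W_{;k}$, together with $|Q_{;k}(r)|\aleq (r/\lmb_k)^D$ for $r\leq \lmb_k$ and $|Q_{;k}(r)-\iota_k\pi|\aleq (\lmb_k/r)^D$ for $r\geq \lmb_k$. On $\wh I_j$ decompose
\[
P_J = Q_{;j}+M_j\pi+E,\qquad M_j\coloneqq\sum_{k>j}\iota_k,
\]
with $E$ collecting all the remaining terms (wide-bubble tails, narrow-bubble deviations from $\iota_k\pi$, and correctors); then $r^{-D}|E|$ is controlled exactly as in the NLH bookkeeping by $\sum_{k\ne j}\lmb_k^{-D}\lan r/\lmb_k\ran^{-2D}$ plus corrector contributions. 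The addition formula
\[
\sin P_J = (-1)^{M_j}\bigl(\sin Q_{;j}\cos E+\cos Q_{;j}\sin E\bigr)
\]
yields $r^{-D}|\sin P_J| \leq r^{-D}|\sin Q_{;j}|+r^{-D}|E|$, proving \eqref{eq:U-bd-2-HM}, and the identity $\sin(P_J-Q_{;j}) = (-1)^{M_j}\sin E$ proves \eqref{eq:U-bd-3-HM}.

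The main obstacle is bookkeeping the regime transitions at $r=\lmb_j$ (for bubbles) and at $r\in\{\lmb_\ell,\br{\lmb}_\ell,2\dlt_0\lmb_{\ell-1}\}$ (for correctors), and in particular checking that the two-sided comparison $|U|\aeq|W_{;j}|$ in \eqref{eq:U-bd-1} survives uniformly on $I_j$: the lower bound $|U|\ageq|W_{;j}|$ requires that \emph{all} off-diagonal terms be suppressed by a small constant rather than merely $O_J(1)$, which is precisely what fails on the larger $\wh I_j$ and forces the two-scale annulus formulation. Taking $A_0$ last and large enough (depending on $J$) so that $A_0^{-2D}C_J\ll 1$ for the geometric-sum constants $C_J$ closes the proof.
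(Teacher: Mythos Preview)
Your approach is the same as the paper's in spirit, but there is a genuine tension in your parameter ordering that leaves a gap on the enlarged annulus $\wh I_j$. You assert both that $A_0$ is chosen large \emph{after} $\alp_0$ is fixed and that $A_0^2\alp_0\ll 1$; the latter is what your asymptotics $|W_{;k}|\aeq\lmb_k^{-D}$ (for $k<j$) and $|W_{;k}|\aeq\lmb_k^Dr^{-2D}$ (for $k>j$) on $\wh I_j$ actually need, and it may fail. Concretely, if $\mu_{j+1},\mu_{j+2}$ are close to $\alp_0$ and $A_0>\alp_0^{-3/2}$, then the lower endpoint $A_0^{-1}\br\lmb_{j+1}$ of $\wh I_j$ drops below $\lmb_{j+2}$; at such $r$ one has $|W_{;j+2}|\aeq\lmb_{j+2}^{-D}\gg\lmb_{j+1}^{-D}\aeq|W_{;j+1}|$, so your claim that for $|k-j|\ge 2$ the ratio $|W_{;k}|/|W_{;j}|$ retains the factor $A_0^{-2D}\alp_0^{D(|k-j|-1)}$ on $\wh I_j$ is false, and ``retaining the $k=j\pm1$ bubbles explicitly'' does not suffice for \eqref{eq:U-bd-3}. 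The same issue propagates to your HMHF argument, since the smallness of $r^{-D}|E|$ relies on the same asymptotics.

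The paper avoids this entirely with a simple device: instead of the sharp asymptotic for each $W_{;k}$, it uses the \emph{global} one-sided bound $|[\lan y\ran^{-2D}]_{;\ell}|\le\lmb_\ell^Dr^{-2D}$ (valid for all $r$) to get $\sum_{\ell>j}\aleq\lmb_{j+1}^Dr^{-2D}$ and $\sum_{\ell<j}\aleq\lmb_{j-1}^{-D}$, and only \emph{then} compares on $\wh I_j$ via $\lmb_{j+1}^Dr^{-2D}\le(1+A_0^2\alp_0)^D|W_{;j+1}|$ and $\lmb_{j-1}^{-D}\le(1+A_0^2\alp_0)^D|W_{;j-1}|$. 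These hold with $A_0$-dependent constants with no smallness of $A_0^2\alp_0$ required, so \eqref{eq:U-bd-2}--\eqref{eq:U-bd-3} follow once $A_0$ is fixed; only \eqref{eq:U-bd-1} needs $A_0$ large. For HMHF the paper (implicitly) uses the subadditivity $|\sin(\sum a_\ell)|\le\sum|\sin a_\ell|$ on $P-Q_{;j}=\sum_{\ell\neq j}Q_{;\ell}+\text{correctors}$ together with $r^{-D}|\sin Q_{;\ell}|\aeq|[\lan y\ran^{-2D}]_{;\ell}|$, which again avoids any pointwise smallness of $E$.
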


\begin{proof}
First, observe using \eqref{eq:vphi-bd} that 
\begin{align*}
\chi_{\dlt_{0}\lmb_{\ell-1}}\vphi_{\ell}\aleq\lmb_{\ell-1}^{-D}\omg_{\ell}\aleq\chf_{(0,2\dlt_{0}\lmb_{\ell-1}]}\lmb_{\ell-1}^{-D}\min\{1,\lmb_{\ell-1}^{D}\lmb_{\ell}^{D}r^{-2D}\}\quad\\
\aleq\min\{|[\lan y\ran^{-2D}]_{;\ell-1}|,|[\lan y\ran^{-2D}]_{;\ell}|\}.
\end{align*}
Thus we obtain 
\begin{align*}
U-W_{;j} & \aleq\sum_{\ell\neq j}|[\lan y\ran^{-2D}]_{;\ell}|,\qquad\text{for \eqref{eq:NLH-rad}},\\
r^{-D}\sin(P-Q_{;j}) & \aleq\sum_{\ell\neq j}|[\lan y\ran^{-2D}]_{;\ell}|,\qquad\text{for \eqref{eq:HMHF-equiv}}.
\end{align*}
We then note the estimates 
\begin{equation}
\sum_{\ell>j}|[\lan y\ran^{-2D}]_{;\ell}|\aleq\sum_{\ell>j}\lmb_{\ell}^{D}r^{-2D}\aleq\lmb_{j+1}^{D}r^{-2D}\quad\text{and}\quad\sum_{\ell<j}|[\lan y\ran^{-2D}]_{;\ell}|\aleq\sum_{\ell<j}\lmb_{\ell}^{-D}\aleq\lmb_{j-1}^{-D}.\label{eq:Q-ell-01}
\end{equation}
Now, observe for $r\in[A_{0}\br{\lmb}_{j+1},A_{0}^{-1}\br{\lmb}_{j}]$
that 
\begin{align*}
\chf_{[A_{0}\br{\lmb}_{j+1},\lmb_{j}]}\{\lmb_{j+1}^{D}r^{-2D}+\lmb_{j-1}^{-D}\} & \aleq(A_{0}^{-2D}+\mu_{j}^{D})\lmb_{j}^{-D}\aeq(\mu_{j}^{D}+A_{0}^{-2D})|[\lan y\ran^{-2D}]_{;j}|,\\
\chf_{[\lmb_{j},A_{0}^{-1}\br{\lmb}_{j}]}\{\lmb_{j+1}^{D}r^{-2D}+\lmb_{j-1}^{-D}\} & \aleq(\mu_{j+1}^{D}+A_{0}^{-2D})\lmb_{j}^{D}r^{-2D}\aeq(\mu_{j+1}^{D}+A_{0}^{-2D})|[\lan y\ran^{-2D}]_{;j}|.
\end{align*}
Taking $A_{0}>10$ large, we obtain \eqref{eq:U-bd-1}. Once $A_{0}$
is fixed, we can ignore the dependence on $A_{0}$; the estimates
\eqref{eq:U-bd-2}--\eqref{eq:U-bd-3} and \eqref{eq:U-bd-2-HM}--\eqref{eq:U-bd-3-HM}
easily follow from \eqref{eq:Q-ell-01}.
\end{proof}
\begin{lem}
\label{lem:corrector}The corrector $\td U$ satisfies the estimates
\eqref{eq:tdU-H1} and \eqref{eq:tdU-weight-L1}.
\end{lem}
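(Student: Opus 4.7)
By the construction of $U_{J}$ in \eqref{eq:def-Pj} and the decomposition \eqref{eq:U-dec}, one has $\td U=\sum_{\ell=2}^{J}\chi_{\dlt_{0}\lmb_{\ell-1}}\vphi_{\ell}$, so both estimates reduce, via triangle inequality on $\ell$, to bounds for a single term $\chi_{\dlt_{0}\lmb_{\ell-1}}\vphi_{\ell}$ or its $\lmb_{j}$-derivative, to which I apply the pointwise estimates \eqref{eq:vphi-bd} and \eqref{eq:vphi-deriv} from Corollary~\ref{cor:Profiles-vphi}.

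For \eqref{eq:tdU-H1}, I would use $|\vphi_{\ell}|_{1}\aleq\lmb_{\ell-1}^{-D}\omg_{\ell}$ to bound
\[
\|\chi_{\dlt_{0}\lmb_{\ell-1}}\vphi_{\ell}\|_{\dot{H}^{1}}^{2}\aleq\lmb_{\ell-1}^{-2D}\int_{0}^{2\dlt_{0}\lmb_{\ell-1}}r^{-2}\omg_{\ell}^{2}\,r^{N-1}\,dr.
\]
After the rescaling $r=\lmb_{\ell}y$, the integrand splits into three dimensionless pieces on $y\in(0,1]$, $[1,\mu_{\ell}^{-1/2}]$, $[\mu_{\ell}^{-1/2},2\dlt_{0}\mu_{\ell}^{-1}]$ corresponding to the three regions defining $\omg_{\ell}$; each piece is elementary and, because $D>2$, produces a positive power of $\mu_{\ell}$ (up to a logarithmic loss in borderline cases). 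Summing over $\ell$ and using $\mu_{\ell}<\alp^{\ast}$ yields \eqref{eq:tdU-H1}.

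For \eqref{eq:tdU-weight-L1}, since $\vphi_{\ell}$ depends on $\lmb_{j}$ only for $1\le j\le\ell$, I would write
\[
\lmb_{j}\rd_{\lmb_{j}}\td U=\sum_{\ell=\max(j,2)}^{J}\chi_{\dlt_{0}\lmb_{\ell-1}}\,\lmb_{j}\rd_{\lmb_{j}}\vphi_{\ell}-\chf_{j+1\le J}\,\tfrac{r}{\dlt_{0}\lmb_{j}}\chi'\!\big(\tfrac{r}{\dlt_{0}\lmb_{j}}\big)\vphi_{j+1},
\]
the second piece coming from differentiating the cutoff $\chi_{\dlt_{0}\lmb_{j}}$ in the $\ell=j+1$ term. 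The boundary term is localized on $r\aeq\dlt_{0}\lmb_{j}$, where $\omg_{j+1}\aeq\mu_{j+1}^{D}$, and its weighted $L^{1}$ integral is harmless in all three cases. For the main sum, I would use \eqref{eq:vphi-deriv} together with the explicit form $[\lan y\ran^{-2D}]_{\ul{;k}}=\iota_{k}\lmb_{k}^{-(D+2)}\lan r/\lmb_{k}\ran^{-2D}$, and rescale $r=\lmb_{k}y$ so that the integrand becomes a piecewise explicit self-similar version of $\omg_{\ell}$ times $\lan y\ran^{-2D}y^{2D+1}$. I then perform a case analysis on $j$ versus $k$. When $j=k$, only $\ell\in\{k,k+1\}$ contribute: the $\ell=k$ piece produces $\mu_{k}^{D}|\log\mu_{k}|$, the logarithm coming from the critical intermediate region of $\omg_{k}$ where the rescaled integrand is $\aeq y^{-1}$, while the $\ell=k+1$ piece yields $\mu_{k+1}^{D}$. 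When $j>k$, all relevant $\ell$ satisfy $\ell\ge j$, so $\omg_{\ell}$ is supported inside $[0,2\dlt_{0}\lmb_{k}]$ where $\lan r/\lmb_{k}\ran^{-2D}\aeq1$, and direct evaluation (dominated by the outer region of $\omg_{\ell}$) gives $\aleq(\lmb_{j}/\lmb_{k})^{D}$. When $j<k$, an analogous computation yields quantities that can be packaged as $\dlt(\alp^{\ast})(\lmb_{k}/\lmb_{j})^{D-2}$ by trading two excess powers of $\mu_{\ell}\le\alp^{\ast}$ against the profile $(\lmb_{k}/\lmb_{j})^{D-2}$.

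The main technical hurdle is this case analysis: each subcase requires carefully tracking the overlap between the three-region support of $\omg_{\ell}$ and the concentration scale $r\aeq\lmb_{k}$ of the weight. The logarithm in the $j=k$ case is forced by the critically scaling middle region of $\omg_{k}$, and recovering the $\dlt(\alp^{\ast})$ factor in the $j<k$ case requires identifying precisely which powers of $\mu_{\ell}$ can be spent without upsetting the scale-invariant shape $(\lmb_{k}/\lmb_{j})^{D-2}$.
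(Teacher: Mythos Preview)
Your proposal is correct and follows essentially the same approach as the paper: reduce to single-$\ell$ terms via the pointwise bounds \eqref{eq:vphi-bd}--\eqref{eq:vphi-deriv}, compute $\|r^{-1}\omg_\ell\|_{L^2}$ for the $\dot{H}^1$ estimate, and carry out a case analysis on the position of $\lmb_k$ relative to the support of $\omg_\ell$ for the weighted $L^1$ estimate. The only cosmetic difference is that the paper absorbs your explicit cutoff-derivative term into the single pointwise bound $\lmb_j\rd_{\lmb_j}(\chi_{\dlt_0\lmb_{\ell-1}}\vphi_\ell)\aleq\chf_{j\le\ell}\min\{\lmb_{\ell-1}^{-D},\lmb_j^{-D}\}\omg_\ell$, and organizes the case analysis by first computing $\|\omg_\ell\cdot[\lan y\ran^{-2D}]_{\ul{;k}}\|_{L^1}$ in the two regimes $\ell\le k$ and $\ell\ge k+1$ before summing; one minor imprecision in your sketch is that for $j=k$ all $\ell\ge k$ contribute, not only $\ell\in\{k,k+1\}$, though the higher terms are indeed subdominant.
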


\begin{proof}
For the $\dot{H}^{1}$-estimate \eqref{eq:tdU-H1}, we use \eqref{eq:vphi-bd}
and $D>2$ to have 
\[
\|\chi_{\dlt_{0}\lmb_{\ell-1}}\vphi_{\ell}\|_{\dot{H}^{1}}\aleq\lmb_{\ell-1}^{-D}\|r^{-1}\omg_{\ell}\|_{L^{2}}\aleq\lmb_{\ell-1}^{-D}\lmb_{\ell}^{2}\br{\lmb}_{\ell}^{D-2}\aeq\mu_{\ell}^{\frac{D+2}{2}}=\dlt(\alp^{\ast}).
\]
Summing this over $\ell=2,\dots,J$ gives \eqref{eq:tdU-H1}.

For the $\lmb$-derivative estimate \eqref{eq:tdU-weight-L1}, we
use \eqref{eq:vphi-bd} and \eqref{eq:vphi-deriv} to have 
\[
\lmb_{j}\rd_{\lmb_{j}}(\chi_{\dlt_{0}\lmb_{\ell-1}}\vphi_{\ell})\aleq\chf_{j\leq\ell}\min\{\lmb_{\ell-1}^{-D},\lmb_{j}^{-D}\}\omg_{\ell}.
\]
Multiplying the above by $[\lan y\ran^{-2D}]_{\ul{;k}}$ and summing
over $\ell$, we obtain a preliminary bound 
\begin{equation}
\|\lmb_{j}\rd_{\lmb_{j}}\td U\cdot[\lan y\ran^{-2D}]_{\ul{;k}}\|_{L^{1}}\aleq\lmb_{j-1}^{-D}\|\omg_{j}\cdot[\lan y\ran^{-2D}]_{\ul{;k}}\|_{L^{1}}+\lmb_{j}^{-D}\sum_{\ell\geq j+1}\|\omg_{\ell}\cdot[\lan y\ran^{-2D}]_{\ul{;k}}\|_{L^{1}}.\label{eq:lmb_td_U-01}
\end{equation}
Now we estimate $\|\omg_{\ell}\cdot[\lan y\ran^{-2D}]_{\ul{;k}}\|_{L^{1}}$.
If $\ell\leq k$, then we use the bound $\chf_{(0,2\dlt_{0}\lmb_{\ell-1}]}[\lan y\ran^{-2D}]_{\ul{;k}}\aleq\lmb_{k}^{D-2}r^{-2D}$
to have 
\[
\chf_{k\geq\ell}\|\omg_{\ell}\cdot[\lan y\ran^{-2D}]_{\ul{;k}}\|_{L^{1}}\aleq\lmb_{\ell}^{2}\lmb_{k}^{D-2}|\log(\br{\lmb_{\ell}}/\lmb_{\ell})|\aleq\lmb_{\ell}^{2}\lmb_{k}^{D-2}|\log\mu_{\ell}|.
\]
If $\ell\geq k+1$, then we use the bound $\chf_{(0,2\dlt_{0}\lmb_{\ell-1}]}[\lan y\ran^{-2D}]_{\ul{;k}}\aleq\lmb_{k}^{-(D+2)}$
to have 
\[
\chf_{k\leq\ell-1}\|\omg_{\ell}\cdot[\lan y\ran^{-2D}]_{\ul{;k}}\|_{L^{1}}\aleq\lmb_{\ell-1}^{D-2}\lmb_{\ell}^{D}\lmb_{k}^{-(D+2)}(2\dlt_{0}\lmb_{\ell-1})^{4}\aleq\lmb_{\ell-1}^{D+2}\lmb_{\ell}^{D}\lmb_{k}^{-(D+2)}.
\]
Substituting these two bounds into \eqref{eq:lmb_td_U-01}, we obtain
\begin{align*}
 & \chf_{j\geq k+1}\|\lmb_{j}\rd_{\lmb_{j}}\td U\cdot[\lan y\ran^{-2D}]_{\ul{;k}}\|_{L^{1}}\\
 & \quad\aleq\lmb_{j-1}^{2}\lmb_{j}^{D}\lmb_{k}^{-(D+2)}+\lmb_{j}^{-D}\sum_{\ell\geq j+1}\lmb_{\ell-1}^{D+2}\lmb_{\ell}^{D}\lmb_{k}^{-(D+2)}\aeq\lmb_{j-1}^{2}\lmb_{j}^{D}\lmb_{k}^{-(D+2)}
\end{align*}
and 
\begin{align*}
 & \chf_{j\leq k}\|\lmb_{j}\rd_{\lmb_{j}}\td U\cdot[\lan y\ran^{-2D}]_{\ul{;k}}\|_{L^{1}}\\
 & \aleq\lmb_{j-1}^{-D}\lmb_{j}^{2}\lmb_{k}^{D-2}|\log\mu_{j}|+\lmb_{j}^{-D}\Big(\sum_{\ell=j+1}^{k}\lmb_{\ell}^{2}\lmb_{k}^{D-2}|\log\mu_{\ell}|+\sum_{\ell\geq k+1}\lmb_{\ell-1}^{D+2}\lmb_{\ell}^{D}\lmb_{k}^{-(D+2)}\Big)\\
 & \aeq\lmb_{j-1}^{-D}\lmb_{j}^{2}\lmb_{k}^{D-2}|\log\mu_{j}|+\lmb_{j}^{-D}(\chf_{j\leq k-1}\lmb_{j+1}^{2}\lmb_{k}^{D-2}|\log\mu_{j+1}|+\lmb_{k+1}^{D}).
\end{align*}
Therefore, we have proved 
\begin{align*}
 & \|\lmb_{j}\rd_{\lmb_{j}}\td U\cdot[\lan y\ran^{-2D}]_{\ul{;k}}\|_{L^{1}}\\
 & \quad\aleq\left\{ \begin{aligned} & \Big(\frac{\lmb_{j-1}}{\lmb_{k}}\Big)^{2}\Big(\frac{\lmb_{j}}{\lmb_{k}}\Big)^{D} &  & \text{if }j\geq k+1,\\
 & \mu_{k+1}^{D}+\mu_{k}^{D}|\log\mu_{k}| &  & \text{if }j=k,\\
 & (\mu_{j+1}^{2}|\log\mu_{j+1}|+\mu_{j}^{D}|\log\mu_{j}|)\cdot\Big(\frac{\lmb_{k}}{\lmb_{j}}\Big)^{D-2} &  & \text{if }j\leq k-1,
\end{aligned}
\right.
\end{align*}
which implies \eqref{eq:tdU-weight-L1}. This completes the proof.
\end{proof}
\begin{lem}
\label{lem:main-term-size}The estimate \eqref{eq:prof-eq-main-size}
holds, i.e., 
\[
\Big\|\sum_{j=2}^{J}\br{\iota}_{j}\frac{\kappa\mu_{j}^{D}}{\lmb_{j}^{2}}\Lmb W_{;j}\Big\|_{L^{2}}^{2}\aeq\calD.
\]
\end{lem}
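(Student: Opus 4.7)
The plan is to expand the squared $L^{2}$-norm as the sum of diagonal terms and off-diagonal inner products, and to show that the cross terms are dominated by the diagonal when $\alp_{0}$ is small. Setting $b_{j}\coloneqq\br{\iota}_{j}\kpp\mu_{j}^{D}\lmb_{j}^{-2}\Lmb W_{;j}$, we have
\[
\Big\|\sum_{j=2}^{J}b_{j}\Big\|_{L^{2}}^{2}=\sum_{j=2}^{J}\|b_{j}\|_{L^{2}}^{2}+2\sum_{2\leq j<k\leq J}\lan b_{j},b_{k}\ran.
\]

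First, for the diagonal, the scaling identity $\|\Lmb W_{\lmb}\|_{L^{2}}^{2}=\lmb^{N-2D}\|\Lmb W\|_{L^{2}}^{2}=\lmb^{2}\|\Lmb W\|_{L^{2}}^{2}$ (using $N-2D=2$ and the fact that $\Lmb W\in L^{2}(\bbR^{N})$ since $|\Lmb W(y)|\aleq\lan y\ran^{-2D}$ and $D>1$) gives $\|b_{j}\|_{L^{2}}^{2}=\kpp^{2}\|\Lmb W\|_{L^{2}}^{2}\cdot\mu_{j}^{2D}/\lmb_{j}^{2}$. Summing over $j$ yields $\sum_{j}\|b_{j}\|_{L^{2}}^{2}=\kpp^{2}\|\Lmb W\|_{L^{2}}^{2}\cdot\calD$, which already matches \eqref{eq:prof-eq-main-size} up to a harmless universal constant.

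Next, for the off-diagonal terms with $j<k$ (so that $\lmb_{k}\ll\lmb_{j}$), the key estimate I would prove is
\[
|\lan\Lmb W_{;j},\Lmb W_{;k}\ran|\aleq(\lmb_{k}/\lmb_{j})^{D-1}\|\Lmb W_{;j}\|_{L^{2}}\|\Lmb W_{;k}\|_{L^{2}}.
\]
Substituting $r=\lmb_{k}y$ reduces the inner product, up to the overall scalar $\lmb_{j}^{-D}\lmb_{k}^{D+2}$, to $\int_{0}^{\infty}\Lmb W(\mu y)\Lmb W(y)y^{N-1}dy$ with $\mu=\lmb_{k}/\lmb_{j}<\alp_{0}$. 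A direct three-region split at $y=1$ and $y=1/\mu$, using $|\Lmb W(y)|\aleq\lan y\ran^{-2D}$ and $N-1-2D=1$, shows that this integral is $\aleq\mu^{-2}$ (the dominant contributions come from the middle region $[1,1/\mu]$, where the integrand is $\aleq y$ and integrates to $\aeq\mu^{-2}$, and from the outer region where $|\Lmb W(\mu y)|\aleq(\mu y)^{-2D}$ provides the decay needed for convergence). Unfolding the scaling and using $\|\Lmb W_{;j}\|_{L^{2}}\|\Lmb W_{;k}\|_{L^{2}}=\|\Lmb W\|_{L^{2}}^{2}\lmb_{j}\lmb_{k}$ then produces the claimed decay factor $(\lmb_{k}/\lmb_{j})^{D-1}$.

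Finally, Young's inequality combined with the geometric bound $\lmb_{k}/\lmb_{j}=\mu_{j+1}\cdots\mu_{k}\leq\alp_{0}^{k-j}$ yields
\[
2\sum_{j<k}|\lan b_{j},b_{k}\ran|\aleq\sum_{j<k}(\lmb_{k}/\lmb_{j})^{D-1}\bigl(\|b_{j}\|_{L^{2}}^{2}+\|b_{k}\|_{L^{2}}^{2}\bigr)\aleq\alp_{0}^{D-1}\sum_{j}\|b_{j}\|_{L^{2}}^{2},
\]
where the last step uses $\sum_{m\geq1}\alp_{0}^{m(D-1)}\aleq\alp_{0}^{D-1}$. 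Taking $\alp_{0}$ sufficiently small, the cross terms are absorbed into $\tfrac{1}{2}\sum_{j}\|b_{j}\|_{L^{2}}^{2}$, and the conclusion $\|\sum_{j}b_{j}\|_{L^{2}}^{2}\aeq\sum_{j}\|b_{j}\|_{L^{2}}^{2}\aeq\calD$ follows. The main technical step is the off-diagonal decay exponent $D-1$: its strict positivity (in fact $D-1>1$ thanks to $D>2$) guarantees summability of the geometric tail, and this is the only place where the hypothesis $D>2$ is essential.
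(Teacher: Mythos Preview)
Your proof is correct and follows essentially the same route as the paper: expand the square, compute the diagonal exactly as $\kappa^{2}\|\Lmb W\|_{L^{2}}^{2}\calD$, and kill the cross terms by the almost-orthogonality estimate $|\lan\Lmb W_{;j},\Lmb W_{;k}\ran|\aleq(\lmb_{k}/\lmb_{j})^{D-1}\lmb_{j}\lmb_{k}$ for $j<k$ (the paper records this as \eqref{eq:almost-orthog}). The only cosmetic difference is that the paper states the off-diagonal bound once in the form $\|\tfrac{1}{\lmb_{j}}[\lan y\ran^{-2D}]_{;j}\cdot\tfrac{1}{\lmb_{k}}[\lan y\ran^{-2D}]_{;k}\|_{L^{1}}\aleq\dlt(\alp^{\ast})$ and invokes it without further comment, whereas you spell out the three-region split and the Young/geometric-series absorption explicitly.
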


\begin{proof}
This estimate easily follows from the almost orthogonality among $\Lmb W_{;j}$s
with $\Lmb W\aleq\lan y\ran^{-2D}$:
\begin{equation}
\chf_{j\neq k}\|\tfrac{1}{\lmb_{j}}[\lan y\ran^{-2D}]_{;j}\cdot\tfrac{1}{\lmb_{k}}[\lan y\ran^{-2D}]_{;k}\|_{L^{1}}\aeq\chf_{j>k}\Big(\frac{\lmb_{j}}{\lmb_{k}}\Big)^{D-1}+\chf_{j<k}\Big(\frac{\lmb_{k}}{\lmb_{j}}\Big)^{D-1}\aleq\dlt(\alp^{\ast}).\label{eq:almost-orthog}
\end{equation}
This completes the proof.
\end{proof}
\begin{lem}
\label{lem:Psi}The inhomogeneous error term
\[
\Psi=\Dlt U+r^{-(D+2)}f(P)+\sum_{j=2}^{J}\br{\iota}_{j}\frac{\kappa\mu_{j}^{D}}{\lmb_{j}^{2}}\Lmb W_{;j}
\]
satisfies the estimates \eqref{eq:Psi-L2} and \eqref{eq:Psi-inn}.
\end{lem}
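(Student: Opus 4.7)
The plan is to substitute the corrector equation \eqref{eq:vphi-eqn} for each $\vphi_\ell$ into the expression for $\Psi$, watch the leading pieces cancel, and then control the remainder annulus by annulus. Writing $U=\sum_j W_{;j}+\td U$ and using the soliton identity $\Dlt W_{;j}+r^{-(D+2)}f(Q_{;j})=0$, one obtains
\[
\Psi \;=\; \Dlt\td U \;+\; r^{-(D+2)}\Bigl[f(P)-\sum_{j=1}^J f(Q_{;j})\Bigr] \;+\; \sum_{\ell=2}^J \br\iota_\ell\,\frac{\kpp\mu_\ell^D}{\lmb_\ell^2}\Lmb W_{;\ell}.
\]
For each $\ell$, I would rewrite \eqref{eq:vphi-eqn} as $\Dlt\vphi_\ell=-r^{-2}f'(Q_{;\ell})\vphi_\ell+\tfrac{\frkr_\ell}{\lmb_\ell^2}\Lmb W_{;\ell}-\chi_{\dlt_0\lmb_{\ell-1}}\calR_\ell^{(0)}-\calR_\ell^{(1)}$ and commute with $\chi_{\dlt_0\lmb_{\ell-1}}$, producing an expression for $\Dlt(\chi_{\dlt_0\lmb_{\ell-1}}\vphi_\ell)$ that contributes the main piece $\chi_{\dlt_0\lmb_{\ell-1}}\tfrac{\frkr_\ell}{\lmb_\ell^2}\Lmb W_{;\ell}$ together with the $\calR^{(0)},\calR^{(1)}$ terms, an interior linear piece $-\chi_{\dlt_0\lmb_{\ell-1}} r^{-2}f'(Q_{;\ell})\vphi_\ell$, and a commutator $[\Dlt,\chi_{\dlt_0\lmb_{\ell-1}}]\vphi_\ell$. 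After summing, the main-term combination becomes $\sum_\ell\bigl(\chi_{\dlt_0\lmb_{\ell-1}}\frkr_\ell+\br\iota_\ell\kpp\mu_\ell^D\bigr)\lmb_\ell^{-2}\Lmb W_{;\ell}$, which by \eqref{eq:frkr-2} and the rapid decay of $(1-\chi_{\dlt_0\lmb_{\ell-1}})\Lmb W_{;\ell}$ outside $[\lmb_\ell,\dlt_0\lmb_{\ell-1}]$ has $L^2$-norm $\dlt(\alp^*)\sqrt\calD$.

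The second block to control is the genuine interaction error
\[
\calI \;\coloneqq\; r^{-(D+2)}\Bigl[f(P)-\sum_{j}f(Q_{;j})\Bigr] - \sum_{\ell=2}^J\Bigl(\chi_{\dlt_0\lmb_{\ell-1}}^2\calR_\ell^{(0)} + \chi_{\dlt_0\lmb_{\ell-1}}\calR_\ell^{(1)} + \chi_{\dlt_0\lmb_{\ell-1}} r^{-2}f'(Q_{;\ell})\vphi_\ell\Bigr).
\]
I would estimate $\calI$ on each annulus $\calA_k=[A_0\br\lmb_{k+1},A_0^{-1}\br\lmb_k]$, where $P$ is close to $Q_{;k}$ (up to a $W_{;k-1}(0)$-type shift in the HMHF case, cf.\ \eqref{eq:U-bd-3-HM}). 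Applying the pointwise estimates \eqref{eq:f-05}--\eqref{eq:f-07} (or their HMHF analogues \eqref{eq:f-05-HM}--\eqref{eq:f-07-HM}) expands $f(P)-\sum_j f(Q_{;j})$ into pairwise pieces $f'(Q_{;\ell})P_{\ell-1}$ absorbed (up to $\chi_{\dlt_0\lmb_{\ell-1}}^2$ versus $\chi_{\dlt_0\lmb_{\ell-1}}$) by $\calR_\ell^{(0)}$, corrector cross terms absorbed by $\calR_\ell^{(1)}$ and by $\chi_{\dlt_0\lmb_{\ell-1}}r^{-2}f'(Q_{;\ell})\vphi_\ell$, plus genuine three-bubble residuals. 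Every unabsorbed piece either lives where $\chi_{\dlt_0\lmb_{\ell-1}}\neq 1$ (which produces an extra factor of $\mu_\ell$) or involves three distinct bubbles simultaneously (which is automatically smaller than $\mu_\ell^D/\lmb_\ell$). The commutator $[\Dlt,\chi_{\dlt_0\lmb_{\ell-1}}]\vphi_\ell$ is supported at $r\aeq\dlt_0\lmb_{\ell-1}$, where \eqref{eq:vphi-bd} gives $|\vphi_\ell|_2\aleq\lmb_{\ell-1}^{-D}\mu_\ell^D$ and hence an $L^2$ bound of size $\mu_\ell^D\lmb_{\ell-1}^{-1}=\mu_\ell\cdot\mu_\ell^D\lmb_\ell^{-1}\leq\dlt(\alp^*)\sqrt\calD$. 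Combining these yields \eqref{eq:Psi-L2}.

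For the weighted $L^1$ bound \eqref{eq:Psi-inn}, I would reuse the decomposition above but integrate against the concentrated weight $[\lan y\ran^{-2D}]_{\ul{;k}}$. The main-term piece with $\ell=k$ produces the leading $\lmb_k^{-2}\mu_{k+1}^D$ from the $(1-\chi_{\dlt_0\lmb_{k-1}})$ tail together with $\dlt(\alp^*)\lmb_k^{-2}\mu_k^D$ from \eqref{eq:frkr-2}; for $\ell\neq k$, the almost-orthogonality \eqref{eq:almost-orthog} bounds the pairing by $\calD$. The $\calR^{(0)}_\ell,\calR^{(1)}_\ell$ and commutator terms are handled by the pointwise bounds of Step 2--3 in the proof of Lemma \ref{lem:contract-recur} together with the weighted integrals of $\omg_\ell\cdot[\lan y\ran^{-2D}]_{\ul{;k}}$ already estimated inside the proof of Lemma \ref{lem:corrector}.

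The main technical obstacle will be tracking the "mismatch" cases in which a corrector's cutoff is active while its target bubble scale $\lmb_\ell$ is far from both the annulus $\calA_k$ of evaluation and the scale $\lmb_k$ of the weight in \eqref{eq:Psi-inn}; these terms are not directly cancelled by the $\vphi_\ell$ equation, so I have to combine the sharp pointwise bound \eqref{eq:vphi-bd} on $\vphi_\ell$ with the separation-of-scales smallness $\mu_\ell\leq\alp^*$ in order to extract the $\dlt(\alp^*)$ factor in \eqref{eq:Psi-L2} and the correct powers of $\mu_{k+1}$ and $\mu_k$ in \eqref{eq:Psi-inn}.
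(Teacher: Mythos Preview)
Your overall strategy coincides with the paper's: substitute the $\vphi_\ell$ equation, combine the $\frkr_\ell$ piece with $\br\iota_\ell\kpp\mu_\ell^D$, and estimate the remaining localized errors. However, you are making the interaction term $\calI$ much harder than it needs to be. The point you are missing is that $\calI$ is not merely \emph{approximately} a sum of exterior $\calR$-pieces: it is \emph{exactly} so. Indeed, since $P_\ell=P_{\ell-1}+Q_{;\ell}+\chi_{\dlt_0\lmb_{\ell-1}}\rng\vphi_\ell$, one has the telescoping identity
\[
r^{-(D+2)}\Bigl[f(P)-\sum_{j=1}^J f(Q_{;j})\Bigr]
=\sum_{\ell=2}^J r^{-(D+2)}\bigl\{f(P_\ell)-f(P_{\ell-1})-f(Q_{;\ell})\bigr\}
=\sum_{\ell=2}^J\bigl(\calR_\ell^{(0)}+\calR_\ell^{(1)}+r^{-2}f'(Q_{;\ell})\chi_{\dlt_0\lmb_{\ell-1}}\vphi_\ell\bigr),
\]
where the second equality is just the \emph{definitions} of $\calR_\ell^{(0)},\calR_\ell^{(1)}$. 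Subtracting your $\chi^2\calR_\ell^{(0)}+\chi\calR_\ell^{(1)}+\chi r^{-2}f'(Q_{;\ell})\vphi_\ell$ gives
\[
\calI=\sum_{\ell=2}^J\bigl((1-\chi_{\dlt_0\lmb_{\ell-1}}^2)\calR_\ell^{(0)}+(1-\chi_{\dlt_0\lmb_{\ell-1}})\calR_\ell^{(1)}\bigr),
\]
which is supported on $\{r\geq\dlt_0\lmb_{\ell-1}\}$. There are no ``three-bubble residuals'' and no ``mismatch cases'' to track; every piece of $\calI$ already lives in a region where $Q_{;\ell}$ is tiny, and the estimates reduce to straightforward tail bounds on $\calR_\ell^{(0)}$ (using \eqref{eq:f-05}/\eqref{eq:f-05-HM} in the exterior) and on $\calR_\ell^{(1)}$ (using \eqref{eq:R1-01} on the thin annulus $[\dlt_0\lmb_{\ell-1},2\dlt_0\lmb_{\ell-1}]$). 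The paper organizes exactly this by defining $\Psi_\ell$ recursively and writing $\Psi=\sum_\ell(\Psi_\ell-\Psi_{\ell-1})$, which makes the cancellation transparent.

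One smaller point on \eqref{eq:Psi-inn}: the undamped $\mu_{k+1}^D/\lmb_k^2$ in the bound does \emph{not} come from the main-term tail $(1-\chi_{\dlt_0\lmb_{k-1}})\Lmb W_{;k}$ as you suggest (that tail contributes only $O(\calD)$). It comes from the exterior $\calR_\ell^{(0)}$ pieces, specifically the interaction of $W_{;k}$ with $W_{;\ell}$ for $\ell>k$ paired against the weight centered at scale $\lmb_k$. Once you have the clean form of $\calI$ above, this falls out directly.
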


\begin{proof}
For $\ell\in\{2,\dots,J\}$, define 
\begin{equation}
\Psi_{\ell}\coloneqq\Dlt U_{\ell}+r^{-(D+2)}f(P_{\ell})+\sum_{j=2}^{\ell}\br{\iota}_{j}\frac{\kappa\mu_{j}^{D}}{\lmb_{j}^{2}}\Lmb W_{;j}.\label{eq:Psi-ell-def}
\end{equation}
Define $\Psi_{1}\equiv0$. We need to estimate $\Psi_{J}$.

\textbf{Step 1.} Recursive expression of $\Psi_{\ell}$.

In this step, we derive a recursive expression for $\Psi_{\ell}$
(see \eqref{eq:Psi-ell-exp} below) for $\ell\geq2$. Decomposing
$U_{\ell}=U_{\ell-1}+W_{;\ell}+\chi_{\dlt_{0}\lmb_{\ell-1}}\vphi_{\ell}$
and applying $\Dlt W_{;\ell}+r^{-(D+2)}f(Q_{;\ell})=0$ and the definition
of $\Psi_{\ell-1}$, we obtain 
\[
\Psi_{\ell}=\Dlt(\chi_{\dlt_{0}\lmb_{\ell-1}}\vphi_{\ell})+r^{-(D+2)}\{f(P_{\ell})-f(P_{\ell-1})-f(Q_{;\ell})\}+\br{\iota}_{\ell}\frac{\kappa\mu_{\ell}^{D}}{\lmb_{\ell}^{2}}\Lmb W_{;\ell}+\Psi_{\ell-1}.
\]
Now we recall the definitions of $\calR_{\ell}^{(0)}$ and $\calR_{\ell}^{(1)}$
from Corollary~\ref{cor:Profiles-vphi}, and note 
\[
r^{-(D+2)}\{f(P_{\ell})-f(P_{\ell-1})-f(Q_{;\ell})\}=r^{-2}f'(Q_{;\ell})\chi_{\dlt_{0}\lmb_{\ell-1}}\vphi_{\ell}+\calR_{\ell}^{(0)}+\calR_{\ell}^{(1)}.
\]
By the above display and the equation \eqref{eq:vphi-eqn} for $\vphi_{\ell}$,
we arrive at 
\begin{equation}
\begin{aligned}\Psi_{\ell} & =\Big\{\br{\iota}_{\ell}\frac{\kappa\mu_{\ell}^{D}}{\lmb_{\ell}^{2}}\Lmb W_{;\ell}+\frac{\frkr_{\ell}}{\lmb_{\ell}^{2}}\chi_{\dlt_{0}\lmb_{\ell-1}}\Lmb W_{;\ell}\Big\}\\
 & \peq+(1-\chi_{\dlt_{0}\lmb_{\ell-1}}^{2})\calR_{\ell}^{(0)}+(1-\chi_{\dlt_{0}\lmb_{\ell-1}})\calR_{\ell}^{(1)}+[\Dlt,\chi_{\dlt_{0}\lmb_{\ell-1}}]\vphi_{\ell}+\Psi_{\ell-1}.
\end{aligned}
\label{eq:Psi-ell-exp}
\end{equation}

\textbf{Step 2.} Estimates.

In this step, we show the estimates \eqref{eq:Psi-L2} and \eqref{eq:Psi-inn}.
We will estimate $\Psi$ using 
\[
|\Psi|\leq\sum_{\ell=2}^{J}|\Psi_{\ell}-\Psi_{\ell-1}|
\]
and the expression \eqref{eq:Psi-ell-exp}.

First, by \eqref{eq:frkr-2}, we have 
\[
\Big|\frac{\frkr_{\ell}+\kappa\br{\iota}_{\ell}\mu_{\ell}^{D}}{\lmb_{\ell}^{2}}\chi_{\dlt_{0}\lmb_{\ell-1}}\Lmb W_{;\ell}\Big|\aleq\frac{\dlt(\alp^{\ast})\mu_{\ell}^{D}}{\lmb_{\ell}^{2}}[\lan y\ran^{-2D}]_{\lmb_{\ell}}.
\]
Hence we have the $L^{2}$-estimate
\[
\sum_{\ell=2}^{J}\Big\|\frac{\frkr_{\ell}+\kappa\br{\iota}_{\ell}\mu_{\ell}^{D}}{\lmb_{\ell}^{2}}\chi_{\dlt_{0}\lmb_{\ell-1}}\Lmb W_{;\ell}\Big\|_{L^{2}}\aleq\sum_{\ell=2}^{J}\frac{\dlt(\alp^{\ast})\mu_{\ell}^{D}}{\lmb_{\ell}}\aleq\dlt(\alp^{\ast})\sqrt{\calD}.
\]
For the weighted $L^{1}$-estimate, fix any $k\in\{1,\dots,J\}$ and
note the estimate 
\begin{equation}
\|[\lan y\ran^{-2D}]_{;\ell}[\lan y\ran^{-2D}]_{\ul{;k}}\|_{L^{1}}\aeq\chf_{\ell<k}\frac{\lmb_{k}^{D-2}}{\lmb_{\ell}^{D-2}}+\chf_{\ell=k}+\chf_{\ell>k}\frac{\lmb_{\ell}^{D}}{\lmb_{k}^{D}}.\label{eq:Qell-Qk}
\end{equation}
Hence we have 
\[
\Big\|\frac{\dlt(\alp^{\ast})\mu_{\ell}^{D}}{\lmb_{\ell}^{2}}[\lan y\ran^{-2D}]_{;\ell}[\lan y\ran^{-2D}]_{\ul{;k}}\Big\|_{L^{1}}\aleq\dlt(\alp^{\ast})\Big\{\chf_{\ell<k}\frac{\mu_{\ell}^{D}\mu_{k}^{D}}{\lmb_{k}^{2}}+\chf_{\ell=k}\frac{\mu_{k}^{D}}{\lmb_{k}^{2}}+\chf_{\ell>k}\frac{\mu_{\ell}^{D}}{\lmb_{\ell}}\frac{\mu_{k+1}^{D}}{\lmb_{k+1}}\Big\}.
\]
Hence we obtain the weighted $L^{1}$-estimate
\[
\sum_{\ell=2}^{J}\Big\|\frac{\frkr_{\ell}+\kappa\br{\iota}_{\ell}\mu_{\ell}^{D}}{\lmb_{\ell}^{2}}\chi_{\dlt_{0}\lmb_{\ell-1}}\Lmb W_{;\ell}\cdot[\lan y\ran^{-2D}]_{\ul{;k}}\Big\|_{L^{1}}\aleq\dlt(\alp^{\ast})\Big(\frac{\mu_{k+1}^{D}+\mu_{k}^{D}}{\lmb_{k}^{2}}+\calD\Big).
\]

Next, we note that 
\[
\Big|\frac{\kappa\br{\iota}_{\ell}\mu_{\ell}^{D}}{\lmb_{\ell}^{2}}(1-\chi_{\dlt_{0}\lmb_{\ell-1}})\Lmb W_{;\ell}\Big|\aleq\chf_{[\dlt_{0}\lmb_{\ell-1},+\infty)}\mu_{\ell}^{D}\frac{\lmb_{\ell}^{D-2}}{r^{2D}}.
\]
We have $L^{2}$-estimate 
\[
\sum_{\ell=2}^{J}\Big\|\frac{\kappa\br{\iota}_{\ell}\mu_{\ell}^{D}}{\lmb_{\ell}^{2}}(1-\chi_{\dlt_{0}\lmb_{\ell-1}})\Lmb W_{;\ell}\Big\|_{L^{2}}\aleq\sum_{\ell=2}^{J}\frac{\mu_{\ell}^{2D-1}}{\lmb_{\ell}}\aleq\dlt(\alp^{\ast})\sqrt{\calD}.
\]
For the weighted $L^{1}$-estimate, fix any $k\in\{1,\dots,J\}$ and
observe that
\begin{align*}
\Big\|\chf_{[\dlt_{0}\lmb_{\ell-1},+\infty)}\mu_{\ell}^{D}\frac{\lmb_{\ell}^{D-2}}{r^{2D}}\cdot[\lan y\ran^{-2D}]_{\ul{;k}}\Big\|_{L^{1}} & \aleq\chf_{\ell\geq k+1}\mu_{\ell}^{D}\frac{\lmb_{\ell}^{D-2}}{\lmb_{k}^{D}}+\chf_{\ell<k+1}\mu_{\ell}^{D}\frac{\lmb_{\ell}^{D-2}\lmb_{k}^{D-2}}{\lmb_{\ell-1}^{2D-2}}\\
 & \aleq\chf_{\ell\geq k+1}\frac{\mu_{\ell}^{D}\mu_{k+1}^{D}}{\lmb_{\ell}\lmb_{k+1}}+\chf_{\ell<k+1}\frac{\mu_{\ell}^{3D-2}}{\lmb_{\ell}^{2}}\aleq\calD
\end{align*}
so we have 
\[
\sum_{\ell=2}^{J}\Big\|\frac{\kappa\br{\iota}_{\ell}\mu_{\ell}^{D}}{\lmb_{\ell}^{2}}(1-\chi_{\dlt_{0}\lmb_{\ell-1}})\Lmb W_{;\ell}\cdot[\lan y\ran^{-2D}]_{\ul{;k}}\Big\|_{L^{1}}\aleq\calD.
\]

We estimate the contribution of $(1-\chi_{\dlt_{0}\lmb_{\ell-1}}^{2})\calR_{\ell}^{(0)}$.
Recall from the proof of Lemma~\ref{lem:A_0} that $|U_{\ell-1}|\aleq\sum_{j=1}^{\ell-1}|[\lan y\ran^{-2D}]_{;j}|$
for \eqref{eq:NLH-rad} and $|r^{-D}\sin(P_{\ell-1})|\aleq\sum_{j=1}^{\ell-1}|[\lan y\ran^{-2D}]_{;j}|$
for \eqref{eq:HMHF-equiv}. Using \eqref{eq:f-05} and \eqref{eq:f-05-HM},
we have a pointwise estimate
\[
\calR_{\ell}^{(0)}\aleq\sum_{j=1}^{\ell-1}|[\lan y\ran^{-2D}]_{;j}|^{p-1}|[\lan y\ran^{-2D}]_{;\ell}|\aleq\sum_{j=1}^{\ell-1}\min\Big\{\frac{1}{\lmb_{j}^{2}},\frac{\lmb_{j}^{2}}{r^{4}}\Big\}\frac{\lmb_{\ell}^{D}}{r^{2D}}.
\]
For the $L^{2}$-estimate, we observe for $j\leq\ell-1$ 
\[
\Big\|\chf_{[\dlt_{0}\lmb_{\ell-1},+\infty)}\min\Big\{\frac{1}{\lmb_{j}^{2}},\frac{\lmb_{j}^{2}}{r^{4}}\Big\}\frac{\lmb_{\ell}^{D}}{r^{2D}}\Big\|_{L^{2}}\aleq\frac{\lmb_{\ell}^{D}}{\lmb_{j}^{2}\lmb_{\ell-1}^{D-1}}\aleq\frac{\mu_{\ell}^{D+1}}{\lmb_{\ell}},
\]
so summing over $j=1,\dots,\ell-1$ and $\ell=2,\dots,J$ gives 
\[
\sum_{\ell=2}^{J}\|\chf_{[\dlt_{0}\lmb_{\ell-1},+\infty)}\calR_{\ell}^{(0)}\|_{L^{2}}\aleq\sum_{\ell=2}^{J}\frac{\mu_{\ell}^{D+1}}{\lmb_{\ell}}\aleq\dlt(\alp^{\ast})\sqrt{\calD}.
\]
For the weighted $L^{1}$-estimate, fix any $k\in\{1,\dots,J\}$ and
observe for $j\leq\ell-1$
\begin{align*}
 & \Big\|\chf_{[\dlt_{0}\lmb_{\ell-1},+\infty)}\min\Big\{\frac{1}{\lmb_{j}^{2}},\frac{\lmb_{j}^{2}}{r^{4}}\Big\}\frac{\lmb_{\ell}^{D}}{r^{2D}}\cdot[\lan y\ran^{-2D}]_{\ul{;k}}\Big\|_{L^{1}}\\
 & \quad\aleq\chf_{k\leq j}\frac{\lmb_{\ell}^{D}}{\lmb_{k}^{D+2}}+\chf_{j<k<\ell}\frac{\lmb_{\ell}^{D}}{\lmb_{j}^{2}\lmb_{k}^{D}}+\chf_{k\geq\ell}\frac{\lmb_{k}^{D-2}\lmb_{\ell}^{D}}{\lmb_{j}^{2}\lmb_{\ell-1}^{2D-2}}\\
 & \quad\aleq\chf_{k\leq j}\frac{\mu_{k+1}^{D}}{\lmb_{k}^{2}}+\chf_{j<k<\ell}\frac{\dlt(\alp^{\ast})\mu_{k+1}^{D}}{\lmb_{k}^{2}}+\chf_{k\geq\ell}\frac{\mu_{\ell}^{D}\mu_{k}^{D}}{\lmb_{k}^{2}}.
\end{align*}
Thus we have proved for $j\leq\ell-1$
\[
\Big\|\chf_{[\dlt_{0}\lmb_{\ell-1},+\infty)}\min\Big\{\frac{1}{\lmb_{j}^{2}},\frac{\lmb_{j}^{2}}{r^{4}}\Big\}\frac{\lmb_{\ell}^{D}}{r^{2D}}\cdot[\lan y\ran^{-2D}]_{\ul{;k}}\Big\|_{L^{1}}\aleq\frac{\mu_{k+1}^{D}+\dlt(\alp^{\ast})\mu_{k}^{D}}{\lmb_{k}^{2}}.
\]
Summing this estimate over $j=1,\dots,\ell-1$ and $\ell=2,\dots,J$
gives 
\[
\sum_{\ell=2}^{J}\Big\|\chf_{[\dlt_{0}\lmb_{\ell-1},+\infty)}\calR_{\ell}^{(0)}\cdot[\lan y\ran^{-2D}]_{\ul{;k}}\Big\|_{L^{1}}\aleq\frac{\mu_{k+1}^{D}+\dlt(\alp^{\ast})\mu_{k}^{D}}{\lmb_{k}^{2}}.
\]

Finally, we estimate $(1-\chi_{\dlt_{0}\lmb_{\ell-1}})\calR_{\ell}^{(1)}$
and $[\Dlt,\chi_{\dlt_{0}\lmb_{\ell-1}}]\vphi_{\ell}$. We notice
that this term is supported on $[\dlt_{0}\lmb_{\ell-1},2\dlt_{0}\lmb_{\ell-1}]$.
We can use the bound \eqref{eq:R1-01} for $\calR_{\ell}^{(1)}$ with
$K_{0}\aeq1$ and $J+1=\ell$, and the bound \eqref{eq:vphi-bd} for
$\vphi_{\ell}$ to obtain 
\begin{align*}
 & |(1-\chi_{\dlt_{0}\lmb_{\ell-1}})\calR_{\ell}^{(1)}|+|[\Dlt,\chi_{\dlt_{0}\lmb_{\ell-1}}]\vphi_{\ell}|\\
 & \quad\aleq\chf_{[\dlt_{0}\lmb_{\ell-1},2\dlt_{0}\lmb_{\ell-1}]}r^{-2}\lmb_{\ell-1}^{-D}{}_{\ell}\aleq\chf_{[\dlt_{0}\lmb_{\ell-1},2\dlt_{0}\lmb_{\ell-1}]}\frac{\lmb_{\ell}^{D}}{\lmb_{\ell-1}^{2D+2}}.
\end{align*}
Thus we have the $L^{2}$-estimate 
\[
\|(1-\chi_{\dlt_{0}\lmb_{\ell-1}})\calR_{\ell}^{(1)}\|_{L^{2}}+\|[\Dlt,\chi_{\dlt_{0}\lmb_{\ell-1}}]\vphi_{\ell}\|_{L^{2}}\aleq\frac{\lmb_{\ell}^{D}}{\lmb_{\ell-1}^{D+1}}\aeq\frac{\mu_{\ell}^{D+1}}{\lmb_{\ell}}\aleq\dlt(\alp^{\ast})\sqrt{\calD}.
\]
For the weighted $L^{1}$-estimate, fix any $k\in\{1,\dots,J\}$ and
observe 
\[
\Big\|\chf_{[\dlt_{0}\lmb_{\ell-1},2\dlt_{0}\lmb_{\ell-1}]}\frac{\lmb_{\ell}^{D}}{\lmb_{\ell-1}^{2D+2}}\cdot[\lan y\ran^{-2D}]_{\ul{;k}}\Big\|_{L^{1}}\aleq\chf_{k<\ell}\frac{\lmb_{\ell}^{D}}{\lmb_{k}^{D+2}}+\chf_{k\geq\ell}\frac{\lmb_{\ell}^{D}\lmb_{k}^{D-2}}{\lmb_{\ell-1}^{2D}}\aleq\frac{\mu_{k+1}^{D}+\dlt(\alp^{\ast})\mu_{k}^{D}}{\lmb_{k}^{2}}.
\]
Summing this over $\ell=2,\dots,J$ gives 
\[
\sum_{\ell=2}^{J}\Big\|\big\{(1-\chi_{\dlt_{0}\lmb_{\ell-1}})\calR_{\ell}^{(1)}+[\Dlt,\chi_{\dlt_{0}\lmb_{\ell-1}}]\vphi_{\ell}\big\}\cdot[\lan y\ran^{-2D}]_{\ul{;k}}\Big\|_{L^{1}}\aleq\frac{\mu_{k+1}^{D}+\dlt(\alp^{\ast})\mu_{k}^{D}}{\lmb_{k}^{2}}.
\]
This completes the proof.
\end{proof}
The proof of Proposition~\ref{prop:Modified-Profile} follows from
Lemmas~\ref{lem:A_0}, \ref{lem:corrector}, \ref{lem:main-term-size},
and \ref{lem:Psi}.

\section{\label{sec:Spacetime-estimate}Spacetime estimate}

Let $u(t)$ be a solution as in the assumption of Theorem~\ref{thm:main}.
Note that $J\geq1$ and $\vec{\iota}=(\iota_{1},\dots,\iota_{J})$
are fixed. The goal of this section is to decompose 
\[
u(t)=U(\vec{\iota},\vec{\lmb}(t);\cdot)+g(t)
\]
(the new scales $\vec{\lmb}(t)=(\lmb_{1}(t),\dots,\lmb_{J}(t))$ will
be different from the scales introduced in the assumption of Theorem~\ref{thm:main}
in general) such that $g(t)$ satisfies an $L^{2}$ in time control:
\[
\int_{t_{0}}^{T}\|g(t)\|_{\dot{H}^{2}}^{2}dt<+\infty.
\]
As explained in Section~\ref{subsec:Strategy}, this $L_{t}^{2}$-control
is crucial when we integrate modulation equations in the next section;
it guarantees that nonlinear terms of $g(t)$ do not contribute to
the dynamics of $\vec{\lmb}(t)$. Moreover, the modified profile $U(\vec{\iota},\vec{\lmb};\cdot)$
was introduced to achieve this $L^{2}$ in time control. Due to its
construction, $\mu_{j}(t)$s also turn out to enjoy some integrability
in time.

To fix the new scales $\vec{\lmb}(t)$ for each time, we will impose
$J$ orthogonality conditions $\lan g(t),\calZ_{\lmb_{k}(t)}\ran=0$,
$k=1,\dots,J$. We fix a function $\calZ$ such that 
\begin{equation}
\calZ\in C_{\rad}^{\infty},\quad\lan\calZ,\Lmb W\ran=1,\quad\mathrm{supp}(\calZ)\subseteq[R_{0}^{-1},R_{0}]\text{ for some }R_{0}>10.\label{eq:calZ-prop}
\end{equation}
One may explicitly choose $\calZ=c(1-\chi_{R_{0}^{-1}})\chi_{R_{0}/2}\Lmb W$
for some $R_{0}>10$ and a constant $c$. The following is the main
result of this section.
\begin{prop}[Decomposition and spacetime estimate]
\label{prop:spacetime-est}Let $u(t)$ satisfy the assumption of
Theorem~\ref{thm:main}. Then, there exist $t_{0}<T$ and a $C^{1}$-curve\footnote{Recall $\alp_{0}$ from Proposition~\ref{prop:Modified-Profile}.}
$\vec{\lmb}:[t_{0},T)\to\calP_{J}(\alp_{0})$ such that the following
holds on the time interval $[t_{0},T)$:
\begin{itemize}
\item (Decomposition) We still have \eqref{eq:main-thm-decom} with the
new choice of parameters $\vec{\lmb}(t)$. The refined decomposition
of $u(t)$ as 
\begin{equation}
u(t)=U(\vec{\iota},\vec{\lmb}(t);\cdot)+g(t)\eqqcolon U(t)+g(t)\label{eq:new-decomp}
\end{equation}
satisfies the orthogonality conditions
\begin{equation}
\lan g(t),\calZ_{\lmb_{k}(t)}\ran=0\qquad\forall k\in\{1,\dots,J\},\label{eq:orthog}
\end{equation}
where $U(\vec{\iota},\vec{\lmb};r)$ is the modified multi-bubble
profile defined in Proposition~\ref{prop:Modified-Profile}.
\item (Qualitative smallness) We have 
\begin{align}
\|\lan r/\lmb_{1}(t)\ran^{-1}g(t)\|_{\dot{H}^{1}} & =o(1),\label{eq:g_H1loc_o(1)}\\
\max_{2\leq j\leq J}\mu_{j}(t) & =o(1).\label{eq:mu_j_o(1)}
\end{align}
Moreover, we have 
\begin{align}
\|\chi_{4r_{0}}g(t)\|_{\dot{H}^{1}} & =\dlt(r_{0})+o(1),\label{eq:chi-r0-g}\\
\|g(t)\|_{\dot{H}^{1}} & =o(1)\qquad\text{if }T=+\infty.\label{eq:global_g_H1_o(1)}
\end{align}
\item (Spacetime estimate) Recall that $\calD=\sum_{j=2}^{J}\mu_{j}^{2D}/\lmb_{j}^{2}$.
We have 
\begin{equation}
\int_{t_{0}}^{T}\Big\{\calD+\|g(t)\|_{\dot{H}^{2}}^{2}\Big\} dt<+\infty.\label{eq:sptime-est}
\end{equation}
\end{itemize}
\end{prop}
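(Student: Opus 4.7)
The plan is to proceed in three stages: a modulation step that fixes $\vec\lmb(t)$, an instantaneous coercivity inequality for the dissipation $\|\rd_t u\|_{L^2}^2$, and time integration via the energy identity.

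\textbf{Stage 1 (modulation).} Starting from the scales $\td\lmb_j(t)$ furnished by Proposition~\ref{prop:HMHF-bubble-dec} (for HMHF) or by the hypothesis of Theorem~\ref{thm:main} together with Open Question~\ref{conj:radial-soliton-resolution} (for NLH), I would apply the implicit function theorem to perturb them into $C^1$ curves $\lmb_j(t)$ for which the orthogonalities \eqref{eq:orthog} hold with $g=u-U(\vec\iota,\vec\lmb(t);\cdot)$. Non-degeneracy of the relevant $J\times J$ Jacobian near any sufficiently decoupled configuration follows from $\lan\Lmb W,\calZ\ran=1$ (the diagonal entries) and from the almost-orthogonality \eqref{eq:almost-orthog} of the off-diagonal entries, which are $o(1)$ as $\alp^\ast\to 0$. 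Since $\|\td U\|_{\dot H^1}=\dlt(\alp^\ast)$ by \eqref{eq:tdU-H1}, the corrected scales differ from $\td\lmb_j(t)$ by a factor $1+o(1)$, hence \eqref{eq:mu_j_o(1)} holds, and \eqref{eq:g_H1loc_o(1)}--\eqref{eq:global_g_H1_o(1)} transfer from the analogous properties of the original decomposition together with $z^\ast\in\dot H_{\rad}^1$ (respectively $z^\ast\equiv 0$ when $T=+\infty$).

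\textbf{Stage 2 (coercivity).} Substituting $u=U+g$ into \eqref{eq:main-evol-eq} and inserting \eqref{eq:def-Psi} yields
\begin{equation*}
\rd_t u = -\sum_{j=2}^{J}\br{\iota}_j\frac{\kpp\mu_j^D}{\lmb_j^2}\Lmb W_{;j} + \Psi - H_{\vec\lmb}g - \calN(g),
\end{equation*}
where $\calN(g)$ collects the genuine nonlinearity $r^{-(D+2)}\{f(P+r^D g)-f(P)-f'(P)r^D g\}$ and the small cross-bubble linear correction $r^{-2}\{f'(P)-\sum_j f'(Q_{;j})\}g$, both pointwise controlled by \eqref{eq:f-04}--\eqref{eq:f-07} for NLH and \eqref{eq:f-06-HM}--\eqref{eq:f-07-HM} for HMHF. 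The crucial observation is that each $\Lmb W_{;j}$ lies in the kernel of $H_{\lmb_j}$, so that $H_{\vec\lmb}\Lmb W_{;j}=-r^{-2}\sum_{k\neq j}f'(Q_{;k})\Lmb W_{;j}$ is small by bubble decoupling; this gives
\begin{equation*}
\Big|\Blan \sum_{j=2}^J\br{\iota}_j\frac{\kpp\mu_j^D}{\lmb_j^2}\Lmb W_{;j},\, H_{\vec\lmb}g\Bran\Big| \aleq \dlt(\alp^\ast)\sqrt{\calD}\,\|g\|_{\dot H^2}.
\end{equation*}
Expanding $\|\rd_t u\|_{L^2}^2$, invoking the main-term size \eqref{eq:prof-eq-main-size}, the smallness \eqref{eq:Psi-L2}, and the coercivity \eqref{eq:lin-coer} (applicable thanks to \eqref{eq:orthog}), I arrive at the pointwise-in-time inequality
\begin{equation*}
\|\rd_t u(t)\|_{L^2}^2 \ageq \calD(t)+\|g(t)\|_{\dot H^2}^2 - C\|\calN(g(t))\|_{L^2}^2.
\end{equation*}

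\textbf{Stage 3 (integration and the localization obstacle).} Integrating the above and using the energy identity bounds $\int_{t_0}^T\{\calD+\|g\|_{\dot H^2}^2\}\,dt$ by $E[u(t_0)]-\lim_{t\to T}E[u(t)]+\int_{t_0}^T\|\calN(g)\|_{L^2}^2\,dt$, and the energy difference is finite since $E$ is monotone nonincreasing and bounded below (by the $\dot H^1$-hypothesis for NLH, by nonnegativity for HMHF). In the global case $T=+\infty$, the estimate \eqref{eq:global_g_H1_o(1)} gives $\|g\|_{\dot H^1}=o(1)$, and the Hardy--Rellich inequalities \eqref{eq:Hardy}--\eqref{eq:Rellich} then yield $\|\calN(g)\|_{L^2}\le o(1)\|g\|_{\dot H^2}$, which is absorbed on the left. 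The hard part is the finite-time blow-up regime, where $g$ is only known to be $\dot H^1$-small on the inner region $r\aleq\lmb_1(t)$ via \eqref{eq:g_H1loc_o(1)} and on the outer region $r\ageq r_0$ via \eqref{eq:chi-r0-g}. I would localize the Stage~2 argument using a smooth cutoff at an intermediate scale $R(t)$ with $\lmb_1(t)\ll R(t)\ll 1$: the interior piece reproduces the coercivity up to $o(1)$ slack, while on the exterior piece the equation is effectively subcritical and handled by a standard parabolic/Gronwall argument on the fixed radiation $z^\ast$. Choosing $R(t)$ so that the commutator $[\chi_R,\Dlt]$ and the boundary flux through the cutoff remain subordinate to $\calD+\|g\|_{\dot H^2}^2$ is the delicate point; once this is set up, \eqref{eq:sptime-est} follows directly from Proposition~\ref{prop:Modified-Profile} and \eqref{eq:lin-coer}.
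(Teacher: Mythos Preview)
Your Stages~1--2 match the paper closely: Lemma~\ref{lem:decom-near-multi-bubble} and Corollary~\ref{cor:Decomp-Sol} for modulation, and the coercivity computation in Lemma~\ref{lem:interior-sptime} (the paper works with $H_U$ rather than $H_{\vec\lmb}$ but then proves $\|(H_U-H_{\vec\lmb})g\|_{L^2}=o(1)\|g\|_{\dot H^2}$, which is equivalent to your packaging of the cross term into $\calN$).

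The gap is in Stage~3 for $T<+\infty$. You propose a time-dependent cutoff $R(t)$ with $\lmb_1(t)\ll R(t)\ll 1$ and flag the commutator control as ``the delicate point'' without resolving it; indeed with $R(t)\to 0$ the term $\|[\chi_{R(t)},\Dlt]g\|_{L^2}\sim R(t)^{-1}\|g\|_{\dot H^1}$ is neither subordinate to $\calD+\|g\|_{\dot H^2}^2$ nor time-integrable a priori. The paper sidesteps this entirely by taking a \emph{fixed} radius $r_0$: then $\|[\chi_{2r_0},\Dlt]g\|_{L^2}\aleq_{r_0}\|g\|_{\dot H^1}\aleq_{r_0}1$, and since $T-t_0<\infty$ this contributes only a finite additive constant to the time integral---which is all \eqref{eq:sptime-est} requires (the paper literally proves $\calD+\|\chi_{2r_0}g\|_{\dot H^2}^2\leq C(\|\chi_{2r_0}u_t\|_{L^2}^2+1)$ pointwise in $t$). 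Interior smallness of the nonlinearity then comes from \eqref{eq:chi-r0-g}: choose $r_0$ small first, then $t_0$ close to $T$. For the exterior no Gronwall is needed: on $r\geq r_0$ radial Sobolev bounds the nonlinearity in $L^\infty$, so $\|(1-\chi_{r_0})\Dlt u\|_{L^2_tL^2}^2\aleq\int\|u_t\|_{L^2}^2+O_{r_0}(T-t_0)$, the same crude bound holds for $\Dlt U$ via $|U|_2\aleq r^{-D}$, and subtraction gives $\|(1-\chi_{r_0})g\|_{L^2_t\dot H^2}<\infty$ directly (Lemma~\ref{lem:exterior-sptime}).
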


We remark that the decomposition and qualitative smallness parts of
the above proposition are standard. The heart of this proposition
is the spacetime estimate \eqref{eq:sptime-est}.

\subsection{Decomposition}

First, we state a decomposition lemma near radial multi-bubbles, which
will follow from the implicit function theorem. For $m\in\bbZ$, $v\in\calH_{0,m}$
(see \eqref{eq:def-calH-ell-m}), $\vec{\nu}\in(0,+\infty)^{J}$,
and $\dlt>0$, we denote 
\begin{align*}
B_{\dlt}(v) & \coloneqq\{u\in\calH_{0,m}:\|u-v\|_{\dot{H}^{1}}<\dlt\},\\
B_{\dlt}(\vec{\nu}) & \coloneqq\{\vec{\lmb}\in(0,+\infty)^{J}:\mathrm{dist}(\vec{\nu},\vec{\lmb})<\dlt\},\\
\mathrm{dist}(\vec{\nu},\vec{\lmb}) & \coloneqq\max_{1\leq j\leq J}|\log(\nu_{j}/\lmb_{j})|.
\end{align*}
We also denote 
\[
W_{\vec{\iota},\vec{\nu}}\coloneqq\sum_{j=1}^{J}\iota_{j}W_{\nu_{j}}.
\]
The proof of the following lemma is similar to \cite[Lemma 4.11]{JendrejLawrie2021arXiv}
and \cite[Lemma B.1]{DuyckaertsKenigMerle2023Acta}.
\begin{lem}[Decomposition near $W_{\vec{\iota},\vec{\nu}}$]
\label{lem:decom-near-multi-bubble}Let $J\ge1$. There exist small
constants $\alp_{\dec}\in(0,\alp_{0})$ and $\dlt_{\dec},\eta_{\dec}>0$
with the following property. For any $\vec{\iota}\in\{\pm1\}^{J}$
and $\vec{\nu}\in\calP_{J}(\alp_{\dec})$, there exists a $C^{1}$-function
$G^{\vec{\iota},\vec{\nu}}:B_{\dlt_{\dec}}(W_{\vec{\iota},\vec{\nu}})\to B_{\eta_{\dec}}(\vec{\nu})$
such that:
\begin{itemize}
\item For all $u\in B_{\dlt_{\dec}}(W_{\vec{\iota},\vec{\nu}})$ and $k\in\{1,\dots,J\}$,
we have $\lan u-U(\vec{\iota},G^{\vec{\iota},\vec{\nu}}(u);\cdot),\calZ_{G_{k}^{\vec{\iota},\vec{\nu}}(u)}\ran=0$.
\item If $\lan u-U(\vec{\iota},\vec{\lmb};\cdot),\calZ_{\lmb_{k}}\ran=0$
for all $k\in\{1,\dots,J\}$ for some $u\in B_{\dlt_{\dec}}(W_{\vec{\iota},\vec{\nu}})$
and $\vec{\lmb}\in B_{\eta_{\dec}}(\vec{\nu})$, then $\vec{\lmb}=G^{\vec{\iota},\vec{\nu}}(u)$.
\item For any $\eta>0$, there exist $\alp_{1}\in(0,\alp_{\dec}]$ and $\dlt_{1}\in(0,\dlt_{\dec}]$
such that $\mathrm{dist}(\vec{\nu},G^{\vec{\iota},\vec{\nu}}(u))\leq\eta$
whenever $\vec{\nu}\in\calP(\alp_{1})$ and $u\in B_{\dlt_{1}}(W_{\vec{\iota},\vec{\nu}})$.
\end{itemize}
\end{lem}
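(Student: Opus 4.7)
My plan is to prove Lemma~\ref{lem:decom-near-multi-bubble} by a quantitative implicit function theorem applied to the map $F:B_\dlt(W_{\vec{\iota},\vec{\nu}})\times B_\eta(\vec{\nu})\to\bbR^J$ whose $k$-th component is an appropriate scaling normalization of $\lan u-U(\vec{\iota},\vec{\lmb};\cdot),\calZ_{\lmb_k}\ran$, chosen so that both $F$ and the entries of its Jacobian are $O(1)$ uniformly across the family. By construction $F(u,\vec{\lmb})=0$ encodes precisely the $J$ orthogonality conditions required of $G^{\vec{\iota},\vec{\nu}}(u)$, so the lemma reduces to solving this system for $\vec{\lmb}$ as a $C^1$ function of $u$, with estimates uniform in $\vec{\nu}\in\calP_J(\alp_\dec)$.

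The core step is to compute the Jacobian $\partial_{\vec{\lmb}}F$ at the reference point $(u,\vec{\lmb})=(W_{\vec{\iota},\vec{\nu}},\vec{\nu})$ and show it is uniformly invertible. Using $\lmb_k\partial_{\lmb_k}W_{;k}=-\Lmb W_{;k}$, the scaling identity $\lan\Lmb W_\lmb,\calZ_\lmb\ran=\lmb^2\lan\Lmb W,\calZ\ran=\lmb^2$ from \eqref{eq:calZ-prop}, and the $\lmb$-derivative control \eqref{eq:tdU-weight-L1} on the corrector $\td U$, the diagonal entries $\partial_{\lmb_k}F_k$ will equal $\iota_k+O(\alp_\dec^c)$ for some $c>0$. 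Each off-diagonal entry $\partial_{\lmb_j}F_k$ ($j\ne k$) is smaller by a positive power of $\max(\lmb_j/\lmb_k,\lmb_k/\lmb_j)$: the leading contribution $\partial_{\lmb_j}W_{;j}$ is concentrated near scale $\lmb_j$, which is well-separated from the compact-support scale $\lmb_k$ of $\calZ_{\lmb_k}$, and the remainder from $\partial_{\lmb_j}\td U$ paired against $\calZ_{\lmb_k}$ is bounded via \eqref{eq:tdU-weight-L1}. The residual $F(W_{\vec{\iota},\vec{\nu}},\vec{\nu})$ is itself $O(\alp_\dec^c)$, since $W_{\vec{\iota},\vec{\nu}}-U(\vec{\iota},\vec{\nu};\cdot)=-\td U$ and \eqref{eq:tdU-weight-L1} again applies. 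The Jacobian is therefore diagonal-dominant with inverse bounded by an absolute constant once $\alp_\dec$ is small enough.

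With these Jacobian and residual bounds in hand, a quantitative implicit function theorem---e.g., Newton iteration initialized at $\vec{\lmb}=\vec{\nu}$---produces a unique $C^1$-map $G^{\vec{\iota},\vec{\nu}}:B_{\dlt_\dec}(W_{\vec{\iota},\vec{\nu}})\to B_{\eta_\dec}(\vec{\nu})$ solving $F(u,G^{\vec{\iota},\vec{\nu}}(u))=0$, giving the first and second bullets of the lemma. For the third bullet, the residual bound yields $\mathrm{dist}(\vec{\nu},G^{\vec{\iota},\vec{\nu}}(W_{\vec{\iota},\vec{\nu}}))=O(\alp_\dec^c)$; given $\eta>0$, I would first choose $\alp_1\le\alp_\dec$ small enough to make this distance at most $\eta/2$, and then shrink $\dlt_1\le\dlt_\dec$ using the Lipschitz dependence of $G^{\vec{\iota},\vec{\nu}}$ on $u$ to absorb the remaining variation.

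The main obstacle is enforcing uniformity of the IFT constants as $\vec{\nu}$ varies over the noncompact parameter region $\calP_J(\alp_\dec)$. I would resolve this using the scaling covariance built into the construction: $U$ is scaling covariant by Proposition~\ref{prop:Modified-Profile}, $\calZ_\lmb$ scales accordingly, and hence $F$ intertwines joint rescalings of $u$ and $\vec{\lmb}$. Normalizing $\nu_1=1$ via this symmetry reduces the problem to a parameter family indexed by the ratios $(\mu_2,\dots,\mu_J)\in(0,\alp_\dec)^{J-1}$, on which all the Jacobian and residual estimates above hold with universal constants depending only on $N$ and $J$.
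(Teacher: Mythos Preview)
Your proposal is correct and follows essentially the same approach as the paper: define $F_k(\vec\lmb;u)=\lan u-U(\vec\iota,\vec\lmb;\cdot),\iota_k\calZ_{\ul{\lmb_k}}\ran$, verify that the Jacobian $\lmb_j\partial_{\lmb_j}F_k$ equals $-\chf_{j=k}+o(1)$ (using $\lan\Lmb W,\calZ\ran=1$, the almost-orthogonality \eqref{eq:almost-orthog}, and \eqref{eq:tdU-weight-L1} for the corrector derivatives) and that the residual $F_k(\vec\nu;W_{\vec\iota,\vec\nu})=-\lan\td U,\iota_k\calZ_{\ul{\nu_k}}\ran$ is small, then invoke the implicit function theorem. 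One minor correction: for the residual bound you should cite \eqref{eq:tdU-H1} (the $\dot H^1$ smallness of $\td U$) rather than \eqref{eq:tdU-weight-L1}, which only controls $\lmb_j\partial_{\lmb_j}\td U$, not $\td U$ itself.
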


\begin{proof}
We only sketch the proof. Let $\vec{\iota}\in\{\pm1\}^{J}$ and $m_{\Sigma}=\sum_{j=1}^{J}\iota_{j}$.
We consider the function $F^{\vec{\iota}}:\calP_{J}(\alp_{0})\times\calH_{0,m_{\Sigma}}\to\bbR^{J}$
whose components $F_{k}^{\vec{\iota}}$, $k=1,\dots,J$, are defined
by 
\[
F_{k}^{\vec{\iota}}(\vec{\lmb};u)=\lan u-U(\vec{\iota},\vec{\lmb};\cdot),\iota_{k}\calZ_{\ul{\lmb_{k}}}\ran.
\]
Now let $\alp_{1}\in(0,\alp_{0})$, $\vec{\nu}\in\calP_{J}(\alp_{1})$,
and $u\in B_{\dlt_{1}}(W_{\vec{\iota},\vec{\nu}})$. First, by \eqref{eq:tdU-H1},
we have 
\[
F_{k}^{\vec{\iota}}(\vec{\nu};W_{\vec{\iota},\vec{\nu}})=-\lan\td U,\iota_{k}\calZ_{\ul{\nu_{k}}}\ran\aleq\dlt(\alp_{1}).
\]
Next, by $\lan\Lmb W,\calZ\ran=1$, \eqref{eq:almost-orthog}, and
\eqref{eq:tdU-weight-L1}, we have 
\begin{align*}
 & \lmb_{j}\rd_{\lmb_{j}}F_{k}^{\vec{\iota}}(\vec{\lmb};u)\\
 & =-\lan\iota_{j}\Lmb W_{\lmb_{j}},\iota_{k}\calZ_{\ul{\lmb_{k}}}\ran+\lan\lmb_{j}\rd_{\lmb_{j}}\td U,\iota_{k}\calZ_{\ul{\lmb_{k}}}\ran+\chf_{j=k}\lan u-U,\iota_{k}\Lmb_{-1}\calZ_{\ul{\lmb_{k}}}\ran\\
 & =-(\chf_{j=k}+\dlt(\alp_{1}))+O(\|u-U\|_{\dot{H}^{1}})\\
 & =-\chf_{j=k}+\dlt(\alp_{1}+\dlt_{1}).
\end{align*}
We also note that $F_{k}$ is $C^{1}$ in $u$ with the Lipschitz
bound 
\[
F_{k}^{\vec{\iota}}(\vec{\lmb};u)-F_{k}^{\vec{\iota}}(\vec{\lmb};u')=\lan u-u',\iota_{k}\calZ_{\ul{\lmb_{k}}}\ran\aleq\|u-u'\|_{\dot{H}^{1}}.
\]
By the previous three displays, one can apply the standard implicit
function theorem to conclude the proof. See \cite[Lemma 4.11]{JendrejLawrie2021arXiv}
and \cite[Lemma B.1]{DuyckaertsKenigMerle2023Acta} for more details.
This completes the sketch of the proof.
\end{proof}
\begin{cor}[Decomposition of solution]
\label{cor:Decomp-Sol}Let $u(t)$ satisfy the assumption of Theorem~\ref{thm:main}.
Then, there exist $t_{\dec}\in(0,T)$ and a $C^{1}$-curve $\vec{\lmb}:[t_{\dec},T)\to\calP_{J}(\alp_{0})$
such that on the time interval $[t_{\dec},T)$ the decomposition and
qualitative smallness parts of Proposition~\ref{prop:spacetime-est}
hold.
\end{cor}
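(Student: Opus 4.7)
The plan is to construct $\vec{\lmb}(t)$ by a modulation-theoretic implicit function argument, using the bubble decomposition supplied by the hypothesis of Theorem~\ref{thm:main} as an initial approximation, and then transferring the qualitative smallness from that decomposition to the refined one $u(t)=U(\vec{\iota},\vec{\lmb}(t);\cdot)+g(t)$. Write $\vec{\nu}(t)$ for the scales in \eqref{eq:main-thm-decom} (to distinguish them from the new $\vec{\lmb}(t)$). By hypothesis $\max_{2\leq j\leq J}\nu_{j}(t)/\nu_{j-1}(t)\to0$ and $\|u(t)-W_{\vec{\iota},\vec{\nu}(t)}-z^{\ast}\|_{\dot{H}^{1}}\to0$ as $t\to T$, so for some $t_{1}<T$ we have $\vec{\nu}(t)\in\calP_{J}(\alp_{\dec})$ on $[t_{1},T)$.

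In the \textbf{global case} $T=+\infty$ we have $z^{\ast}\equiv0$, hence for any $\dlt_{1}\in(0,\dlt_{\dec}]$ and $\alp_{1}\in(0,\alp_{\dec}]$ we may enlarge $t_{1}$ so that both $u(t)\in B_{\dlt_{1}}(W_{\vec{\iota},\vec{\nu}(t)})$ and $\vec{\nu}(t)\in\calP_{J}(\alp_{1})$ on $[t_{1},T)$. I would then simply set $\vec{\lmb}(t)\coloneqq G^{\vec{\iota},\vec{\nu}(t)}(u(t))$ from Lemma~\ref{lem:decom-near-multi-bubble}; its third bullet yields $\mathrm{dist}(\vec{\nu}(t),\vec{\lmb}(t))\to0$, whence $\vec{\lmb}(t)\in\calP_{J}(\alp_{0})$ for $t$ large and \eqref{eq:main-thm-decom} persists with the refined scales. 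The map $t\mapsto\vec{\lmb}(t)$ is $C^{1}$ since $u$ is $C^{1}$ into $\dot{H}^{1}$ by regularity of the heat flow and $G^{\vec{\iota},\vec{\nu}}$ is $C^{1}$ in $u$.

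The \textbf{finite-time case} $T<+\infty$ is the main obstacle: because $z^{\ast}$ is nontrivial, $u(t)$ is not asymptotically $\dot{H}^{1}$-close to a pure multi-bubble and Lemma~\ref{lem:decom-near-multi-bubble} does not apply as a black box. I would instead redo its proof directly, applying the implicit function theorem to $F_{k}(\vec{\lmb};u)\coloneqq\lan u-U(\vec{\iota},\vec{\lmb};\cdot),\iota_{k}\calZ_{\ul{\lmb_{k}}}\ran$ at the base point $(\vec{\nu}(t);u(t))$. The Jacobian $\lmb_{j}\rd_{\lmb_{j}}F_{k}=-\chf_{j=k}+\dlt(\alp_{\dec})+O(\|u-U\|_{\dot{H}^{1}})$ depends only on the bubbles and on $\td U$ (via \eqref{eq:tdU-H1} and \eqref{eq:tdU-weight-L1}) and so is unaffected by $z^{\ast}$, remaining invertible. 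The crucial point is that the defect at the base point,
\[
F_{k}(\vec{\nu}(t);u(t))=-\lan\td U,\iota_{k}\calZ_{\ul{\nu_{k}(t)}}\ran+\iota_{k}\lan z^{\ast},\calZ_{\ul{\nu_{k}(t)}}\ran+o(1),
\]
is $o(1)$ as $t\to T$: the first summand by \eqref{eq:tdU-H1} and duality, the last by the hypothesis, and the middle term because $\calZ$ is supported in $[R_{0}^{-1},R_{0}]$, so $\calZ_{\ul{\nu_{k}(t)}}$ is supported in the shrinking annulus $[\nu_{k}(t)R_{0}^{-1},\nu_{k}(t)R_{0}]$. Since $\|\calZ_{\ul{\lmb}}\|_{L^{(2^{\ast})'}}$ is scale-invariant in $\lmb$ while $z^{\ast}\in\dot{H}^{1}\embed L^{2^{\ast}}$ with $2^{\ast}=2N/(N-2)$, Hölder on that annulus gives $|\lan z^{\ast},\calZ_{\ul{\lmb}}\ran|\aleq\|z^{\ast}\|_{L^{2^{\ast}}([\lmb R_{0}^{-1},\lmb R_{0}])}\to0$ as $\lmb\to0$ by absolute continuity of the Lebesgue integral. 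The implicit function theorem then produces a unique $\vec{\lmb}(t)\in B_{\eta_{\dec}}(\vec{\nu}(t))$ with $F_{k}(\vec{\lmb}(t);u(t))=0$ and $\mathrm{dist}(\vec{\nu}(t),\vec{\lmb}(t))\to0$; $C^{1}$-regularity follows as in the global case.

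Finally, for the qualitative smallness: combining $\mathrm{dist}(\vec{\nu}(t),\vec{\lmb}(t))\to0$ with \eqref{eq:tdU-H1} and \eqref{eq:main-thm-decom} yields $\|g(t)-z^{\ast}\|_{\dot{H}^{1}}\to0$, which gives \eqref{eq:global_g_H1_o(1)} directly when $z^{\ast}\equiv0$; in the finite-time case, \eqref{eq:g_H1loc_o(1)} and \eqref{eq:chi-r0-g} follow from this together with $\lmb_{1}(t)\to0$ and the $\dot{H}^{1}$-convergences $\|\lan r/\lmb\ran^{-1}z^{\ast}\|_{\dot{H}^{1}}\to0$ as $\lmb\to0$ and $\|\chi_{4r_{0}}z^{\ast}\|_{\dot{H}^{1}}\to0$ as $r_{0}\to0$ (each by dominated convergence in $\dot{H}^{1}$), while \eqref{eq:mu_j_o(1)} is immediate from $\mathrm{dist}(\vec{\nu}(t),\vec{\lmb}(t))\to0$ together with $\max_{j\geq2}\nu_{j}(t)/\nu_{j-1}(t)\to0$.
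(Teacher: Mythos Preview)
Your strategy is correct and takes a slightly different route from the paper in the finite-time case. The paper does not feed $u(t)$ directly into the implicit function theorem; instead it introduces $\td u(t)\coloneqq u(t)-(1-\chi_{r_0})z^{\ast}$ for a small fixed $r_0>0$, so that $\|\td u(t)-W_{\vec{\iota},\vec{\nu}(t)}\|_{\dot{H}^1}<\dlt_1$, applies Lemma~\ref{lem:decom-near-multi-bubble} as a black box to $\td u$ on a covering by open intervals $I^t$ (checking compatibility of the resulting $G^t$ via uniqueness), and only afterwards observes that $\lan\td u-U,\calZ_{\lmb_k}\ran=\lan u-U,\calZ_{\lmb_k}\ran$ once $\lmb_1(t)$ is small enough that $\mathrm{supp}\,\calZ_{\lmb_k}\subset(0,r_0)$. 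Your approach is more direct: you keep $u(t)$ and exploit the compact support of $\calZ$ to kill the $z^{\ast}$-contribution to $F_k$ itself. Both work; the paper's buys the ability to quote Lemma~\ref{lem:decom-near-multi-bubble} verbatim, yours avoids the auxiliary $\td u$ and yields the orthogonality for $u$ in one step.

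Two details in your argument deserve tightening. First, the Jacobian you write, $\lmb_j\rd_{\lmb_j}F_k=-\chf_{j=k}+\dlt(\alp_{\dec})+O(\|u-U\|_{\dot{H}^1})$, literally contains the term $\chf_{j=k}\lan u-U,\iota_k\Lmb_{-1}\calZ_{\ul{\lmb_k}}\ran$, which \emph{does} see $z^{\ast}$; so ``unaffected by $z^{\ast}$'' is not right as stated. The fix is exactly your own argument for $F_k$: since $\Lmb_{-1}\calZ$ is also compactly supported, $\lan z^{\ast},\Lmb_{-1}\calZ_{\ul{\lmb_k}}\ran\to0$ by the same H\"older-plus-absolute-continuity reasoning. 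Second, for \eqref{eq:HMHF-equiv} with $m^{\ast}\neq0$ one has $z^{\ast}\notin\dot{H}^1$ globally, so ``$z^{\ast}\in\dot{H}^1\embed L^{2^{\ast}}$'' is false as written; but your argument only needs $z^{\ast}\in L^{2^{\ast}}_{\loc}$ near $r=0$, which follows from $\int r^{-1}|w^{\ast}|^{2^{\ast}}dr\le\|w^{\ast}\|_{L^\infty(0,1]}^{2^{\ast}-2}\int r^{-1}|w^{\ast}|^2dr<\infty$ with $w^{\ast}=r^{D}z^{\ast}\in\calE_{0,m^{\ast}}$.
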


\begin{proof}
\textbf{Step 1.} Construction of the curve $\vec{\lmb}(t)$.

Let $u(t)$ be as in the assumption of Theorem~\ref{thm:main} with
$\vec{\lmb}(t)$ replaced by $\vec{\nu}(t)$. Let the constants $\alp_{1}$
and $\dlt_{1}$ be chosen by the third item of Lemma~\ref{lem:decom-near-multi-bubble}
with $\eta=\eta_{\dec}/3$. Choose $r_{0}>0$ small if $T<+\infty$
and $r_{0}=+\infty$ if $T=+\infty$ so that $\|\chi_{r_{0}}z^{\ast}\|_{\dot{H}^{1}}<\frac{1}{2}\dlt_{1}$.
Choose $t_{\dec}$ sufficiently close to $T$ such that $\vec{\nu}(t)\in\calP(\alp_{1})$
and $\|u(t)-W_{\vec{\iota},\vec{\nu}(t)}-z^{\ast}\|_{\dot{H}^{1}}<\frac{1}{2}\dlt_{1}$
for all $t\in[t_{\dec},T)$. In particular, we have $\|\td u(t)-W_{\vec{\iota},\vec{\nu}(t)}\|_{\dot{H}^{1}}\leq\|u(t)-W_{\vec{\iota},\vec{\nu}(t)}-z^{\ast}\|_{\dot{H}^{1}}+\|\chi_{r_{0}}z^{\ast}\|_{\dot{H}^{1}}<\dlt_{1}$
for all $t\in[t_{\dec},T)$, where $\td u(t)=u(t)-(1-\chi_{r_{0}})z^{\ast}$.
For each $t\in[t_{\dec},T)$, choose an open interval $I^{t}\ni t$
such that $\td u(\tau)\in B_{\dlt_{1}}(W_{\vec{\iota},\vec{\nu}(t)})$
and $\vec{\nu}(\tau)\in B_{\eta}(\vec{\nu}(t))$ for all $\tau\in I^{t}$.
Then, we apply Lemma~\ref{lem:decom-near-multi-bubble} to obtain
$G^{t}:I^{t}\to B_{\eta_{\dec}}(\vec{\nu})$ defined by $G^{t}(\tau)=G^{\vec{\iota},\vec{\nu}(t)}(\td u(\tau))$.
Note that each $G^{t}$ is $C^{1}$.

We claim that $\{G^{t}\}_{t\in[t_{\dec},T)}$ is a compatible family.
Suppose $\tau\in I^{t_{1}}\cap I^{t_{2}}$ for some $t_{1},t_{2}\in[t_{\dec},T)$.
By the choice of $\alp_{1}$ and $\dlt_{1}$, we have $\mathrm{dist}(G^{t_{j}}(\tau),\vec{\nu}(t_{j}))<\eta$
for $j\in\{1,2\}$. By the definition of $I^{t_{j}}$, we have $\td u(\tau)\in B_{\dlt_{1}}(W_{\vec{\iota},\vec{\nu}(t_{j})})$
and $\mathrm{dist}(\vec{\nu}(t_{j}),\vec{\nu}(\tau))<\eta$. By triangle
inequality we have $\mathrm{dist}(G^{t_{2}}(\tau),\vec{\nu}(t_{1}))<3\eta=\eta_{\dec}$.
Hence $G^{t_{2}}(\tau)=G^{t_{1}}(\tau)$ by uniqueness.

As $\{G^{t}\}_{t\in[t_{\dec},T)}$ is a compatible family, the definition
$\vec{\lmb}(\tau)=G^{t}(\tau)$ for any $\tau\in[t_{1},T)$ and $t$
with $\tau\in I^{t}$ is well-defined.

\textbf{Step 2.} Completion of the proof.

First, we show that \eqref{eq:main-thm-decom} holds for the new parameters
$\vec{\lmb}(t)$. It suffices to show $\mathrm{dist}(\vec{\lmb}(t),\vec{\nu}(t))\to0$.
If $T<+\infty$, this follows from 
\[
\mathrm{dist}(\vec{\lmb}(t),\vec{\nu}(t))\aeq|F_{k}^{\vec{\iota}}(\vec{\lmb}(t);\td u(t))-F_{k}^{\vec{\iota}}(\vec{\nu}(t);\td u(t))|=|F_{k}^{\vec{\iota}}(\vec{\nu}(t);\td u(t))|
\]
and 
\[
F_{k}^{\vec{\iota}}(\vec{\nu}(t);\td u(t))\aleq\|\chf_{r\aleq\nu_{1}(t)}r^{-1}|\td u(t)-U(\vec{\iota},\vec{\nu}(t);\cdot)|\|_{L^{2}}\aleq o_{t\to T}(1),
\]
where the latter is a consequence of $\vec{\nu}(t)\in\calP_{J}(o_{t\to T}(1))$,
$\nu_{1}(t)\to0$, and the fact that $\calZ$ is compactly supported.
If $T=+\infty$, this is easier because $\td u(t)=u(t)$ and we directly
have $\|u(t)-U(\vec{\iota},\vec{\nu}(t);\cdot)\|_{\dot{H}^{1}}\to0$
without localization.

Now, the definition of $G^{t}$ says that the orthogonality conditions
\eqref{eq:orthog} hold for the function $\td u(t)$. If $T=+\infty$,
then $\td u(t)=u(t)$ so we have \eqref{eq:orthog}. If $T<+\infty$,
using again the fact that $\calZ$ is compactly supported and $\lmb_{1}(t)\to0$,
we have \eqref{eq:orthog} for $u(t)$, possibly after taking $t_{\dec}$
closer to $T$. Finally, the smallness estimates easily follow from
\eqref{eq:main-thm-decom}. This completes the proof.
\end{proof}

\subsection{Spacetime estimate}

Let $u(t)$ be as in the assumption of Theorem~\ref{thm:main}; let
$t_{\dec}<T$ and decompose $u(t)$ on $[t_{\dec},T)$ according to
Corollary~\ref{cor:Decomp-Sol}. To complete the proof of Proposition~\ref{prop:spacetime-est},
it remains to show the spacetime estimate \eqref{eq:sptime-est}.
First, we show the interior spacetime estimate:
\begin{lem}[Interior spacetime estimate]
\label{lem:interior-sptime}There exist $t_{0}\in[t_{\dec},T)$ and
$r_{0}>0$ such that 
\begin{equation}
\int_{t_{0}}^{T}\Big\{\calD+\|\chi_{2r_{0}}g(t)\|_{\dot{H}^{2}}^{2}\Big\} dt<+\infty.\label{eq:interior-sptime}
\end{equation}
If $T=+\infty$, one can choose $r_{0}=+\infty$, i.e., \eqref{eq:sptime-est}
holds.
\end{lem}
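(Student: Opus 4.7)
The argument is driven by the energy identity. Since $u(t)$ is $\dot H^1$-bounded and the energy decreases in time (bounded below trivially for HMHF and via Sobolev for NLH),
\[
\int_{t_{\dec}}^T \|\rd_t u(t)\|_{L^2}^2\,dt = E[u(t_{\dec})] - \lim_{t\to T} E[u(t)] < +\infty.
\]
The goal is to establish a pointwise-in-time lower bound $\|\rd_t u(t)\|_{L^2}^2 \gtrsim \calD(t) + \|\chi_{2r_0} g(t)\|_{\dot H^2}^2$ modulo absorbable errors, and then integrate.

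\textbf{Main identity and coercivity.} Substituting $u = U + g$ into \eqref{eq:main-evol-eq} and invoking the profile equation \eqref{eq:def-Psi}, one obtains
\[
\rd_t u = -H_{\vec\lambda} g - \sum_{j=2}^J \br{\iota}_j \frac{\kappa\mu_j^D}{\lambda_j^2}\Lmb W_{;j} + \Psi + \calE(g),
\]
where $\calE(g) = r^{-2}\{f'(P) - \sum_k f'(Q_{;k})\} g + r^{-(D+2)}\{f(P + r^D g) - f(P) - f'(P) r^D g\}$ collects small-linear and pure nonlinear remainders. Self-adjointness of $H_{\vec\lambda}$ and $H_{\lmb_j}\Lmb W_{;j} = 0$ reduce the cross term $\lan H_{\vec\lambda}g, \Lmb W_{;j}\ran$ to $-\sum_{k\ne j}\lan g, r^{-2}f'(Q_{;k})\Lmb W_{;j}\ran$, bounded by $\dlt(\alp^*)\|g\|_{\dot H^2}\sqrt{\calD}$ via scale decoupling. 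Combined with the multi-bubble coercivity \eqref{eq:lin-coer} (applicable thanks to the orthogonality \eqref{eq:orthog}) and the main-term estimate \eqref{eq:prof-eq-main-size}, one gets
\[
\Bigl\|H_{\vec\lambda}g + \sum_{j=2}^J \br{\iota}_j \frac{\kappa\mu_j^D}{\lambda_j^2}\Lmb W_{;j}\Bigr\|_{L^2}^2 \gtrsim \|g\|_{\dot H^2}^2 + \calD.
\]
The profile error $\|\Psi\|_{L^2}^2 \lesssim \dlt(\alp^*)^2\calD$ from \eqref{eq:Psi-L2} is absorbable, while the pointwise bounds \eqref{eq:f-03}--\eqref{eq:f-07-HM} combined with Hardy and Sobolev yield $\|\calE(g)\|_{L^2}^2 \lesssim \bigl(\dlt(\alp^*)^2 + \|g\|_{\dot H^1}^{2\alpha}\bigr)\bigl(\|g\|_{\dot H^2}^2 + \calD\bigr)$ for some $\alpha > 0$. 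If $T = +\infty$, then \eqref{eq:global_g_H1_o(1)} renders the last factor $o(1)$ and the argument closes globally (i.e.\ with $r_0 = +\infty$), which is exactly \eqref{eq:sptime-est}.

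\textbf{Localization for $T < +\infty$, and main obstacle.} In the finite-time case only local $\dot H^1$-smallness \eqref{eq:chi-r0-g} is available, so the nonlinear part of $\calE(g)$ is under control only on $\{r \le 4r_0\}$. We therefore replace $\|\rd_t u\|_{L^2}^2$ by its restriction to $\{r \le 2r_0\}$ (still dominated by the global dissipation) and insert the cutoff $\chi_{2r_0}$ throughout the previous computation, applying the coercivity \eqref{eq:lin-coer} to $\chi_{2r_0}g$ rather than to $g$. The orthogonality \eqref{eq:orthog} survives the cutoff because $\mathrm{supp}(\calZ_{\lmb_k}) \subset \{\chi_{2r_0} \equiv 1\}$ once $\lambda_1(t) \ll r_0$, while the commutator $[\chi_{2r_0}, H_{\vec\lambda}]g$ is supported on $\{r \simeq r_0\}$ and bounded via Rellich by $r_0^{-2}\|\chi_{4r_0}g\|_{\dot H^1} = \dlt(r_0) + o(1)$. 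The main obstacle is quantitative: every error contribution --- $\Psi$, the cross term with $\Lmb W_{;j}$, the small-linear and nonlinear parts of $\calE(g)$, and the cutoff commutator --- must be \emph{strictly} dominated by $\|g\|_{\dot H^2}^2 + \calD$. This is precisely why Proposition \ref{prop:Modified-Profile} is tailored to produce $\Psi$ with $L^2$-norm of size $\dlt(\alp^*)\sqrt{\calD}$, strictly below the leading bubble-interaction term (see Remark \ref{rem:7D-simpler-profile}); without this gain, the absorption step would fail. Integrating the resulting pointwise estimate over $[t_0, T)$ then gives \eqref{eq:interior-sptime}.
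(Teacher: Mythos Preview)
Your overall strategy is the same as the paper's: use the energy identity, write $\rd_t u$ via the profile equation, exploit the near-orthogonality of $\Lmb W_{;j}$ to $H_{\vec\lambda}g$ together with the multi-bubble coercivity \eqref{eq:lin-coer}, and localize with $\chi_{2r_0}$ in the finite-time case. The organizational choice of working with $H_{\vec\lambda}$ rather than $H_U$ is harmless.

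There is, however, a genuine gap in your treatment of the cutoff commutator. You claim $\|[\chi_{2r_0},H_{\vec\lambda}]g\|_{L^2}\aleq r_0^{-2}\|\chi_{4r_0}g\|_{\dot H^1}=\dlt(r_0)+o(1)$, and then insist that ``every error contribution \dots\ must be strictly dominated by $\|g\|_{\dot H^2}^2+\calD$.'' Both assertions are incorrect. The commutator $[\Dlt,\chi_{2r_0}]g$ is supported on $r\aeq r_0$ and satisfies $\|[\Dlt,\chi_{2r_0}]g\|_{L^2}\aleq r_0^{-1}\|g\|_{\dot H^1}$; since only $\|\chi_{4r_0}g\|_{\dot H^1}=\dlt(r_0)+o(1)$ while $\|g\|_{\dot H^1}$ is merely bounded, this term is $O_{r_0}(1)$ --- bounded, not small. (Your identity $r_0^{-2}\|\chi_{4r_0}g\|_{\dot H^1}=\dlt(r_0)+o(1)$ drops the $r_0^{-2}$ prefactor, and the commutator sees $g$ at $r\aeq r_0$, not $\chi_{4r_0}g$.) The paper's pointwise estimate therefore reads
\[
\calD+\|\chi_{2r_0}g\|_{\dot H^2}^{2}\leq C\Big(\int\chi_{2r_0}^{2}u_{t}^{2}+1\Big),
\]
with the ``$+1$'' coming precisely from this commutator. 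The argument closes not by absorption but because $T<+\infty$: integrating the $O_{r_0}(1)$ over $[t_0,T)$ gives a finite contribution $O_{r_0}(T-t_0)$. Your requirement that the commutator be strictly dominated by $\calD+\|g\|_{\dot H^2}^2$ is too strong and would not be satisfiable; the correct mechanism is the finiteness of the time interval.
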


\begin{proof}
As $u(t)$ is energy bounded (as in the assumption of Theorem~\ref{thm:main}),
for any $t_{0}\in[t_{\dec},T)$ the energy identity yields 
\[
\int_{t_{0}}^{T}\|u_{t}\|_{L^{2}}^{2}dt=E[u(t_{0})]-\lim_{t\to T}E[u(t)]<+\infty.
\]
Hence, it suffices to show that there exist $r_{0}>0$, $t_{0}\in[t_{\dec},T)$,
and $C>0$ such that for all $t\in[t_{0},T)$ we have 
\begin{equation}
\left\{ \begin{aligned}\calD+\|\chi_{2r_{0}}g\|_{\dot{H}^{2}}^{2} & \leq C\Big(\int\chi_{2r_{0}}^{2}u_{t}^{2}+1\Big) & \text{if }T<+\infty,\\
\calD+\|g\|_{\dot{H}^{2}}^{2} & \leq C\int u_{t}^{2} & \text{if }T=+\infty.
\end{aligned}
\right.\label{eq:sptime-2-2}
\end{equation}
We will prove this estimate only in the finite-time blow-up case $T<+\infty$.
The global case $T=+\infty$ is easier; one can assume $r_{0}=+\infty$
(i.e., no cutoff) in the argument below and use $\|g(t)\|_{\dot{H}^{1}}\to0$
as $t\to+\infty$ directly.

Let $T<+\infty$. The parameters $r_{0}$ and $t_{0}$ will be chosen
in the order
\[
0<T-t_{0}\ll r_{0}\ll1.
\]
We begin with the identity 
\[
u_{t}=\Dlt u+r^{-(D+2)}f(v)=\Dlt(U+g)+r^{-(D+2)}f(P+\rng g).
\]
We then apply \eqref{eq:def-Psi} and introduce (note that $r^{-(D+2)}f'(P)\rng g=r^{-2}f'(P)g$)
\begin{align*}
-H_{U}g & =\Dlt g+r^{-2}f'(P)g,\\
\NL_{U}(g) & =r^{-(D+2)}\{f(P+\rng g)-f(P)-f'(P)\rng g\}
\end{align*}
to obtain 
\begin{equation}
u_{t}=-\sum_{j=2}^{J}\br{\iota}_{j}\frac{\kappa\mu_{j}^{D}}{\lmb_{j}^{2}}\Lmb W_{;j}-H_{U}g+\NL_{U}(g)+\Psi.\label{eq:sptime-2-3}
\end{equation}
Multiplying the above by $\chi_{2r_{0}}$, we arrive at the identity
\begin{align}
\chi_{2r_{0}}u_{t} & =-\sum_{j=2}^{J}\br{\iota}_{j}\frac{\kappa\mu_{j}^{D}}{\lmb_{j}^{2}}\Lmb W_{;j}-H_{U}(\chi_{2r_{0}}g)+\calR,\label{eq:sptime-2-4}\\
\calR & \coloneqq\sum_{j=2}^{J}\br{\iota}_{j}\frac{\kappa\mu_{j}^{D}}{\lmb_{j}^{2}}(1-\chi_{2r_{0}})\Lmb W_{;j}-[\Dlt,\chi_{2r_{0}}]g+\chi_{2r_{0}}\NL_{U}(g)+\Psi.\label{eq:sptime-2-5}
\end{align}
We will show below the following:
\begin{align}
 & \|\calR\|_{L^{2}}\aleq O_{r_{0}}(1)+o_{t_{0}\to T}(\sqrt{\calD})+o_{r_{0}\to0}(\|\chi_{2r_{0}}g\|_{\dot{H}^{2}}),\label{eq:sptime-2-6}\\
 & \|H_{U}(\chi_{2r_{0}}g)\|_{L^{2}}\aeq\|\chi_{2r_{0}}g\|_{\dot{H}^{2}},\label{eq:sptime-2-10}\\
 & \Blan\frac{\kappa\mu_{j}^{D}}{\lmb_{j}^{2}}\Lmb W_{;j},H_{U}(\chi_{2r_{0}}g)\Bran\aleq o_{t_{0}\to T}(1)\cdot\sqrt{\calD}\|\chi_{2r_{0}}g\|_{\dot{H}^{2}}.\label{eq:sptime-2-7}
\end{align}
We assume \eqref{eq:sptime-2-6}--\eqref{eq:sptime-2-7} and finish
the proof of \eqref{eq:sptime-2-2}. From the mixed term estimate
\eqref{eq:sptime-2-7}, we have 
\begin{align*}
 & \Big\|-\sum_{j=2}^{J}\br{\iota}_{j}\frac{\kappa\mu_{j}^{D}}{\lmb_{j}^{2}}\Lmb W_{;j}-H_{U}(\chi_{2r_{0}}g)\Big\|_{L^{2}}^{2}\\
 & \quad\geq\Big\|\sum_{j=2}^{J}\br{\iota}_{j}\frac{\kappa\mu_{j}^{D}}{\lmb_{j}^{2}}\Lmb W_{;j}\Big\|_{L^{2}}^{2}+\|H_{U}(\chi_{2r_{0}}g)\|_{L^{2}}^{2}-o_{t_{0}\to T}(\sqrt{\calD}\|\chi_{2r_{0}}g\|_{\dot{H}^{2}}).
\end{align*}
Applying \eqref{eq:prof-eq-main-size} and the coercivity \eqref{eq:sptime-2-10},
there exists $c>0$ such that 
\[
\Big\|-\sum_{j=2}^{J}\br{\iota}_{j}\frac{\kappa\mu_{j}^{D}}{\lmb_{j}^{2}}\Lmb W_{;j}-H_{U}(\chi_{2r_{0}}g)\Big\|_{L^{2}}^{2}\geq c\Big(\calD+\|\chi_{2r_{0}}g\|_{\dot{H}^{2}}^{2}\Big).
\]
Taking the square root and applying \eqref{eq:sptime-2-6} and \eqref{eq:sptime-2-4},
we obtain 
\[
\sqrt{\calD}+\|\chi_{2r_{0}}g\|_{\dot{H}^{2}}\aleq\|\chi_{2r_{0}}u_{t}\|_{L^{2}}+O_{r_{0}}(1)+o_{t_{0}\to T}(\sqrt{\calD})+o_{r_{0}\to0}(\|\chi_{2r_{0}}g\|_{\dot{H}^{2}}).
\]
Absorbing the last two errors into the LHS, we obtain \eqref{eq:sptime-2-2}
and hence \eqref{eq:interior-sptime}. Therefore, it remains to show
\eqref{eq:sptime-2-6}--\eqref{eq:sptime-2-7}.\smallskip{}

\emph{\uline{Proof of \mbox{\eqref{eq:sptime-2-6}}.}} Recall the
definition of $\calR$ from \eqref{eq:sptime-2-5}. The first term
can be estimated by 
\[
\Big\|\br{\iota}_{j}\frac{\kappa\mu_{j}^{D}}{\lmb_{j}^{2}}(1-\chi_{2r_{0}})\Lmb W_{;j}\Big\|_{L^{2}}\aleq\frac{1}{r_{0}^{D-1}}\mu_{j}^{D}\lmb_{j}^{D-2}\aleq o_{t_{0}\to T}(1).
\]
For the second term, we use $|[\Dlt,\chi_{2r_{0}}]g|\aleq r_{0}^{-1}|g|_{-1}$
and $\|g\|_{\dot{H}^{1}}\aleq1$ to have 
\[
\|[\Dlt,\chi_{2r_{0}}]g\|_{L^{2}}\aleq r_{0}^{-1}\||g|_{-1}\|_{L^{2}}\aleq_{r_{0}}1.
\]
For the third term, we use \eqref{eq:f-04} and \eqref{eq:f-06-HM},
the embeddings $\dot{H}^{1}\embed L^{\frac{2N}{N-2}}$ and $\dot{H}^{2}\embed L^{\frac{2N}{N-4}}$,
the numerology $(p-1)(\frac{1}{2}-\frac{1}{N})+(\frac{1}{2}-\frac{2}{N})=\frac{1}{2}$,
and \eqref{eq:chi-r0-g} to obtain 
\[
\|\chi_{2r_{0}}\NL_{U}(g)\|_{L^{2}}\aleq\|\chi_{2r_{0}}|g|^{p}\|_{L^{2}}\aleq\|\chi_{4r_{0}}g\|_{\dot{H}^{1}}^{p-1}\|\chi_{2r_{0}}g\|_{\dot{H}^{2}}\aleq o_{r_{0}\to0}(\|\chi_{2r_{0}}g\|_{\dot{H}^{2}}).
\]
Finally, $\|\Psi\|_{L^{2}}$ was already estimated in \eqref{eq:Psi-L2}.
This completes the proof of \eqref{eq:sptime-2-6}.\smallskip{}

\emph{\uline{Proof of \mbox{\eqref{eq:sptime-2-10}}.}} Note that
$\lmb_{1}(t)\to0$ implies $\lan\chi_{2r_{0}}g,\calZ_{\lmb_{k}}\ran=\lan g,\calZ_{\lmb_{k}}\ran=0$
for all $t$ close to $T$. Thus the linear coercivity estimate \eqref{eq:lin-coer}
gives 
\[
\|H_{\vec{\lmb}}(\chi_{2r_{0}}g)\|_{L^{2}}\aeq\|\chi_{2r_{0}}g\|_{\dot{H}^{2}},
\]
where $H_{\vec{\lmb}}=-\Dlt-r^{-2}\sum_{j=1}^{J}f'(Q_{;j})$. Therefore,
it suffices to show 
\[
\|H_{\vec{\lmb}}(\chi_{2r_{0}}g)-H_{U}(\chi_{2r_{0}}g)\|_{L^{2}}=o_{t_{0}\to T}(\|\chi_{2r_{0}}g\|_{\dot{H}^{2}}).
\]
Note that 
\[
H_{\vec{\lmb}}(\chi_{2r_{0}}g)-H_{U}(\chi_{2r_{0}}g)=r^{-2}\{f'(P)-\tsum{j=1}Jf'(Q_{;j})\}\chi_{2r_{0}}g.
\]
First, by \eqref{eq:f-03} and \eqref{eq:f-03-HM}, we have 
\[
r^{-2}|f'(P)-f'(\tsum{j=1}JQ_{;j})|\aleq|\td U|^{p-1}.
\]
Next, by \eqref{eq:f-07} and \eqref{eq:f-07-HM}, we have 
\[
|f'(\tsum{j=1}JQ_{;j})-\tsum{j=1}Jf'(Q_{;j})|\aleq\sum_{j,\ell=1}^{J}\chf_{j\neq\ell}|[\lan y\ran^{-2D}]_{;j}[\lan y\ran^{-2D}]_{;\ell}|^{\frac{p-1}{2}}.
\]
By the previous two estimates and the numerology $(\frac{p-1}{2})(\frac{N-2}{N})+(\frac{1}{2}-\frac{2}{N})=\frac{1}{2}$,
we obtain 
\begin{align*}
 & \|H_{\vec{\lmb}}(\chi_{2r_{0}}g)-H_{U}(\chi_{2r_{0}}g)\|_{L^{2}}\\
 & \quad\aleq\Big\{\|\td U\|_{\dot{H}^{1}}^{p-1}+\sum_{j,\ell=1}^{J}\chf_{j\neq\ell}\|[\lan y\ran^{-2D}]_{;j}[\lan y\ran^{-2D}]_{;\ell}\|_{L^{\frac{N}{N-2}}}^{\frac{p-1}{2}}\Big\}\|\chi_{2r_{0}}g\|_{\dot{H}^{2}}.
\end{align*}
It remains to show that the terms in the curly bracket are $o_{t_{0}\to T}(1)$.
For $\td U$, this follows from \eqref{eq:tdU-H1} and \eqref{eq:mu_j_o(1)}.
For the mixed term, scaling the following estimate (for $\lmb<\frac{1}{2}$)
\begin{align*}
 & \|\lan y\ran^{-2D}[\lan y\ran^{-2D}]_{\lmb}\|_{L^{\frac{N}{N-2}}}\\
 & \quad\aeq\|\chf_{(0,\lmb]}\lmb^{-\frac{N}{2}}+\chf_{[\lmb,1]}\lmb^{\frac{N}{2}}r^{-N}+\chf_{[1,+\infty)}\lmb^{\frac{N}{2}}r^{-2N}\|_{L^{1}}^{\frac{N-2}{N}}\aeq\lmb^{D}|\log\lmb|^{\frac{N-2}{N}}
\end{align*}
and applying \eqref{eq:mu_j_o(1)} show $\chf_{j\neq\ell}\|[\lan y\ran^{-2D}]_{;j}[\lan y\ran^{-2D}]_{;\ell}\|_{L^{\frac{N}{N-2}}}=o_{t_{0}\to T}(1)$.
This completes the proof of \eqref{eq:sptime-2-10}.\smallskip{}

\emph{\uline{Proof of \mbox{\eqref{eq:sptime-2-7}}.}} We claim
that 
\begin{equation}
\|\{f'(P)-f'(Q_{;k})\}\cdot[\lan y\ran^{-2D}]_{\ul{;k}}\|_{L^{2}}\aleq\frac{\mu_{k+1}^{D/2}+o(1)\cdot\mu_{k}^{D/2}}{\lmb_{k}}.\label{eq:h_k-est-04}
\end{equation}
Assuming this claim, \eqref{eq:sptime-2-7} follows from $H_{\lmb_{j}}\Lmb W_{;j}=0$
and $\Lmb W_{\ul{;j}}\aleq|[\lan y\ran^{-2D}]_{\ul{;j}}|$: 
\begin{align*}
\Blan\frac{\kappa\mu_{j}^{D}}{\lmb_{j}^{2}}\Lmb W_{;j},H_{U}(\chi_{2r_{0}}g)\Bran & =-\Blan\frac{\kappa\mu_{j}^{D}}{\lmb_{j}^{2}}\Lmb W_{;j},r^{-2}\{f'(P)-f'(Q_{;j})\}(\chi_{2r_{0}}g)\Bran\\
 & \aleq\mu_{j}^{D}\|\{f'(P)-f'(Q_{;j})\}\cdot[\lan y\ran^{-2D}]_{\ul{;j}}\|_{L^{2}}\|r^{-2}\chi_{2r_{0}}g\|_{L^{2}}\\
 & \aleq o_{t_{0}\to T}(1)\cdot\sqrt{\calD}\|\chi_{2r_{0}}g\|_{\dot{H}^{2}}.
\end{align*}
It remains to show \eqref{eq:h_k-est-04}. We separate into three
regions: (i) $r\in[\br{\lmb}_{k+1},\br{\lmb}_{k}]$, (ii) $r\in[\br{\lmb}_{j+1},\br{\lmb}_{j}]$
for some $j<k$, and (iii) $r\in[\br{\lmb}_{j+1},\br{\lmb}_{j}]$
for some $j>k$.

\emph{\uline{Case (i).}} In this region, using \eqref{eq:f-02},
\eqref{eq:U-bd-3}, and \eqref{eq:f-03-HM}, \eqref{eq:U-bd-3-HM},
we have 
\begin{align*}
\{f'(P)-f'(Q_{;k})\}\cdot[\lan y\ran^{-2D}]_{\ul{;k}} & \aleq\frac{r^{2}}{\lmb_{k}^{2}}|[\lan y\ran^{-2D}]_{;k}|^{p-1}(|[\lan y\ran^{-2D}]_{;k-1}|+|[\lan y\ran^{-2D}]_{;k+1}|)\\
 & \aleq\min\Big\{\frac{r^{2}}{\lmb_{k}^{4}},\frac{1}{r^{2}}\Big\}\Big(\frac{1}{\lmb_{k-1}^{D}}+\frac{\lmb_{k+1}^{D}}{r^{2D}}\Big).
\end{align*}
Thus 
\begin{align*}
 & \|\chf_{[\br{\lmb}_{k+1},\br{\lmb}_{k}]}|[\lan y\ran^{-2D}]_{\ul{;k}}|^{p-1}(|[\lan y\ran^{-2D}]_{;k-1}|+|[\lan y\ran^{-2D}]_{;k+1}|)\|_{L^{2}}\\
 & \aleq\frac{\br{\lmb}_{k}^{D-1}}{\lmb_{k-1}^{D}}+\Big\{\chf_{N=7}\frac{\lmb_{k+1}^{5/2}}{\lmb_{k}^{7/2}}+\chf_{N=8}\frac{\lmb_{k+1}^{3}|\log\mu_{k+1}|}{\lmb_{k}^{4}}+\chf_{N\geq9}\frac{\lmb_{k+1}^{D}}{\lmb_{k}^{4}\br{\lmb}_{k+1}^{D-3}}\Big\}\\
 & \aeq\frac{\mu_{k}^{\frac{D+1}{2}}}{\lmb_{k}}+\Big\{\chf_{N=7}\frac{\mu_{k+1}^{5/2}}{\lmb_{k}}+\chf_{N=8}\frac{\mu_{k+1}^{3}|\log\mu_{k+1}|}{\lmb_{k}}+\chf_{N\geq9}\frac{\mu_{k+1}^{\frac{D+3}{2}}}{\lmb_{k}}\Big\}.
\end{align*}

\emph{\uline{Case (ii).}} In this region, we simply use \eqref{eq:U-bd-2}
and \eqref{eq:U-bd-2-HM} to have
\[
\{f'(P)-f'(Q_{;k})\}\cdot[\lan y\ran^{-2D}]_{\ul{;k}}\aleq r^{2}|[\lan y\ran^{-2D}]_{;j}|^{p-1}|[\lan y\ran^{-2D}]_{\ul{;k}}|\aleq\min\Big\{\frac{r^{2}}{\lmb_{j}^{2}},\frac{\lmb_{j}^{2}}{r^{2}}\Big\}\frac{\lmb_{k}^{D-2}}{r^{2D}}.
\]
Hence we have 
\begin{align*}
 & \sum_{j<k}\|\chf_{[\br{\lmb}_{j+1},\br{\lmb}_{j}]}\{f'(P)-f'(Q_{;k})\}\cdot[\lan y\ran^{-2D}]_{\ul{;k}}\|_{L^{2}}\\
 & \aleq\sum_{j<k}\Big\{\chf_{N=7}\frac{\lmb_{k}^{1/2}}{\lmb_{j}^{3/2}}+\chf_{N=8}\frac{\lmb_{k}|\log\mu_{j+1}|}{\lmb_{j}^{2}}+\chf_{N\geq9}\frac{\lmb_{k}^{D-2}}{\lmb_{j}^{2}\br{\lmb}_{j+1}^{D-3}}\Big\}\\
 & \aleq\chf_{N=7}\frac{\mu_{k}^{3/2}}{\lmb_{k}}+\chf_{N=8}\frac{\mu_{k}^{2}|\log\mu_{k+1}|}{\lmb_{k}}+\chf_{N\geq9}\frac{\mu_{k}^{\frac{D+1}{2}}}{\lmb_{k}}.
\end{align*}

\emph{\uline{Case (iii).}} In this region, similarly as in the
previous case, we have 
\[
\{f'(P)-f'(Q_{;k})\}\cdot[\lan y\ran^{-2D}]_{\ul{;k}}\aleq r^{2}|[\lan y\ran^{-2D}]_{;j}|^{p-1}|[\lan y\ran^{-2D}]_{\ul{;k}}|\aleq\frac{\lmb_{j}^{2}}{\lmb_{k}^{D+2}r^{2}}.
\]
Hence we have 
\[
\sum_{j>k}\|\chf_{[\br{\lmb}_{j+1},\br{\lmb}_{j}]}\{f'(P)-f'(Q_{;k})\}\cdot[\lan y\ran^{-2D}]_{\ul{;k}}\|_{L^{2}}\aleq\sum_{j>k}\frac{\lmb_{j}^{2}\br{\lmb}_{j}^{D-1}}{\lmb_{k}^{D+2}}\aleq\frac{\mu_{k+1}^{\frac{D+3}{2}}}{\lmb_{k}}.
\]
This completes the proof of \eqref{eq:h_k-est-04} and hence \eqref{eq:sptime-2-7}.
This completes the proof.
\end{proof}
By the previous lemma, it remains to consider the case of $T<+\infty$
to conclude the proof of \eqref{eq:sptime-est}. This is achieved
by the following:
\begin{lem}[Exterior spacetime estimate]
\label{lem:exterior-sptime}Let $T<+\infty$; choose $r_{0}>0$ and
$t_{0}\in[t_{\dec},T)$ as in Lemma~\ref{lem:interior-sptime}. We
have 
\begin{equation}
\int_{t_{0}}^{T}\|(1-\chi_{r_{0}})g(t)\|_{\dot{H}^{2}}^{2}<+\infty.\label{eq:exterior-sptime}
\end{equation}
\end{lem}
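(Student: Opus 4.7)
The strategy is to perform an energy estimate directly on $u(t)$ in the exterior region, then transfer the resulting bound to $g=u-U$. Introduce the cutoff $\zeta=1-\chi_{r_0/2}$, so that $\zeta\equiv 1$ on $\{r\geq r_0\}$ and $\mathrm{supp}(\nabla\zeta)\subseteq\{r_0/2\leq r\leq r_0\}$. Multiplying \eqref{eq:main-evol-eq} by $-\zeta^2\Dlt u$ and integrating by parts in space yields the pointwise-in-time identity
\begin{equation*}
\frac{1}{2}\frac{d}{dt}\int|\nabla u|^2\zeta^2+\int(\Dlt u)^2\zeta^2=-\int\rd_t u\,\nabla u\cdot\nabla\zeta^2-\int\Dlt u\cdot r^{-(D+2)}f(v)\,\zeta^2.
\end{equation*}
Integrating over $t\in[t_0,T)$ and applying Young's inequality on the nonlinear term to absorb $\tfrac12\int_{t_0}^T\!\int\zeta^2(\Dlt u)^2$ into the left-hand side, it suffices to bound uniformly (i) the time-boundary terms $\int|\nabla u(t)|^2\zeta^2$ at $t=t_0,T$, (ii) the commutator $\int_{t_0}^T\!\int|\rd_t u|\,|\nabla u|\,|\nabla\zeta^2|$, and (iii) the nonlinear quantity $\int_{t_0}^T\!\int\zeta^2 r^{-2(D+2)}f(v)^2$.

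Item (i) follows from the uniform $\dot{H}^1$-boundedness of $u-\sum_j\iota_jW_{\lmb_j(t)}-z^*$ supplied by the decomposition hypothesis of Theorem~\ref{thm:main}, together with uniform-in-$t$ bounds on $\|\nabla W_{\lmb_j(t)}\|_{L^2(\zeta^2)}$ (which tend to zero as $\lmb_j\to 0$) and $\|\nabla z^*\|_{L^2(\zeta^2)}$ in the exterior. Item (ii) is handled by $|\nabla\zeta^2|\aleq r_0^{-1}\chf_{[r_0/2,r_0]}$ and Cauchy--Schwarz in time,
\begin{equation*}
\int_{t_0}^T\!\int|\rd_t u\,\nabla u\cdot\nabla\zeta^2|\aleq r_0^{-1}(T-t_0)^{1/2}\Big(\int_{t_0}^T\|\rd_t u\|_{L^2}^2\,dt\Big)^{1/2}\sup_{t}\|\nabla u\|_{L^2}<+\infty,
\end{equation*}
where the $L_t^2$ dissipation bound $\int_{t_0}^T\|\rd_t u\|_{L^2}^2\,dt<+\infty$ comes from the same energy identity invoked in the proof of Lemma~\ref{lem:interior-sptime}. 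For (iii), in the exterior region uniform pointwise bounds are available: for \eqref{eq:NLH-rad} the radial Sobolev decay $|u(t,r)|\aleq r^{-(N-2)/2}$ together with the identity $Dp=D+2$ gives $r^{-(D+2)}f(v)=|u|^{p-1}u$, so $\int\zeta^2|u|^{2p}\aleq r_0^{-2}$ uniformly in $t$, using the critical relation $(N-2)p=N+2>N$; for \eqref{eq:HMHF-equiv}, the map $v(t,\cdot)$ is uniformly bounded on the exterior (by its prescribed limit at infinity and soliton resolution), so $|f(v)|\aleq 1$ and $\int\zeta^2 r^{-2(D+2)}\aleq r_0^{-2}$. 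Hence (iii) is $\aleq T-t_0<+\infty$.

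Combining these bounds yields $\int_{t_0}^T\!\int\zeta^2(\Dlt u)^2<+\infty$. To conclude, write $g=u-U$ and use the Calder\'on--Zygmund equivalence $\|\cdot\|_{\dot{H}^2}\aeq\|\Dlt\cdot\|_{L^2}$ on radial functions in $\bbR^N$, together with a standard commutator estimate for the cutoff $1-\chi_{r_0}$, to obtain
\begin{equation*}
\|(1-\chi_{r_0})g\|_{\dot{H}^2}\aleq\|\zeta\Dlt u\|_{L^2}+\|\zeta\Dlt U\|_{L^2}+\|g\|_{\dot{H}^1(\{r_0\leq r\leq 2r_0\})}.
\end{equation*}
The middle term is uniformly bounded in $t$ because $U$ and its second derivatives decay like $\lmb_1^D/r^{2D+2}$ in the exterior, and the last term is bounded by $\|\chi_{4r_0}g\|_{\dot{H}^1}$, uniformly controlled by \eqref{eq:chi-r0-g}. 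Since $T<+\infty$, all uniform-in-time quantities integrate to finite $L_t^2$ bounds. The main anticipated obstacle is the nonlinear estimate (iii): it crucially relies on being in the \emph{exterior} region, where radial Sobolev decay is effective, and, for \eqref{eq:NLH-rad}, on the relation $(N-2)p=N+2>N$ that ensures integrability of $|u|^{2p}$ away from the origin. Interior to a bubble, no such uniform bound holds, which is precisely why the spacetime estimate must be split into the interior (Lemma~\ref{lem:interior-sptime}) and exterior (Lemma~\ref{lem:exterior-sptime}) parts.
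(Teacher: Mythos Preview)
Your proof is correct and uses essentially the same ingredients as the paper: the $L_t^2$ dissipation bound $\int_{t_0}^T\|\rd_t u\|_{L^2}^2\,dt<\infty$ from the energy identity, a uniform pointwise bound on the nonlinearity in the exterior (via radial Sobolev for \eqref{eq:NLH-rad} and boundedness of $v$ for \eqref{eq:HMHF-equiv}), the exterior decay of $U$, and a cutoff commutator estimate. The only difference is packaging: the paper simply writes $\Dlt u=\rd_t u-r^{-(D+2)}f(v)$ and applies the triangle inequality in $L^2_tL^2$ with the cutoff $1-\chi_{r_0}$, whereas you route through a localized energy identity obtained by multiplying by $-\zeta^2\Dlt u$; this introduces the extra boundary term (i) and commutator (ii) that you then dispatch, so the paper's argument is a couple of lines shorter but otherwise equivalent.
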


\begin{proof}
Using the nonlinear energy identity, crude bounds $f(\rng u)\aleq|\rng u|^{p}\aleq1$
and $|U(t)|_{2}\aleq r^{-D}$ (For the $|\cdot|_{2}$-bound, one can
use the $X^{2}$-bounds of $\vphi_{\ell}$ in Corollary~\ref{cor:Profiles-vphi}),
we have 
\begin{align*}
\|(1-\chi_{r_{0}})\Dlt u\|_{L_{[t_{0},T)}^{2}L^{2}}^{2} & \aleq1+\|(1-\chi_{r_{0}})r^{-(D+2)}f(\rng u)\|_{L_{[t_{0},T)}^{2}L^{2}}^{2}\aleq1+r_{0}^{-2}|T-t_{0}|,\\
\|(1-\chi_{r_{0}})\Dlt U\|_{L_{[t_{0},T)}^{2}L^{2}}^{2} & \aleq r_{0}^{-2}|T-t_{0}|,
\end{align*}
and hence 
\[
\|(1-\chi_{r_{0}})\Dlt g\|_{L_{[t_{0},T)}^{2}L^{2}}\aleq1+r_{0}^{-1}\sqrt{T-t_{0}}<+\infty
\]
Combining the above estimate with \eqref{eq:interior-sptime}, we
obtain
\[
\|(1-\chi_{r_{0}})g\|_{L_{[t_{0},T)}^{2}\dot{H}^{2}}\aleq\|(1-\chi_{r_{0}})\Dlt g\|_{L_{[t_{0},T)}^{2}L^{2}}+\|\chf_{[r_{0},2r_{0}]}r^{-1}|g|_{-1}\|_{L_{[t_{0},T)}^{2}L^{2}}<+\infty
\]
as desired.
\end{proof}
Lemmas~\ref{lem:interior-sptime} and \ref{lem:exterior-sptime}
give \eqref{eq:sptime-est}. This completes the proof of Proposition~\ref{prop:spacetime-est}.

\section{\label{sec:Proof-of-main-thm}Proof of Theorem~\ref{thm:main}}

In this section, we fix a solution $u(t)$ as in the assumption of
Theorem~\ref{thm:main} and decompose $u(t)$ on $[t_{0},T)$ according
to Proposition~\ref{prop:spacetime-est}. In this last section, we
will prove the modulation estimates (the controls on the time variation
of $\lmb_{j}$) and integrate them to complete the proof of Theorem~\ref{thm:main}.

\subsection{Modulation estimates}

The goal of this subsection is to prove the \emph{modulation estimates}
(Proposition~\ref{prop:Modulation-Est}). We introduce a further
set of notation. For $\psi\in\{\calZ,\Lmb W,\Lmb_{-1}\Lmb W\}$ and
$j,k\in\{1,\dots,J\}$, we denote 
\begin{equation}
\left\{ \begin{aligned}\calA_{jk}^{(\psi)} & \coloneqq\lan\Lmb W_{;j},\psi_{\ul{;k}}\ran, & g_{k}^{(\psi)} & \coloneqq\lan g,-\Lmb_{-1}\psi_{\ul{;k}}\ran,\\
\calB_{jk}^{(\psi)} & \coloneqq\lan-\lmb_{j}\rd_{\lmb_{j}}\td U,\psi_{\ul{;k}}\ran, & h_{k}^{(\psi)} & \coloneqq\lan-H_{U}g+\NL_{U}(g)+\Psi,\psi_{\ul{;k}}\ran.
\end{aligned}
\right.\label{eq:def-calA-B-etc}
\end{equation}
(Regarding the terms in $h_{k}^{(\psi)}$, see \eqref{eq:sptime-2-3}).
We estimate these inner products in Lemma~\ref{lem:Mod-Prelim} below.
On the other hand, due to the vector-valued nature of modulation estimates,
we need some preliminary estimates to solve a linear system. These
are also provided in Lemma~\ref{lem:Mod-Prelim}.
\begin{lem}[Preliminary estimates]
\label{lem:Mod-Prelim}The following estimates hold.
\begin{itemize}
\item (Smallness) We have 
\begin{equation}
\chf_{j\neq k}|\calA_{jk}^{(\psi)}|+|\calB_{jk}^{(\psi)}|+|g_{k}^{(\psi)}|=o(1).\label{eq:ABg-small}
\end{equation}
\item (Size of $\calA_{jk}^{(\psi)}$) We have 
\begin{equation}
\calA_{kk}^{(\calZ)}=1,\quad\calA_{kk}^{(\Lmb W)}=\|\Lmb W\|_{L^{2}}^{2},\quad|\calA_{kk}^{(\Lmb_{-1}\Lmb W)}|\aleq1\label{eq:calA-diag}
\end{equation}
and 
\begin{equation}
\chf_{j\neq k}|\calA_{jk}^{(\psi)}|\aleq\chf_{j>k}\Big(\frac{\lmb_{j}}{\lmb_{k}}\Big)^{D}+\chf_{j<k}\Big(\frac{\lmb_{k}}{\lmb_{j}}\Big)^{D-2}.\label{eq:calA-off-diag}
\end{equation}
\item (Size of $\calB_{jk}^{(\psi)}$) We have 
\begin{equation}
|\calB_{jk}^{(\psi)}|\aleq\chf_{j>k}\Big(\frac{\lmb_{j}}{\lmb_{k}}\Big)^{D}+\chf_{j=k}(\mu_{k+1}^{D}+\mu_{k}^{D}|\log\mu_{k}|)+\chf_{j<k}o(1)\Big(\frac{\lmb_{k}}{\lmb_{j}}\Big)^{D-2}.\label{eq:calB-est}
\end{equation}
\item (Size of $h_{k}^{(\psi)}$) We have 
\begin{align}
|h_{k}^{(\calZ)}|+|h_{k}^{(\Lmb_{-1}\Lmb W)}| & \aleq\frac{\|g\|_{\dot{H}^{2}}}{\lmb_{k}}+\frac{\mu_{k+1}^{D}+o(1)\cdot\mu_{k}^{D}}{\lmb_{k}^{2}}+\calD+\|g\|_{\dot{H}^{2}}^{2},\label{eq:h_k-est-1}\\
|h_{k}^{(\Lmb W)}| & \aleq\frac{\mu_{k+1}^{D}+o(1)\cdot\mu_{k}^{D}}{\lmb_{k}^{2}}+\calD+\|g\|_{\dot{H}^{2}}^{2}.\label{eq:h_k-est-2}
\end{align}
\item (Off-diagonal summation estimates) We have 
\begin{align}
\sum_{j=1}^{J}\Big\{\chf_{j>k}\Big(\frac{\lmb_{j}}{\lmb_{k}}\Big)^{D}+\chf_{j<k}\Big(\frac{\lmb_{k}}{\lmb_{j}}\Big)^{D-2}\Big\}\frac{1}{\lmb_{j}} & \aleq\frac{\mu_{k}^{D-1}+\mu_{k+1}^{D-1}}{\lmb_{k}},\label{eq:off-diag-1}\\
\sum_{j=1}^{J}\Big\{\chf_{j>k}\Big(\frac{\lmb_{j}}{\lmb_{k}}\Big)^{D}+\chf_{j<k}\Big(\frac{\lmb_{k}}{\lmb_{j}}\Big)^{D-2}\Big\}\frac{\mu_{j+1}^{D}}{\lmb_{j}^{2}} & \aleq o\Big(\frac{\mu_{k}^{D}}{\lmb_{k}^{2}}+\calD\Big),\label{eq:off-diag-2}\\
\sum_{j=1}^{J}\Big\{\chf_{j>k}\Big(\frac{\lmb_{j}}{\lmb_{k}}\Big)^{D}+\chf_{j<k}\Big(\frac{\lmb_{k}}{\lmb_{j}}\Big)^{D-2}\Big\}\frac{\mu_{j}^{D}}{\lmb_{j}^{2}} & \aleq o\Big(\frac{\mu_{k+1}^{D}+\mu_{k}^{D}}{\lmb_{k}^{2}}+\calD\Big).\label{eq:off-diag-3}
\end{align}
\end{itemize}
\end{lem}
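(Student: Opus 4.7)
The plan is to prove each of the five bullets essentially independently, using the tools already established: the modified profile estimates of Proposition~\ref{prop:Modified-Profile}, the decomposition and smallness estimates of Proposition~\ref{prop:spacetime-est}, and the scaling identities for $\Lmb$-type weights. In every case $\psi\in\{\calZ,\Lmb W,\Lmb_{-1}\Lmb W\}$ satisfies $|\psi|_2\aleq\lan y\ran^{-2D}$ (with $\calZ$ compactly supported and the other two polynomially decaying), so the pointwise bounds from Section~\ref{sec:Modified-multi-bubble-profiles} apply uniformly in the estimate of every inner product $\lan\cdot,\psi_{\ul{;k}}\ran$.

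The $\calA$ and $\calB$ parts are almost bookkeeping. The diagonal identities \eqref{eq:calA-diag} follow from the scaling relation $\lan\Lmb W_{\lmb},\psi_{\ul{\lmb}}\ran=\lan\Lmb W,\psi\ran$ together with the normalization \eqref{eq:calZ-prop}. The off-diagonal bound \eqref{eq:calA-off-diag} is a direct scaling computation analogous to \eqref{eq:almost-orthog}, using $|\Lmb W\cdot\psi|\aleq\lan y\ran^{-4D}$ and the same geometric factor as in \eqref{eq:Qell-Qk}. The $\calB$ estimate \eqref{eq:calB-est} is a straight application of the weighted $L^1$-bound \eqref{eq:tdU-weight-L1} after majorizing $|\psi_{\ul{;k}}|\aleq[\lan y\ran^{-2D}]_{\ul{;k}}$. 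For the $g_k^{(\psi)}$ smallness, we split according to $\psi$: when $\psi=\calZ$, the orthogonality \eqref{eq:orthog} kills the main term $\lmb_k^{-2}\lan g,\calZ_{\lmb_k}\ran$, and what remains is $\lan g,\lmb_k^{-2}(\Lmb\calZ)_{\lmb_k}\ran$, supported in $r\aeq\lmb_k$, which is estimated by pairing with $r^{-1}g$ via Hardy \eqref{eq:Hardy} and the local smallness \eqref{eq:g_H1loc_o(1)}; when $\psi\in\{\Lmb W,\Lmb_{-1}\Lmb W\}$ a similar Cauchy--Schwarz with the $\lan y\ran^{-2D}$ weight and \eqref{eq:g_H1loc_o(1)}--\eqref{eq:global_g_H1_o(1)} yields $o(1)$.

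The main work, and the only genuinely interesting step, lies in proving the $h_k^{(\psi)}$ bounds. I split
\[
h_k^{(\psi)}=-\lan H_U g,\psi_{\ul{;k}}\ran+\lan\NL_U(g),\psi_{\ul{;k}}\ran+\lan\Psi,\psi_{\ul{;k}}\ran.
\]
The profile error is bounded directly by \eqref{eq:Psi-inn}, producing the first two terms on the right of \eqref{eq:h_k-est-1}--\eqref{eq:h_k-est-2}. The nonlinear term $\NL_U(g)$ is controlled by \eqref{eq:f-04}, \eqref{eq:f-06-HM}, Sobolev embedding, and $\|g\|_{\dot H^1}=o(1)$, exactly as in the proof of \eqref{eq:sptime-2-6}, giving the $\|g\|_{\dot H^2}^2$ contribution. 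For the linear term with $\psi\in\{\calZ,\Lmb_{-1}\Lmb W\}$, a direct Cauchy--Schwarz together with $\|H_U g\|_{L^2}\aeq\|g\|_{\dot H^2}$ (from \eqref{eq:sptime-2-10}) and the scaling $\|\psi_{\ul{;k}}\|_{L^2}\aeq\lmb_k^{-1}$ produces the $\|g\|_{\dot H^2}/\lmb_k$ summand in \eqref{eq:h_k-est-1}. The subtle point, which is what forces the improved estimate \eqref{eq:h_k-est-2}, is the $\psi=\Lmb W$ case. Here I exploit the kernel relation $H_{\lmb_k}\Lmb W_{\ul{;k}}=0$ (using the evenness of $f'$ in the sign $\iota_k$) to rewrite, after integration by parts,
\[
\lan H_U g,\Lmb W_{\ul{;k}}\ran=\lan g,H_U\Lmb W_{\ul{;k}}\ran=-\lan g,r^{-2}\{f'(P)-f'(Q_{;k})\}\Lmb W_{\ul{;k}}\ran,
\]
and bound this by Rellich \eqref{eq:Rellich} together with the weighted $L^2$-estimate \eqref{eq:h_k-est-04} already established in Lemma~\ref{lem:interior-sptime}, which yields precisely the right-hand side of \eqref{eq:h_k-est-2} without any $\|g\|_{\dot H^2}/\lmb_k$ contribution.

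Finally, the off-diagonal summation estimates \eqref{eq:off-diag-1}--\eqref{eq:off-diag-3} are purely arithmetic: since the geometric factors $(\lmb_j/\lmb_k)^D$ for $j>k$ and $(\lmb_k/\lmb_j)^{D-2}$ for $j<k$ are strictly decaying in $|j-k|$ (thanks to $\mu_\ell<\alp^\ast$), the sums are dominated by the two extremal terms $j=k\pm1$, whose size matches exactly the right-hand sides stated; the additional $\mu_{j+1}^D$ or $\mu_j^D$ factors in \eqref{eq:off-diag-2}--\eqref{eq:off-diag-3} produce the claimed $o(1)$ prefactor. All of these reduce to telescoping geometric series in the ratios $\mu_\ell$. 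In summary, the only conceptually nontrivial ingredient is exploiting the kernel of $H_{\lmb_k}$ in the $\psi=\Lmb W$ case; every other bound is scaling bookkeeping built on Sections~\ref{sec:Modified-multi-bubble-profiles}--\ref{sec:Spacetime-estimate}.
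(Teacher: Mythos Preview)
Your outline is essentially correct and matches the paper's approach, except for one real gap: your treatment of the nonlinear contribution $\lan\NL_U(g),\psi_{\ul{;k}}\ran$ in the \eqref{eq:NLH-rad} case does not give the sharp bound required for \eqref{eq:h_k-est-2}. The argument from \eqref{eq:sptime-2-6} yields $\|\NL_U(g)\|_{L^2}\aleq\|g\|_{\dot H^1}^{p-1}\|g\|_{\dot H^2}$, so after Cauchy--Schwarz against $\psi_{\ul{;k}}$ you obtain a term of size $o(1)\|g\|_{\dot H^2}/\lmb_k$, not $\|g\|_{\dot H^2}^2$ as you claim. For $\psi\in\{\calZ,\Lmb_{-1}\Lmb W\}$ this is harmless (it is absorbed into the first term of \eqref{eq:h_k-est-1}), but for $\psi=\Lmb W$ it destroys \eqref{eq:h_k-est-2}: by Young, $o(1)\|g\|_{\dot H^2}/\lmb_k\leq o(1)\|g\|_{\dot H^2}^2+o(1)/\lmb_k^2$, and $o(1)/\lmb_k^2$ is not bounded by any term on the right of \eqref{eq:h_k-est-2}. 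Since \eqref{eq:h_k-est-2} is precisely what feeds the refined modulation estimate \eqref{eq:Ref-Mod}, and the absence of a $\|g\|_{\dot H^2}/\lmb_k$ term there is what makes the errors $L_t^1$-integrable in Section~\ref{subsec:Integration}, this is a genuine obstruction rather than a cosmetic issue. (A related minor point: you invoke $\|g\|_{\dot H^1}=o(1)$, but \eqref{eq:global_g_H1_o(1)} is only available when $T=+\infty$, which has not yet been established at this stage.)

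The paper resolves this by estimating the weighted $L^1$-norm $\|\NL_U(g)\cdot[\lan y\ran^{-2D}]_{\ul{;k}}\|_{L^1}$ directly rather than passing through $\|\NL_U(g)\|_{L^2}$. For \eqref{eq:HMHF-equiv} the quadratic bound \eqref{eq:f-06-HM} together with the weight immediately gives the pointwise bound $r^{-4}|g|^2$, hence $\|g\|_{\dot H^2}^2$ by Rellich. For \eqref{eq:NLH-rad}, where $p<2$ and $U$ may vanish, one splits into regions using the pointwise structure of $U$: on the core annulus $[A_0\br\lmb_{k+1},A_0^{-1}\br\lmb_k]$ the lower bound \eqref{eq:U-bd-1} allows \eqref{eq:f-06} to give $\NL_U(g)\aleq|W_{;k}|^{p-2}|g|^2$, which after multiplying by the weight is again $\aleq r^{-4}|g|^2$; on the complementary regions one uses the crude bound $\NL_U(g)\aleq|g|^p$ and then Young's inequality with exponents $2/p$ and $2/(2-p)$ to split off $r^{-4}|g|^2$ plus a pure-weight term whose integral over those annuli is of size $\mu_{k+1}^{D}/\lmb_k^2$ or $o(1)\mu_k^{D}/\lmb_k^2$. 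These extra $\mu$-type contributions are exactly why the nonlinear estimate (cf.\ \eqref{eq:h_k-est-05}) has the same shape as the full right-hand side of \eqref{eq:h_k-est-2}, not merely $\|g\|_{\dot H^2}^2$. Your kernel argument for the linear part when $\psi=\Lmb W$ is exactly right; only the nonlinear part needs this additional care.
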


\begin{proof}
\textbf{Step 1.} Estimates for $\calA_{jk}^{(\psi)}$, $\calB_{jk}^{(\psi)}$,
and $g_{k}^{(\psi)}$.

We show \eqref{eq:ABg-small}--\eqref{eq:calB-est}. The estimate
\eqref{eq:calA-diag} is immediate from $\lan\Lmb W,\calZ\ran=1$
and rescaling. The estimate \eqref{eq:calA-off-diag} follows from
$\psi\aleq\lan y\ran^{-2D}$ and \eqref{eq:almost-orthog}. The estimate
\eqref{eq:calB-est} follows from $\psi\aleq\lan y\ran^{-2D}$ and
\eqref{eq:tdU-weight-L1}. Finally, we use $\|y\lan y\ran^{1/4}\lan y\ran^{-2D}\|_{L^{2}}\aleq1$
from $N\geq7$ and \eqref{eq:g_H1loc_o(1)} to have 
\[
g_{k}^{(\psi)}\aleq\|r^{-1}\lan\tfrac{r}{\lmb_{k}}\ran^{-1/4}g\|_{L^{2}}\|r\lan\tfrac{r}{\lmb_{k}}\ran^{1/4}\psi_{\ul{;k}}\|_{L^{2}}\aleq\|r^{-1}\lan\tfrac{r}{\lmb_{1}}\ran^{-1/4}g\|_{L^{2}}\|y\lan y\ran^{1/4}\psi\|_{L^{2}}\aleq o(1),
\]
completing the proof of \eqref{eq:ABg-small}.

\textbf{Step 2.} Estimates for $h_{k}$.

In this step, we prove \eqref{eq:h_k-est-1} and \eqref{eq:h_k-est-2}.
First, the contribution of $\Psi$ is easily estimated from $\lan\Psi,\psi_{\ul{;k}}\ran\aleq\|\Psi\cdot[\lan y\ran^{-2D}]_{\ul{;k}}\|_{L^{1}}$
and \eqref{eq:Psi-inn}. Next, when $\psi\in\{\calZ,\Lmb_{-1}\Lmb W\}$,
we can simply estimate 
\[
\|H_{U}g\cdot[\lan y\ran^{-2D}]_{\ul{;k}}\|_{L^{1}}\leq\|H_{U}g\|_{L^{2}}\|[\lan y\ran^{-2D}]_{\ul{;k}}\|_{L^{2}}\aleq\frac{1}{\lmb_{k}}\|g\|_{\dot{H}^{2}}.
\]
When $\psi=\Lmb W$, we use $H_{\lmb_{k}}\Lmb W_{;k}=0$ and \eqref{eq:h_k-est-04}
to obtain 
\begin{align*}
\lan-H_{U}g,\Lmb W_{\ul{;k}}\ran & =\lan r^{-2}\{f'(P)-f'(Q_{;k})\}g,\Lmb W_{\ul{;k}}\ran\\
 & \aleq\|\{f'(P)-f'(Q_{;k})\}\Lmb W_{\ul{;k}}\|_{L^{2}}\|r^{-2}g\|_{L^{2}}\aleq\frac{\mu_{k+1}^{D/2}+o(1)\mu_{k}^{D/2}}{\lmb_{k}}\|g\|_{\dot{H}^{2}}.
\end{align*}
Finally, for the nonlinear term, as $\psi\aleq\lan y\ran^{-2D}$,
it suffices to show 
\begin{equation}
\|\NL_{U}(g)\cdot[\lan y\ran^{-2D}]_{\ul{;k}}\|_{L^{1}}\aleq\frac{\mu_{k+1}^{D}+o(1)\cdot\mu_{k}^{D}}{\lmb_{k}^{2}}+\|g\|_{\dot{H}^{2}}^{2}.\label{eq:h_k-est-05}
\end{equation}

We prove \eqref{eq:h_k-est-05}. It is more convenient to separate
the \eqref{eq:HMHF-equiv} case and the \eqref{eq:NLH-rad} case.
For \eqref{eq:HMHF-equiv}, we use \eqref{eq:f-06-HM} to have 
\begin{align*}
\NL_{U}(g)\cdot[\lan y\ran^{-2D}]_{\ul{;k}} & \aleq r^{-(D+2)}|\rng g|^{2}\cdot[\lan y\ran^{-2D}]_{\ul{;k}}\aleq r^{-4}|g|^{2},\\
\|\NL_{U}(g)\cdot[\lan y\ran^{-2D}]_{\ul{;k}}\|_{L^{1}} & \aleq\|r^{-2}g\|_{L^{2}}^{2}\aleq\|g\|_{\dot{H}^{2}}^{2}.
\end{align*}
For \eqref{eq:NLH-rad}, as $p<2$, the previous simple argument fails.
We instead separate into three regions motivated from \eqref{eq:U-bd-1}:
(i) $r\in[A_{0}\br{\lmb}_{k+1},A_{0}^{-1}\br{\lmb}_{k}]$, (ii) $r\in(0,A_{0}\br{\lmb}_{k+1}]$,
and (iii) $r\in[A_{0}^{-1}\br{\lmb}_{k},+\infty)$.

\emph{\uline{Case (i).}} In this region, by \eqref{eq:U-bd-1}
and \eqref{eq:f-06}, we have 
\[
\NL_{U}(g)\cdot[\lan y\ran^{-2D}]_{\ul{;k}}\aleq|g|^{2}\cdot\tfrac{1}{\lmb_{k}^{2}}|[\lan y\ran^{-2D}]_{;k}|^{p-1}\aleq r^{-4}|g|^{2}
\]
and hence 
\[
\|\chf_{[A_{0}\br{\lmb}_{k+1},A_{0}^{-1}\br{\lmb}_{k}]}\NL_{U}(g)\cdot[\lan y\ran^{-2D}]_{\ul{;k}}\|_{L^{1}}\aleq\|r^{-2}g\|_{L^{2}}^{2}\aleq\|g\|_{\dot{H}^{2}}^{2}.
\]

\emph{\uline{Case (ii).}} In this region, by \eqref{eq:f-04},
Young's inequality, and $\frac{2}{2-p}=\frac{2D}{D-2}$, we have 
\[
\NL_{U}(g)\cdot[\lan y\ran^{-2D}]_{\ul{;k}}\aleq|g|^{p}\cdot|[\lan y\ran^{-2D}]_{\ul{;k}}|\aleq\frac{1}{r^{4}}\Big\{|g|^{p}\cdot\frac{r^{4}}{\lmb_{k}^{D+2}}\Big\}\aleq\frac{1}{r^{4}}\Big\{|g|^{2}+\frac{1}{\lmb_{k}^{2D}}\Big(\frac{r}{\lmb_{k}}\Big)^{\frac{8D}{D-2}}\Big\}
\]
so 
\[
\|\chf_{(0,A_{0}\br{\lmb}_{k+1}]}|g|^{p}\cdot[\lan y\ran^{-2D}]_{\ul{;k}}\|_{L^{1}}\aleq\|r^{-2}g\|_{L^{2}}^{2}+\frac{\br{\lmb}_{k+1}^{2D-2}}{\lmb_{k}^{2D}}\mu_{k+1}^{\frac{4D}{D-2}}\aeq\|r^{-2}g\|_{L^{2}}^{2}+\frac{\mu_{k+1}^{D+3+\frac{8}{D-2}}}{\lmb_{k}^{2}}.
\]

\emph{\uline{Case (iii).}} In this region, similarly as in the
previous case, we have 
\[
\NL_{U}(g)\cdot[\lan y\ran^{-2D}]_{\ul{;k}}\aleq|g|^{p}\cdot|[\lan y\ran^{-2D}]_{\ul{;k}}|\aleq\frac{1}{r^{4}}|g|^{p}\cdot\frac{\lmb_{k}^{D-2}}{r^{2D-4}}\aleq\frac{1}{r^{4}}\Big(|g|^{2}+\frac{\lmb_{k}^{2D}}{r^{4D}}\Big)
\]
so 
\[
\|\chf_{[A_{0}^{-1}\br{\lmb}_{k},+\infty)}|g|^{p}\cdot[\lan y\ran^{-2D}]_{\ul{;k}}\|_{L^{1}}\aleq\|r^{-2}g\|_{L^{2}}^{2}+\frac{\mu_{k}^{D+1}}{\lmb_{k}^{2}}.
\]
This completes the proof of \eqref{eq:h_k-est-05}.

\textbf{Step 3.} Off diagonal summation estimates.

We show \eqref{eq:off-diag-1}--\eqref{eq:off-diag-3}. The summation
over $j>k$ follows from
\begin{align*}
\chf_{j>k}\Big(\frac{\lmb_{j}}{\lmb_{k}}\Big)^{D}\frac{1}{\lmb_{j}} & =\chf_{j>k}\Big(\frac{\lmb_{j}}{\lmb_{k}}\Big)^{D-1}\frac{1}{\lmb_{k}}\aleq\frac{\mu_{k+1}^{D-1}}{\lmb_{k}},\\
\chf_{j>k}\Big(\frac{\lmb_{j}}{\lmb_{k}}\Big)^{D}\frac{\mu_{j+1}^{D}}{\lmb_{j}^{2}} & =\chf_{j>k}\frac{(\lmb_{j}/\lmb_{k})^{D-1}}{\lmb_{k}}\frac{\mu_{j+1}^{D}}{\lmb_{j}}\aleq\frac{\mu_{k+1}^{D}}{\lmb_{k+1}}\frac{\mu_{j+1}^{D}}{\lmb_{j}}\aleq o(\calD),\\
\chf_{j>k}\Big(\frac{\lmb_{j}}{\lmb_{k}}\Big)^{D}\frac{\mu_{j}^{D}}{\lmb_{j}^{2}} & =\chf_{j=k+1}\frac{\mu_{k+1}^{2D-2}}{\lmb_{k}^{2}}+\chf_{j>k+1}\Big(\frac{\lmb_{j-1}}{\lmb_{k}}\Big)^{D}\frac{\mu_{j}^{2D}}{\lmb_{j}^{2}}\aleq o\Big(\frac{\mu_{k+1}^{D}}{\lmb_{k}^{2}}+\calD\Big).
\end{align*}
The summation over $j<k$ follows from 
\begin{align*}
\chf_{j<k}\Big(\frac{\lmb_{k}}{\lmb_{j}}\Big)^{D-2}\frac{1}{\lmb_{j}} & =\chf_{j<k}\Big(\frac{\lmb_{k}}{\lmb_{j}}\Big)^{D-1}\frac{1}{\lmb_{k}}\aleq\frac{\mu_{k}^{D-1}}{\lmb_{k}},\\
\chf_{j<k}\Big(\frac{\lmb_{k}}{\lmb_{j}}\Big)^{D-2}\frac{\mu_{j+1}^{D}}{\lmb_{j}^{2}} & =\chf_{j=k-1}\frac{\mu_{k}^{2D}}{\lmb_{k}^{2}}+\chf_{j<k-1}\Big(\frac{\lmb_{k}}{\lmb_{j+1}}\Big)^{D-2}\frac{\mu_{j+1}^{2D-2}}{\lmb_{j}^{2}}\aleq o\Big(\frac{\mu_{k}^{D}}{\lmb_{k}^{2}}+\calD\Big),\\
\chf_{j<k}\Big(\frac{\lmb_{k}}{\lmb_{j}}\Big)^{D-2}\frac{\mu_{j}^{D}}{\lmb_{j}^{2}} & \aleq\mu_{j}^{D}\frac{\mu_{k}^{D}}{\lmb_{k}^{2}}\aleq o(1)\cdot\frac{\mu_{k}^{D}}{\lmb_{k}^{2}}.
\end{align*}
This completes the proof.
\end{proof}
We are now ready to state and prove the modulation estimates.
\begin{prop}[Modulation estimates]
\label{prop:Modulation-Est}We have 
\begin{gather}
\Big|\frac{(\lmb_{k})_{t}}{\lmb_{k}}\Big|\aleq\frac{\|g\|_{\dot{H}^{2}}}{\lmb_{k}}+\frac{\mu_{k+1}^{D}+\mu_{k}^{D}}{\lmb_{k}^{2}}+\calD+\|g\|_{\dot{H}^{2}}^{2},\label{eq:First-Mod}\\
\Big|\frac{(\lmb_{k})_{t}}{\lmb_{k}}+\frac{d}{dt}o(1)-\br{\iota}_{k}\frac{\kappa\mu_{k}^{D}}{\lmb_{k}^{2}}\Big|\aleq\frac{\mu_{k+1}^{D}+o(1)\cdot\mu_{k}^{D}}{\lmb_{k}^{2}}+\calD+\|g\|_{\dot{H}^{2}}^{2}.\label{eq:Ref-Mod}
\end{gather}
\end{prop}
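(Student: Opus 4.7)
The plan is to derive a linear system for the modulation rates $\nu_k \coloneqq (\lambda_k)_t/\lambda_k$ by differentiating the orthogonality conditions \eqref{eq:orthog} in time, then invert it using the estimates of Lemma~\ref{lem:Mod-Prelim}. Concretely, write $u = U + g$ with $\partial_t U = \sum_j \nu_j \lambda_j \partial_{\lambda_j} U$ and $\lambda_j \partial_{\lambda_j} U = -\Lambda W_{;j} + \lambda_j \partial_{\lambda_j}\td U$, and insert \eqref{eq:sptime-2-3} for $u_t$. Differentiating $\lan g,\mathcal{Z}_{\lambda_k}\ran = 0$ and multiplying by $\iota_k/\lambda_k^2$ to match the normalizations in \eqref{eq:def-calA-B-etc}, one obtains the $J\times J$ linear system
\[
\sum_{j=1}^J \nu_j \bigl(\calA_{jk}^{(\calZ)} + \calB_{jk}^{(\calZ)}\bigr) + \nu_k\, g_k^{(\calZ)} \;=\; \sum_{\ell=1}^{J}\br\iota_\ell\frac{\kpp\mu_\ell^D}{\lmb_\ell^2}\calA_{\ell k}^{(\calZ)} - h_k^{(\calZ)}.
\]
By \eqref{eq:calA-diag} and \eqref{eq:ABg-small}, the matrix $[\calA_{jk}^{(\calZ)} + \calB_{jk}^{(\calZ)} + \dlt_{jk}g_k^{(\calZ)}]$ is an $o(1)$-perturbation of the identity, hence invertible via Neumann series. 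Inverting and applying \eqref{eq:calA-off-diag}, \eqref{eq:h_k-est-1}, together with the summation estimates \eqref{eq:off-diag-1}--\eqref{eq:off-diag-3} to absorb the off-diagonal contributions into the stated bound, yields the first modulation estimate \eqref{eq:First-Mod}.

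To prove the refined estimate \eqref{eq:Ref-Mod}, I would replace the test function $\calZ$ by $\Lmb W$, whose key advantage is that $H_{\lmb_k}(\Lmb W_{;k})=0$, so the control of $h_k^{(\Lmb W)}$ in \eqref{eq:h_k-est-2} has no $\|g\|_{\dot H^2}/\lmb_k$ term. Since $\Lmb W_{;k}$ is not orthogonal to $g$, instead of differentiating an orthogonality relation I would test the evolution equation \eqref{eq:sptime-2-3} directly against $(\Lmb W)_{\ul{;k}}$ and integrate by parts in time the $g$-contribution:
\[
\lan g_t,(\Lmb W)_{\ul{;k}}\ran = \frac{d}{dt}\lan g,(\Lmb W)_{\ul{;k}}\ran + \nu_k\,g_k^{(\Lmb W)},
\]
where the identity $\partial_t(\Lmb W)_{\ul{;k}} = -\nu_k (\Lmb_{-1}\Lmb W)_{\ul{;k}}$ is used. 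Combining with $\partial_t U = \sum_j \nu_j\lmb_j\partial_{\lmb_j}U$, one arrives at the analogous system
\[
\sum_j \nu_j\bigl(\calA_{jk}^{(\Lmb W)} + \calB_{jk}^{(\Lmb W)}\bigr) + \nu_k\,g_k^{(\Lmb W)} + \frac{d}{dt}\lan g,(\Lmb W)_{\ul{;k}}\ran \;=\; \sum_{\ell}\br\iota_\ell\frac{\kpp\mu_\ell^D}{\lmb_\ell^2}\calA_{\ell k}^{(\Lmb W)} - h_k^{(\Lmb W)}.
\]
The diagonal $\calA_{kk}^{(\Lmb W)} = \|\Lmb W\|_{L^2}^2$ is nonzero, the off-diagonal entries are small by \eqref{eq:calA-off-diag} and \eqref{eq:calB-est}, and $\lan g,(\Lmb W)_{\ul{;k}}\ran = -g_k^{(\Lmb W)}\mod\text{ortho}= o(1)$ by \eqref{eq:ABg-small}, so its time derivative is precisely the $\frac{d}{dt}o(1)$ appearing in \eqref{eq:Ref-Mod}. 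Solving for $\nu_k$ using \eqref{eq:First-Mod} to control the other $\nu_j$ and applying the summation estimates \eqref{eq:off-diag-2}, \eqref{eq:off-diag-3} to the sum $\sum_\ell \br\iota_\ell \kpp\mu_\ell^D\lmb_\ell^{-2}\calA_{\ell k}^{(\Lmb W)}$, the diagonal $\ell=k$ term produces the main $\br\iota_k\kpp\mu_k^D/\lmb_k^2$, while the off-diagonal terms and $h_k^{(\Lmb W)}$ contribute only $(\mu_{k+1}^D + o(1)\mu_k^D)/\lmb_k^2 + \calD + \|g\|_{\dot H^2}^2$.

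The main obstacle is the bookkeeping of error terms: the RHS of \eqref{eq:Ref-Mod} is sharp (note the $o(1)\mu_k^D$ rather than $\mu_k^D$), so every contribution from off-diagonal entries $\calA_{jk}^{(\Lmb W)}$ and $\calB_{jk}^{(\Lmb W)}$ paired with $\nu_j$ (of size $\mu_j^D/\lmb_j^2$ by the first modulation estimate) must be shown to be absorbable. This is exactly what the three summation estimates \eqref{eq:off-diag-1}--\eqref{eq:off-diag-3} are designed for, and the reason the weighted $L^1$-bound \eqref{eq:tdU-weight-L1} on $\td U$ was propagated through the profile construction; in particular, the $j<k$ off-diagonal contribution of $\calB$ must be upgraded from $O(\lmb_k/\lmb_j)^{D-2}$ to $o(1)\cdot(\lmb_k/\lmb_j)^{D-2}$ to close the refined estimate. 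A secondary technical point is to verify $\frac{\iota_k}{\lmb_k^2}\lan g,\Lmb\calZ_{\lmb_k}\ran = -g_k^{(\calZ)}$ using the orthogonality, and the analogous identity for $\Lmb W$; these reduce to the commutation $\Lmb\phi_\lmb = (\Lmb\phi)_\lmb$ together with the definitions in Section~\ref{subsec:Unified-notation}.
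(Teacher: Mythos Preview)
Your approach to \eqref{eq:First-Mod} is essentially the paper's: differentiate the orthogonality with $\psi=\calZ$, invert the near-identity system using the off-diagonal summation estimates. That part is fine.

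The gap is in the refined estimate \eqref{eq:Ref-Mod}. You write the system with $\psi=\Lmb W$ and claim that the term $\frac{d}{dt}\lan g,(\Lmb W)_{\ul{;k}}\ran$ ``is precisely the $\frac{d}{dt}o(1)$''. But after isolating $\nu_k$ you must divide by the diagonal coefficient $\|\Lmb W\|_{L^2}^2 + \calB_{kk}^{(\Lmb W)} + g_k^{(\Lmb W)}$, which is \emph{time-dependent} through $g_k^{(\Lmb W)}$. The quotient
\[
\frac{\rd_t\lan g,(\Lmb W)_{\ul{;k}}\ran}{\|\Lmb W\|_{L^2}^2 + g_k^{(\Lmb W)}}
\]
is therefore not a total derivative. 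Writing it as $\rd_t(\text{quotient})$ plus a correction, the correction is proportional to $\lan g,(\Lmb W)_{\ul{;k}}\ran\cdot\rd_t g_k^{(\Lmb W)}$. The first factor is only $o(1)$ (since $y^2\Lmb W\notin L^2$ for $N\leq 8$, you cannot upgrade this to $\lmb_k\|g\|_{\dot H^2}$), while $\rd_t g_k^{(\Lmb W)}$ contains a term of size $\|g\|_{\dot H^2}/\lmb_k$ coming from $H_U g$. The product $o(1)\cdot\|g\|_{\dot H^2}/\lmb_k$ is \emph{not} dominated by the right-hand side of \eqref{eq:Ref-Mod}; by Young it gives $o(1)/\lmb_k^2$, which is not $\aleq(\mu_{k+1}^D+o(1)\mu_k^D)/\lmb_k^2+\calD$. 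Equivalently, if you avoid dividing and instead move $\nu_k\,g_k^{(\Lmb W)}$ to the right, the same term $o(1)\cdot\|g\|_{\dot H^2}/\lmb_k$ appears from \eqref{eq:First-Mod}.

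The paper resolves this with an algebraic trick you are missing: write $\Lmb_{-1}\Lmb W=\td c\,\Lmb W+R$ with $\td c=-\tfrac{N}{2}+3\neq 0$ and $R\aleq\lan y\ran^{-N}$. Then $g_k^{(\Lmb W)}=-\td c\,\lan g,(\Lmb W)_{\ul{;k}}\ran-\lan g,R_{\ul{;k}}\ran$, so the time-dependent part of the denominator that is proportional to the numerator can be integrated exactly (via an antiderivative $h$ of $a\mapsto a/(1-a)^2$), and the residual correction involves $\lan g,R_{\ul{;k}}\ran$ instead of $\lan g,(\Lmb W)_{\ul{;k}}\ran$. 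The faster decay of $R$ gives the crucial improved bound $|\lan g,R_{\ul{;k}}\ran|\aleq\lmb_k\|g\|_{\dot H^2}$, and this extra factor of $\lmb_k$ exactly cancels the $1/\lmb_k$ in $\rd_t\lan g,(\Lmb_{-1}\Lmb W)_{\ul{;k}}\ran$, so the correction is $\aleq\calD+\|g\|_{\dot H^2}^2$ as required. This is why the paper also carries the estimates for $\psi=\Lmb_{-1}\Lmb W$ in Lemma~\ref{lem:Mod-Prelim}.
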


\begin{proof}
We generalize the one-bubble case of \cite{CollotMerleRaphael2017CMP}
to the multi-bubble case.

\textbf{Step 1.} Algebraic computation.

We begin with the evolution equation for $g$ (cf. \eqref{eq:sptime-2-3})
\[
\rd_{t}g=-\rd_{t}U-\sum_{j=1}^{J}\br{\iota}_{j}\frac{\kappa\mu_{j}^{D}}{\lmb_{j}^{2}}\Lmb W_{;j}-H_{U}g+\NL_{U}(g)+\Psi.
\]
Recall that we have set $\mu_{1}=0$ so that the summation above starts
from $j=2$. For $\psi\in\{\calZ,\Lmb W,\Lmb_{-1}\Lmb W\}$, we compute
\begin{align*}
\rd_{t}\lan g,\psi_{\ul{;k}}\ran & =\lan\rd_{t}g,\psi_{\ul{;k}}\ran-\frac{\lmb_{k,t}}{\lmb_{k}}\lan g,\Lmb_{-1}\psi_{\ul{;k}}\ran\\
 & =\Blan-\rd_{t}U-\sum_{j=1}^{J}\br{\iota}_{j}\frac{\kappa\mu_{j}^{D}}{\lmb_{j}^{2}}\Lmb W_{;j},\psi_{\ul{;k}}\Bran-\frac{\lmb_{k,t}}{\lmb_{k}}\lan g,\Lmb_{-1}\psi_{\ul{;k}}\ran\\
 & \peq+\lan-H_{U}g+\NL_{U}(g)+\Psi,\psi_{\ul{;k}}\ran.
\end{align*}
Note that 
\[
-\rd_{t}U-\sum_{j=1}^{J}\br{\iota}_{j}\frac{\kappa\mu_{j}^{D}}{\lmb_{j}^{2}}\Lmb W_{;j}=\sum_{j=1}^{J}\Big(\frac{\lmb_{j,t}}{\lmb_{j}}-\br{\iota}_{j}\frac{\kappa\mu_{j}^{D}}{\lmb_{j}^{2}}\Big)\Lmb W_{;j}-\sum_{j=1}^{J}\frac{\lmb_{j,t}}{\lmb_{j}}(\lmb_{j}\rd_{\lmb_{j}}\td U).
\]
Recalling the notation \eqref{eq:def-calA-B-etc}, we arrive at 
\begin{equation}
\rd_{t}\lan g,\psi_{\ul{;k}}\ran=\sum_{j=1}^{J}\Big\{\Big(\frac{\lmb_{j,t}}{\lmb_{j}}-\br{\iota}_{j}\frac{\kappa\mu_{j}^{D}}{\lmb_{j}^{2}}\Big)\calA_{jk}^{(\psi)}+\frac{\lmb_{j,t}}{\lmb_{j}}(\calB_{jk}^{(\psi)}+\chf_{j=k}g_{k}^{(\psi)})\Big\}+h_{k}^{(\psi)}.\label{eq:ModEst-1-1}
\end{equation}

\textbf{Step 2.} Proof of \eqref{eq:First-Mod}.

Substituting $\psi=\calZ$ into \eqref{eq:ModEst-1-1} and using the
orthogonality \eqref{eq:orthog}, we obtain 
\[
0=\sum_{j=1}^{J}\frac{\lmb_{j,t}}{\lmb_{j}}(\calA_{jk}^{(\calZ)}+\calB_{jk}^{(\calZ)}+\chf_{j=k}g_{k}^{(\calZ)})-\sum_{j=1}^{J}\br{\iota}_{j}\frac{\kappa\mu_{j}^{D}}{\lmb_{j}^{2}}\calA_{jk}^{(\calZ)}+h_{k}^{(\calZ)}.
\]
Dividing the above by $\calA_{kk}^{(\calZ)}+\calB_{kk}^{(\calZ)}+g_{k}^{(\calZ)}$
and using $\calA_{kk}^{(\calZ)}=1$ and $|\calB_{kk}^{(\calZ)}|+|g_{k}^{(\calZ)}|=o(1)$
(see Lemma~\ref{lem:Mod-Prelim}), we obtain 
\begin{equation}
\sum_{j=1}^{J}\frac{\lmb_{j,t}}{\lmb_{j}}(\chf_{j=k}+\calM_{jk})=\wh h_{k},\label{eq:ModEst-2-1}
\end{equation}
where 
\[
\calM_{jk}\coloneqq\begin{cases}
0 & \text{if }j=k,\\
\frac{\calA_{jk}^{(\calZ)}+\calB_{jk}^{(\calZ)}}{1+\calB_{kk}^{(\calZ)}+g_{k}^{(\calZ)}} & \text{if }j\neq k,
\end{cases}\qquad\wh h_{k}\coloneqq\frac{\sum_{j=1}^{J}\br{\iota}_{j}\frac{\kappa\mu_{j}^{D}}{\lmb_{j}^{2}}\calA_{jk}^{(\calZ)}-h_{k}^{(\calZ)}}{1+\calB_{kk}^{(\calZ)}+g_{k}^{(\calZ)}}.
\]

First, we estimate $\wh h_{k}$. The denominator has size $\aeq1$
due to \eqref{eq:ABg-small}. For the numerator, we apply \eqref{eq:calA-diag},
\eqref{eq:calA-off-diag}, \eqref{eq:off-diag-3} (for the summation),
and \eqref{eq:h_k-est-1} (for $h_{k}^{(\calZ)}$): 
\begin{equation}
|\wh h_{k}|\aleq\frac{\|g\|_{\dot{H}^{2}}}{\lmb_{k}}+\frac{\mu_{k+1}^{D}+\mu_{k}^{D}}{\lmb_{k}^{2}}+\calD+\|g\|_{\dot{H}^{2}}^{2}.\label{eq:ModEst-2-2}
\end{equation}
Next, by \eqref{eq:ABg-small}, \eqref{eq:calA-off-diag}, and \eqref{eq:calB-est},
we observe that $\calM_{jk}$ satisfies
\[
|\calM_{jk}|\aleq o(1)\quad\text{and}\quad|\calM_{jk}|\aleq\chf_{j>k}\Big(\frac{\lmb_{j}}{\lmb_{k}}\Big)^{D}+\chf_{j<k}\Big(\frac{\lmb_{k}}{\lmb_{j}}\Big)^{D-2}.
\]
This together with \eqref{eq:off-diag-1}, \eqref{eq:off-diag-2},
and \eqref{eq:off-diag-3} implies a contraction property of $\calM_{jk}$:
\begin{equation}
\begin{aligned}\sum_{j=1}^{J}\calM_{jk} & \Big(\frac{\|g\|_{\dot{H}^{2}}}{\lmb_{j}}+\frac{\mu_{j+1}^{D}+\mu_{j}^{D}}{\lmb_{j}^{2}}+\calD+\|g\|_{\dot{H}^{2}}^{2}\Big)\\
 & \aleq o(1)\cdot\Big(\frac{\|g\|_{\dot{H}^{2}}}{\lmb_{k}}+\frac{\mu_{k+1}^{D}+\mu_{k}^{D}}{\lmb_{k}^{2}}+\calD+\|g\|_{\dot{H}^{2}}^{2}\Big).
\end{aligned}
\label{eq:ModEst-2-3}
\end{equation}
Substituting \eqref{eq:ModEst-2-2} and \eqref{eq:ModEst-2-3} into
the system \eqref{eq:ModEst-2-1} gives \eqref{eq:First-Mod}.

\textbf{Step 3.} Proof of \eqref{eq:Ref-Mod}.

First, we claim 
\begin{equation}
\bigg|\frac{\lmb_{k,t}}{\lmb_{k}}-\br{\iota}_{k}\frac{\kappa\mu_{k}^{D}}{\lmb_{k}^{2}}-\frac{\rd_{t}\lan g,\tfrac{1}{\lmb_{k}^{2}}\Lmb W_{;k}\ran}{\|\Lmb Q\|_{L^{2}}^{2}+g_{k}^{(\Lmb W)}}\bigg|\aleq\frac{\mu_{k+1}^{D}+o(1)\cdot\mu_{k}^{D}}{\lmb_{k}^{2}}+\calD+\|g\|_{\dot{H}^{2}}^{2}.\label{eq:ModEst-3-1}
\end{equation}
Substituting $\psi=\Lmb W$ into \eqref{eq:ModEst-1-1} and using
$\calA_{kk}^{(\Lmb W)}=\|\Lmb W\|_{L^{2}}^{2}$, we obtain 
\begin{equation}
\begin{aligned}\rd_{t}\lan g,\tfrac{1}{\lmb_{k}^{2}}\Lmb W_{;k}\ran & =\Big(\frac{\lmb_{k,t}}{\lmb_{k}}-\br{\iota}_{k}\frac{\kappa\mu_{k}^{D}}{\lmb_{k}^{2}}\Big)\|\Lmb W\|_{L^{2}}^{2}+\frac{\lmb_{k,t}}{\lmb_{k}}g_{k}^{(\Lmb W)}\\
 & +\sum_{j=1}^{J}\Big\{\Big(\frac{\lmb_{j,t}}{\lmb_{j}}-\br{\iota}_{j}\frac{\kappa\mu_{j}^{D}}{\lmb_{j}^{2}}\Big)\chf_{j\neq k}\calA_{jk}^{(\Lmb W)}+\frac{\lmb_{j,t}}{\lmb_{j}}\calB_{jk}^{(\Lmb W)}\Big\}+h_{k}^{(\Lmb W)}.
\end{aligned}
\label{eq:ModEst-3-2}
\end{equation}
We show that the terms in the second line are errors. First, using
\eqref{eq:First-Mod}, \eqref{eq:calA-off-diag}, and \eqref{eq:calB-est},
we have for $\psi\in\{\Lmb W,\Lmb_{-1}\Lmb W\}$ 
\begin{align*}
 & \bigg|\sum_{j=1}^{J}\Big\{\Big(\frac{\lmb_{j,t}}{\lmb_{j}}-\br{\iota}_{j}\frac{\kappa\mu_{j}^{D}}{\lmb_{j}^{2}}\Big)\chf_{j\neq k}\calA_{jk}^{(\psi)}+\frac{\lmb_{j,t}}{\lmb_{j}}\calB_{jk}^{(\psi)}\Big\}\bigg|\\
 & \aleq\sum_{j=1}^{J}\Big(\frac{\|g\|_{\dot{H}^{2}}}{\lmb_{j}}+\frac{\mu_{j+1}^{D}+\mu_{j}^{D}}{\lmb_{j}^{2}}+\calD+\|g\|_{\dot{H}^{2}}^{2}\Big)\\
 & \qquad\qquad\times\Big(\chf_{j>k}\Big(\frac{\lmb_{j}}{\lmb_{k}}\Big)^{D}+\chf_{j=k}(\mu_{k+1}^{D}+\mu_{k}^{D}|\log\mu_{k}|)+\chf_{j<k}\Big(\frac{\lmb_{k}}{\lmb_{j}}\Big)^{D-2}\Big).
\end{align*}
Applying \eqref{eq:off-diag-1} and \eqref{eq:off-diag-3} when $j\neq k$,
the previous display continues as 
\begin{align*}
 & \aleq\frac{\mu_{k+1}^{D-1}+\mu_{k}^{D-1}}{\lmb_{k}}\|g\|_{\dot{H}^{2}}+o(1)\cdot\frac{\mu_{k-1}^{D}+\mu_{k}^{D}}{\lmb_{k}^{2}}+\calD+\|g\|_{\dot{H}^{2}}^{2}\\
 & \aleq o(1)\cdot\frac{\mu_{k+1}^{D}+\mu_{k}^{D}}{\lmb_{k}^{2}}+\calD+\|g\|_{\dot{H}^{2}}^{2}.
\end{align*}
Therefore, we have proved for $\psi\in\{\Lmb W,\Lmb_{-1}\Lmb W\}$
that 
\begin{equation}
\bigg|\sum_{j=1}^{J}\Big\{\Big(\frac{\lmb_{j,t}}{\lmb_{j}}-\br{\iota}_{j}\frac{\kappa\mu_{j}^{D}}{\lmb_{j}^{2}}\Big)\chf_{j\neq k}\calA_{jk}^{(\psi)}+\frac{\lmb_{j,t}}{\lmb_{j}}\calB_{jk}^{(\psi)}\Big\}\bigg|\aleq o(1)\cdot\frac{\mu_{k+1}^{D}+\mu_{k}^{D}}{\lmb_{k}^{2}}+\calD+\|g\|_{\dot{H}^{2}}^{2}.\label{eq:ModEst-3-3}
\end{equation}
Now the claim \eqref{eq:ModEst-3-1} follows from substituting \eqref{eq:ModEst-3-3}
and \eqref{eq:h_k-est-2} into \eqref{eq:ModEst-3-2}.

To complete the proof of \eqref{eq:Ref-Mod}, we will integrate by
parts the last term of LHS\eqref{eq:ModEst-3-1}. Let $\td c=-\frac{N}{2}+3\neq0$
(as $N\neq6$) be such that 
\[
\Lmb_{-1}\Lmb W=\td c\Lmb W+R\quad\text{with}\quad R\aleq\frac{1}{\lan y\ran^{N}}.
\]
Define a function $h:[-\frac{1}{2},\frac{1}{2}]\to\bbR$ such that
\[
h(a)\coloneqq\int_{0}^{a}\frac{a}{(1-a)^{2}}da\quad\text{and}\quad|h(a)|\aleq|a|^{2}.
\]
Now we can manipulate the last term of LHS\eqref{eq:ModEst-3-1} as
\begin{align}
 & \frac{\rd_{t}\lan g,\Lmb W_{\ul{;k}}\ran}{\|\Lmb W\|_{L^{2}}^{2}-\lan g,\Lmb_{-1}\Lmb W_{\ul{;k}}\ran}\nonumber \\
 & =\rd_{t}\Big\{\frac{\lan g,\Lmb W_{\ul{;k}}\ran}{\|\Lmb W\|_{L^{2}}^{2}-\lan g,\Lmb_{-1}\Lmb W_{\ul{;k}}\ran}\Big\}-\frac{\lan g,\Lmb W_{\ul{;k}}\ran\rd_{t}\lan g,\Lmb_{-1}\Lmb W_{\ul{;k}}\ran}{(\|\Lmb W\|_{L^{2}}^{2}-\lan g,\Lmb_{-1}\Lmb W_{\ul{;k}}\ran)^{2}}\nonumber \\
 & =\rd_{t}\Big\{\frac{\lan g,\Lmb W_{\ul{;k}}\ran}{\|\Lmb W\|_{L^{2}}^{2}-\lan g,\Lmb_{-1}\Lmb W_{\ul{;k}}\ran}-\frac{1}{\td c}h\Big(\frac{\lan g,\Lmb_{-1}\Lmb W_{\ul{;k}}\ran}{\|\Lmb W\|_{L^{2}}^{2}}\Big)\Big\}\label{eq:ModEst-3-4}\\
 & \peq+\frac{\lan g,R_{\ul{;k}}\ran\rd_{t}\lan g,\Lmb_{-1}\Lmb W_{\ul{;k}}\ran}{\td c(\|\Lmb W\|_{L^{2}}^{2}-\lan g,\Lmb_{-1}\Lmb W_{\ul{;k}}\ran)^{2}}.\nonumber 
\end{align}
The first line of RHS\eqref{eq:ModEst-3-4} is of the form $\frac{d}{dt}o(1)$
due to \eqref{eq:ABg-small}. Thus it suffices to show that the last
term of RHS\eqref{eq:ModEst-3-4} is of size $O(\calD+\|g\|_{\dot{H}^{2}}^{2})$.
For this purpose, we note (due to the improved decay $R\aleq\lan y\ran^{-N}$)
\[
|\lan g,R_{\ul{;k}}\ran|\aleq\min\{o(1),\lmb_{k}\|g\|_{\dot{H}^{2}}\}
\]
and (using \eqref{eq:ModEst-3-3} and \eqref{eq:h_k-est-1}) 
\begin{align*}
 & \big|\rd_{t}\lan g,\Lmb_{-1}\Lmb W_{\ul{;k}}\ran\big|\\
 & \aleq\Big|\frac{\lmb_{k,t}}{\lmb_{k}}\Big|+\frac{\mu_{k}^{D}}{\lmb_{k}^{2}}+\sum_{j=1}^{J}\Big\{\Big(\frac{\lmb_{j,t}}{\lmb_{j}}-\br{\iota}_{j}\frac{\kappa\mu_{j}^{D}}{\lmb_{j}^{2}}\Big)\chf_{j\neq k}\calA_{jk}^{(\Lmb_{-1}\Lmb W)}+\frac{\lmb_{j,t}}{\lmb_{j}}\calB_{jk}^{(\Lmb_{-1}\Lmb W)}\Big\}+|h_{k}^{(\Lmb_{-1}\Lmb W)}|\\
 & \aleq\frac{\|g\|_{\dot{H}^{2}}}{\lmb_{k}}+\frac{\mu_{k+1}^{D}+\mu_{k}^{D}}{\lmb_{k}^{2}}+\calD+\|g\|_{\dot{H}^{2}}^{2}.
\end{align*}
Combining the previous two displays, we obtain 
\[
\big|\lan g,R_{\ul{;k}}\ran\rd_{t}\lan g,\Lmb_{-1}\Lmb W_{\ul{;k}}\ran\big|\aleq\calD+\|g\|_{\dot{H}^{2}}^{2},
\]
saying that the second line of RHS\eqref{eq:ModEst-3-4} is of size
$O(\calD+\|g\|_{\dot{H}^{2}}^{2})$. Therefore, we have proved that
the last term of LHS\eqref{eq:ModEst-3-1} is of the form $\frac{d}{dt}o(1)+O(\calD+\|g\|_{\dot{H}^{2}}^{2})$.
Substituting this into \eqref{eq:ModEst-3-1} completes the proof
of \eqref{eq:Ref-Mod}.
\end{proof}

\subsection{\label{subsec:Integration}Integration}

We complete the proof of Theorem~\ref{thm:main}.
\begin{proof}
First, it suffices to show $\br{\iota}_{j}=-1$ for \eqref{eq:NLH-rad},
$\br{\iota}_{j}=+1$ for \eqref{eq:HMHF-equiv}, and the scales $\lmb_{j}(t)$
are asymptotically given by \eqref{eq:main-thm-scale} with $T=+\infty$.
Once the sign properties are shown, $J=|m|$ and $\iota_{j}=\mathrm{sgn}(m)$
for \eqref{eq:HMHF-equiv} follow from the  constraint $u\in\calH_{0,m}$.
To classify the signs and scales, we integrate \eqref{eq:Ref-Mod}.

\textbf{Step 1.} Two-sided differential inequalities for integration.

We claim that there exist functions $\lmb_{k}^{+}=e^{o(1)}\lmb_{k}$
and $\lmb_{k}^{-}=e^{o(1)}\lmb_{k}$ such that 
\begin{align}
\frac{d}{dt}\log\lmb_{k}^{+} & \leq\br{\iota}_{k}\frac{\kappa\mu_{k}^{D}}{\lmb_{k}^{2}}\cdot(1+o(1)),\label{eq:Integ-1-1}\\
\frac{d}{dt}\log\lmb_{k}^{-} & \geq\br{\iota}_{k}\frac{\kappa\mu_{k}^{D}}{\lmb_{k}^{2}}\cdot(1+o(1)).\label{eq:Integ-1-2}
\end{align}
First, by \eqref{eq:Ref-Mod}, there exist functions $\wh{\lmb}_{k}(t)=e^{o(1)}\lmb_{k}(t)$
such that 
\begin{equation}
\frac{d}{dt}\log\wh{\lmb}_{k}(t)=\br{\iota}_{k}\frac{\kappa\mu_{k}^{D}}{\lmb_{k}^{2}}\cdot(1+o(1))+O\Big(\frac{\mu_{k+1}^{D+2}}{\lmb_{k+1}^{2}}+\calD+\|g\|_{\dot{H}^{2}}^{2}\Big).\label{eq:integ-1-3}
\end{equation}
The above display also gives 
\[
\frac{d}{dt}\log\wh{\lmb}_{k-1}(t)=O\Big(\frac{\mu_{k-1}^{D}}{\lmb_{k-1}^{2}}+\frac{\mu_{k}^{D+2}}{\lmb_{k}^{2}}+\calD+\|g\|_{\dot{H}^{2}}^{2}\Big).
\]
Letting $\wh{\mu}_{j}\coloneqq\wh{\lmb}_{j}/\wh{\lmb}_{j-1}=\mu_{j}(1+o(1))$
for $j=2,\dots,J$, the above two displays give an auxiliary estimate
\[
\frac{d}{dt}\log\wh{\mu}_{j}=\br{\iota}_{j}\frac{\kappa\mu_{j}^{D}}{\lmb_{j}^{2}}\cdot(1+o(1))+O\Big(\frac{\mu_{j-1}^{D}}{\lmb_{j-1}^{2}}+\frac{\mu_{j+1}^{D+2}}{\lmb_{j+1}^{2}}+\calD+\|g\|_{\dot{H}^{2}}^{2}\Big).
\]
Fix $0<s_{2}<\dots<s_{J}<2$. The previous display with $\wh{\mu}_{j}=e^{o(1)}\mu_{j}$
implies 
\begin{equation}
\frac{d}{dt}\frac{\br{\iota}_{j}}{\kappa s_{j}}\wh{\mu}_{j}^{s_{j}}=\frac{\mu_{j}^{D+s_{j}}}{\lmb_{j}^{2}}\cdot(1+o(1))+O\Big(\frac{\mu_{j-1}^{D+s_{j}}}{\lmb_{j-1}^{2}}+\frac{\mu_{j+1}^{D+2}}{\lmb_{j+1}^{2}}+\calD+\|g\|_{\dot{H}^{2}}^{2}\Big),\label{eq:integ-1-4}
\end{equation}
where we used 
\[
\frac{\mu_{j-1}^{D}}{\lmb_{j-1}^{2}}\mu_{j}^{s_{j}}\aleq\frac{\mu_{j-1}^{D+s_{j}}+\mu_{j}^{D+s_{j}}}{\lmb_{j-1}^{2}}=\frac{\mu_{j-1}^{D+s_{j}}}{\lmb_{j-1}^{2}}+o(1)\cdot\frac{\mu_{j}^{D+s_{j}}}{\lmb_{j}^{2}}.
\]
We add \eqref{eq:integ-1-3} and \eqref{eq:integ-1-4} and use the
property $0<s_{2}<\dots<s_{J}<2$ to obtain 
\begin{align*}
 & \frac{d}{dt}\{\log\wh{\lmb}_{k}\mp\sum_{j=k+1}^{J}\frac{\br{\iota}_{j}}{\kappa s_{j}}\wh{\mu}_{j}^{s_{j}}\}\\
 & \quad=\br{\iota}_{k}\frac{\kappa\mu_{k}^{D}}{\lmb_{k}^{2}}\cdot\big\{1+o(1)+O(\mu_{k}^{s_{k+1}})\big\}\\
 & \quad\peq\mp\sum_{j=k+1}^{J}\frac{\mu_{j}^{D+s_{j}}}{\lmb_{j}^{2}}\cdot\big\{1+o(1)+O(\mu_{j}^{2-s_{j}}+\mu_{j}^{s_{j+1}-s_{j}})\big\}+O(\calD+\|g\|_{\dot{H}^{2}}^{2})\\
 & \quad=\br{\iota}_{k}\frac{\kappa\mu_{k}^{D}}{\lmb_{k}^{2}}\cdot(1+o(1))\mp\sum_{j=k+1}^{J}\frac{\mu_{j}^{D+s_{j}}}{\lmb_{j}^{2}}\cdot(1+o(1))+O(\calD+\|g\|_{\dot{H}^{2}}^{2}).
\end{align*}
Notice that the middle summation term is sign definite. Recalling
$\calD+\|g\|_{\dot{H}^{2}}^{2}\in L_{t}^{1}$ from \eqref{eq:sptime-est},
the claim \eqref{eq:Integ-1-1}--\eqref{eq:Integ-1-2} follows.

\textbf{Step 2.} Global existence and stabilization of $\lmb_{1}$.

As $\mu_{1}\equiv0$ by definition, \eqref{eq:Integ-1-1}--\eqref{eq:Integ-1-2}
reads 
\[
\frac{d}{dt}\{\log\lmb_{1}+o(1)\}\leq0\leq\frac{d}{dt}\{\log\lmb_{1}+o(1)\}.
\]
Integrating this, there exists $L_{\infty}\in(0,\infty)$ such that
\[
\lmb_{1}(t)=L_{\infty}\cdot(1+o(1)).
\]
Since $\lmb_{1}(t)$ does not converge to $0$, we have $T=+\infty$.

\textbf{Step 3.} Signs and asymptotics for the other bubbles.

In this last step, we finish the proof. By scaling invariance, we
may assume $L_{\infty}=1$. Recall \eqref{eq:def-lmb-ex-ell} and
denote $\lmb_{k}^{\mathrm{ex}}(t)=\lmb_{k,1}^{\mathrm{ex}}(t)$. We
need to show $\br{\iota}_{k}=-1$ for \eqref{eq:NLH-rad}, $\br{\iota}_{k}=+1$
for \eqref{eq:HMHF-equiv}, and $\lmb_{k}(t)=\lmb_{k}^{\mathrm{ex}}(t)\cdot(1+o(1))$
for all $k=2,\dots,J$.

We use induction on $k$; suppose for some $k\geq2$ that we proved
$\br{\iota}_{j}=-1$ and $\lmb_{j}=\lmb_{j}^{\mathrm{ex}}(t)\cdot(1+o(1))$
for $j=1,\dots k-1$. We substitute the inductive hypothesis and \eqref{eq:def-lmb-ex-ell}
into \eqref{eq:Integ-1-2} for $k$ to obtain 
\[
\frac{d}{dt}\log\lmb_{k}^{-}\geq\br{\iota}_{k}\frac{\kappa\mu_{k}^{D}}{\lmb_{k}^{2}}\cdot(1+o(1))\geq\br{\iota}_{k}\kappa\lmb_{k}^{D-2}\Big(\frac{t^{\alp_{k-1}}}{\beta_{k-1}}\Big)^{D}\cdot(1+o(1)),
\]
which gives 
\[
\frac{d}{dt}\Big(\frac{1}{(\lmb_{k}^{-})^{D-2}}\Big)\leq-\frac{\br{\iota}_{k}\kappa(D-2)}{\beta_{k-1}^{D}}\cdot t^{D\alp_{k-1}}\cdot(1+o(1)).
\]
As $\kappa\neq0$ and $\alp_{k-1}\geq0$, integrating the above gives
\[
\frac{1}{(\lmb_{k}^{-})^{D-2}}\leq\Big(\frac{-\br{\iota}_{k}\kappa(D-2)}{\beta_{k-1}^{D}(D\alp_{k-1}+1)}+o(1)\Big)\cdot t^{D\alp_{k-1}+1}.
\]
Since $\lmb_{k}^{-}>0$ for all time, we must have $\br{\iota}_{k}\kappa<0$.
For \eqref{eq:NLH-rad}, $\kappa>0$ so we have $\br{\iota}_{k}=-1$.
For \eqref{eq:HMHF-equiv}, $\kappa<0$ so we have $\br{\iota}_{k}=+1$.
Once we determined the sign, taking the power of $-\frac{1}{D-2}$
and using 
\[
\frac{D\alp_{k-1}+1}{D-2}=\alp_{k}\quad\text{and}\quad\beta_{k-1}^{\frac{D}{D-2}}\Big(\frac{\alp_{k}}{|\kappa|}\Big)^{\frac{1}{D-2}}=\beta_{k}
\]
from \eqref{eq:def-lmb-ex-ell} and \eqref{eq:def-beta_j}, we arrive
at the lower bound 
\[
\lmb_{k}^{-}(t)\geq\frac{\beta_{k}}{t^{\alp_{k}}}\cdot(1+o(1)).
\]
Proceeding similarly with \eqref{eq:Integ-1-1} instead of \eqref{eq:Integ-1-2},
we have the upper bound 
\[
\lmb_{k}^{+}(t)\leq\frac{\beta_{k}}{t^{\alp_{k}}}\cdot(1+o(1)).
\]
The previous two displays imply 
\[
\lmb_{k}(t)=\frac{\beta_{k}}{t^{\alp_{k}}}\cdot(1+o(1)),
\]
completing the induction step. This completes the proof of Theorem~\ref{thm:main}.
\end{proof}
As mentioned at the beginning of Section~\ref{subsec:Strategy},
Theorem~\ref{thm:main-HMHF} for \eqref{eq:HMHF-equiv} and Theorem~\ref{thm:main-NLH}
for \eqref{eq:NLH-rad} are direct corollaries of Theorem~\ref{thm:main}.

\appendix

\section{\label{sec:Proof-of-R1}Proof of \eqref{eq:R1-01}--\eqref{eq:R1-04}}

In this appendix, we prove the estimates for $\calR_{J+1}^{(1)}[\vphi]$
in the proof of Lemma~\ref{lem:contract-recur}, i.e., \eqref{eq:R1-01}--\eqref{eq:R1-04}.
Recall the notation of Section~\ref{subsec:Notation-for-rigorous-construction}
and Steps~0 and 1 of the proof of Lemma~\ref{lem:contract-recur}.
\begin{proof}[Proof of \eqref{eq:R1-01}]
We write 
\[
\calR_{J+1}^{(1)}[\vphi]=r^{-2}\{f'(Q_{;J+1}+P_{J})-f'(Q_{;J+1})\}\chi_{\dlt_{0}\lmb_{J}}\vphi+\NL_{W_{;J+1}+U_{J}}(\chi_{\dlt_{0}\lmb_{J}}\vphi).
\]
We apply \eqref{eq:f-03} and \eqref{eq:f-03-HM} to the first term
and \eqref{eq:f-04} and \eqref{eq:f-06-HM} to the second to obtain
\begin{align*}
\calR_{J+1}^{(1)}[\vphi] & \aleq\chf_{(0,2\dlt_{0}\lmb_{J}]}\{r^{-2}|P_{J}|^{p-1}|\vphi|+r^{-(D+2)}|\rng{\vphi}|^{p}\}.\\
 & \aeq\chf_{(0,2\dlt_{0}\lmb_{J}]}\{|U_{J}|^{p-1}+|\vphi|^{p-1}\}\cdot|\vphi|.
\end{align*}
As \eqref{eq:1-2} and \eqref{eq:1-5} imply 
\begin{equation}
\chf_{(0,2\dlt_{0}\lmb_{J}]}\{|U_{J}|^{p-1}+|\vphi|^{p-1}\}\aleq\chf_{(0,2\dlt_{0}\lmb_{J}]}\{\lmb_{J}^{-2}+K_{0}^{p-1}\mu_{J+1}^{p}r^{-2}\},\label{eq:R1-tmp-01}
\end{equation}
we obtain \eqref{eq:R1-01}.
\end{proof}
\begin{proof}[Proof of \eqref{eq:R1-02}]
Letting $\vphi^{(t)}=t\vphi+(1-t)\vphi'$ for $t\in[0,1]$, we have
the identity 
\begin{align*}
 & \calR_{J+1}^{(1)}[\vphi]-\calR_{J+1}^{(1)}[\vphi']\\
 & \quad=\tint 01r^{-2}\{f'(Q_{;J+1}+P_{J}+\chi_{\dlt_{0}\lmb_{J}}\rng{\vphi}^{(t)})-f'(Q_{;J+1})\}dt\cdot\chi_{\dlt_{0}\lmb_{J}}(\vphi-\vphi').
\end{align*}
We then apply \eqref{eq:f-03}, \eqref{eq:f-03-HM}, \eqref{eq:R1-tmp-01},
and $K_{0}^{p-1}\mu_{J+1}^{p}<\dlt_{0}^{2}$ to have 
\begin{equation}
\begin{aligned} & \chf_{(0,2\dlt_{0}\lmb_{J}]}r^{-2}\{f'(Q_{;J+1}+P_{J}+\chi_{\dlt_{0}\lmb_{J}}\rng{\vphi}^{(t)})-f'(Q_{;J+1})\}\\
 & \quad\aleq\chf_{(0,2\dlt_{0}\lmb_{J}]}\{|U_{J}|^{p-1}+|\vphi^{(t)}|^{p-1}\}\aleq\chf_{(0,2\dlt_{0}\lmb_{J}]}\dlt_{0}^{2}r^{-2},
\end{aligned}
\label{eq:R1-tmp-02}
\end{equation}
which gives \eqref{eq:R1-02}.
\end{proof}
\begin{proof}[Proof of \eqref{eq:R1-03}]
Note that for $j=1,\dots,J$ 
\begin{align}
 & \lmb_{j}\rd_{\lmb_{j}}(\calR_{J+1}^{(1)}[\vphi])\nonumber \\
 & =r^{-2}\{f'(Q_{;J+1}+P_{J}+\chi_{\dlt_{0}\lmb_{J}}\rng{\vphi})-f'(Q_{;J+1})\}\cdot\lmb_{j}\rd_{\lmb_{j}}(\chi_{\dlt_{0}\lmb_{J}}\vphi)\label{eq:R1-4-02}\\
 & \peq+r^{-2}\{f'(Q_{;J+1}+P_{J}+\chi_{\dlt_{0}\lmb_{J}}\rng{\vphi})-f'(Q_{;J+1}+P_{J})\}\cdot\lmb_{j}\rd_{\lmb_{j}}U_{J}.\label{eq:R1-4-03}
\end{align}
We apply \eqref{eq:R1-tmp-02} to estimate 
\[
\eqref{eq:R1-4-02}\aleq\chf_{(0,2\dlt_{0}\lmb_{J}]}\dlt_{0}^{2}r^{-2}\{|\lmb_{j}\rd_{\lmb_{j}}\vphi|+\chf_{j=J}|\vphi|\}\aleq K_{0}\dlt_{0}^{2}\lmb_{j}^{-D}\cdot r^{-2}\omg_{J+1}.
\]
Estimating \eqref{eq:R1-4-03} is more delicate. In the \eqref{eq:HMHF-equiv}
case, we apply \eqref{eq:f-03-HM} to obtain 
\begin{align*}
 & r^{-2}\{f'(Q_{;J+1}+P_{J}+\chi_{\dlt_{0}\lmb_{J}}\rng{\vphi})-f'(Q_{;J+1}+P_{J})\}\\
 & \quad\aleq r^{-2}|\rng{\vphi}|\aleq r^{-2}\cdot r^{D}K_{0}\lmb_{J}^{-D}\omg_{J+1}\aleq K_{0}\dlt_{0}^{D}\cdot r^{-2}\omg_{J+1}.
\end{align*}
In the \eqref{eq:NLH-rad} case, by \eqref{eq:1-2}, we can choose
$A>1$ such that $|W_{;J+1}+U_{J}|\aeq W_{\lmb_{J+1}}$ on $(0,\frac{1}{A}\br{\lmb}_{J+1}]$
and $|W_{;J+1}+U_{J}|\aeq W_{\lmb_{J}}$ on $[A\br{\lmb}_{J+1},2\dlt_{0}\lmb_{J}]$.
In these regions, we apply \eqref{eq:f-02} to estimate 
\begin{align*}
 & \chf_{(0,\frac{1}{A}\br{\lmb}_{J+1}]}\{f'(W_{;J+1}+U_{J}+\chi_{\dlt_{0}\lmb_{J}}\vphi)-f'(W_{;J+1}+U_{J})\}\aleq\chf_{(0,\frac{1}{A}\br{\lmb}_{J+1}]}W_{\lmb_{J+1}}^{p-2}|\vphi|\\
 & \aleq(\chf_{(0,\lmb_{J+1}]}\lmb_{J+1}^{D-2}+\chf_{[\lmb_{J+1},\frac{1}{A}\br{\lmb}_{J+1}]}\lmb_{J+1}^{-(D-2)}r^{2D-4})\cdot K_{0}\lmb_{J}^{-D}\omg_{J+1}\aleq K_{0}\mu_{J+1}\cdot r^{-2}\omg_{J+1}
\end{align*}
and 
\begin{align*}
 & \chf_{[A\br{\lmb}_{J+1},2\dlt_{0}\lmb_{J}]}\{f'(W_{;J+1}+U_{J}+\chi_{\dlt_{0}\lmb_{J}}\vphi)-f'(W_{;J+1}+U_{J})\}\\
 & \aleq\chf_{[A\br{\lmb}_{J+1},2\dlt_{0}\lmb_{J}]}W_{\lmb_{J}}^{p-2}|\vphi|\aleq\chf_{[A\br{\lmb}_{J+1},2\dlt_{0}\lmb_{J}]}\lmb_{J}^{D-2}\cdot K_{0}\lmb_{J}^{-D}\omg_{J+1}\aleq K_{0}\dlt_{0}^{2}\cdot r^{-2}\omg_{J+1}.
\end{align*}
In the intermediate region, we use \eqref{eq:f-03} and $\omg_{J+1}\aeq\mu_{J+1}$
at $r\aeq\br{\lmb}_{J+1}$ to estimate 
\begin{align*}
 & \chf_{[\frac{1}{A}\br{\lmb}_{J+1},A\br{\lmb}_{J+1}]}\{f'(W_{;J+1}+U_{J}+\chi_{\dlt_{0}\lmb_{J}}\vphi)-f'(W_{;J+1}+U_{J})\}\\
 & \aleq\chf_{[\frac{1}{A}\br{\lmb}_{J+1},A\br{\lmb}_{J+1}]}|\vphi|^{p-1}\aleq\chf_{[\frac{1}{A}\br{\lmb}_{J+1},A\br{\lmb}_{J+1}]}K_{0}^{p-1}\lmb_{J}^{-2}\omg_{J+1}^{p-1}\aleq K_{0}^{p-1}\mu_{J+1}^{p}\cdot r^{-2}\omg_{J+1}.
\end{align*}
The previous four displays and parameter dependence \eqref{eq:profile-param-dep}
imply that in either the \eqref{eq:NLH-rad} or \eqref{eq:HMHF-equiv}
cases, we have 
\begin{equation}
r^{-2}\{f'(Q_{;J+1}+P_{J}+\chi_{\dlt_{0}\lmb_{J}}\rng{\vphi})-f'(Q_{;J+1}+P_{J})\}\aleq\chf_{(0,2\dlt_{0}\lmb_{J}]}K_{0}\dlt_{0}^{2}\cdot r^{-2}\omg_{J+1}.\label{eq:R1-4-01}
\end{equation}
This estimate together with \eqref{eq:1-4} gives 
\[
\eqref{eq:R1-4-03}\aleq\chf_{(0,2\dlt_{0}\lmb_{J}]}K_{0}\dlt_{0}^{2}\lmb_{j}^{-D}\cdot r^{-2}\omg_{J+1}.
\]
This completes the proof of \eqref{eq:R1-03}.
\end{proof}
\begin{proof}[Proof of \eqref{eq:R1-04}]
We write 
\begin{align}
 & \lmb_{j}\rd_{\lmb_{j}}(\calR_{J+1}^{(1)}[\vphi]-\calR_{J+1}^{(1)}[\vphi'])\nonumber \\
 & =r^{-2}\{f'(Q_{;J+1}+P_{J}+\chi_{\dlt_{0}\lmb_{J}}\rng{\vphi})-f'(Q_{;J+1})\}\cdot\lmb_{j}\rd_{\lmb_{j}}(\chi_{\dlt_{0}\lmb_{J}}(\vphi-\vphi'))\label{eq:R1-6-01}\\
 & \peq+r^{-2}\{f'(Q_{;J+1}+P_{J}+\chi_{\dlt_{0}\lmb_{J}}\rng{\vphi})-f'(Q_{;J+1}+P_{J}+\chi_{\dlt_{0}\lmb_{J}}\rng{\vphi}')\}\label{eq:R1-6-02}\\
 & \qquad\qquad\times\{\lmb_{j}\rd_{\lmb_{j}}(\chi_{\dlt_{0}\lmb_{J}}\vphi')+\lmb_{j}\rd_{\lmb_{j}}U_{J}\}.\nonumber 
\end{align}
We apply \eqref{eq:R1-tmp-02} and then use $\|\vphi\|_{X},\|\vphi'\|_{X}\leq K_{0}\lmb_{J}^{-D}$
to obtain 
\begin{align*}
\eqref{eq:R1-6-01} & \aleq\chf_{(0,2\dlt_{0}\lmb_{J}]}\dlt_{0}^{2}\{\|\lmb_{j}\rd_{\lmb_{j}}(\vphi-\vphi')\|_{X}+\chf_{j=J}\|\vphi-\vphi'\|_{X}\}\cdot r^{-2}\omg_{J+1},\\
 & \aleq\chf_{(0,2\dlt_{0}\lmb_{J}]}\dlt_{0}^{2}\{\|\lmb_{j}\rd_{\lmb_{j}}(\vphi-\vphi')\|_{X}+K_{0}^{2-p}\lmb_{J}^{-(D-2)}\|\vphi-\vphi'\|_{X}^{p-1}\}\cdot r^{-2}\omg_{J+1}.
\end{align*}
To estimate \eqref{eq:R1-6-02}, we simply use \eqref{eq:f-03}, \eqref{eq:f-03-HM},
and \eqref{eq:1-6} to obtain a rough bound 
\begin{align*}
 & r^{-2}\{f'(Q_{;J+1}+P_{J}+\chi_{\dlt_{0}\lmb_{J}}\rng{\vphi})-f'(Q_{;J+1}+P_{J}+\chi_{\dlt_{0}\lmb_{J}}\rng{\vphi}')\}\\
 & \quad\aleq\chf_{(0,2\dlt_{0}\lmb_{J}]}|\vphi-\vphi'|^{p-1}\aleq\dlt_{0}^{2}\lmb_{J+1}^{2}\mu_{J+1}^{-D}\cdot\|\vphi-\vphi'\|_{X}^{p-1}\cdot r^{-2}\omg_{J+1},
\end{align*}
which together with \eqref{eq:1-4} and $\mu_{J+1}^{-D}\lmb_{j}^{-D}\leq\lmb_{J+1}^{-D}$
implies 
\[
\eqref{eq:R1-6-02}\aleq K_{0}\dlt_{0}^{2}\lmb_{J+1}^{-(D-2)}\|\vphi-\vphi'\|_{X}^{p-1}\cdot r^{-2}\omg_{J+1}.
\]
This completes the proof of \eqref{eq:R1-04}.
\end{proof}
\bibliographystyle{abbrv}
\bibliography{References}

\begin{thebibliography}{10}

\bibitem{Aryan2024arXiv}
S.~Aryan.
\newblock Soliton resolution for the energy-critical nonlinear heat equation in
  the radial case.
\newblock {\em preprint arXiv:2405.06005}, 2024.

\bibitem{BourgainWang1997}
J.~Bourgain and W.~Wang.
\newblock Construction of blowup solutions for the nonlinear {S}chr\"odinger
  equation with critical nonlinearity.
\newblock {\em Ann. Scuola Norm. Sup. Pisa Cl. Sci. (4)}, 25(1-2):197--215
  (1998), 1997.
\newblock Dedicated to Ennio De Giorgi.

\bibitem{BrezisCazenave1996}
H.~Brezis and T.~Cazenave.
\newblock A nonlinear heat equation with singular initial data.
\newblock {\em J. Anal. Math.}, 68:277--304, 1996.

\bibitem{ChangDingYe1992}
K.-C. Chang, W.~Y. Ding, and R.~Ye.
\newblock Finite-time blow-up of the heat flow of harmonic maps from surfaces.
\newblock {\em J. Differential Geom.}, 36(2):507--515, 1992.

\bibitem{CollotMerleRaphael2017CMP}
C.~Collot, F.~Merle, and P.~Rapha\"{e}l.
\newblock Dynamics near the ground state for the energy critical nonlinear heat
  equation in large dimensions.
\newblock {\em Comm. Math. Phys.}, 352(1):215--285, 2017.

\bibitem{CortazarDelPinoMusso2020JEMS}
C.~Cort\'{a}zar, M.~del Pino, and M.~Musso.
\newblock Green's function and infinite-time bubbling in the critical nonlinear
  heat equation.
\newblock {\em J. Eur. Math. Soc. (JEMS)}, 22(1):283--344, 2020.

\bibitem{Cote2015CPAM}
R.~C\^{o}te.
\newblock On the soliton resolution for equivariant wave maps to the sphere.
\newblock {\em Comm. Pure Appl. Math.}, 68(11):1946--2004, 2015.

\bibitem{CoteZaag2013CPAM}
R.~C\^{o}te and H.~Zaag.
\newblock Construction of a multisoliton blowup solution to the semilinear wave
  equation in one space dimension.
\newblock {\em Comm. Pure Appl. Math.}, 66(10):1541--1581, 2013.

\bibitem{DaviladelPinoWei2020Invent}
J.~D\'{a}vila, M.~del Pino, and J.~Wei.
\newblock Singularity formation for the two-dimensional harmonic map flow into
  {$S^2$}.
\newblock {\em Invent. Math.}, 219(2):345--466, 2020.

\bibitem{delPinoMussoWei2019ActaSinica}
M.~del Pino, M.~Musso, and J.~Wei.
\newblock Type {II} blow-up in the 5-dimensional energy critical heat equation.
\newblock {\em Acta Math. Sin. (Engl. Ser.)}, 35(6):1027--1042, 2019.

\bibitem{delPinoMussoWei2020AnalPDE}
M.~del Pino, M.~Musso, and J.~Wei.
\newblock Infinite-time blow-up for the 3-dimensional energy-critical heat
  equation.
\newblock {\em Anal. PDE}, 13(1):215--274, 2020.

\bibitem{delPinoMussoWei2021AnalPDE}
M.~del Pino, M.~Musso, and J.~Wei.
\newblock Existence and stability of infinite time bubble towers in the energy
  critical heat equation.
\newblock {\em Anal. PDE}, 14(5):1557--1598, 2021.

\bibitem{delPinoMussoWeiZhangZhou2020arXiv}
M.~del Pino, M.~Musso, J.~Wei, Q.~Zhang, and Y.~Zhou.
\newblock Type {II} finite time blow-up for the three dimensional energy
  critical heat equation.
\newblock {\em preprint arXiv:2002.05765}, 2020.

\bibitem{delPinoMussoWeiZhou2020DCDS}
M.~del Pino, M.~Musso, J.~Wei, and Y.~Zhou.
\newblock Type {II} finite time blow-up for the energy critical heat equation
  in {$\Bbb R^4$}.
\newblock {\em Discrete Contin. Dyn. Syst.}, 40(6):3327--3355, 2020.

\bibitem{DengSunWei2025Duke}
B.~Deng, L.~Sun, and J.~Wei.
\newblock Sharp quantitative estimates of {S}truwe's decomposition.
\newblock {\em Duke Math. J.}, 174(1):159--228, 2025.

\bibitem{DingTian1995CommAnalGeom}
W.~Ding and G.~Tian.
\newblock Energy identity for a class of approximate harmonic maps from
  surfaces.
\newblock {\em Comm. Anal. Geom.}, 3(3-4):543--554, 1995.

\bibitem{DuyckaertsJiaKenigMerle2017GAFA}
T.~Duyckaerts, H.~Jia, C.~Kenig, and F.~Merle.
\newblock Soliton resolution along a sequence of times for the focusing energy
  critical wave equation.
\newblock {\em Geom. Funct. Anal.}, 27(4):798--862, 2017.

\bibitem{DuyckaertsKenigMerle2013CambJMath}
T.~Duyckaerts, C.~Kenig, and F.~Merle.
\newblock Classification of radial solutions of the focusing, energy-critical
  wave equation.
\newblock {\em Camb. J. Math.}, 1(1):75--144, 2013.

\bibitem{DuyckaertsKenigMerle2023Acta}
T.~Duyckaerts, C.~Kenig, and F.~Merle.
\newblock Soliton resolution for the radial critical wave equation in all odd
  space dimensions.
\newblock {\em Acta Math.}, 230(1):1--92, 2023.

\bibitem{EellsSampson1964AJM}
J.~Eells, Jr. and J.~H. Sampson.
\newblock Harmonic mappings of {R}iemannian manifolds.
\newblock {\em Amer. J. Math.}, 86:109--160, 1964.

\bibitem{FilaKing2012}
M.~Fila and J.~R. King.
\newblock Grow up and slow decay in the critical {S}obolev case.
\newblock {\em Netw. Heterog. Media}, 7(4):661--671, 2012.

\bibitem{FilippasHerreroVelazquez2000}
S.~Filippas, M.~A. Herrero, and J.~J.~L. Vel\'{a}zquez.
\newblock Fast blow-up mechanisms for sign-changing solutions of a semilinear
  parabolic equation with critical nonlinearity.
\newblock {\em R. Soc. Lond. Proc. Ser. A Math. Phys. Eng. Sci.},
  456(2004):2957--2982, 2000.

\bibitem{GigaKohn1985CPAM}
Y.~Giga and R.~V. Kohn.
\newblock Asymptotically self-similar blow-up of semilinear heat equations.
\newblock {\em Comm. Pure Appl. Math.}, 38(3):297--319, 1985.

\bibitem{GigaKohn1987Indiana}
Y.~Giga and R.~V. Kohn.
\newblock Characterizing blowup using similarity variables.
\newblock {\em Indiana Univ. Math. J.}, 36(1):1--40, 1987.

\bibitem{GigaMatsuiSasayama2004Indiana}
Y.~Giga, S.~Matsui, and S.~Sasayama.
\newblock Blow up rate for semilinear heat equations with subcritical
  nonlinearity.
\newblock {\em Indiana Univ. Math. J.}, 53(2):483--514, 2004.

\bibitem{GustafsonNakanishiTsai2010CMP}
S.~Gustafson, K.~Nakanishi, and T.-P. Tsai.
\newblock Asymptotic stability, concentration, and oscillation in harmonic map
  heat-flow, {L}andau-{L}ifshitz, and {S}chr\"{o}dinger maps on {$\Bbb R^2$}.
\newblock {\em Comm. Math. Phys.}, 300(1):205--242, 2010.

\bibitem{Harada2020AIHP}
J.~Harada.
\newblock A higher speed type {II} blowup for the five dimensional energy
  critical heat equation.
\newblock {\em Ann. Inst. H. Poincar\'{e} C Anal. Non Lin\'{e}aire},
  37(2):309--341, 2020.

\bibitem{Harada2020AnnPDE}
J.~Harada.
\newblock A type {II} blowup for the six dimensional energy critical heat
  equation.
\newblock {\em Ann. PDE}, 6(2):Paper No. 13, 63, 2020.

\bibitem{HerreroVelazquez-preprint}
M.~A. Herrero and J.~J.~L. Vel\'{a}zquez.
\newblock A blow up result for semilinear heat equations in the supercritical
  case.
\newblock {\em preprint}.

\bibitem{HerreroVelazquezCRASP1994}
M.~A. Herrero and J.~J.~L. Vel\'{a}zquez.
\newblock Explosion de solutions d'\'{e}quations paraboliques semilin\'{e}aires
  supercritiques.
\newblock {\em C. R. Acad. Sci. Paris S\'{e}r. I Math.}, 319(2):141--145, 1994.

\bibitem{IacopettiVaira2016}
A.~Iacopetti and G.~Vaira.
\newblock Sign-changing tower of bubbles for the {B}rezis-{N}irenberg problem.
\newblock {\em Commun. Contemp. Math.}, 18(1):1550036, 53, 2016.

\bibitem{Jendrej2017AnalPDE}
J.~Jendrej.
\newblock Construction of two-bubble solutions for the energy-critical {NLS}.
\newblock {\em Anal. PDE}, 10(8):1923--1959, 2017.

\bibitem{Jendrej2019AJM}
J.~Jendrej.
\newblock Construction of two-bubble solutions for energy-critical wave
  equations.
\newblock {\em Amer. J. Math.}, 141(1):55--118, 2019.

\bibitem{JendrejLawrie2021arXiv}
J.~Jendrej and A.~Lawrie.
\newblock Soliton resolution for equivariant wave maps.
\newblock {\em preprint arXiv:2106.10738, to appear in J. Amer. Math. Soc.},
  2021.

\bibitem{JendrejLawrie2023CVPDE}
J.~Jendrej and A.~Lawrie.
\newblock Bubble decomposition for the harmonic map heat flow in the
  equivariant case.
\newblock {\em Calc. Var. Partial Differential Equations}, 62(9):Paper No. 264,
  2023.

\bibitem{JendrejLawrie2023AnnPDE}
J.~Jendrej and A.~Lawrie.
\newblock Soliton resolution for the energy-critical nonlinear wave equation in
  the radial case.
\newblock {\em Ann. PDE}, 9(2):Paper No. 18, 117, 2023.

\bibitem{JendrejLawrieSchlag2023arXiv}
J.~Jendrej, A.~Lawrie, and W.~Schlag.
\newblock Continuous in time bubble decomposition for the harmonic map heat
  flow.
\newblock {\em preprint arXiv:2304.05927}, 2023.

\bibitem{JiaKenig2017AJM}
H.~Jia and C.~Kenig.
\newblock Asymptotic decomposition for semilinear wave and equivariant wave map
  equations.
\newblock {\em Amer. J. Math.}, 139(6):1521--1603, 2017.

\bibitem{Kim2022arXiv2}
K.~Kim.
\newblock Rigidity of smooth finite-time blow-up for equivariant self-dual
  {C}hern-{S}imons-{S}chr\"{o}dinger equation.
\newblock {\em preprint arXiv:2210.05412, to appear in J. Eur. Math. Soc.},
  2022.

\bibitem{LinWangCVPDE}
F.~Lin and C.~Wang.
\newblock Energy identity of harmonic map flows from surfaces at finite
  singular time.
\newblock {\em Calc. Var. Partial Differential Equations}, 6(4):369--380, 1998.

\bibitem{LinWang2008book}
F.~Lin and C.~Wang.
\newblock {\em The analysis of harmonic maps and their heat flows}.
\newblock World Scientific Publishing Co. Pte. Ltd., Hackensack, NJ, 2008.

\bibitem{Martel2005AJM}
Y.~Martel.
\newblock Asymptotic {$N$}-soliton-like solutions of the subcritical and
  critical generalized {K}orteweg-de {V}ries equations.
\newblock {\em Amer. J. Math.}, 127(5):1103--1140, 2005.

\bibitem{MartelMerleRaphael2014Acta}
Y.~Martel, F.~Merle, and P.~Rapha\"{e}l.
\newblock Blow up for the critical generalized {K}orteweg--de {V}ries equation.
  {I}: {D}ynamics near the soliton.
\newblock {\em Acta Math.}, 212(1):59--140, 2014.

\bibitem{MartelRaphael2018AnnSci}
Y.~Martel and P.~Rapha\"{e}l.
\newblock Strongly interacting blow up bubbles for the mass critical nonlinear
  {S}chr\"{o}dinger equation.
\newblock {\em Ann. Sci. \'{E}c. Norm. Sup\'{e}r. (4)}, 51(3):701--737, 2018.

\bibitem{MatanoMerle2004CPAM}
H.~Matano and F.~Merle.
\newblock On nonexistence of type {II} blowup for a supercritical nonlinear
  heat equation.
\newblock {\em Comm. Pure Appl. Math.}, 57(11):1494--1541, 2004.

\bibitem{Merle1990CMP}
F.~Merle.
\newblock Construction of solutions with exactly {$k$} blow-up points for the
  {S}chr\"{o}dinger equation with critical nonlinearity.
\newblock {\em Comm. Math. Phys.}, 129(2):223--240, 1990.

\bibitem{MerleRaphael2003GAFA}
F.~Merle and P.~Rapha\"{e}l.
\newblock Sharp upper bound on the blow-up rate for the critical nonlinear
  {S}chr\"{o}dinger equation.
\newblock {\em Geom. Funct. Anal.}, 13(3):591--642, 2003.

\bibitem{MerleRaphael2004InventMath}
F.~Merle and P.~Rapha\"{e}l.
\newblock On universality of blow-up profile for {$L^2$} critical nonlinear
  {S}chr\"{o}dinger equation.
\newblock {\em Invent. Math.}, 156(3):565--672, 2004.

\bibitem{MerleRaphael2005AnnMath}
F.~Merle and P.~Rapha\"{e}l.
\newblock The blow-up dynamic and upper bound on the blow-up rate for critical
  nonlinear {S}chr\"{o}dinger equation.
\newblock {\em Ann. of Math. (2)}, 161(1):157--222, 2005.

\bibitem{MerleRaphael2005CMP}
F.~Merle and P.~Rapha\"{e}l.
\newblock Profiles and quantization of the blow up mass for critical nonlinear
  {S}chr\"{o}dinger equation.
\newblock {\em Comm. Math. Phys.}, 253(3):675--704, 2005.

\bibitem{MerleRaphael2006JAMS}
F.~Merle and P.~Rapha\"{e}l.
\newblock On a sharp lower bound on the blow-up rate for the {$L^2$} critical
  nonlinear {S}chr\"{o}dinger equation.
\newblock {\em J. Amer. Math. Soc.}, 19(1):37--90, 2006.

\bibitem{MerleRaphaelRodnianski2013InventMath}
F.~Merle, P.~Rapha\"{e}l, and I.~Rodnianski.
\newblock Blowup dynamics for smooth data equivariant solutions to the critical
  {S}chr\"{o}dinger map problem.
\newblock {\em Invent. Math.}, 193(2):249--365, 2013.

\bibitem{MerleZaag2003AJM}
F.~Merle and H.~Zaag.
\newblock Determination of the blow-up rate for the semilinear wave equation.
\newblock {\em Amer. J. Math.}, 125(5):1147--1164, 2003.

\bibitem{MerleZaag2005MathAnn}
F.~Merle and H.~Zaag.
\newblock Determination of the blow-up rate for a critical semilinear wave
  equation.
\newblock {\em Math. Ann.}, 331(2):395--416, 2005.

\bibitem{MerleZaag2012AJM}
F.~Merle and H.~Zaag.
\newblock Existence and classification of characteristic points at blow-up for
  a semilinear wave equation in one space dimension.
\newblock {\em Amer. J. Math.}, 134(3):581--648, 2012.

\bibitem{Mizoguchi2007MathAnn}
N.~Mizoguchi.
\newblock Rate of type {II} blowup for a semilinear heat equation.
\newblock {\em Math. Ann.}, 339(4):839--877, 2007.

\bibitem{Mizoguchi2011TAMS}
N.~Mizoguchi.
\newblock Blow-up rate of type {II} and the braid group theory.
\newblock {\em Trans. Amer. Math. Soc.}, 363(3):1419--1443, 2011.

\bibitem{Mizoguchi2020CPAM}
N.~Mizoguchi.
\newblock Refined asymptotic behavior of blowup solutions to a simplified
  chemotaxis system.
\newblock {\em Comm. Pure Appl. Math.}, 75(8):1870--1886, 2022.

\bibitem{Qing1995CommAnalGeom}
J.~Qing.
\newblock On singularities of the heat flow for harmonic maps from surfaces
  into spheres.
\newblock {\em Comm. Anal. Geom.}, 3(1-2):297--315, 1995.

\bibitem{QingTian1997CPAM}
J.~Qing and G.~Tian.
\newblock Bubbling of the heat flows for harmonic maps from surfaces.
\newblock {\em Comm. Pure Appl. Math.}, 50(4):295--310, 1997.

\bibitem{Raphael2005MathAnn}
P.~Rapha\"{e}l.
\newblock Stability of the log-log bound for blow up solutions to the critical
  non linear {S}chr\"{o}dinger equation.
\newblock {\em Math. Ann.}, 331(3):577--609, 2005.

\bibitem{RaphaelRodnianski2012Publ.Math.}
P.~Rapha\"{e}l and I.~Rodnianski.
\newblock Stable blow up dynamics for the critical co-rotational wave maps and
  equivariant {Y}ang-{M}ills problems.
\newblock {\em Publ. Math. Inst. Hautes \'{E}tudes Sci.}, 115:1--122, 2012.

\bibitem{RaphaelSchweyer2013CPAM}
P.~Rapha\"{e}l and R.~Schweyer.
\newblock Stable blowup dynamics for the 1-corotational energy critical
  harmonic heat flow.
\newblock {\em Comm. Pure Appl. Math.}, 66(3):414--480, 2013.

\bibitem{RaphaelSchweyer2014AnalPDE}
P.~Rapha\"{e}l and R.~Schweyer.
\newblock Quantized slow blow-up dynamics for the corotational energy-critical
  harmonic heat flow.
\newblock {\em Anal. PDE}, 7(8):1713--1805, 2014.

\bibitem{RaphaelSzeftel2011JAMS}
P.~Rapha\"{e}l and J.~Szeftel.
\newblock Existence and uniqueness of minimal blow-up solutions to an
  inhomogeneous mass critical {NLS}.
\newblock {\em J. Amer. Math. Soc.}, 24(2):471--546, 2011.

\bibitem{SacksUhlenbeck1981}
J.~Sacks and K.~Uhlenbeck.
\newblock The existence of minimal immersions of {$2$}-spheres.
\newblock {\em Ann. of Math. (2)}, 113(1):1--24, 1981.

\bibitem{Schweyer2012JFA}
R.~Schweyer.
\newblock Type {II} blow-up for the four dimensional energy critical semi
  linear heat equation.
\newblock {\em J. Funct. Anal.}, 263(12):3922--3983, 2012.

\bibitem{Struwe1985}
M.~Struwe.
\newblock On the evolution of harmonic mappings of {R}iemannian surfaces.
\newblock {\em Comment. Math. Helv.}, 60(4):558--581, 1985.

\bibitem{Topping2004AnnMath}
P.~Topping.
\newblock Repulsion and quantization in almost-harmonic maps, and asymptotics
  of the harmonic map flow.
\newblock {\em Ann. of Math. (2)}, 159(2):465--534, 2004.

\bibitem{Topping2004MathZ}
P.~Topping.
\newblock Winding behaviour of finite-time singularities of the harmonic map
  heat flow.
\newblock {\em Math. Z.}, 247(2):279--302, 2004.

\bibitem{Topping1997JDG}
P.~M. Topping.
\newblock Rigidity in the harmonic map heat flow.
\newblock {\em J. Differential Geom.}, 45(3):593--610, 1997.

\bibitem{vandenBergHulshofKing2003}
J.~B. van~den Berg, J.~Hulshof, and J.~R. King.
\newblock Formal asymptotics of bubbling in the harmonic map heat flow.
\newblock {\em SIAM J. Appl. Math.}, 63(5):1682--1717, 2003.

\bibitem{vanderHout2003JDE}
R.~van~der Hout.
\newblock On the nonexistence of finite time bubble trees in symmetric harmonic
  map heat flows from the disk to the 2-sphere.
\newblock {\em J. Differential Equations}, 192(1):188--201, 2003.

\bibitem{Wang1996HoustonJM}
C.~Wang.
\newblock Bubble phenomena of certain {P}alais-{S}male sequences from surfaces
  to general targets.
\newblock {\em Houston J. Math.}, 22(3):559--590, 1996.

\bibitem{WangWei2021arXiv}
J.~Wei and K.~Wang.
\newblock Refined blowup analysis and nonexistence of type {II} blowups for an
  energy critical nonlinear heat equation.
\newblock {\em preprint arXiv:2101.07186}, 2021.

\bibitem{WeiZhangZhou2023arXiv}
J.~Wei, Q.~Zhang, and Y.~Zhou.
\newblock Trichotomy dynamics of the 1-equivariant harmonic map flow.
\newblock {\em preprint arXiv:2301.09221}, 2023.

\bibitem{Weissler1980}
F.~B. Weissler.
\newblock Local existence and nonexistence for semilinear parabolic equations
  in {$L\sp{p}$}.
\newblock {\em Indiana Univ. Math. J.}, 29(1):79--102, 1980.

\end{thebibliography}

\end{document}